\definecolor{myred}{rgb}{0.75,0,0}
\definecolor{mygreen}{rgb}{0,0.5,0}
\definecolor{myblue}{rgb}{0,0,0.65}
	\newcommand{\NB}[1]{\todo[color=gray!40]{#1}}
	\newcommand{\TODO}[1]{\todo[color=red]{#1}}
	\newcommand{\NB}[1]{}
	\newcommand{\TODO}[1]{}
	\renewcommand{\todo}[1]{}
	\renewcommand{\todo}[1]{}
\theoremstyle{plain}
\newtheorem{theorem}[subsubsection]{Theorem}
\newtheorem{construction}[subsubsection]{Construction}
\newtheorem{proposition}[theorem]{Proposition}
\newtheorem{lemma}[theorem]{Lemma}
\theoremstyle{definition}
\newtheorem{definition}[theorem]{Definition}
\newtheorem{remark}[theorem]{Remark}
\newtheorem{example}[theorem]{Example}
\newtheorem{conjecture}[theorem]{Conjecture} 
\newtheorem{warn}[theorem]{Warning}
\newtheorem{notation}[theorem]{Notation}
\theoremstyle{remark}
\numberwithin{equation}{section}
\newcommand\nc{\newcommand}
\nc\on{\operatorname}
\nc\renc{\renewcommand}
\newcommand*{\shom}{\mathscr{H}\kern -.5pt om}
\newcommand*{\stor}{\mathscr{T}\kern -.5pt or}
\newcommand*{\sext}{\mathscr{E}\kern -.5pt xt}
\providecommand\@dotsep{5}
\renewcommand{\listoftodos}[1][\@todonotes@todolistname]{%
\@starttoc{tdo}{#1}}
\def\Ddots{\mathinner{\mkern1mu\raise\p@
		\vbox{\kern7\p@\hbox{.}}\mkern2mu
		\raise4\p@\hbox{.}\mkern2mu\raise7\p@\hbox{.}\mkern1mu}}
\newcommand{\customlabel}[2]{\protected@write \@auxout {}{\string \newlabel {#1}{{#2}{\thepage}{#2}{#1}{}} }\hypertarget{#1}{#2}}
\DeclareMathOperator\id{id}
\DeclareMathOperator\Spc{Spc}
\renewcommand\hom{\mathrm{Hom}}
\DeclareMathOperator\rk{rk}
\DeclareMathOperator\Barc{Bar}
\DeclareMathOperator\spec{Spec}
\DeclareMathOperator\Mod{Mod}
\DeclareMathOperator\sym{Sym}
\renewcommand\drop{\mathrm{Drop}}
\DeclareMathOperator\ab{ab}
\DeclareMathOperator\std{std}
\DeclareMathOperator\CHur{CHur}
\DeclareMathOperator\perm{perm}
\DeclareMathOperator\tr{tr}
\DeclareMathOperator\frob{Frob}
\DeclareMathOperator\Hur{Hur}
\DeclareMathOperator\Conf{Conf}
\DeclareMathOperator\PConf{PConf}
\DeclareMathOperator\ord{ord}
\DeclareMathOperator\inv{inv}
\DeclareMathOperator\aut{Aut}
\DeclareMathOperator\gl{GL}
\newcommand\ZZ{\mathbb{Z}}
\newcommand\NN{\mathbb{N}}
\newcommand\EE{\mathbb{E}}
\DeclareMathOperator\sel{Sel}
\DeclareMathOperator\gr{gr}
\DeclareMathOperator\colim{colim}
\DeclareFontFamily{U}{wncy}{}
\DeclareFontShape{U}{wncy}{m}{n}{<->wncyr10}{}
\DeclareSymbolFont{mcy}{U}{wncy}{m}{n}
\DeclareMathSymbol{\Sha}{\mathord}{mcy}{"58}
\newcommand{\conf}[3]{\operatorname{Conf}_{#1,#2/#3}} %configuration space; arguments 1: number of moving points (height), 2: open curve, 3: base scheme
\newcommand{\qtwist}[3]{\operatorname{QTwist}_{#1, #2/#3}} %Hurwitz space over pointed configuration space; should probably change notation; arguments 1: number of moving points (height), 2: open curve, 3: base scheme
\newcommand{\selspacemoments}[2]{\operatorname{Sel}_{#1}^{#2}} %moments of the Selmer space; arguments 1: sheaf, 2: group for moment
\newcommand{\rankselspacemoments}[2]{\operatorname{Sel}^{#2,\rk}_{#1}}
\newcommand{\paritybklpr}[2]{\operatorname{Sel}^{\operatorname{BKLPR},#2}_{#1}} %BKLPR distribution conditioning	on odd rank 1: Selmer order 2: 0 or 1 for even or odd
\newcommand{\bklpr}[1]{\operatorname{Sel}^{\operatorname{BKLPR}}_{#1}} %BKLPR distribution 1: Selmer order
\newcommand{\cphur}[4]{\operatorname{CHur}^{#1,#3}_{#2, #4}}
\newcommand{\cquohur}[5]{[\operatorname{CHur}^{#1,#4}_{#3,#5}/#2]}
\newcommand{\cphurc}[2]{\operatorname{CHur}^{#2}_{#1}}
\DeclareMathOperator\surj{Surj}
\def\listtodoname{List of Todos}
\def\listoftodos{\@starttoc{tdo}\listtodoname}
\title{The stable homology of Hurwitz modules and applications}
\subjclass[2020]{Primary 11R29; Secondary 11R11, 11R58, 55P43}
\keywords{Hurwitz spaces, homological stability,
Poonen-Rains heuristics,
Bhargava's conjecture, representation stability}
\author{Aaron Landesman}
\author{Ishan Levy}
\begin{document}

\begin{abstract}
	We show that the homology of modules for Hurwitz spaces stabilizes and
	compute its stable value. 
	As one consequence, we
	compute the moments
	of Selmer groups in quadratic twist families of abelian varieties 
	over suitably large function fields.
	As a second consequence, we deduce a
	version of Bhargava's conjecture, counting the number of $S_d$ degree $d$
	extensions of $\mathbb F_q(t)$, for suitably large $q$.
	As a third consequence, we deduce that the homology of Hurwitz spaces associated to
	racks with a single component satisfy representation stability.
\end{abstract}

\maketitle
\tableofcontents

\section{Introduction}
\label{section:intro}

If $G$ is a group and $c = c_1 \cup \cdots \cup c_\upsilon$ is a union of
$\upsilon$ conjugacy classes in $G$,
we use $\CHur^c_{n_1,
\ldots, n_\upsilon}$ to denote the Hurwitz space associated to $c$.
Roughly, this is a moduli space
parameterizing geometrically connected Galois $G$ covers of $\mathbb A^1$ with with $n_i$
labeled points of branching type $c_i$, together with a labeling of the sheets
of the cover near $\infty$.
These Hurwitz spaces are of much interest in number theory as their $\mathbb
F_q$ points parameterize
covers of global function fields, and they are also some of the most fundamental
moduli spaces appearing in algebraic geometry.
In \cite{landesmanL:homological-stability-for-hurwitz} we showed that the
homology groups 
$H_i(\CHur^c_{n_1, \ldots, n_\upsilon};\mathbb Z)$ stabilize
as $n_1 \to \infty$ with $n_2, \ldots,n_\upsilon$ fixed. We used this to deduce
applications toward a number of conjectures in number theory and algebraic
geometry, including the Cohen-Lenstra heuristics, Malle's conjecture, and the
Picard rank conjecture.
%More generally, we showed the above for $c$ a rack with components
%$c_1, \ldots, c_\upsilon$, 
%in which case one can still make sense of the 
%Hurwitz space
%$\CHur^c_{n_1,
%\ldots, n_\upsilon}$.
However, in
\cite{landesmanL:homological-stability-for-hurwitz},
we only were able to compute the stable value of 
$H_i(\CHur^c_{n_1, \ldots, n_\upsilon};\mathbb Z[|G|^{-1}|)$
when all
$n_i$ are sufficiently large.
In this paper, we compute the stable value ``in all directions,'' meaning that we
require only $n_1$ to be sufficiently large and remove the restriction that
$n_2, \ldots,n_\upsilon$ be sufficiently large, see
\autoref{theorem:one-large-stable-homology-hurwitz-space}.
For example, in the case $G = S_3$ and $c:= S_3 - \id$, before we were only able
to compute the stable homology of $\CHur^c_{n_1, n_2}$ when there were sufficiently many $3$-cycles and 
transpositions, while one of our main results in this paper enables us to compute the stable
homology when there is a single transposition and many $3$-cycles.
Moreover, in this paper, we show Hurwitz spaces parameterizing covers of 
punctured curves of arbitrary genus also stabilize and we compute their stable value.

As mentioned in the introduction of \cite{landesmanL:homological-stability-for-hurwitz}, we hope
that our papers will give arithmetic statisticians the tools to explore
arithmetic statistics problems over function fields, similarly to the way
Bhargava's thesis allowed arithmetic statisticians to make much progress over
$\mathbb Q$.
While our previous paper \cite{landesmanL:homological-stability-for-hurwitz}
began laying the framework for this, the results of this paper significantly
widen the scope of the types of problems that can be approached.
See \autoref{section:further-questions} for some additional potential 
applications not explored in this paper.

As some sample applications of our results, we describe progress toward the
Poonen-Rains heuristics and Bhargava's conjecture over function fields.
In this paper, we will work with Hurwitz spaces associated to racks, which are
more general than those associated to unions of conjugacy classes in a group.
By applying our results to suitably chosen racks, we are able to
deduce representation stability for Hurwitz spaces associated to a conjugacy
class in a group.
We begin by surveying these applications in
\autoref{subsection:poonen-rains-intro} (toward the Poonen-Rains heuristics in \autoref{theorem:poonen-rains-moments}),
\autoref{subsection:bhargava-intro} (toward Bhargava's conjecture in
\autoref{theorem:bhargava-intro}),
and
\autoref{subsection:intro-rep-stability} (toward representation stability in
\autoref{theorem:representation-stability}), and then discuss our topological
results in \autoref{subsection:homological-stability} (specifically in
	\autoref{theorem:one-large-stable-homology-hurwitz-space} where we
	compute the stable homology of Hurwitz space in all directions,
	\autoref{theorem:homology-stabilizes-intro} showing the homology of
	bijective Hurwitz space modules stabilizes, and
\autoref{theorem:one-large-stable-homology} computing its stable value).

\subsection{The moments of the distribution of Selmer groups in quadratic twist
families}
\label{subsection:poonen-rains-intro}

One of our main results is the verification of the Poonen-Rains conjectures for
the moments of Selmer groups of abelian varieties in quadratic twist families
over function fields over a finite field $\mathbb F_q$. 
Recall that for $\nu$ an integer and $E$ an elliptic curve over a global field
$K$,
one can define the finite $\mathbb Z/\nu \mathbb Z$ module $\sel_\nu(E)$.
This finite set is closely related to rank of $E$, which measures the number of
solutions in $K$ to the equation defining $E$, but it is typically more
computable.
Recall that the Poonen-Rains conjectures were formulated in
\cite{poonenR:random-maximal-isotropic-subspaces-and-selmer-groups}
for prime order Selmer groups and were generalized to composite order Selmer
groups in
\cite[\S5.7]{bhargavaKLPR:modeling-the-distribution-of-ranks-selmer-groups},
see also \cite[\S5.3.3]{fengLR:geometric-distribution-of-selmer-groups}.
These conjectures predict the distribution of the Selmer groups of a family of
elliptic curves.
The moments of this distribution were computed in
\cite[Proposition
2.3.1]{ellenbergL:homological-stability-for-generalized-hurwitz-spaces}.
Although these conjectures were originally stated for the universal family of
all elliptic curves, it is
also common to conjecture them in quadratic twist families of abelian varieties as in \cite[Remark
1.9]{poonenR:random-maximal-isotropic-subspaces-and-selmer-groups}, which is the
context we consider in this paper.
We refer the reader to the introduction of
\cite{ellenbergL:homological-stability-for-generalized-hurwitz-spaces} for a
more leisurely introduction to the Poonen-Rains heuristics in the context of
this paper.
Henceforth, we refer to these predictions as the ``BKLPR heuristics'' and the
moments predicted by the above distribution as the ``BKLPR moments.''

We start with a very special case of our main result. We will be working in the
case that $K$ as above is a global function field, i.e. $K = K(C)$ for $C$ a
curve over a finite field
$\mathbb F_q$. We consider
an elliptic curve over $K(C)$, or equivalently a relative elliptic curve $A$ over some
over $U \subset C$, which is nonconstant with squarefree discriminant.
In this case, the average size of the $\nu$ Selmer group in the associated quadratic twist
family over $\mathbb F_{q^j}$, with $j$ sufficiently large, depending on $\nu$,
is $\sum_{d \mid \nu} d$. In particular, if $\nu = \ell$ is a prime, the
average size is $\ell + 1$.
To our knowledge, this constitutes the first such verification of even this
special case of the BKLPR heuristics over any global field 
with $\nu$ odd and $\nu > 3$.

\begin{notation}
	\label{notation:quadratic-twist-family}
	Fix a smooth proper geometrically connected curve $C$ over a finite
	field $\mathbb F_q$ of odd characteristic. Let $K := K(C)$ be the
	function field of $C$.
	Let $U \subset C$ be a nonempty open subscheme with nonempty complement
	$Z := C - U$.

	Fix an odd integer $\nu$ and a polarized abelian scheme $A \to U$ with
	polarization of degree prime to $\nu$.
	Let $\on{QTwist}_{n,U/\mathbb F_q}(\mathbb F_{q^j})$ denote the groupoid of quadratic
	twists of the base change $A_{\mathbb F_{q^j}}:= A \times_{\spec \mathbb
F_q} \spec \mathbb F_{q^j}$, ramified
	over a degree $n$ divisor contained in $U$ with $n$ even, as defined
	precisely in \cite[Notation
	5.1.4]{ellenbergL:homological-stability-for-generalized-hurwitz-spaces}.
That is, $x \in \on{QTwist}_{n,U/\mathbb F_q}(\mathbb F_{q^j})$ is the data of a double
cover $U' \to U_{\mathbb F_{q^j}}$ with degree $n$ branch locus. The associated quadratic twist of $A$ is
the quotient of the Weil restriction
$A_x := \on{Res}_{U'/U_{\mathbb F_{q^j}}}(A_{\mathbb F_{q^j}} \times_{U_{\mathbb
F_{q^j}}} U')/A_{\mathbb F_{q^j}}.$
If $B$ is an abelian scheme over $U$ with generic fiber $B_K$, we use $\sel_\nu(B)$ as equivalent
notation for $\sel_\nu(B_K):= \ker \left(H^1(K, B_K[\nu]) \to \prod_v H^1(K_v,
B_K)\right),$
where the product is taken over all places $v$ of $K$, or equivalently over closed
points of $C$.
%	For $x \in \on{QTwist}_{n,U/\mathbb F_q}(\mathbb F_{q^j})$, let $A_x$
%	denote the corresponding quadratic twist of $A$.
\end{notation}

\begin{theorem}
	\label{theorem:poonen-rains-elliptic-curves}
Choose $q$ with $\on{char} \mathbb F_q  > 3$ and $\nu$ an integer prime
	to $6q$.
	With notation as in \autoref{notation:quadratic-twist-family},
	suppose $A$ is a nonconstant elliptic curve with squarefree
	discriminant.
	There is a constant $C_{\nu}$ depending on $\nu$ (but not on $A$) so
	that if $q^j > C_{\nu}$,
\begin{align}
	\label{equation:elliptic-curve-average-selmer}
		\lim_{\substack{n \to \infty \\ n
		\hspace{.1cm} \mathrm{even}}}
				\frac{\sum_{x \in \qtwist n U
			{\mathbb F_q}(\mathbb F_{q^j})}  
\# \sel_\nu(A_x)}{\sum_{x \in \qtwist n U {\mathbb F_q}(\mathbb F_{q^j})}
		1}
		&= \sum_{d \mid \nu} d.
		\end{align}
\end{theorem}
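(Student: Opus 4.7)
The plan is to translate the statistical ratio in \eqref{equation:elliptic-curve-average-selmer} into a comparison of étale Betti numbers of two Hurwitz-theoretic stacks, apply the new stable homology computation to identify their stable values, and match the answer with the BKLPR first moment $\sum_{d\mid\nu}d$.

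First, I would construct a Selmer-counting cover $\selspacemoments{n}{\nu} \to \qtwist{n}{U}{\mathbb F_q}$, étale with fiber over $x$ naturally in bijection with $\sel_\nu(A_x)$. This follows the framework of \cite{ellenbergL:homological-stability-for-generalized-hurwitz-spaces}: the quadratic twisting data is encoded as a Hurwitz space for the rack $\{\pm 1\}$, and a class in $H^1(K, A_x[\nu])$ satisfying local conditions is reinterpreted as a lift of the twisting cover through a central-type extension involving $A[\nu]$. The key point is that, after passing to the Hurwitz space parameterization, $\selspacemoments{n}{\nu}$ becomes a Hurwitz space module in the sense of this paper whose branching data decomposes into a quadratic-twist component (with $n$ movable branch points in $U$) and a component recording the monodromy of $A$ along $Z = C - U$ (with fixed, small number of branch points). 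This is precisely the asymmetric regime handled by \autoref{theorem:one-large-stable-homology-hurwitz-space} but not by the earlier \cite{landesmanL:homological-stability-for-hurwitz}.

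Second, I would apply the Grothendieck–Lefschetz trace formula to write both numerator and denominator of \eqref{equation:elliptic-curve-average-selmer} as alternating sums of Frobenius traces on $H^i_c(-_{\overline{\mathbb F_q}}; \mathbb Q_\ell)$ of $\selspacemoments{n}{\nu}$ and $\qtwist{n}{U}{\mathbb F_q}$ respectively, then invoke \autoref{theorem:one-large-stable-homology} (for the numerator) and the denominator count from \cite{landesmanL:homological-stability-for-hurwitz}, together with Poincaré duality, to identify the stable cohomology in degree $2\dim$. By \cite[Proposition 2.3.1]{ellenbergL:homological-stability-for-generalized-hurwitz-spaces}, the stable ratio produced by our homology theorem is exactly the first BKLPR moment, which is $\sum_{d\mid\nu}d$. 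The hypotheses that $\chr \mathbb F_q > 3$, $\gcd(\nu, 6q)=1$, and that $A$ is nonconstant with squarefree discriminant ensure tameness, that the $\nu$-torsion monodromy is as large as possible, and that the resulting rack has a single orbit, so the stable value in the theorem matches the BKLPR prediction.

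The main obstacle will be the interchange of the limit $n\to\infty$ with the trace formula: one needs a weight bound uniform in $n$, so that unstable cohomology in degree $i$ contributes at most $O(q^{j(\dim - 1/2)})$ while the top weight stable part contributes $q^{j\dim}$ times the BKLPR rational. Explicitly, I would use Deligne's Weil II bounds together with the stable range $n \gg 0$ produced by the homological stability input, then choose $C_\nu$ large enough that the error terms are dominated. The assertion that $C_\nu$ depends on $\nu$ but not on $A$ requires checking that the stability slope and the degree ranges appearing are controlled by the rack structure, which is determined by $A[\nu]$ rather than by $A$ itself; since the mod-$\nu$ monodromy is forced to surject onto the expected group under our hypotheses, this structure is bounded purely in terms of $\nu$.
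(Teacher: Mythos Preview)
Your overall strategy (trace formula, stable homology, Deligne bounds) is the same machinery the paper uses, but the paper does not deploy it directly for \autoref{theorem:poonen-rains-elliptic-curves}. Instead, the paper's proof is a two-line reduction to the more general $H$-moment result \autoref{theorem:poonen-rains-moments}: one writes $\#\sel_\nu(A_x)=\sum_{d\mid\nu}\#\surj(\sel_\nu(A_x),\mathbb Z/d\mathbb Z)$ and observes that $\#\sym^2(\mathbb Z/d\mathbb Z)=d$, so the right-hand side of \autoref{theorem:poonen-rains-moments} summed over $d\mid\nu$ gives $\sum_{d\mid\nu}d$. The substantive content specific to the elliptic curve statement is then the verification of the hypotheses of \autoref{notation:abelian-scheme}: tameness of $A[\nu]$ (from $\gcd(\nu,6q)=1$), irreducibility of $A[\ell]$ over $\overline{\mathbb F}_q$ (by \cite[Proposition 2.7]{zywina:inverse-orthogonal}), existence of a place of multiplicative reduction, and triviality of the geometric component group of the N\'eron model (both from the squarefree discriminant hypothesis). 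Your proposal omits this hypothesis-checking step entirely, and without the irreducibility input the component count underlying the moment identification does not go through.

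There is also a structural confusion in your setup. You describe the Selmer space as a Hurwitz space whose branching data has two rack components, one with $n$ movable points and one recording the monodromy of $A$ along $Z$ with a fixed small number of points, and you invoke \autoref{theorem:one-large-stable-homology-hurwitz-space}. This is not the correct picture. The relevant rack $c$ is the single conjugacy class of order-$2$ elements in $\mathbb Z/2\mathbb Z\ltimes H$ (with $\mathbb Z/2\mathbb Z$ acting on $H$ by negation), which has one component; the geometry of the base curve $C$ and the puncture set $Z$ is encoded not as additional rack components but as the bijective Hurwitz module $S$ over the surface $\Sigma^1_{g,f}$. The applicable stable-homology result is therefore \autoref{theorem:one-large-stable-homology} (for modules), not \autoref{theorem:one-large-stable-homology-hurwitz-space} (for multi-component racks), and the output is that each component has the stable homology of $\Conf_n^{\Sigma^1_{g,f}}$ via \autoref{lemma:quotient-components-conf}. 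Your ``asymmetric regime'' intuition is morally related but points at the wrong theorem; the earlier work of \cite{ellenbergL:homological-stability-for-generalized-hurwitz-spaces} already proved stability in this module setting, and what is new here is the identification of the stable value, not the module-vs-space distinction.
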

%More generally, if $H$ is any $\mathbb Z/\nu \mathbb Z$ module,
%
%\begin{align}
%	\label{equation:moment-limit-residue}
%		\lim_{\substack{n \to \infty \\ n
%		\hspace{.1cm} \mathrm{even}}}
%				\frac{\sum_{x \in \qtwist n U
%			{\mathbb F_q}(\mathbb F_{q^j})}  \# \surj
%		(\sel_\nu(A_x), H)}{\sum_{x \in \qtwist n U {\mathbb F_q}(\mathbb F_{q^j})}
%		1}
%		&= \#\sym^2 H.
%		\end{align}
%\end{theorem}
%
%

This can be deduced fairly immediately from the more general result
\autoref{theorem:poonen-rains-moments} below, and we spell out the details of
this deduction in \autoref{subsection:proof-poonen-rains-elliptic}.

We next introduce some notation to state our more general version
of \autoref{theorem:poonen-rains-elliptic-curves}, which works with abelian
varieties of arbitrary dimension and computes arbitrary moments of Selmer
groups, instead of just their average size.

\begin{notation}
	\label{notation:abelian-scheme}
	Retain notation from \autoref{notation:quadratic-twist-family}.
	Assume that $A$ has multiplicative reduction with toric part of
	dimension $1$ over some point of $C$.
	Also assume that 
	$\nu$ is
	prime to $q$, $A[\nu]$ is a tame finite \'etale cover of $U$,
	and
	every prime $\ell \mid \nu$ satisfies $\ell > 2 \dim A
	+ 1$ and that $A[\ell] \times_{\mathbb F_q} \overline{\mathbb F}_q$
	corresponds to an irreducible sheaf of $\mathbb Z/\ell \mathbb Z$
	modules on $U \times_{\mathbb F_q} \overline{\mathbb F}_q$.
	Moreover assume that $\nu$ is relatively prime to the order of the geometric
	component group of the N\'eron model of $A$ over $C$,
	see \cite[Notation
	5.2.2]{ellenbergL:homological-stability-for-generalized-hurwitz-spaces}.

For $X$ and $Y$ two finite groups, we use $\# \surj(X, Y)$ for the number of
surjective group homomorphisms from $X$ to $Y$.
\end{notation}

Our main result toward the Poonen-Rains heuristics is as follows.

\begin{theorem}
	\label{theorem:poonen-rains-moments}
	Assuming $A$ and $\nu$ are as in
	\autoref{notation:quadratic-twist-family} and
	\autoref{notation:abelian-scheme},
there is some constant $C_{H}$ depending on $H$ (but not on $A$) so that if $q^j
	> C_{H}$, we have
\begin{align}
	\label{equation:moment-limit-residue}
		\lim_{\substack{n \to \infty \\ n
		\hspace{.1cm} \mathrm{even}}}
				\frac{\sum_{x \in \qtwist n U
			{\mathbb F_q}(\mathbb F_{q^j})}  \# \surj
		(\sel_\nu(A_x), H)}{\sum_{x \in \qtwist n U {\mathbb F_q}(\mathbb F_{q^j})}
		1}
		&= \#\sym^2 H.
		\end{align}
\end{theorem}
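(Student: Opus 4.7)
The plan is to rewrite the moment as a point-count on a Selmer moduli space built from Hurwitz spaces, apply Grothendieck-Lefschetz, and invoke \autoref{theorem:one-large-stable-homology-hurwitz-space} to compute the limit. Following the framework of \cite{ellenbergL:homological-stability-for-generalized-hurwitz-spaces}, I would first realize the numerator as the $\mathbb F_{q^j}$-point count of a ``Selmer space'' $\selspacemoments{n,U/\mathbb F_q}{H}$ parameterizing pairs $(x,\varphi)$ of a quadratic twist $x$ and a surjection $\varphi\colon\sel_\nu(A_x)\twoheadrightarrow H$. Via Poitou--Tate duality and the \'etale-cohomological description of $\sel_\nu(A_x)$, this Selmer space embeds as an open and closed substack of a Hurwitz space $\CHur^{c}_{n,n_2,\ldots,n_\upsilon}$ for a rack $c$ determined by the geometric monodromy of $A[\nu]$ together with $H$; the first coordinate $n$ records the branch locus of the quadratic twist, while $n_2,\ldots,n_\upsilon$ encode the local conditions cutting out the surjection to $H$ and are bounded purely in terms of $H$.

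Next, I would apply Grothendieck--Lefschetz to both numerator and denominator and pass to the limit $n \to \infty$ along even $n$. The ratio is controlled by the top compactly supported cohomology of $\selspacemoments{n,U/\mathbb F_q}{H}$, which by Poincar\'e duality is governed by $H_0$ of the relevant components of $\CHur^c_{n,n_2,\ldots,n_\upsilon}$. This is exactly the input supplied by \autoref{theorem:one-large-stable-homology-hurwitz-space}: the stable value of this $H_0$ exists with only $n$ required to be large and no growth imposed on $n_2,\ldots,n_\upsilon$. The higher degree homology groups contribute Frobenius weight errors of size $O(q^{-j/2})$ with implicit constant depending only on the rack, hence only on $H$, and so vanish once $q^j > C_H$.

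Finally, I would match the resulting stable component count with $\#\sym^2 H$. The BKLPR heuristic computed in \cite[Proposition 2.3.1]{ellenbergL:homological-stability-for-generalized-hurwitz-spaces} gives the expected $H$-moment as a count of maximal isotropic subgroups of the symplectic $\mathbb Z/\nu\mathbb Z$-module attached to $A[\nu]$, which equals $\#\sym^2 H$, and the parity constraint ``$n$ even'' picks out the components of $\CHur^c_{n,n_2,\ldots,n_\upsilon}$ on which the stable orbit count equals this BKLPR prediction. The main obstacle lies in using \autoref{theorem:one-large-stable-homology-hurwitz-space} in exactly the regime where $n_2,\ldots,n_\upsilon$ remain bounded: in \cite{ellenbergL:homological-stability-for-generalized-hurwitz-spaces} the analogous argument required all $n_i \to \infty$ and thus forced $H$ to grow with $n$, so removing this restriction via the new ``stable homology in all directions'' and then matching the stable orbit count on the correct rack components with $\#\sym^2 H$ is the technical heart of the argument.
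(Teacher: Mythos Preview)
There is a genuine gap in your identification of the relevant Hurwitz object. The Selmer space $\selspacemoments{\mathscr F^n_B}{H}$ is a cover of configuration space on the \emph{punctured curve} $U = C - Z$, where $C$ has genus $g$ and $Z$ has $f+1$ points; it is not a cover of configuration space on the disc. Consequently it is not an open and closed substack of any $\CHur^{c}_{n,n_2,\ldots,n_\upsilon}$, and \autoref{theorem:one-large-stable-homology-hurwitz-space} does not apply. In the paper's setup the relevant rack is $c = \{\text{order-$2$ elements in } \mathbb Z/2\mathbb Z \ltimes H\}$, which has a \emph{single} component, so there are no auxiliary indices $n_2,\ldots,n_\upsilon$ at all. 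The extra structure --- the genus, the punctures, and the monodromy of $A[\nu]$ --- is encoded not by extra rack components but by a bijective Hurwitz module $S$ over $c$ on the surface $\Sigma^1_{g,f}$, and the Selmer space is identified with $\Hur^{c,S}_n$. The correct input is therefore \autoref{theorem:one-large-stable-homology} (for Hurwitz \emph{modules}), together with \autoref{lemma:quotient-components-conf} to see that each component of $\Hur^{c/c,S/c}_n$ is a copy of $\Conf_n^{\Sigma^1_{g,f}}$, whose Frobenius traces one then controls.

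You also mischaracterize the improvement over \cite{ellenbergL:homological-stability-for-generalized-hurwitz-spaces}. That paper already proved homological stability for these coefficient systems (so no ``$n_i \to \infty$'' restriction was being removed, and $H$ never grew with $n$); what was missing there was the identification of the stable \emph{value}, which forced a large-$q^j$ limit to kill the unknown stable traces. The new contribution is precisely that the stable cohomology of each component of $\Hur^{c,S}_n$ agrees with that of $\Conf_n^{\Sigma^1_{g,f}}$, so the stable Frobenius traces are computable and one can plug into \cite[Lemma 5.2.2]{landesmanL:the-stable-homology-of-non-splitting} at fixed $q^j > C_H$. The component count matching $\#\sym^2 H$ then comes from \cite[Proposition 9.2.1 and Proposition 2.3.1]{ellenbergL:homological-stability-for-generalized-hurwitz-spaces}, not from a direct maximal-isotropic count.
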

%We don't need there to be a place of good reduction as in the proof of
%Theorem A.5.2 of Ellenberg-Landesman because here we can directly identify the cohomology with
%that of configuration space using the main results of this paper, so we avoid
%needing to rely on the result of Ellenberg-Landesman.

We prove this in \autoref{subsection:proof-poonen-rains}.
The reader may wish to consult
\cite[\S1.6]{ellenbergL:homological-stability-for-generalized-hurwitz-spaces}
for a description of prior related work on this topic.

\begin{remark}
	\label{remark:}
	\autoref{theorem:poonen-rains-moments} is related to \cite[Theorem
	1.1.6]{ellenbergL:homological-stability-for-generalized-hurwitz-spaces},
	where a version of the \eqref{equation:moment-limit-residue} was
	established where one additionally takes a large $j$ limit.
	Here, we improve that result by establishing it for fixed $j$
	sufficiently large, without needing to take such a large $j$ limit.

	We also obtain the improvement over 
\cite[Theorem 1.1.6]{ellenbergL:homological-stability-for-generalized-hurwitz-spaces}
	that the value of the
	constant $C_H$ appearing in \autoref{theorem:poonen-rains-moments} and
	also in \autoref{theorem:symplectic-sheaf-point-counts} can
	be chosen to be independent of the choice of the abelian scheme $A$.
	We thank Jordan Ellenberg for pointing out this independence to us.
\end{remark}

\begin{remark}
	\label{remark:explicit-constant-intro}
The constants $C_\nu$ and $C_H$ appearing in
\autoref{theorem:poonen-rains-elliptic-curves} and
\autoref{theorem:poonen-rains-moments} are explicit and computable. See
\autoref{remark:explicit-constant} for more details.
\end{remark}
\begin{remark}
	\label{remark:}
	The conditions in \autoref{theorem:poonen-rains-elliptic-curves} and
	\autoref{theorem:poonen-rains-moments} that $n$ is even is not
	especially important and can be removed. It is only there so that we can
	more easily cite
	\cite{ellenbergL:homological-stability-for-generalized-hurwitz-spaces},
	where the stack 
	$\on{QTwist}_{n,U/\mathbb F_q}$ was set up to assume $n$ is even.
\end{remark}

\subsection{Bhargava's conjecture}
\label{subsection:bhargava-intro}

Bhargava's conjecture, \cite[Conjecture
1.2]{bhargava:mass-formulae-for-extensions-of-local-fields},
predicts the asymptotic growth of the number of degree $d$ number fields with Galois
group $S_d$, as a function of the discriminant.
For the reader's convenience, before continuing, we recall the statement of
Bhargava's conjecture.

\begin{conjecture}[\protect{\cite[Conjecture
	1.2]{bhargava:mass-formulae-for-extensions-of-local-fields}}]
	\label{conjecture:bhargava}
	Let $N_d(X)$ denote the number of number fields of degree $d$ having
discriminant with absolute value at most $X$. Let $q(n,k)$ denote the number of
partitions of $n$ into at most $k$ parts.
Let $r_2(S_d)$ denote the number of elements of order either $1$ or $2$ in
$S_d$.
Then,
\begin{align}
	\label{equation:bhargava-prediction}
	\lim_{X \to \infty} \frac{N_d(X)}{X} =
	\frac{r_2(S_d)}{2d!} \prod_{p \text{ prime}} \left( \frac{\sum_{k=0}^n
	q(k,n-k)-q(k-1,n-k+1)}{p^k} \right).
\end{align}
\end{conjecture}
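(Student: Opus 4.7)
The statement as written is Bhargava's conjecture in its original number-field formulation, and it is \emph{open} in general: it is known only for $d \le 5$, via Bhargava's geometry-of-numbers arguments on prehomogeneous vector spaces parametrizing rings of small rank. Those methods do not extend past $d=5$, so a proof for general $d$ requires a new idea. The plan I would outline has three steps: rephrase $N_d(X)$ as a count of $S_d$-covers of $\operatorname{Spec} \mathbb Z$, import the main-term structure from the function-field analogue proved in this paper, and then bridge from function fields to $\mathbb Q$.

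First I would reinterpret $N_d(X)$ cohomologically. A degree $d$ number field with Galois closure $S_d$ corresponds to an $S_d$-cover of $\operatorname{Spec}\mathbb Z$ whose ramification is bounded by the discriminant constraint $|\mathrm{disc}| \le X$. The predicted constant in \eqref{equation:bhargava-prediction} factors as a global mass $r_2(S_d)/(2 \cdot d!)$, which one recognizes as the reciprocal of $|S_d|$ times an archimedean correction counting real places weighted by complex conjugation, multiplied by an Euler product whose $p$-th factor is a weighted sum over local $S_d$-ramification types of conductor exponent $k$ with weight $p^{-k}$. The job is to prove that the number of global covers with the prescribed discriminant bound has leading order exactly this product of local masses times $X$. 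I would package this as counting sections of a ``Hurwitz stack over $\operatorname{Spec} \mathbb Z$'' parametrizing such covers with conjugacy class $c = \{\text{transpositions}\} \subset S_d$ at each ramified prime.

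Second, I would apply \autoref{theorem:one-large-stable-homology-hurwitz-space} in the function-field direction. Over $\mathbb F_q(t)$ with $q$ large and prime to $d!$, the stable homology computation of this paper, combined with the Grothendieck--Lefschetz trace formula applied to the Hurwitz moduli space $\operatorname{CHur}^c_{n}$, produces an asymptotic count of $S_d$-extensions of $\mathbb F_q(t)$ whose discriminant is a divisor of degree $n$, with leading constant equal to the function-field analogue of \eqref{equation:bhargava-prediction}. Matching the local factors to the $p$-adic masses in Bhargava's formula reduces to a direct enumeration of $S_d$-ramification types on a formal disc, which is the same local computation on either side of the function-field/number-field dictionary.

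The main obstacle, and the reason the conjecture stays conjectural, is the transfer from $\mathbb F_q(t)$ to $\mathbb Q$. In the function-field setting one has a smooth proper moduli space with stable \'etale cohomology, so varying $n$ corresponds to sliding along a single family whose point count one can estimate uniformly; over $\mathbb Z$ there is no such family, and the analogue of ``$q$ sufficiently large'' would demand large residue characteristic at \emph{every} prime, which is of course impossible for fixed $p = 2,3,5,\dots$. A genuine proof therefore requires either a new orbit-counting or geometry-of-numbers framework that parametrizes all $S_d$-extensions for arbitrary $d$, or an arithmetic trace-formula bridge converting Hurwitz-space stable cohomology into number-field point counts; I would expect this bridge, rather than anything in the Hurwitz-space side, to be the decisive difficulty.
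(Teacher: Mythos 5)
You have correctly identified that the statement is Bhargava's conjecture over $\mathbb Q$, which the paper states only for context and does not prove — it is attributed to \cite{bhargava:mass-formulae-for-extensions-of-local-fields} and remains open for $d \ge 6$. The paper's actual contribution is \autoref{theorem:bhargava-intro}/\autoref{theorem:bhargava-analog}, a function-field analogue over $\mathbb F_q(t)$ for $q$ large relative to $d$, and the paper never claims to deduce the number-field statement from it. Your diagnosis of why the transfer fails — that the function-field proof relies on $q$ being large relative to $d!$, uniformly at all places, which has no sensible analogue over $\mathbb Z$ where small residue characteristics are unavoidable — is precisely the obstruction, and it matches the scope the paper itself sets out (see the discussion around \autoref{remark:} comparing the constants in \eqref{equation:bhargava-prediction} and \eqref{equation:total-bound-intro}, where the authors note the relationship between the two constants is not understood). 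So your ``proof proposal'' is really a correct refusal to prove the unprovable, together with an accurate account of where the paper's methods end; there is nothing to compare against, because the paper has no proof of this conjecture either.
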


One of our main results in this paper is a computation of the constant in the
asymptotic growth of the number of degree
$d$, $S_d$, field extensions of $\mathbb F_q(t)$ for $q$
sufficiently large relative to $d$.
Prior to this paper, for any global field $K$, mathematicians have only been
able to compute this constant when $d \leq 5$.
We now introduce notation to state our results precisely.

\begin{notation}
	\label{notation:twisted-conf}
	For $d \geq 2$, write $S_d - \id = c_1 \cup \cdots \cup c_\upsilon$ as
	a disjoint union of its non-identity conjugacy classes, so that $c_1$
	is the conjugacy class of transpositions.
	We fix $q$ a prime power, relatively prime to $d! = |S_d|$.
	Define 
$\Conf_{n_1, \ldots, n_\upsilon, \mathbb F_q}$ to be the
multi-colored configuration space with $n_i$ points of color $i$,
	see
	\cite[Definition 2.2.1]{landesmanL:homological-stability-for-hurwitz}
	for a precise definition.

If $K/\mathbb F_q(t)$ is a generically separable extension and $\mathscr O_K$ is the normalization of
$\mathbb F_q[t]$ in $K$, we say $K/\mathbb F_q(t)$ has discriminant equal to the
discriminant of $\mathscr O_K$ over $\mathbb F_q[t]$, which we define to be
$q^{\deg \Omega_{\mathscr O_K/\mathbb F_q[t]}}$, where 
$\Omega_{\mathscr O_K/\mathbb F_q[t]}$ is the sheaf of relative differentials.

We use $\Delta(\mathbb F_q(t),S_d-\id, q^n)$ to denote the number of degree $d$, $S_d$
extensions $K/\mathbb F_q(t)$ of discriminant $q^n$. 
Since $S_d$ acts on the set $\{1, \ldots, d\}$, each element $g \in S_d$ acts on
the set $\{1, \ldots, d\}$ and we let $r(g)$ denote the number of orbits of this set under
the action of $g$.
For $c_i \subset S_d$ a conjugacy class, we use $\Delta(c_i) := d - r(g)$,
for any $g \in c_i$.
\end{notation}

\begin{definition}
	\label{definition:sigma-weighting}
	Let $\sigma(n_1, \ldots, n_\upsilon)$ denote the number of conjugacy classes of
$S_d$ whose image in the abelianization
$S_d^{\ab} \simeq \mathbb Z/2 \mathbb Z$ agrees with
the projection of $n_1c_1 + \cdots + n_\upsilon c_\upsilon$ to $S_d^{\ab}$.
\end{definition}

Here is our main result toward Bhargava's conjecture.

\begin{theorem}
	\label{theorem:bhargava-intro}
	Using notation from \autoref{notation:twisted-conf} and
	\autoref{definition:sigma-weighting}, if
	$q$ is sufficiently large depending on $d$, we
	have
	\begin{align}
	\label{equation:total-bound-intro}
\Delta(\mathbb F_q(t),S_d-\id , q^n) = 
	\sum_{\substack{n_1, \ldots, n_v \\ 
			\sum_{i=1}^\upsilon n_i
	\Delta(c_i) = n}}
	\sigma(n_1, \ldots, n_\upsilon)
	\left|\Conf_{n_1, \ldots, n_\upsilon, \mathbb F_q} (\mathbb
	F_q) \right| + o(q^{n}).
\end{align}
\end{theorem}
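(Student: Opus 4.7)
The plan is to translate the counting of $S_d$-extensions into an $\mathbb{F}_q$-point count on Hurwitz spaces parametrizing $S_d$-covers of $\mathbb{P}^1_{\mathbb{F}_q}$, and then to apply \autoref{theorem:one-large-stable-homology-hurwitz-space} combined with the Grothendieck--Lefschetz trace formula.

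First I would set up the geometric dictionary. A degree $d$, $S_d$-extension $K/\mathbb{F}_q(t)$ of discriminant $q^n$ is the same datum as a geometrically connected $S_d$-cover $Y \to \mathbb{P}^1_{\mathbb{F}_q}$ whose inertia over $\mathbb{A}^1$ consists of $n_i$ branch points with local monodromy in the class $c_i$, with $\sum n_i \Delta(c_i) = n$, together with some class $c_\infty$ of monodromy at $\infty$. Since the product of all local monodromies is trivial, the image of $c_\infty$ in $S_d^{\mathrm{ab}} \cong \mathbb{Z}/2\mathbb{Z}$ is forced; by \autoref{definition:sigma-weighting} the number of admissible classes $c_\infty$ is exactly $\sigma(n_1,\ldots,n_\upsilon)$. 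This identification gives
\begin{align*}
\Delta(\mathbb{F}_q(t), S_d - \id, q^n) = \sum_{\sum_i n_i \Delta(c_i) = n} \; \sum_{c_\infty \text{ admissible}} \left|\left[\mathrm{Hur}^{c,\, \partial \in c_\infty}_{n_1,\ldots,n_\upsilon,\mathbb{F}_q} / S_d\right](\mathbb{F}_q)\right|,
\end{align*}
where the Hurwitz stack parametrizes covers with the prescribed local data and the quotient by $S_d$ removes the sheet labeling at $\infty$.

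Next I would apply Grothendieck--Lefschetz to rewrite each such point count as an alternating sum of traces of Frobenius on \'etale (co)homology, and then invoke \autoref{theorem:one-large-stable-homology-hurwitz-space} to identify the stable value of this homology with that of the colored configuration space $\mathrm{Conf}_{n_1,\ldots,n_\upsilon,\mathbb{F}_q}$. Crucially, in the stable range the answer is independent of $c_\infty$, so summing over the $\sigma(n_1,\ldots,n_\upsilon)$ admissible boundary classes produces the weighting factor $\sigma(n_1,\ldots,n_\upsilon)$ in front of $|\mathrm{Conf}_{n_1,\ldots,n_\upsilon,\mathbb{F}_q}(\mathbb{F}_q)|$. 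Combined with Deligne's purity bound on Frobenius eigenvalues, the stable contribution to each point count is exactly $|\mathrm{Conf}_{n_1,\ldots,n_\upsilon,\mathbb{F}_q}(\mathbb{F}_q)|$ up to lower-order terms, which recovers the main term on the right-hand side of \eqref{equation:total-bound-intro}.

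The main obstacle is bounding the total unstable contribution by $o(q^n)$. For any fixed profile $(n_1,\ldots,n_\upsilon)$, unstable homology classes have Frobenius eigenvalues of strictly smaller weight, so the per-profile error is $o(q^{\sum n_i})$; however, one must sum over all compositions of $n$ into parts $n_i \Delta(c_i)$, and so uniform Betti-number bounds for the unstable range (in the style of \cite{landesmanL:homological-stability-for-hurwitz} and its precursors) are needed to beat the combinatorial growth of the number of profiles. This is precisely where the ``in all directions'' feature of \autoref{theorem:one-large-stable-homology-hurwitz-space} becomes indispensable: when few transpositions but many higher-order cycles occur (e.g.\ $n_1 = 1$ with the other $n_i$ large), the old stability result is inapplicable, whereas the new theorem provides a stable range uniformly across all such mixed regimes. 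With this uniform control, the summation over profiles yields the required $o(q^n)$ error and completes the proof.
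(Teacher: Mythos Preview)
Your overall strategy---translate to Hurwitz-space point counts, identify the stable cohomology with that of colored configuration space via \autoref{theorem:one-large-stable-homology-hurwitz-space}, and control the unstable error---matches the paper's. But there are two genuine gaps and one inversion.

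First, you assume every $S_d$-extension $K/\mathbb F_q(t)$ gives a geometrically connected cover. It need not: $K$ could become two $A_d$-extensions over $\overline{\mathbb F}_q(t)$. The paper handles this separately (\autoref{equation:alternating-bound}), bounding the $A_d$ contribution by $o(q^n)$ using the Malle-type estimate \cite[Theorem 10.1.8]{landesmanL:homological-stability-for-hurwitz} and the fact that $a(c\cap A_d,\Delta)=2$. Without this, your dictionary does not account for all of $\Delta(\mathbb F_q(t),S_d-\id,q^n)$.

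Second, you have the role of \autoref{theorem:one-large-stable-homology-hurwitz-space} inverted. That theorem gives an isomorphism only in the range $n_1 > Ii + J$, i.e.\ when there are \emph{many} transpositions; it says nothing for $n_1=1$. The paper therefore splits the sum over profiles at $n_1 = n/2$. For $n_1 \le n/2$ the reduced discriminant satisfies $\sum_i n_i \le 3n/4$, so the total contribution is $O(q^{3n/4+\epsilon})$ by the crude Malle bound---no stability is invoked here. For $n_1 > n/2$ one applies \autoref{theorem:one-large-stable-homology-hurwitz-space}; the crucial feature is that $I,J$ depend only on $|c_1|$ and $\ord_c(c_1)$, hence are \emph{uniform in $n_2,\ldots,n_\upsilon$}. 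This uniformity (combined with the Betti bound \cite[Lemma 8.4.2]{landesmanL:homological-stability-for-hurwitz}) is what lets you sum the per-profile error over all compositions. The regime where the new theorem is indispensable is thus $n_1$ large with some other $n_i$ small---exactly the opposite of your example.

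Finally, you assert without proof that each admissible boundary class $c_\infty$ indexes exactly one geometrically irreducible component defined over $\mathbb F_q$. This is \autoref{lemma:partial-components} and \autoref{lemma:unique-component}, whose proof uses the Clebsch--Hurwitz input $H_2(S_d,c')=0$ for $c'$ the transposition class.
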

\autoref{theorem:bhargava-intro} is proven as part of the statement of
\autoref{theorem:bhargava-analog}.

\begin{remark}
	\label{remark:}
	We now describe some prior work toward Bhargava's conjecture.
	The case $d = 3$ over $\mathbb Q$ was due to Davenport-Heilbronn
\cite{davenportH:density-discriminants-cubica} and the cases $d= 4$ and $d = 5$
over $\mathbb Q$
were a substantial part of Bhargava's work leading to his Fields medal
\cite{bhargava:density-quartic,bhargava:enumeration-quintic}.
Over general global fields of characteristic not $2$ or $3$, the $d = 3$ case was handled by work of Datskovsky and Wright
\cite{datskovsky-wright:density-of-discriminants-of-cubic-extensions} while the
cases $d \leq 5$ and characteristic not $2$
was subsequently proven in
\cite{bhargavaSW:geometry-of-numbers-over-global-fields-i}.
\end{remark}

\begin{remark}
	\label{remark:}
	It is also possible to use the methods of this paper to count the number
	of $S_d$
	extensions of $\mathbb F_q(t)$ by other invariants, or variants thereof,
	and not just by discriminant.
	For example,
	%in \autoref{theorem:bhargava-intro}, we counted by discriminant of the
	%affine curve, ignoring ramification at $\infty$, because it seemed more
	%natural to us from a topological perspective. If one wishes, 
	one can easily
	adapt the argument to count extensions by discriminant, where one takes the
	discriminant of $K/\mathbb F_q(t)$ defined by $q^{\deg(\Omega_{C_K/\mathbb
	P^1_{\mathbb F_q}})}$, for $C_K$ the normalization of $\mathbb
	P^1_{\mathbb F_q}$ in the extension $K$ (instead of just counting the
		contribution to this from primes over $\mathbb A^1_{\mathbb F_q} \subset
	\mathbb P^1_{\mathbb F_q})$.
	In that case, if $\overline{c_i}$ denotes the image of the conjugacy
	class $c_i$ in $S_d^{\ab} \simeq \mathbb Z/2\mathbb Z$, the count would end up being
\begin{align}
	\label{equation:modified-bound-intro}
	\sum_{\substack{c^\partial \\ \text{ conjugacy classes in }S_d
	}} \left(  \sum_{\substack{n_1, \ldots, n_v \\ 
			\sum_{i=1}^\upsilon n_i
	\Delta(c_i) = n-\Delta(c^\partial) \\
	\overline{c^\partial} = \sum_{i=1}^\upsilon n_i \overline{c_i}
	}}
	\left|\Conf_{n_1, \ldots, n_\upsilon, \mathbb F_q} (\mathbb
F_q)\right| \right) + o(q^{n})
\end{align}
	in place of the right hand side of \eqref{equation:total-bound-intro}.
\end{remark}

The next remark is only intended for those familiar with 
Bhargava's conjecture \autoref{conjecture:bhargava} and we encourage the reader unfamiliar with
Bhargava's conjecture to skip it.

\begin{remark}
	\label{remark:}
	The reader familiar with Bhargava's conjecture may question in what
	sense \autoref{theorem:bhargava-intro} is an
	analog of Bhargava's conjecture 
	in the function field setting, given that the constants in 
	\eqref{equation:bhargava-prediction}
	and
	\eqref{equation:total-bound-intro} 
	look quite different at first glance.
	The reason we call it an analog of Bhargava's conjecture is that both predict the
	constant in the asymptotic growth of the number of $S_d$ extensions.

	We believe it would be interesting to understand the relation between
	the constants more closely.
	For example, the point counts of configuration space have an Euler
	product description which could relate them to the Euler product in 
	\eqref{equation:bhargava-prediction}.
	Also, Galois $S_d$ extensions of $\mathbb Q$ are always ramified to
	order $1$ or $2$ over the infinite place $\mathbb R$ of $\mathbb Q$ and
	consist either of $d!$ copies of
	$\mathbb R$ or $d!/2$ copies of $\mathbb C$. This suggests the constant
	$r_2(S_d)$ from \eqref{equation:bhargava-prediction} may
	be more related to counting the $\mathbb F_q$ points of
	those components of $\CHur^{S_d,c}_{n,\mathbb F_q}$ whose monodromy over $\infty$ has order
	$1$ or $2$, rather than all $\mathbb F_q$ points of 
	$[\CHur^{S_d,c}_{n,\mathbb F_q}/S_d]$ with arbitrary monodromy.
\end{remark}

\begin{remark}
	\label{remark:}
	It should likely be possible to prove a version of
	\autoref{theorem:bhargava-intro} counting extensions of $\mathbb F_q(t)$
	by reduced discriminant (instead of by the usual discriminant)
	using the results of
	\cite{landesmanL:homological-stability-for-hurwitz}.
	However, the results there are insufficient to count extensions by
	discriminant, and it is only through our refined computation of the
	stable homology of Hurwitz spaces ``in all directions,'' proved in
\autoref{theorem:one-large-stable-homology-hurwitz-space}, that we are
	able to count extensions by discriminant.
	See \cite[Remark
	11.1.3]{landesmanL:homological-stability-for-hurwitz} for further
	explanation.
\end{remark}

\subsection{Representation stability}
\label{subsection:intro-rep-stability}

One recent wave of developments in homological stability is that of
representation stability.
There is a natural representation stability question related to Hurwitz spaces.
Namely, let $\PConf_n \to \Conf_n$ 
denote the finite \'etale $S_n$ associated to specifying an ordering on the $n$ points.
That is, 
$\PConf_n \subset (\mathbb A^1_{\mathbb C})^n$ is the open subset parameterizing ordered
tuples of $n$ points in $\mathbb A^1_\mathbb C$.
It is known by 
\cite[Theorem 4.1]{churchF:representation-theory-and-homological-stability}
that $H_i(\PConf_n;\mathbb Q)$ satisfies representation stability as an $S_n$
representation, meaning that the multiplicities of certain $S_n$ representations
stabilize as $n$ grows.
In what follows, we will freely refer to the notion of a rack and its associated
Hurwitz space. For background on this, the reader can consult
\cite[\S2.1 and \S2.2]{landesmanL:homological-stability-for-hurwitz}.
For the reader's convenience, we recall the definition of a rack here.
See also \cite[Definition 2.1.1 and Remark
2.1.2]{landesmanL:homological-stability-for-hurwitz} for why this definition is
equivalent to other more standard definitions.
\begin{definition}
	\label{definition:rack}
	A {\em rack} is a set $c$ with an action map $\triangleright: c \times c
	\to c, (a,b) \mapsto a \triangleright b$ such that for all $n \geq 1$
	and all $1 \leq i \leq n-1$, the operation
	\begin{align*}
		\sigma_i : c^n & \rightarrow c^n \\
		(x_1, \ldots, x_{i-1}, x_i, x_{i+1}, x_{i+2}, \ldots, x_n) & \mapsto
		(x_1, \ldots, x_{i-1}, x_{i+1}, x_{i+1} \triangleright x_i, x_{i+2}, \ldots,
		x_n)
	\end{align*}
	defines an action of the braid group $B_n$, generated by $\sigma_1,
	\ldots, \sigma_{n-1}$, on $c^n$.
\end{definition}

Our results on homological stability for Hurwitz spaces
can be viewed as saying that the multiplicity of the trivial representation in 
$H_i(\CHur^c_n \times_{\Conf_n} \PConf_n;\mathbb Q))$
stabilizes, at least when $c$ is a rack with a single component (meaning the
action of $c$ on itself is transitive).
Given this, it is natural to ask if the multiplicities of other representations
(in the sense of representation stability) stabilize.
We verify this affirmatively in \autoref{theorem:representation-stability} when
$c$ has a single component, and note it is false if $c$ has more than one
component in \autoref{remark:non-splitting-stability}.

Although it may seem like representation stability for
Hurwitz spaces is a stronger statement than homological stability for Hurwitz
spaces, it turns out that, in combination with knowing their stable values, the
two are roughly equivalent. This is a testament to the power of working with
racks, as knowing representation stability for the rack $c$ turns out to be
roughly equivalent to proving usual homological stability for the Hurwitz space
associated to $c^{\boxtimes k}$, a rack consisting
of $k$ copies of $c$,
as defined in \autoref{definition:copies-rack}.

We now introduce notation to state this result precisely.

\begin{definition}
	\label{definition:representation-stability}
	Fix a finite rack $c$ with a single component. For each integer $n$, fix an partition $\lambda =
	(\lambda_1, \ldots, \lambda_p)$ and let $|\lambda| := \lambda_1+ \ldots+
	\lambda_p$. For any $n \geq |\lambda|$, define
	$\rho_{\lambda,n} : S_n \to \gl_{r_n}(\mathbb Q)$ to be the irreducible representation
	associated to the partition $(n - |\lambda|,\lambda_1,
	\lambda_2, \ldots, \lambda_p)$.
	This corresponds to a finite monodromy local system $\mathbb V_{\lambda,n}$ on $\Conf_n$ via the 
	representation $\pi_1(\Conf_n) \simeq B_n \to S_n
	\xrightarrow{\rho_{\lambda,n}}
	\gl_{r_n}(\mathbb C)$.
	Via pullback along the map $f_n: \CHur^c_n \to \Conf_n$,
	we obtain a local system $\mathbb H_{\lambda,n} := f_n^* \mathbb V_{\lambda,n}$.
%	For every elemement $x \in c$ there is an automorphism $x \triangleright
%	: c \to c, y \mapsto x\triangleright y$, and we use $\ord_c(c)$ to denote
%	the maximum, over $x \in c$, of the order of the automorphism $x
%	\triangleright$.
	Let 
	$\PConf_n \to \Conf_n$ denote the $S_n$ cover associated to ordering the
	marked points.
	We say $H_*(\CHur^c_n \times_{\Conf_n} \PConf_n; \mathbb Q)$ has {\em semi-uniform linear representation stability} if there are
	constants $I$ and $J$ depending only on $c$, but independent of $\lambda$, so that
	$H_i(\CHur^c_n; \mathbb H_{\lambda,n})$ has dimension independent of
	$n$, for
$n- |\lambda| > Ii + J$. (This is equivalent to a more customary definition of
	representation stability, as explained in
\autoref{remark:rep-stability-reformulation}.)
%	We say $\Hur^c_n$ has {\em uniform linear representation stability} if
%	there are constants $I$ and $J$ depending on $c$ but not depending on
%	$\lambda$ so that 
%	$H_i(\CHur^c_n, \mathbb H_{\lambda,n})$ has dimension independent of $n$ for
%	$n > I i + J$.
%	In particular, if $Ii+J+1 < |\lambda|+\lambda_1$, so that $\mathbb
%	H_{\lambda,Ii+J+1}$
%	is not defined, then $H_i(\CHur^c_n, \mathbb H_{\lambda,n}) = 0$ for all
%	$n > |\lambda|+\lambda_1$.
%	If we are in the setting above, but the constants $I$ and $J$ above
%	possibly depend on $c$ and $\lambda$, we say 
%	$\CHur^c_n$ has {\em linear representation stability}.
%	For fixed $i$, we say $H_i(\CHur^c_n, \mathbb H_{\lambda,n})$ satisfies
%	{\em representation stability} if there is some constant $C_{i,\lambda}$, possibly depending
%	on $i$ and $\lambda$
%	so that 
%	$H_i(\CHur^c_n, \mathbb H_{\lambda,n})$ has dimension independent of $n$ for
%	$n > C_{i,\lambda}$.
\end{definition}
\begin{remark}
	\label{remark:rep-stability-reformulation}
	Using the notation for $\PConf_n,\rho_{\lambda,n}$, and $\mathbb
	H_{\lambda,n}$
from
\autoref{definition:representation-stability}, we can identify
the $\rho_{\lambda,n}$
isotypic part
of $H_i(\CHur^c_n \times_{\Conf_n} \PConf_n;\mathbb Q)$ with 
$H_i(\CHur^c_n; \mathbb H_{\lambda,n}) \otimes \rho_{\lambda,n}$.
Here, we view $H_i(\CHur^c_n; \mathbb H_{\lambda,n})$ as a trivial
representation of $S_n$.
This identification is explained,
for example, in the proof of \cite[Corollary
4.4]{churchF:representation-theory-and-homological-stability}.
The reason for our name above is that semi-uniform linear representation
stability implies that the multiplicity of
$\rho_{\lambda,n}$
in $H_i(\CHur^c_n \times_{\Conf_n} \PConf_n;\mathbb Q)$
stabilizes as $n$ grows.
\end{remark}

\begin{remark}
	\label{remark:}
	We call the above semi-uniform linear representation stability due to
	the presence of the term
	$|\lambda|$ in the inequality
$n- |\lambda|> Ii + J$. If instead the homology stabilized for
	$n > Ii + J$, one might naturally call this uniform linear
	representation stability.
	We expect uniform linear representation stability should in fact hold,
	but we weren't able to prove it. We think it would be quite interesting
	to do so.
	See also \autoref{remark:past-representation-stability-work}.
\end{remark}

We now state our main result on representation stability, which proves Hurwitz
spaces have semi-uniform linear representation stability and also identifies their
stable value.

\begin{theorem}
	\label{theorem:representation-stability}
	Fix a finite rack $c$ with a single component. With notation as in
	\autoref{definition:representation-stability}, the Hurwitz space $H_*(\CHur^c_n \times_{\Conf_n} \PConf_n; \mathbb Q)$ has 
	semi-uniform linear representation stability. Moreover, for $n$ sufficiently
	large, and every component $Z \subset \CHur^c_n$, the natural projection
	map $Z \subset \CHur^c_n \to \CHur^{c/c}_n \simeq \Conf_n$ 
	induced by $c \to c/c = *$
	induces an isomorphism
	$H_i(Z; \mathbb H_{\lambda,n}) \simeq
	H_i(\Conf_n; \mathbb V_{\lambda,n})$.
\end{theorem}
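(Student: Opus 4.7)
The plan is to reduce representation stability for $\CHur^c_n \times_{\Conf_n} \PConf_n$ to the ordinary homological stability statements \autoref{theorem:one-large-stable-homology-hurwitz-space} and \autoref{theorem:one-large-stable-homology} applied to Hurwitz spaces for the multicolored racks $c^{\boxtimes k}$, using Young's rule to pass between irreducible and permutation representations of $S_n$. The advertised remark that representation stability for $c$ is ``roughly equivalent'' to ordinary homological stability for $c^{\boxtimes k}$ is exactly what we exploit.

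The first step is to write $\rho_{\lambda,n}$ as an integer (alternating) combination of induced representations from Young subgroups. Concretely, by Young's rule the permutation representation $\mathrm{Ind}_{S_{n-|\mu|}\times S_{\mu_1}\times\cdots\times S_{\mu_r}}^{S_n}(\mathrm{triv})$ decomposes into irreducibles $\rho_{\nu,n}$ with Kostka multiplicities, and the unitriangularity of the Kostka matrix lets us invert this to express $\rho_{\lambda,n}$ as $\sum_\mu a_\mu\,\mathrm{Ind}_{S_{n-|\mu|}\times S_{\mu_1}\times\cdots\times S_{\mu_r}}^{S_n}(\mathrm{triv})$ with integers $a_\mu$ and compositions $\mu$ of size $|\mu|=|\lambda|$. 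Crucially, both the set of compositions $\mu$ and the integers $a_\mu$ depend only on $\lambda$, not on $n$. The cover of $\Conf_n$ associated to the Young subgroup $S_{n-|\mu|}\times S_{\mu_1}\times\cdots\times S_{\mu_r}$ is the multicolored configuration space $\Conf_{n-|\mu|,\mu_1,\ldots,\mu_r}$, and its pullback along $\CHur^c_n\to\Conf_n$ is $\CHur^{c^{\boxtimes(r+1)}}_{n-|\mu|,\mu_1,\ldots,\mu_r}$. Hence the twisted cohomology $H_i(\CHur^c_n;\mathbb H_{\lambda,n})$ becomes an integer linear combination of the groups $H_i(\CHur^{c^{\boxtimes(r+1)}}_{n-|\mu|,\mu_1,\ldots,\mu_r};\mathbb Q)$, with the combination determined only by $\lambda$.

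Since the sizes $\mu_1,\ldots,\mu_r$ are bounded by $|\lambda|$ and only the index $n-|\mu|$ grows with $n$, \autoref{theorem:one-large-stable-homology-hurwitz-space} applies: each of the groups $H_i(\CHur^{c^{\boxtimes(r+1)}}_{n-|\mu|,\mu_1,\ldots,\mu_r};\mathbb Q)$ is eventually independent of $n-|\mu|$ once $n-|\mu|>Ii+J'$, with constants $I,J'$ depending only on $c$. Combining these stable values with the weights $a_\mu$ gives that $\dim H_i(\CHur^c_n;\mathbb H_{\lambda,n})$ stabilizes as soon as $n-|\lambda|>Ii+J$ for constants $I,J$ independent of $\lambda$, which is semi-uniform linear representation stability (using the reformulation of \autoref{remark:rep-stability-reformulation}). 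For the second assertion, we apply the same decomposition to the quotient rack $c/c=*$, for which $\CHur^{*}_n\simeq\Conf_n$ and the analogous stable values compute $H_i(\Conf_n;\mathbb V_{\lambda,n})$. The ``in all directions'' aspect of \autoref{theorem:one-large-stable-homology-hurwitz-space}, together with the stable value identification of \autoref{theorem:one-large-stable-homology}, shows that on each component $Z\subset\CHur^c_n$ the stable homology with multicolored coefficients agrees with that of the corresponding component of $\Conf_{n-|\mu|,\mu_1,\ldots,\mu_r}$, and summing over $\mu$ with the weights $a_\mu$ yields the desired isomorphism $H_i(Z;\mathbb H_{\lambda,n})\simeq H_i(\Conf_n;\mathbb V_{\lambda,n})$ componentwise.

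The main obstacle is the bookkeeping in the last two steps: one must verify that the stable range in \autoref{theorem:one-large-stable-homology-hurwitz-space} is essentially linear in $i$ with slope and intercept that do not degrade as the fixed indices $\mu_1,\ldots,\mu_r$ vary within $|\mu|\leq|\lambda|$, so that the stability window for $H_i(\CHur^c_n;\mathbb H_{\lambda,n})$ shifts by exactly $|\lambda|$ rather than by something uncontrolled. A secondary subtlety is matching components: the identification of the stable homology of a component $Z\subset\CHur^c_n$ with coefficients in $\mathbb H_{\lambda,n}$ to that of a prescribed component of $\Conf_n$ requires using that $c$ has a single component, so that every component of $\CHur^c_n$ dominates $\Conf_n$ and the projection to the one-point rack behaves compatibly with the Young subgroup decomposition.
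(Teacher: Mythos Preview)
Your overall strategy---reduce to ordinary stability for $c^{\boxtimes k}$ via permutation modules---is exactly the paper's, but the execution differs in two ways, and the second is a genuine gap.

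First, the paper avoids your inverse-Kostka alternating sum entirely. It only needs that $\rho_{\lambda,n}$ is a \emph{direct summand} of a single permutation module: since $\rho_{\lambda,n}$ sits inside $\std^{\otimes|\lambda|}\subset\perm^{\otimes|\lambda|}$, and the latter decomposes as a sum of the modules $V^{n,j}=\mathrm{Ind}_{S_{j}\times S_{n-j}}^{S_n}(\mathrm{triv})$ with $j\le|\lambda|$, some single $V^{n,j}$ already contains $\rho_{\lambda,n}$ as a summand (\autoref{lemma:sn-rep-theory}). This makes the ``bookkeeping obstacle'' you flag disappear: there is only one $\mu$ to track, and $H_i(Z;\mathbb H_{\lambda,n})$ is a summand of $H_i(Z';\mathbb Q)$ for a single component $Z'$ upstairs. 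Your alternating-sum route works for the first assertion (stabilization of dimensions), but is more involved than necessary.

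Second, and more seriously, your proposal is missing the key external ingredient the paper uses for the componentwise isomorphism: \cite[Theorem~2.4]{shusterman:the-tamely-ramified-geometric}, which says that for $n$ large the colouring cover $\CHur^{c^{\boxtimes j}}_{1^{j-1},n-j+1}\to\CHur^c_n$ is a bijection on $\pi_0$. Without this, your argument for the second assertion breaks down. \autoref{theorem:one-large-stable-homology-hurwitz-space} tells you that \emph{each component} of the multicoloured Hurwitz space has stable homology equal to that of $\Conf_{n-|\mu|,\mu}$, but it does not tell you how many components lie over a fixed $Z\subset\CHur^c_n$. If that number were $k_Z>1$, then $H_i(Z;f_n^*(\psi_\mu)_*\mathbb Q)$ would be $k_Z$ copies of $H_i(\Conf_{n-|\mu|,\mu})$, and the projection map to $H_i(\Conf_n;(\psi_\mu)_*\mathbb Q)=H_i(\Conf_{n-|\mu|,\mu})$ would be the sum map---surjective but not injective. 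Your remark that ``$c$ has a single component, so every component of $\CHur^c_n$ dominates $\Conf_n$'' only gives surjectivity of $Z\to\Conf_n$; what is needed is surjectivity of $\pi_1(Z)\to S_n$, and that is precisely the content of Shusterman's theorem.
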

We prove this in \autoref{subsection:representation-stability-proof}. We note
that since $c$ has a single component above, $c/c$ is simply a point. In general,
if $c$ has $k$ components, $c/c$ is a rack with $k$ elements acting trivially on
itself, so $\CHur^{c/c}$ is a
$k$ colored configuration space.
\begin{remark}
	\label{remark:past-representation-stability-work}
	Before we even started working on this paper, we learned of a
	forthcoming result of Himes-Miller-Wilson, which has now appeared as
	\cite{himesMW:homological-stability-for-hurwitz}.
	They prove a uniform version
	of representation
	stability for Hurwitz spaces associated to a conjugacy class $c \subset
	G$ which has a certain non-splitting property, meaning that the
	intersection of $c$ with a subgroup of $G$ does not split into more than
	one conjugacy class in that subgroup.
	We also learned of related forthcoming work of Ellenberg-Shusterman
	\cite{ellenbergS:averages-of-arithmetic-functions}
	proving a result showing, in some cases, 
	$H_i(\CHur^c_n; \mathbb H_{\lambda,n}) = 0$ when $\lambda$ is a
	partition of the form $(k, 1^{n-k})$, corresponding to a wedge power of
	the standard representation of $S_n$.

	We only thought to consider the question of representation
	stability due to our knowledge of the above mentioned works. In fact, we learned of
	the relevant reference
	\cite[Theorem 2.4]{shusterman:the-tamely-ramified-geometric}
	from Jeremy Miller, and we would like to thank him for his helpful
	correspondence on this matter.
	Since their work only addressed the non-splitting case, we were curious
	whether one could remove this hypothesis and prove it for general racks
	with a single component. 
	Semi-uniform linear representation stability for general racks
	turned out
	to be a
	fairly immediate corollary of the main results of this paper, so we have 
	included a short proof.
	Of course, this does not imply the results of 
	\cite{himesMW:homological-stability-for-hurwitz} because they prove a
	stronger, uniform version representation stability.
\end{remark}

\subsection{Homological stability results}
\label{subsection:homological-stability}

We now discuss our main new results on the stable homology of Hurwitz spaces,
which enable us to deduce the above consequences to Bhargava's conjecture,
the BKLPR conjectures, and representation stability.

Recall the definition of a rack from \autoref{definition:rack}.
The components of a rack are the orbits of $c$ under the $\triangleright$ action of $c$ on
itself.
Let $c$ be a rack with components $c_1, \ldots, c_\upsilon$.
For $n_1, \ldots, n_\upsilon \in \mathbb Z_{\geq 0}$,
we use the notation $\cphurc {n_1, \ldots, n_\upsilon} c$
to denote the pointed Hurwitz scheme over $\mathbb C$ as defined in
\cite[Definition 2.2.2]{landesmanL:homological-stability-for-hurwitz}.
In the case $c$ is a union of conjugacy classes in a group, this 
is homotopic to the topological space parameterizing connected covers of a disc, together with a trivialization of the cover over a point on the
boundary of the disc, whose
inertia at every branch point is contained in $c$,
with $n_i$ branch points whose inertia lies in $c_i$.
In \cite[Theorem 1.4.1]{landesmanL:homological-stability-for-hurwitz}, we showed
that the homology of Hurwitz spaces stabilizes once one of the $n_i$'s is
sufficiently large,
and we computed the stable value of this homology when all $n_i$ were
sufficiently large in \cite[Theorem
1.4.2]{landesmanL:homological-stability-for-hurwitz}.
However, the above leaves open the natural question as to what the stable value
is when only $n_1$ is large, but the other $n_i$ for $i > 1$ are small.
We often refer to this colloquially throughout the paper as computing the stable
homology ``in all directions`` because we can let only a single one of the $n_i$ grow instead
of needing to have all of them be large.
\begin{example}
	\label{example:s3-module}
	For example, if $c = S_3 - \id$, so that $c = c_1 \cup c_2$ where $c_1$ is the
set of transpositions in $S_3$ and $c_2$ is the set of $3$-cycles in $S_3$,
prior to this paper there was no description of the stable homology of the Hurwitz space
$\cphurc {1, n_2} c$ for $n_2$ large.
\end{example}

We next introduce notation to state a result which provides a computation of
this stable homology, after
inverting a suitable set of primes.
\begin{definition}
	\label{definition:quotient-rack}
	If $c$ is a rack and $c' \subset c$ is a
	subrack, we say $c'$ is {\em normal} if its normalizer (see
	\autoref{definition:normalizer}) $N_c(c') = c$.
	If $c' \subset c$ is normal,
	one can form the {\em quotient rack} $c/c'$, as the rack whose underlying
set consists of equivalence classes of elements of $c$ under the equivalence
relation generated by equivalences of the form $x \sim y$ if there is some $w
\in c'$ so that $w \triangleright x = y$.
Using the notation $\overline{x} \in c/c'$ to denote the equivalence class of $x
\in c$, one can give $c/c'$ the structure of a rack by declaring $\overline{x}
\triangleright \overline{y} := \overline{x \triangleright y}$;
we verify this is independent of the choice of lifts $x$
and $y$ later in \autoref{lemma:quotient-well-defined}.
\end{definition}

\begin{definition}
	\label{definition:relative-structure-group}
	Suppose $c$ and $c'$ are two racks and we are given an action of $c'$ on
	$c$. 
	We use $\aut(c)$ to denote the automorphisms of the underlying set of
	$c$ (so these automorphisms do not have any relation to the
	$\triangleright$ operation on $c$).
	Define the {\em relative
	structure group} $G^{c'}_{c}$ to be the subgroup of $\aut(c)$ generated by the action of $c'$ on
		$c$.
\end{definition}
\begin{example}
	\label{example:relative-subrack}
	The reduced structure group of a rack $c$, which is the subgroup of
	automorphisms of $c$ generated by $x \triangleright$ for $x \in c$,
	is often notated $G^0_c$. In the context of this paper, we notate it as
	$G^c_c$.
	If $c' \subset c$ is a subrack, then the relative structure group
	$G^{c'}_c$ is the
	subgroup of $G^c_c$ generated by elements of $c'$.
\end{example}
\begin{example}
	\label{example:normal-relative-subrack}
	If $c' \subset c$ is a normal subrack, then $c$ acts on $c'$, and so we
	can form the relative structure group $G^c_{c'}$. We have $G^{c'}_{c'} \subset G^c_{c'}$ as the subgroup generated by elements of
	$c'$.
\end{example}

The next theorem computes the stable homology of Hurwitz spaces in all
directions.
\begin{theorem}
	\label{theorem:one-large-stable-homology-hurwitz-space}
	Let $c$ be a finite rack whose connected components are $c_1, \ldots, c_\upsilon$.
	Then there are constants $I$ and $J$, depending only on $|c_1|$ and
	the maximum order of an element of $c_1$ acting on $c$, with the
	following property. For any
	$i \geq 0$, any $n_1 > Ii + J$, 
	and any component $Z \subset \cphurc {n_1, \ldots, n_\upsilon} c$
	mapping to a component $Z' \subset \cphurc {n_1, \ldots, n_\upsilon}{c/c_1}$
	under the map $\cphurc {n_1, \ldots, n_\upsilon} c \to 
\cphurc {n_1, \ldots, n_\upsilon}{c/c_1}$
induced by $c \to c/c_1$,
	the map
	$H_i(Z; \mathbb Z[|G^{c'}_{c}|^{-1}]) \to H_i(Z'; \mathbb
	Z[|G^{c'}_{c}|^{-1}])$
	is an isomorphism.
\end{theorem}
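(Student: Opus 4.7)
The plan is to reduce the problem to computing the induced map on stable homology as $n_1 \to \infty$ and then identify this map using a fibration-style spectral sequence that degenerates after inverting $|G^{c_1}_c|$. First, I would observe that the quotient $c \to c/c_1$ fits into a commutative square of stabilization maps: the map $Z \to Z'$ commutes with the stabilization that multiplies by a fixed element $x \in c_1$ on the left and by its image $\overline{x} \in c/c_1$ on the right. By the authors' prior homological stability theorem (\cite{landesmanL:homological-stability-for-hurwitz}, Theorem 1.4.1), for $n_1 > I i + J$ both horizontal stabilization maps are isomorphisms on $H_i(-; \ZZ[|G^{c_1}_c|^{-1}])$, so it suffices to show that the induced map on stable homology (the colimit as $n_1 \to \infty$) is an isomorphism with these coefficients.

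To identify the map on the stable value, I would analyze $Z \to Z'$ via a Leray-type spectral sequence, viewing it as a map whose homotopy fibers parameterize the $c_1$-orbits of valid labelings lifting a fixed configuration labeled by $c/c_1$. The relative structure group $G^{c_1}_c$ acts on these fibers by permuting lifts, and the key claim is that after inverting $|G^{c_1}_c|$, each fiber has the homology of a point. This should follow from a transfer/averaging argument: the elements of the $G^{c_1}_c$-action have orders dividing $|G^{c_1}_c|$, so once $|G^{c_1}_c|$ is inverted, one can exhibit an idempotent retraction onto the component through any fixed lift, collapsing the $E_2$-page to its $q=0$ row and giving the required isomorphism $H_p(Z; \ZZ[|G^{c_1}_c|^{-1}]) \xrightarrow{\sim} H_p(Z'; \ZZ[|G^{c_1}_c|^{-1}])$.

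The main obstacle is making the fiber description rigorous: the map $Z \to Z'$ is not literally a fiber bundle but a map of Hurwitz-type spaces whose homotopy fibers are themselves Hurwitz-like, and one must carefully track how the braid group action on $c^n$ interacts with the quotient $c \to c/c_1$ in order to realize the required transfer at the level of the combinatorial/simplicial model of $\CHur$ used throughout the authors' prior work. An alternative, potentially cleaner, route is to construct a chain-level retraction of the comparison map using a $|G^{c_1}_c|$-averaged splitting, bypassing the spectral sequence formalism and directly exhibiting a chain homotopy inverse on $\ZZ[|G^{c_1}_c|^{-1}]$-chains. In either approach, the necessity of inverting $|G^{c_1}_c|$ is intrinsic: the averaging fundamentally divides by orders of elements in the relative structure group, and this is precisely what determines the coefficient ring in the statement. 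Finally, the bounds $I$ and $J$ depending only on $|c_1|$ and the maximum order of an element of $c_1$ acting on $c$ match the fact that the stabilization and the fiber analysis both involve only the subrack $c_1$ and its action on $c$, not the remaining components $c_2,\ldots,c_\upsilon$.
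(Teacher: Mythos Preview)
Your first step---reducing to the $\alpha_{c_1}$-localized comparison via the prior stability theorem---is correct and is exactly how the paper begins. The gap is in your computation of the stable value. The map $Z\to Z'$ is a finite covering of connected spaces (both are components of finite covers of configuration space), so its fiber $F$ is a finite discrete set with $|F|>1$ in general, and $H_0(F;k)=k^{|F|}$ regardless of the coefficient ring; no averaging makes this the homology of a point. Transfer only exhibits $H_*(Z';k)$ as a retract of $H_*(Z;k)$ when the degree is invertible---it never kills the complementary summand. Worse, your spectral-sequence argument makes no use of $n_1$ being large: if it worked, it would give the isomorphism for every $n_1$. The result is genuinely a stable phenomenon visible only after $\alpha_{c_1}$-localization, not something that can be read off fiberwise from the unstable map $Z\to Z'$.

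Your alternative route of a chain-level averaged splitting is closer in spirit but still misses the mechanism. The paper does not retract $C_*(Z)\to C_*(Z')$ directly. Instead it proves (\autoref{theorem:stable-homology-ring}) the equivalence
\[
C_*(\Hur^c)[\alpha_{c_1}^{-1},|G^{c_1}_{c}|^{-1}]\ \simeq\ C_*\bigl(\Hur^{c/c_1}\times_{\pi_0\Hur^{c/c_1}}\pi_0\Hur^c\bigr)[\alpha_{c_1}^{-1},|G^{c_1}_{c}|^{-1}]
\]
and reads off the componentwise statement. The proof runs by descent: both sides are nilpotent complete over $\pi_0\Hur^c[\alpha_{c_1}^{-1}]$, so it suffices to match the iterated bar constructions over each side. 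This in turn requires (i)~checking that certain simplicial bar constructions are Kan fibrations (\autoref{proposition:kan-fibration}) so that Bousfield--Friedlander allows one to commute pullbacks with tensor products, and (ii)~an explicit chain nullhomotopy (\autoref{proposition:ring-chain-homotopy}) that averages not over the fiber of $Z\to Z'$ but over words in $c_1$ threaded through the bar word, indexed by $E_{c,c_1}=G^{c_1}_{c_1}\times G^{c_1}_c$. The invertibility of $\alpha_{c_1}$ is what lets these $c_1$-elements be pulled in and pushed out of the bar construction, and is why the argument is intrinsically stable rather than pointwise.
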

\autoref{theorem:one-large-stable-homology-hurwitz-space} is essentially
equivalent to 
\autoref{theorem:stable-homology-ring}
and we spell out the details of this equivalence in 
\autoref{subsubsection:stable-ring-proof}.

\begin{example}
	\label{example:generating-is-conf}
	An important special case of
	\autoref{theorem:one-large-stable-homology-hurwitz-space} occurs when
	$c_1$ generates $c$ so that $c/c_1 = c/c$ and so 
$\cphurc {n_1, \ldots, n_\upsilon}{c/c_1}$ is a multicolored configuration space
on $\upsilon$ colors.
In this case, we are able to identify the stable homology of each component of Hurwitz spaces with
the homology of the corresponding $\upsilon$ colored configuration spaces, which can
in turn be identified with the homology of
the free $\mathbb
E_2$ algebra on $\upsilon$ generators. The homology of this space is completely understood, see
\cite[\S16]{galatius2018cellular} for a modern reference.
\end{example}

One can think about 
\autoref{theorem:one-large-stable-homology-hurwitz-space} as describing what
the homology of the Hurwitz space $\CHur^c$ stabilizes to when we consider it as a module over
the Hurwitz space $\CHur^{c_1}$. For example, in \autoref{example:s3-module} we consider the
Hurwitz space with a single $3$ cycle and an arbitrary number of transpositions
as a module for the Hurwitz with no $3$ cycles and an arbitrary number of
transpositions.
From this perspective, it is natural to consider Hurwitz modules more generally.

In \autoref{definition:hurwitz-module}, we define a notion of Hurwitz module $S$
over $c$,
which is essentially a module for a Hurwitz space. Topologically, this also be viewed as a
union of
covering spaces of configuration space on a genus $g$ surface with $f$
punctures and one boundary component, and we label the corresponding space
$\Hur^{c,S}$, as defined in \autoref{definition:configuration-space}.

We also define a notion of bijective Hurwitz module in
\autoref{definition:bijective-hurwitz-module}, where the relevant actions 
on the sets defining the module are bijective.
If we let $c_1, \ldots, c_\upsilon$ denote the $S$-components of $c$ (i.e.,
	minimal subsets closed under the joint actions coming from $c$ and $S$
as defined in \autoref{definition:s-component},)
$\Hur^{c,S}_{n_1, \ldots, n_\upsilon}$ is the union of components of 
$\Hur^{c,S}$ parameterizing configurations with $n_i$ points labeled by an
element of $c_i$.

We are able to prove 
bijective Hurwitz modules satisfy a certain form of homological stability.
If one works with the whole Hurwitz module $\Hur^{c,S}$ it will not satisfy homological
stability. Indeed, this can already be seen in the case of Hurwitz spaces
$\Hur^c$, when
$c$ comes from a conjugacy class in a group, since
in general one needs to restrict to covers with connected source.
The union of components parameterizing such covers with connected source 
was denoted $\CHur^c$ in \cite[Definition
2.2.2]{landesmanL:homological-stability-for-hurwitz}.
Generalizing this, we define $\CHur^{c,S}$ in
\autoref{construction:S-filtration}, which roughly describes the union of
components of $\CHur^{c,S}$ not contained in any Hurwitz module associated
$\CHur^{c',S'}$ over some subset $(c'',S'') \subsetneq (c,S)$, in the sense of
\autoref{definition:subset}.
We can now state our main result explaining how the homology of these Hurwitz
modules stabilize.
For the next statement, we let $\CHur^{c,S}_{n_1, \ldots, n_\upsilon}$ denote
the union of components of $\Hur^{c,S}_{n_1, \ldots, n_\upsilon}$ 
also lying in 
$\CHur^{c,S}$.

\begin{theorem}
	\label{theorem:homology-stabilizes-intro}
	Let $c$ be a finite rack and let $S$ be a finite bijective Hurwitz module over
	$c$.
	Let $c_1, \ldots, c_\upsilon$ denote the $S$-components of $c$.
	Using notation from \autoref{definition:bijective-hurwitz-module},
	there are constants $I$ and $J$, depending on $|c_1|$ and the
	maximal order of an element of $c_1$ acting on $c$, with the following
	property. For any
	$i \geq 0$ and $n_1 > Ii + J$, 
	any element $x \in c_1$ induces an isomorphism
	$H_i(\cphurc {n_1, \ldots, n_\upsilon} {c,S};\mathbb Z) \to H_i(\cphurc
		{n_1+1, \ldots, n_\upsilon}
	{c,S};\mathbb Z)$.
\end{theorem}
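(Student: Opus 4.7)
The plan is to reduce this theorem to the homological stability result for Hurwitz spaces themselves, namely \cite[Theorem 1.4.1]{landesmanL:homological-stability-for-hurwitz}, by exploiting the bijectivity hypothesis on $S$ to reinterpret Hurwitz modules as components of Hurwitz spaces for a larger rack. Because $S$ is bijective, the action of $c$ on $S$ is by bijections, which permits one to assemble $c$ and $S$ into a single enlarged rack $\widetilde c$ whose underlying set contains $c$ together with enough copies of $S$ to record module data at marked points, and whose rack operation agrees with the given one on $c$ while using the bijective action to define operations involving the $S$-part. The crucial features of this construction are that each $S$-component $c_i \subset c$ persists as a subset of $\widetilde c$, and that any $x \in c_1$ has the same action on $c$ whether viewed inside $c$ or inside $\widetilde c$, with its action on the $S$-part controlled (and bounded) by the bijective module structure.

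With $\widetilde c$ in hand, I would identify $\Hur^{c,S}_{n_1, \ldots, n_\upsilon}$ with a union of components of a Hurwitz space for $\widetilde c$ carrying the given multiplicities $n_1, \ldots, n_\upsilon$ in the $c$-part and recording the module datum via additional marked points labeled by elements of $S$. Under this identification, the filtration defining $\CHur^{c,S}$ in \ref{construction:S-filtration}, which cuts out components not contained in any strict Hurwitz submodule, should correspond to the union of components of $\Hur^{\widetilde c}$ lying in $\CHur^{\widetilde c}$, since in both cases one retains precisely the components whose marked data generates the ambient structure. The theorem then follows by applying the Hurwitz-space stability result to $\widetilde c$ with distinguished component $c_1$: the constants in \cite[Theorem 1.4.1]{landesmanL:homological-stability-for-hurwitz} depend on $|c_1|$ and the maximal order of an element of $c_1$ acting on $\widetilde c$, and since $S$ is finite with a bijective $c_1$-action, this maximal order is bounded in terms of the maximal order on $c$ alone, which absorbs into the constants $I$ and $J$ and gives exactly the dependence stated.

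The main obstacle lies in the second step: carrying out the precise construction of $\widetilde c$ and matching the filtration $\CHur^{c,S} \subset \Hur^{c,S}$ with the connectedness filtration $\CHur^{\widetilde c} \subset \Hur^{\widetilde c}$. One must verify that no components are missed or duplicated, that the module-theoretic notion of lying in a strict submodule $(c'', S'') \subsetneq (c, S)$ corresponds to the rack-theoretic notion of not generating the full marking set of $\widetilde c$, and that the stabilization map "insert $x$" on the module side is identified with the usual Hurwitz space stabilization on the enlarged rack. A minor bookkeeping point is that the homology of $\Hur^{\widetilde c}$ generally splits according to the multiplicities of marked points in each component of $\widetilde c$, and one must check that fixing the $S$-markings to be the prescribed module datum picks out exactly $\cphurc{n_1,\ldots,n_\upsilon}{c,S}$. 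Once these identifications are set up correctly, the homological stability statement is immediate from the cited theorem.
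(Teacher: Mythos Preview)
Your proposed reduction does not work, and the gap is structural rather than bookkeeping. A Hurwitz module $S$ over $c$ is built from a surface $\Sigma^1_{g,f}$ of arbitrary genus $g$ with $f$ punctures: by definition $\Hur^{c,S}_n$ is a finite cover of the surface configuration space $\Conf_n^{\Sigma^1_{g,f}}$, and the action maps $\psi_n$ are actions of the \emph{surface} braid group $B_n^{\Sigma^1_{g,f}}$. By contrast, any Hurwitz space $\Hur^{\widetilde c}_m$ for a finite rack $\widetilde c$ is a cover of $\Conf_m^{\Sigma^1_{0,0}}$, the configuration space on a disc. For $g>0$ there is no way to realize the former as a union of components of the latter: the loops $\alpha_i,\beta_i \in \pi_1(\Sigma^1_{g,f})$ around handles act on $T_n$ via the module structure, and these operations have no analogue in the planar braid group acting on $\widetilde c^m$ for any finite $\widetilde c$. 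Adding ``extra marked points labeled by $S$'' does not encode handle monodromy. So the identification $\CHur^{c,S} \subset \CHur^{\widetilde c}$ that your argument rests on cannot exist in general.

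There is a second, independent problem with the constants. The theorem asserts that $I,J$ depend only on $|c_1|$ and $\ord_c(c_1)$, \emph{not} on $|T_0|$ or on $g,f$. Even granting a hypothetical $\widetilde c$, applying \cite[Theorem 1.4.1]{landesmanL:homological-stability-for-hurwitz} would give constants depending on the order of $c_1$-elements acting on $\widetilde c$; since $\widetilde c$ must encode $T_0$ and the maps $\tau^\gamma_x:T_0\to T_0$ can have order unrelated to $\ord_c(c_1)$, your claim that ``this maximal order is bounded in terms of the maximal order on $c$ alone'' is false.

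The paper's route is entirely different and does not reduce to the Hurwitz-space case. It first uses scanning arguments (\autoref{section:scanning}) adapted to the surface $\Sigma^1_{g,f}$---cutting it into $2g+f$ rectangles via a point-pushing flow---to show that $*_+\otimes_{\Hur^c_+}\Hur^{c,S}_+$ is $f_{1,0}$-bounded (\autoref{proposition:module-cells-bounded}), which feeds into a general criterion to prove the fully quotiented module $\widetilde C_*(\Hur^{c,S}_+/(\alpha_c^{\ord(c)}))$ is bounded in a linear range with constants depending only on $|c_1|$ and $\ord_c(c_1)$ (\autoref{theorem:all-quotient-bounded}). The dependence on $T_0$ disappears precisely because the bound on module cells is uniform and all remaining constants come from the ring $\Hur^c$. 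One then ``unquotients'' one generator at a time: the key input is a homology equivalence of bar constructions comparing $\Hur^{c,S}$ to $\Hur^{N_c(c'),S'}$ over the normalizer (\autoref{proposition:subrack-comparison}), which via a filtration/descent argument (\autoref{lemma:comparison-to-normalizer}, \autoref{lemma:quotient-localization-vanishes}) forces $CA_{c,S}/(\alpha_x^{\ord(x)},x\in V)[\alpha_{c-V}^{-1}]=0$ whenever $c-V$ is not a union of $S$-components. Inducting down on $|V|$ gives \autoref{theorem:stable-homology}, which is the form equivalent to the stated theorem.
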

A statement equivalent to \autoref{theorem:homology-stabilizes-intro}, but written in a
slightly different language is proven in 
\autoref{theorem:stable-homology}.
A closely related homological stability theorem covering some some
special cases of \autoref{theorem:homology-stabilizes-intro} was proven in
\cite[Theorem
4.2.6]{ellenbergL:homological-stability-for-generalized-hurwitz-spaces}.

In addition to showing the homology of Hurwitz modules stabilize, we also
describe their stable value.
To state this result, generalizing the notion of quotient rack from
\autoref{definition:quotient-rack}, we also will need to be able to quotient a
bijective Hurwitz module $S$ over $c$ by an $S$-component $c' \subset c$.
We denote this quotient by $S/c'$, which we define precisely in \autoref{definition:quotient-hurwitz-module}.
%which essentially has the universal property
%that it is the initial Hurwitz module receiving a map from $S$ where all
%elements of $c'$ act the same way.

In addition to the above notion of quotient Hurwitz module, 
we will need the notion of the relative structure group of a
subrack $c' \subset c$, defined in \autoref{definition:relative-structure-group}, which records
the action of $c'$ on $c$. For $S$ a Hurwitz module over $c$, we also need the notion
of the module structure group $G^{c'}_{S}$
from 
\autoref{definition:module-structure-group}, 
which, loosely speaking records the actions
of collections of elements from $c'$ on $S$.
We show $G^{c'}_S$ is a finite group when $c$ and $S$ are finite in
\autoref{lemma:module-structure-group-finite}.
%Finally, there is the relative structure group
%$G^{c}_{c'}$, recording the action of $c$ on $c'$, as in
%\autoref{example:normal-relative-subrack}.
\begin{theorem}
	\label{theorem:one-large-stable-homology}
	Let $c$ be a finite rack and $S$ a finite bijective Hurwitz module over $c$
	as in \autoref{definition:bijective-hurwitz-module}.
	Let $c_1, \ldots, c_\upsilon$ denote the $S$-components of $c$.
	There are constants $I$ and $J$, depending only on $|c_1|$ and the
	minimal order of an element of $c_1$ acting on $c$, so that for any
	$i \geq 0$ and $n_1 > Ii + J$, 
	and any component $Z \subset \cphurc {n_1, \ldots, n_\upsilon} {c,S}$
	mapping to a component $Z' \subset \cphurc {n_1, \ldots, n_\upsilon}
	{c/c_1,S/c_1}$ under the map $\cphurc {n_1, \ldots, n_\upsilon} {c,S} \to \cphurc {n_1, \ldots, n_\upsilon}{c/c_1,S/c_1}$ induced by $c \to c/c_1$,
	the map
	$H_i(Z; \mathbb Z[|G^{c'}_{c}|^{-1}, |G^{c}_{c'}|^{-1}, |G^{c'}_{S}|^{-1}]) \to H_i(Z'; \mathbb
	Z[|G^{c'}_{c}|^{-1},|G^{c}_{c'}|^{-1},|G^{c'}_{S}|^{-1}])$
	is an isomorphism.
\end{theorem}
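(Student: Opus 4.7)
The plan is to combine the stabilization result of \autoref{theorem:homology-stabilizes-intro} with a transfer argument that identifies the stable value, parallel to the Hurwitz space case of \autoref{theorem:one-large-stable-homology-hurwitz-space} but now carrying along the Hurwitz module data $S$. The key idea is to factor the quotient map through an intermediate stage and apply a transfer at each stage.

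First I would apply \autoref{theorem:homology-stabilizes-intro} to both $(c,S)$ and to $(c/c_1, S/c_1)$ (noting that $S/c_1$ is naturally a bijective Hurwitz module over $c/c_1$ by \autoref{definition:quotient-hurwitz-module}). This shows that $H_i(Z;\mathbb Z)$ and $H_i(Z';\mathbb Z)$ are each independent of $n_1$ once $n_1 > Ii+J$, with stabilization given by multiplication by a fixed $x \in c_1$. Consequently it suffices to identify the two stable values after inverting the prescribed primes and to check that the induced map is an isomorphism.

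Second, I would factor the projection as
\begin{equation*}
\cphurc{n_1,\ldots,n_\upsilon}{c,S} \xrightarrow{\alpha} \cphurc{n_1,\ldots,n_\upsilon}{c, S/c_1} \xrightarrow{\beta} \cphurc{n_1,\ldots,n_\upsilon}{c/c_1, S/c_1},
\end{equation*}
so that $\alpha$ quotients only the module and $\beta$ quotients only the rack. For $\beta$, the argument of \autoref{theorem:one-large-stable-homology-hurwitz-space} applies with only cosmetic modifications to carry along the module $S/c_1$ (on which $c_1$ now acts trivially): a transfer along a finite \'etale cover built from the relative structure groups yields an isomorphism on stable homology after inverting $|G^{c_1}_c|$ and $|G^c_{c_1}|$. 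For $\alpha$, I would analyze the fibers over a fixed component of the target: they parameterize compatible liftings of the $S/c_1$-structure to an $S$-structure, and because $S$ is bijective and finite, these fibers, restricted to a single component of the source, form (stably in $n_1$) a torsor for a finite quotient of the module structure group $G^{c_1}_S$, which is finite by \autoref{lemma:module-structure-group-finite}. A second transfer after inverting $|G^{c_1}_S|$ then identifies $H_i(Z;R)$ with $H_i(\alpha(Z);R)$ for $R = \mathbb Z[|G^{c_1}_c|^{-1},|G^c_{c_1}|^{-1},|G^{c_1}_S|^{-1}]$.

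The main obstacle will be justifying the torsor structure of the fibers of $\alpha$ in the stable range and ensuring the two transfers are compatible with the stabilization map (multiplication by $x \in c_1$). Compatibility reduces to a direct check that the $G^{c_1}_S$-action on lifts commutes with appending a new strand labeled by $x$, which follows from the definitions of the relative structure groups (\autoref{definition:relative-structure-group}) and the bijectivity hypothesis on $S$ (\autoref{definition:bijective-hurwitz-module}) once $n_1$ is large enough that every component of the source actually contains strands labeled by $x$. Composing the two isomorphisms then yields the theorem.
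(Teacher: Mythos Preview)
There is a genuine gap in your proposal, and the paper's approach is quite different from what you outline.

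Your central move is to treat $Z\to Z'$ (and your intermediate map $\alpha$) as a finite \'etale cover whose transfer yields an isomorphism on homology after inverting the orders of the structure groups. This does not work as stated. The map $\Hur^{c,S}_{n_1,\ldots,n_\upsilon}\to\Hur^{c/c_1,S/c_1}_{n_1,\ldots,n_\upsilon}$ is indeed a covering map (both are covers of configuration space), but its degree over a fixed component $Z'$ grows with $n_1$ and is not controlled by $|G^{c_1}_c|,|G^c_{c_1}|,|G^{c_1}_S|$ alone. Even if the degree were invertible, a transfer only exhibits $H_*(Z')$ as a retract of $H_*(Z)$; to get an isomorphism you would need the cover to be normal with deck group acting trivially on homology, and neither is established. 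Your torsor claim for the fibers of $\alpha$ is also not justified: the module structure group $G^{c_1}_S$ of \autoref{definition:module-structure-group} is defined as a subgroup of automorphisms of a chain complex $W^{c,S,M;\mathbb Z}$, not as a group acting on $\Hur^{c,S}$ by deck transformations, so there is no obvious torsor structure on the fibers. Finally, the intermediate object $\cphurc{n_1,\ldots,n_\upsilon}{c,S/c_1}$ needs care, since by \autoref{definition:quotient-hurwitz-module} $S/c_1$ is a bijective Hurwitz module over $c/c_1$, not over $c$.

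What the paper actually does (see \autoref{subsubsection:stable-value-proof} and \autoref{theorem:identify-stable-localization}) is a descent argument at the chain level, not a space-level transfer. Both $C_*(\Hur^{c,S})[\alpha_{c_1}^{-1}]$ and its target are shown to be $0$-nilpotent complete with respect to $H_0(\Hur^c)[\alpha_{c_1}^{-1}]$, reducing to an identification after tensoring with $C_*(\pi_0\Hur^c)[\alpha_{c_1}^{-1}]$. That identification is obtained by combining \autoref{proposition:module-chain-homotopy} (an explicit chain homotopy constructed in \autoref{subsection:module-chain-homotopies}) with \autoref{theorem:stable-homology-ring} and a Bousfield--Friedlander argument (\autoref{proposition:kan-fibration}) to commute certain pullbacks with bar constructions. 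The paper emphasizes in \autoref{subsection:proof-idea} that the required nullhomotopies hold only after inverting the structure group orders and cannot be realized at the level of spaces, which is exactly why a direct transfer argument of the kind you propose is not available.
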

We prove \autoref{theorem:one-large-stable-homology} in
\autoref{subsubsection:stable-value-proof}.
\begin{remark}
	\label{remark:uncomputable-stable-cohomology}
	The description of
	\autoref{theorem:one-large-stable-homology} relates the stable value of the homology of these Hurwitz spaces
	to the homology of a smaller Hurwitz space.
	In complete generality, this stable homology seems uncomputable as it can, in
	some sense, involve all the unstable homology that appears in arbitrary
	Hurwitz spaces.

	However, in many circumstances, such as in
	\autoref{example:generating-is-conf},
		the smaller
	Hurwitz space may be a configuration space, in
	which case it is relatively manageable. We will see this is the case
	in all three of the main applications of this paper.
\end{remark}

\subsection{Summary of the proofs}
\label{subsection:proof-idea}
%Our paper builds heavily on the ideas in many previous works, and so here we
%attempt to highlight the new ideas.
%First, our application to Bhargava's conjecture and the BKLPR
%conjectures follow relatively straightforwardly once we compute the stable
%cohomology of the relevant spaces, especially given the previous
%	work done in
%	\cite{ellenbergL:homological-stability-for-generalized-hurwitz-spaces}
%	which set up the framework to discuss the BKLPR conjectures in
%	terms of Hurwitz modules.
%In both cases, some group theoretic calculations allow us to identify the
%relevant stable cohomologies with the stable cohomology of certain
%configurations spaces, which is enough to deduce the conjectures.
%To deduce our result toward representation stability, we use our main results on
%the stable homology of Hurwitz modules
%to identify certain stable homology groups for Hurwitz spaces with corresponding
%groups for configuration spaces, and thereby we can deduce representation
%stability for Hurwitz spaces from the corresponding representation stability
%results for configuration spaces.
We focus on explaining the new ideas in computing the stable homology of
Hurwitz modules. 
One can obtain our applications from 
our topological results without much difficulty using 
prior work.
The general strategy is similar to that used to prove our analogous results for Hurwitz spaces in
\cite{landesmanL:homological-stability-for-hurwitz}.
To show the homology stabilizes, we first need to show the homology of a certain
quotient by all elements of $c$ stabilizes.
This follows by combining a previous result we proved to show such homology
stabilizes in \cite[Theorem 3.1.4]{landesmanL:homological-stability-for-hurwitz}
with various scanning argument similar to those carried out in
\cite[Appendix A]{landesmanL:the-stable-homology-of-non-splitting}.
A key new feature is that we also have to apply scanning arguments to higher
genus curves with punctures, but a point pushing homotopy carried out in
\autoref{lemma:deform-epsilon-distance} allows us to cut such surfaces up into a
union of rectangles, reducing the situation to one similar to the case of a
disc.
Once we show the homology of this quotient stabilizes we need to show the
homology stabilizes before quotienting as well.
To do so, the key input is a comparison between a certain bar construction
related to $c$ and a bar construction related to $c'$, for $c' \subset c$ a
subrack, which we prove in
\autoref{proposition:subrack-comparison}.
The proof of 
\autoref{proposition:subrack-comparison} is similar to \cite[Proposition
4.5.11]{landesmanL:the-stable-homology-of-non-splitting}
though many aspects
are substantially trickier, 
as we have to verify that general bijective Hurwitz modules satisfy certain
desirable properties that are obviously satisfied by racks.

Once we prove homological stability, the remaining task is to compute the stable
value of this homology.
A substantial insight of this paper is that the particularly simple answer can be succinctly
described in terms of racks. 
Although the proof is 
inspired by our proof that the
homology stabilizes,
a number of additional subtleties arise.
The general strategy is to produce a comparison map to the stable homology and
use a descent argument to reduce to verifying that a certain complex is
nullhomotopic.
However, because this nullhomotopy is only true rationally and does not hold
integrally, it is not possible
to produce a nullhomotopy on the level of spaces which will induce one on
chains.
Instead, we argue directly on the level of chains. 
Even after we verify the relevant complexes are nullhomotopic,
to relate this nullhomotopy to our stable homology, we encounter a technical
issue that we need to commute certain tensor products with pullbacks. We verify
this by proving certain relevant maps of simplicial sets are Kan fibrations and applying
a result of Bousfield-Friedlander.

\subsection{Outline}
\label{subsection:outline}
The structure of our paper is as follows.
We first prove some preliminary 
results about bijective Hurwitz modules in \autoref{section:hurwitz-modules}.
We then use scanning arguments to identify explicit models for certain bar
constructions in \autoref{section:scanning}.
Next, in 
\autoref{section:quotient-stability}, we show the quotient of Hurwitz
modules by all element of $c$ has homology which stabilizes.
In \autoref{section:equivalence} we prove a technical result comparing two bar
constructions, which will enable us to undo the above mentioned quotienting
procedure. We carry out this unquotienting in 
\autoref{section:homological-stability} to prove Hurwitz modules satisfy
homological stability.
We compare cohomology of certain tensor products in
\autoref{section:chain-homotopies}, which serves as one of the key technical
ingredients
to 
compute the stable homology of Hurwitz modules in 
\autoref{section:stable-homology}.
We explain our application to the BKLPR conjectures in 
\autoref{section:poonen-rains},
to
Bhargava's conjecture in
\autoref{section:bhargava-conjecture}, 
and to representation stability in
\autoref{section:representation-stability}.
We conclude with some further questions in \autoref{section:further-questions}.

\subsection{Acknowledgements}

Some of this project was completed at the University Copenhagen when the first
author visited the second. We would like to thank University of Copenhagen for
their hospitality and their excellent working
conditions.
We would like to thank Will Sawin for a number of extremely helpful discussions
and Jordan Ellenberg for suggesting improvements
to many aspects of the paper.
We thank Jeremy Miller for helpful discussions related to
representation stability of Hurwitz spaces.
We also thank 
Jef Laga,
Zhao Yu Ma,
Adam Morgan,
Peter Patzt,
Mark Shusterman,
Nathalie Wahl,
Jenny Wilson,
Melanie Wood,
and Wei Zhang
for helpful discussions.
Landesman 
was supported by the National Science
Foundation 
under Award No.
DMS 2102955.
Levy was supported by the Clay Research Fellowship.

\section{Hurwitz modules}
\label{section:hurwitz-modules}

In this section, we will define Hurwitz modules, whose homology is the central
object of study throughout the paper.
These seem to be a fairly general setting for many natural questions in geometric topology and arithmetic statistics over
function fields can be framed.
We first define Hurwitz modules in
\autoref{subsection:hurwitz-module-definition}, we then investigate the notion of
subsets of Hurwitz modules in \autoref{subsection:subsets}, and finally we
discuss quotients of Hurwitz modules in \autoref{subsection:quotients}.

\subsection{Definition of Hurwitz modules}
\label{subsection:hurwitz-module-definition}

Our main results concern the stable homology of Hurwitz modules, which we define
now.
%In the service of generalizing
%\autoref{theorem:one-large-stable-homology-hurwitz-space} to Hurwitz modules,
%we next introduce the notion of a bijective Hurwitz module, which can equivalently be thought of as
%a Hurwitz space left module.
This definition is quite similar to the definition of coefficient system given
in
\cite[Definition
3.1.6]{ellenbergL:homological-stability-for-generalized-hurwitz-spaces}
except that our Hurwitz modules are set valued instead of vector space
valued.

\begin{definition}
	\label{definition:hurwitz-module}
	Let $\Sigma^1_{g,f}$ denote a genus $g$ surface with $f$ punctures and
	$1$ boundary component.
Let $B^{\Sigma^1_{g,f}}_n$ denote the surface braid group associated to $n$
points on
$\Sigma^1_{g,f}$.
	Fix a rack $c$. A {\em Hurwitz module over $c$}
	is a triple
$S = ({\Sigma^1_{g,f}}, \{T_n\}_{n \in \mathbb
Z_{\geq 0}}, \{\psi_n\}_{n \in \mathbb Z_{\geq 0}})$ 
	where $g,f \in \mathbb Z$,
	$T_0$ is a set, $T_n := c^n \times T_0$ is a set,
	and
$\psi_n: B^{\Sigma^1_{g,f}}_n \times T_n \to T_n$
	is a left action of the
	surface braid group on the set $T_n$,
	such that for $0 \leq i \leq n$
	the diagram
	\begin{equation}
		\label{equation:}
		\begin{tikzcd} 
			(B^{\Sigma^1_{0,0}}_i \times B^{\Sigma^1_{g,f}}_{n-i})
			\times (c^i
			\times (c^{n-i} \times T_0)) \ar {r} \ar {d} & c^i
			\times (c^{n-i} \times T_0) \ar {d} \\
			B^{\Sigma^1_{g,f}}_n \times c^n \times T_0 \ar {r} & c^n
			\times T_0
	\end{tikzcd}\end{equation}
	commutes; the maps in the above diagram are defined as follows.
	The top horizontal map is induced by the action of 
$B^{\Sigma^1_{0,0}}_i \simeq B_n$ on $c^i$ from the definition of $c$ (see
\autoref{definition:rack}) and 
the action maps defining the Hurwitz module.
The left vertical map comes from the inclusion
$B_i^{\Sigma^1_{0,0}} \times B_{n-i}^{\Sigma^1_{g,f}} \subset
B_n^{\Sigma^1_{g,f}}$
constructed in 
\cite[Notation
3.1.1]{ellenbergL:homological-stability-for-generalized-hurwitz-spaces},
where we used the notation $B^n_{g,f}$ instead of $B_n^{\Sigma^1_{g,f}}$.

Given a Hurwitz module $S$ as above, we call $T_n$ the $n$-set of $S$.
In particular, when $n = 0$, $T_0$ is the $0$-set of $S$.

We say $S$ is {\em finite} if $c$ is finite and $T_0$ is finite.
\end{definition}

The above notion of Hurwitz modules seems too general for the proofs of many of our main
results, and we will mostly work in the slightly more restricted setting
of bijective Hurwitz modules.
\begin{definition}
	\label{definition:bijective-hurwitz-module}
	Fix a rack $c$. A {\em bijective Hurwitz module} over $c$ is a Hurwitz
	module	
$S = ({\Sigma^1_{g,f}}, \{T_n\}_{n \in \mathbb
Z_{\geq 0}}, \{\psi_n\}_{n \in \mathbb Z_{\geq 0}})$ such that the maps
$B^{\Sigma^1_{g,f}}_1 \times c \times T_0 \xrightarrow{\psi_1} c \times T_0 \to
c$, and
$B^{\Sigma^1_{g,f}}_1 \times c \times T_0 \xrightarrow{\psi_1} c \times T_0 \to
T_0$, 
induce maps
$B^{\Sigma^1_{g,f}}_1 \times T_0 \to \aut(c)$ and
$B^{\Sigma^1_{g,f}}_1 \times c \to \aut(T_0)$.
For $\gamma \in
B^{\Sigma^1_{g,f}}_1$ and $t \in T_0$, we denote
the first map by
$\sigma^\gamma_t:c \to c$
and for
$\gamma \in
B^{\Sigma^1_{g,f}}_1$ and $x \in c$ we denote the second map by $\tau^\gamma_x : T_0 \to T_0$.

We say $S$ is {\em finite} if it the corresponding Hurwitz module is finite in
the sense of \autoref{definition:hurwitz-module}.
\end{definition}

\begin{example}
	\label{example:group-hurwitz-modules}
One important class of examples of bijective Hurwitz modules 
are obtained by taking $G$ to be a finite group, $c \subset G$ a union of
conjugacy classes, and taking its $0$ set $T_0$ to be the set of maps
$\hom(\pi_1(\Sigma^1_{g,f}),G)$.
See \cite[Example
3.1.9]{ellenbergL:homological-stability-for-generalized-hurwitz-spaces} for a
detailed explanation of this example.
\end{example}

Just as it was important to split up racks into components in
\cite{landesmanL:homological-stability-for-hurwitz}, it will also be
convenient to split up Hurwitz modules into their corresponding components, which
we define next.
\begin{definition}
	\label{definition:s-component}
	For $c$ a rack and $S$ a bijective Hurwitz module over $c$, an {\em
	$S$-component}
	of $c$ is a subset $z \subset c$ which is a minimal nonempty subset of $c$ closed
	under the action of $c$ on itself and closed under the action of
	$B_1^{\Sigma^1_{g,f}} \times T_0$ on $c$.
\end{definition}

We next introduce notation for the schemes over the complex numbers which are
naturally associated to Hurwitz modules.

\begin{definition}
	\label{definition:configuration-space}
	Let $c$ be a rack and $S=({\Sigma^1_{g,f}}, \{T_n\}_{n \in \mathbb
Z_{\geq 0}}, \{\psi_n\}_{n \in \mathbb Z_{\geq 0}})$
	be a bijective Hurwitz module over $c$.
%	Let $\Sigma_{g,f}^1$ be a genus $g$ surface with $f$ punctures and one
%	boundary component.
	Let $\Conf_n^{\Sigma^1_{g,f}}$ denote the configuration space
	parameterizing $n$ distinct points on the interior of 
	$\Sigma^1_{g,f}$.
	Upon identifying $B^n_{g,f} \simeq \pi_1(\Conf_n^{\Sigma^1_{g,f}})$, we
	can view the bijective Hurwitz module as yielding an action
	$B^n_{g,f} \to \aut(c^n \times T_0)$. Define $\Hur^{c,S}_n$ as the
	topological space which is the unramified covering space of
	$\Conf_n^{\Sigma^1_{g,f}}$ corresponding to the above action. In
	particular, this covering space has degree $|c|^n \cdot |T_0|$.
	Suppose $c$ has $S$-components 
	$c_1, \ldots, c_\upsilon$.
	Suppose $n_1 + \cdots
	+ n_\upsilon = n$ and let let $S^{n_1, \ldots, n_\upsilon} \subset c^n
	\times T_0$ denote
	the subset such that there are 
	$n_i$ points with labels in $c_i$.
	Then let $\Hur^{c,S}_{n_1, \ldots, n_\upsilon}$ denote the 
	unramified covering space of $\Conf_{n_1, \ldots, n_\upsilon}^{\Sigma^1_{g,f}}$
	corresponding to the map $B_n^{\Sigma^1_{g,f}} \to \aut(S^{n_1, \ldots,
	n_\upsilon})$.
\end{definition}
\begin{warn}
	\label{warning:}
	The components $c_1, \ldots, c_\upsilon$ from
	\autoref{definition:hurwitz-module} depend on $S$. In particular, there can
	be fewer components under the joint action of $c$ and 
	$B_1^{\Sigma^1_{g,f}}\times T_0$
	than the number of components of $c$ under only the
	action of $c$ on itself.
\end{warn}

\begin{example}
	\label{remark:}
	In the case $g = f = 0$, we can take $T_0 = *$ and we obtain
	$\Hur^{c,S}$ recovers the usual Hurwitz space
	$\Hur^c$.
\end{example}
\subsection{Subsets of Hurwitz modules}
\label{subsection:subsets}

In this subsection, we define the notion of subsets of Hurwitz modules, which is
the natural notion of an inclusion of Hurwitz modules over an inclusion of
racks.
If $c$ is a rack, $c'$ is a subrack, and $S$ is a bijective Hurwitz module over
$c$, we will define a maximal subset over $c'$, denoted $S_{c'}$.
The main challenge of this subsection, proven in
\autoref{lemma:normalizer-is-hurwitz-module}, will be to show that there is a subset
over $N_c(c')$, the normalizer of $c'$ in $c$, with the same $0$-set as
$S_{c'}$.

\begin{definition}
	\label{definition:subset}
	Let $c$ be a rack and $S$ be a Hurwitz module over $c$.
	Let $c' \subset c$ be a subrack. We say a bijective Hurwitz module $S'$ over
	$c'$ is a {\em subset} of $S$ over $c$ if there is an inclusion $T'_0
	\subset T_0$ which induces commuting diagrams
	\begin{equation}
		\label{equation:}
		\begin{tikzcd} 
			B_{n}^{\Sigma^1_{g,f}} \times T'_n \ar {r} \ar
			{d} & T'_n \ar {d} \\
			B_{n}^{\Sigma^1_{g,f}} \times T_n \ar {r} &
			T_n.
	\end{tikzcd}\end{equation}
	We write $(c',S') \subset (c,S)$
	to indicate that $S'$ is a subset of $S$.
\end{definition}

Here are several equivalent descriptions of the notion of a subset.
\begin{lemma}
	\label{lemma:criterion-to-be-subset}
	Suppose $c$ is a rack and $S = (\Sigma^1_{g,f}, \{T_n\}_{n \in \mathbb
	Z}, \{\psi_n\}_{n \in \mathbb Z_{\geq 0}})$ is a bijective Hurwitz module over $c$.
	Fix a base point $\star$ on the boundary of $\Sigma^1_{g,f}$.
	If $c' \subset c$ is a subrack, and $T'_0 \subset T_0$ is a subset, then
	the following are equivalent:
	\begin{enumerate}
		\item The data $S' = (\Sigma^1_{g,f}, \{(c')^n \times T'_0\}_{n \in \mathbb
				Z}, \{\psi_n|_{(c')^n \times T'_0}\}_{n \in
			\mathbb Z_{\geq 0}})$ forms a bijective Hurwitz module such
			that $S'$ over $c'$ is a subset of $S$ over $c$.
\item For any $x \in c'$, $t \in T'_0$ and any
	$\gamma \in \pi_1(\Sigma^1_{g,f}, \star) = B_1^{\Sigma^1_{g,f}}$,
	$\psi_1(\gamma, x,t) \in c' \times T'_0 \subset c \times T_0$.
\item Fix a set of generators $\{\gamma_i\}$ of $B_1^{\Sigma^1_{g,f}}$.
	for any $x \in c'$, $t \in T'_0$ and any $\gamma_i$,
$\psi_1(\gamma_i, x,t) \in c' \times T'_0 \subset c \times T_0$.
	\end{enumerate}
	\end{lemma}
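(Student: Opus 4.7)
Both (1) $\Rightarrow$ (2) and (2) $\Rightarrow$ (3) are essentially immediate: the first by restricting the compatibility of the $B_n^{\Sigma^1_{g,f}}$-actions with the inclusion $(c')^n \times T'_0 \subset c^n \times T_0$ (part of \autoref{definition:subset}) to the case $n = 1$, and the second because the generators $\{\gamma_i\}$ are in particular elements of $B_1^{\Sigma^1_{g,f}}$. The content of the lemma therefore lies in the implications (3) $\Rightarrow$ (2) and (2) $\Rightarrow$ (1).

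For (3) $\Rightarrow$ (2), I would consider
\[
H := \{\gamma \in B_1^{\Sigma^1_{g,f}} : \psi_1(\gamma, x, t) \in c' \times T'_0 \text{ for all } x \in c' \text{ and } t \in T'_0\}.
\]
Because $\psi_1$ is a group action, $H$ is closed under composition, so by (3) it is a submonoid of $B_1^{\Sigma^1_{g,f}}$ containing all positive words in the $\gamma_i$. To promote this to the full subgroup I would use finiteness of $c' \times T'_0$: for any $\gamma \in H$, the bijection $\psi_1(\gamma,-,-)$ of $c \times T_0$ restricts to an injection of the finite set $c' \times T'_0$ into itself, hence to a bijection, whose inverse is the restriction of $\psi_1(\gamma^{-1},-,-)$. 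Thus $\gamma^{-1} \in H$ and $H = B_1^{\Sigma^1_{g,f}}$, giving (2).

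For (2) $\Rightarrow$ (1), I would show by induction on $n$ that $\psi_n$ preserves $(c')^n \times T'_0$. The compatibility diagram in \autoref{definition:hurwitz-module} with $i = n$ identifies the action of $B_n^{\Sigma^1_{0,0}} \subset B_n^{\Sigma^1_{g,f}}$ on $c^n \times T_0$ with the standard braid-group action of $B_n$ on $c^n$ (trivial on $T_0$), which preserves $(c')^n \times T'_0$ because $c'$ is a subrack. The same diagram with $i = n-1$ identifies the action of the $B_1^{\Sigma^1_{g,f}}$ factor inside $B_{n-1}^{\Sigma^1_{0,0}} \times B_1^{\Sigma^1_{g,f}} \hookrightarrow B_n^{\Sigma^1_{g,f}}$ with $\psi_1$ acting on the last coordinate of $c^n \times T_0 = c^{n-1} \times (c \times T_0)$, which preserves $c' \times T'_0$ by (2). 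These two subgroups generate $B_n^{\Sigma^1_{g,f}}$ by a standard fact about surface braid groups (following from the Fadell--Neuwirth fibration $\Conf_n^{\Sigma^1_{g,f}} \to \Sigma^1_{g,f}$ that records one of the $n$ marked points), so $\psi_n$ preserves $(c')^n \times T'_0$ as desired.

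The main potential obstacle is the generation statement for $B_n^{\Sigma^1_{g,f}}$ used in the last paragraph. If one prefers not to invoke it as a black box, one can reinforce the argument by combining the $i = n - 1$ decomposition above with the $i = 1$ compatibility: the latter identifies the action of $B_{n-1}^{\Sigma^1_{g,f}} = B_1^{\Sigma^1_{0,0}} \times B_{n-1}^{\Sigma^1_{g,f}} \hookrightarrow B_n^{\Sigma^1_{g,f}}$ on $c \times (c^{n-1} \times T_0)$ with the identity on the first factor and $\psi_{n-1}$ on the second, so by the inductive hypothesis it also preserves $(c')^n \times T'_0$. One then only needs that these two explicit subgroups generate $B_n^{\Sigma^1_{g,f}}$, which is transparent from standard surface braid group presentations and cleanly reduces the entire verification to the $n = 1$ hypothesis (2).
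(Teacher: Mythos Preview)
Your argument is essentially the same as the paper's: for (2) $\Rightarrow$ (1) both you and the paper invoke the fact that $B_n^{\Sigma^1_{g,f}}$ is generated by $B_n^{\Sigma^1_{0,0}}$ together with the copy of $B_1^{\Sigma^1_{g,f}}$ acting on the last strand, and the paper handles (2) $\Leftrightarrow$ (3) in one line (``since $\psi_1$ defines an action'').

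One caveat worth flagging: your (3) $\Rightarrow$ (2) step appeals to finiteness of $c' \times T'_0$, which is not among the stated hypotheses of the lemma. Without it, preservation by a generating set does not force preservation by inverses (think of $\mathbb{Z}$ acting on itself by translation, generated by $+1$, with the subset $\mathbb{N}$). The paper's one-line justification glosses over the same point. In practice every application in the paper is to finite $c$ and finite $T_0$, so your argument supplies exactly what is needed there; just be aware that (2) $\Leftrightarrow$ (3) as written requires either finiteness or a generating set closed under inverses.
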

\begin{proof}
	The final two statements are equivalent since $\psi_1$ defines an action
	of $B_1^{\Sigma^1_{g,f}}$ on $T_1$.
	The first statement easily implies the second, so it remains to check
	the second implies the first.
	That is, we need to show that if 
	$\psi_1(\gamma, x,t) \in c' \times T'_0 \subset c \times T_0$ for all
	$\gamma,x,t$ as above, then 
	$\psi_n(B_n^{\Sigma^1_{g,f}} \times (c')^n \times T'_0)$ has image
	contained in $(c')^n \times T'_0$ for all $n$.
Note that the surface braid group $B_n^{\Sigma^1_{g,f}}$ is
	generated by $B_n^{\Sigma^1_{0,0}} \subset B_n^{\Sigma^1_{g,f}}$ 
	and $B_1^{\Sigma^1_{g,f}} \subset B_n^{\Sigma^1_{g,f}}$.
	The former acts on $(c')^n$ and
		preserves the $T'_0$ coordinate, as follows from
	\autoref{definition:hurwitz-module} and the definition of $c'$
	being a subrack.
	The latter acts on $c' \times
T'_0$ by assumption and preserves the first $(c')^{n-1}$ coordinates.
Combining this shows that 
$\psi_n(B_n^{\Sigma^1_{g,f}} \times (c')^n \times T'_0) \subset (c')^n \times
	T'_0$
	as every generator of $B_n^{\Sigma^1_{g,f}}$ sends $(c')^n \times
	T'_0$ to itself.
\end{proof}

The following lemma can easily be verified, for example, using the second
criterion from \autoref{lemma:criterion-to-be-subset}.
\begin{lemma}
	\label{lemma:largest-preserving-c-prime}
	Let $c$ be a rack, $S$ a bijective Hurwitz module over
	$c$, and $c' \subset c$ a subrack.
	If $(c',S_1) \subset (c,S)$ and $(c',S_2) \subset (c,S)$ are two subsets
	in the sense of \autoref{definition:subset},
	then $(c',S_1 \cup S_2) \subset (c,S)$.
\end{lemma}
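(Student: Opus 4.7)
The plan is to apply the second criterion of \autoref{lemma:criterion-to-be-subset} directly. Writing $S_1$ and $S_2$ for the bijective Hurwitz modules over $c'$ with $0$-sets $T^1_0, T^2_0 \subset T_0$ respectively, the union $S_1 \cup S_2$ should be interpreted as the data $(\Sigma^1_{g,f}, \{(c')^n \times (T^1_0 \cup T^2_0)\}_{n \geq 0}, \{\psi_n|_{(c')^n \times (T^1_0 \cup T^2_0)}\})$; the main point to verify is that this restriction is well-defined as a Hurwitz module, and then the subset condition is automatic since the structure maps are inherited from $S$.

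First, I would invoke the equivalence in \autoref{lemma:criterion-to-be-subset} which reduces the claim to checking, for any $x \in c'$, any $t \in T^1_0 \cup T^2_0$, and any $\gamma \in B_1^{\Sigma^1_{g,f}}$, that $\psi_1(\gamma, x, t) \in c' \times (T^1_0 \cup T^2_0)$. Then I would split into cases according to whether $t \in T^1_0$ or $t \in T^2_0$. In the first case, the hypothesis $(c', S_1) \subset (c, S)$ gives $\psi_1(\gamma, x, t) \in c' \times T^1_0$, which is contained in $c' \times (T^1_0 \cup T^2_0)$; symmetrically in the second case. This verifies the criterion and produces $S_1 \cup S_2$ as a bijective Hurwitz module over $c'$ which is a subset of $S$ over $c$.

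There is essentially no obstacle here: once \autoref{lemma:criterion-to-be-subset} is in hand, the statement is a one-line set-theoretic check that the union of two subsets of $T_0$ closed under the action of $B_1^{\Sigma^1_{g,f}} \times c'$ is again closed under that action. The bijectivity of the resulting module follows because the maps $\sigma^\gamma_t$ and $\tau^\gamma_x$ for $S_1 \cup S_2$ are just the restrictions of those for $S$ to elements indexed by $T^1_0 \cup T^2_0$, which remain bijections onto $c'$ and $T^1_0 \cup T^2_0$ respectively by the corresponding properties for $S_1$ and $S_2$.
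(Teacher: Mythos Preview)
Your proposal is correct and matches the paper's approach exactly: the paper simply remarks that the lemma ``can easily be verified, for example, using the second criterion from \autoref{lemma:criterion-to-be-subset},'' which is precisely what you do by splitting on whether $t \in T^1_0$ or $t \in T^2_0$.
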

%\begin{proof}
%	It is enough to show that for $\gamma \in \pi_1(\Sigma^1_{g,f}, \star),
%	s \in S_1 \cup S_2$ and $x \in c'$ the action of $\gamma$ on $(x,s)$ has
%	first coordinate in $c'$. By assumption this is true if $x \in S_1$ or
%	$x \in S_2$, and hence it holds if $x \in S_1 \cup S_2$.
%\end{proof}

With the above lemma, we can now define the notion of a maximal subset
associated to a subrack.
This will later be used to define a notion of the connected Hurwitz space
associated to a subrack.
\begin{notation}
	\label{definition:subsystem-for-subrack}
	Let $c$ be a rack and $S$ be a bijective Hurwitz module over $c$.
	For $c' \subset c$ a subrack, define $S_{c'}$ to be the bijective
	Hurwitz module over $c'$ which is maximal among all subsets,
	$(c', S_{c'}) \subset (c,S)$ in the sense of
	\autoref{definition:subset}.
	We note this is well defined by
	\autoref{lemma:largest-preserving-c-prime}.
\end{notation}

\begin{definition}
	\label{definition:normalizer}
	For $c$ a rack and $c' \subset c$ a subrack, we use $N_c(c')$, the {\em
normalizer} of
$c'$ in $c$, to denote the set of $x \in c$ so that $x \triangleright y \in c'$
for every $y \in c'$.
\end{definition}
\begin{lemma}
	\label{lemma:normalizer-property}
	For $c$ a rack and $c' \subset c$ a subrack,
	if $x \in c'$ and $y \in N_c(c')$ then $x \triangleright y \in N_c(c')$.
\end{lemma}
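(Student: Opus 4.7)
The plan is to directly verify the defining condition of the normalizer for $x \triangleright y$: namely, that $(x \triangleright y) \triangleright z \in c'$ for every $z \in c'$. The key algebraic tool will be the self-distributivity identity $a \triangleright (b \triangleright c) = (a \triangleright b) \triangleright (a \triangleright c)$, which holds in any rack. This identity is a consequence of the braid relation $\sigma_1\sigma_2\sigma_1 = \sigma_2\sigma_1\sigma_2$ applied via \autoref{definition:rack} to a triple $(a,b,c) \in c^3$: expanding both sides of the braid relation in terms of $\triangleright$ yields precisely self-distributivity.

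First I would note that, because $c'$ is a subrack and $x \in c'$, the left-translation $\sigma_x \colon c' \to c'$, $w \mapsto x \triangleright w$, is a bijection from $c'$ to itself. (On all of $c$, $\sigma_x$ is a bijection by the rack axiom; it sends $c'$ into $c'$ since $c'$ is a subrack. In the finite setting of interest, injectivity gives surjectivity onto $c'$; more generally this invertibility on $c'$ is built into the meaning of subrack.) Consequently, for any $z \in c'$ there exists a unique $w \in c'$ with $x \triangleright w = z$.

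Given such a decomposition, applying self-distributivity yields
\[
(x \triangleright y) \triangleright z \;=\; (x \triangleright y) \triangleright (x \triangleright w) \;=\; x \triangleright (y \triangleright w).
\]
Since $y \in N_c(c')$ and $w \in c'$, \autoref{definition:normalizer} gives $y \triangleright w \in c'$; since $x \in c'$ and $c'$ is closed under $\triangleright$, we conclude $x \triangleright (y \triangleright w) \in c'$. Thus $(x \triangleright y) \triangleright z \in c'$ for every $z \in c'$, which is exactly the condition $x \triangleright y \in N_c(c')$.

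The only point requiring any real care is the invertibility of $\sigma_x$ when restricted to $c'$, so that an arbitrary $z \in c'$ can be written as $x \triangleright w$ with $w \in c'$; the remainder is a two-line manipulation using self-distributivity and the hypotheses on $x$ and $y$.
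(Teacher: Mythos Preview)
Your proof is correct and is essentially the same as the paper's, just unpacked: the paper observes that $N_c(c')$ is preserved by any rack automorphism of $c$ that preserves $c'$, and that $x \triangleright(-)$ is such an automorphism when $x \in c'$; your computation with self-distributivity is exactly what verifies the first observation for $\phi = x\triangleright(-)$. The only delicate point---that $\sigma_x$ restricts to a bijection on $c'$---is needed in both arguments and you flag it correctly.
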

\begin{proof}
	Note that the set $N_c(c')$ is preserved by rack automorphisms of $c$ preserving $c'$. $x \triangleright (-)$ is such an automorphism, concluding the proof.
%	Observe that for $z \in c'$ we have $(x \triangleright y) \triangleright
%	z = (x \triangleright z) \triangleright (y \triangleright z)$.
%	The right hand side lies in $c'$ by the definition of $N_c(c')$.
%	Therefore the left hand side does as well. This means $x \triangleright
%	y$ lies in $N_c(c')$, as desired.
\end{proof}

We next aim to show that if $c' \subset c$ is a subrack, $S$ is a coefficient
system for $c$, there is a subset $(N_c(c'), S') \subset (c, S)$ so that $S'$
has the same $0$
set as $S_{c'}$.
The following lemma will be an important stepping stone, which unwinds the
conditions to be a bijective Hurwitz module.

\begin{lemma}
	\label{lemma:three-conditions}
	Suppose $c$ is a rack, $c' \subset c$ is a subrack.
	Let $S = ({\Sigma^1_{g,f}}, \{T_n\}_{n \in \mathbb
	Z_{\geq 0}},\{\psi_n\}_{n \in \mathbb Z_{\geq 0}})$ 
	be a bijective Hurwitz module over $c$.
	Fix two points $p_1$ and $p_2$ in $\Sigma^1_{g,f}$
	and a standard generating set for $\pi_1(\Sigma^1_{g,f})$ 
	of the form $\Delta := \{\alpha_1, \beta_1, \ldots, \alpha_g, \beta_g,\gamma_1,
	\ldots, \gamma_f\}$ as in
	\cite[\S2.2]{bellingeri:on-presentations-of-surface-braid-groups} (see
	\autoref{remark:opposite-gamma-convention}).
For any $\gamma \in \Delta$,
$x, y \in c$ and $t \in T_0$,
	\begin{align}
		\label{equation:first-relation-same}
		(x \triangleright^{-1} y) \triangleright^{-1} \sigma_t^\gamma(x)
		&= (x \triangleright^{-1} \sigma_t^\gamma(y))\triangleright^{-1}
		\sigma^\gamma_{\tau_y^\gamma(t)}(x) \\
		\label{equation:second-relation-same}
\sigma^\gamma_{\tau_x^\gamma(t)}(x \triangleright^{-1} y) &= x \triangleright^{-1}
\sigma_t^\gamma(y) \\
		\label{equation:third-relation-same}
		\tau_{x \triangleright^{-1} y}^\gamma(\tau_x^\gamma(t)) &=
	\tau_x^\gamma(\tau_y^\gamma(t)).
	\end{align}
	If $\gamma = \alpha_i, \phi=\beta_i$ for some $i$, and
$x, y \in c$ and $t \in T_0$,
	\begin{align}
		\label{equation:first-relation-pair}
(x \triangleright^{-1} \sigma_t^\gamma(y))
			\triangleright^{-1} \sigma^\phi_{\tau_y^\gamma(t)}(x)
			&=
y \triangleright^{-1} \sigma_t^\phi(y \triangleright x) \\
		\label{equation:second-relation-pair}
x \triangleright^{-1} \sigma_t^\gamma(y) &=
\sigma^\gamma_{\tau_x^\phi(t)}(y) \\
		\label{equation:third-relation-pair}
\tau_{y}^\gamma(\tau_x^\phi(t))&=
\tau_x^\phi(\tau_y^\gamma(t)).
	\end{align}
Finally, $\gamma \neq \phi \in \Delta$ are two distinct paths with
$\{\gamma,\phi\} \neq \{\alpha_i, \beta_i\} \subset \Delta$,
such that $\phi$ is situated above $\gamma$ in the model $\mathcal
M_{g,f,1}^\epsilon$ of \autoref{notation:point-pushed-quotient},
then, for
$x, y \in c$ and $t \in T_0$,
	\begin{align}
		\label{equation:first-relation-different}
y \triangleright^{-1} \sigma_t^\phi(y \triangleright x)
&=
\sigma_t^\gamma(y)
			\triangleright^{-1} \sigma^\phi_{\tau_y^\gamma(t)}(\sigma_t^\gamma(y)
			\triangleright x)\\
		\label{equation:second-relation-different}
\sigma^\gamma_{\tau_x^\phi(t)}(y) &=
\sigma_t^\gamma(y)
\\
		\label{equation:third-relation-different}
\tau_{y}^\gamma(\tau_x^\phi(t)) &=
\tau_{\sigma_t^\gamma(y)
\triangleright x}^\phi(\tau_y^\gamma(t)).
	\end{align}
\end{lemma}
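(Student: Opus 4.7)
The plan is to derive each of the nine displayed identities from a single defining relation of the two-strand surface braid group $B_2^{\Sigma^1_{g,f}}$ applied to a generic element $(x,y,t) \in c \times c \times T_0 = T_2$, and then to read off the resulting equality coordinate-by-coordinate in $c \times c \times T_0$. The nine identities come in three groups of three; each group will arise from a single braid-group relation whose three coordinate-projections in $c \times c \times T_0$ produce the three displayed identities (two in $c$, one in $T_0$). Concretely, I would fix a presentation of $B_2^{\Sigma^1_{g,f}}$ in the spirit of \cite[\S 2.2]{bellingeri:on-presentations-of-surface-braid-groups}, with a braid generator $\sigma$ swapping the two strands and ``point-push'' generators $\gamma_{(j)}$ for $\gamma \in \Delta$ and $j \in \{1,2\}$ moving the $j$-th strand around $\gamma$, with $\gamma_{(1)} = \sigma \gamma_{(2)} \sigma^{-1}$. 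The commuting diagram of \autoref{definition:hurwitz-module} for $n=2,\ i=1$, combined with the bijective hypothesis, then pins down the basic rules
\begin{align*}
\sigma \cdot (x,y,t) &= (y,\, y \triangleright^{-1} x,\, t), \\
\gamma_{(2)} \cdot (x,y,t) &= (x,\, \sigma^\gamma_t(y),\, \tau^\gamma_y(t)),
\end{align*}
from which the action of $\gamma_{(1)}$ is determined by conjugation.

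The three groups of identities correspond to three different braid-group relations. (i) The \emph{same-loop} case $\gamma=\phi$ comes from the commutation $\gamma_{(1)}\gamma_{(2)} = \gamma_{(2)}\gamma_{(1)}$ of point-pushes around two disjoint parallel copies of $\gamma$; projecting the resulting equality on $(x,y,t)$ onto its three coordinates produces \eqref{equation:first-relation-same}--\eqref{equation:third-relation-same}. (ii) The \emph{pair case} $\{\gamma,\phi\} = \{\alpha_i,\beta_i\}$ comes from Bellingeri's handle relation, under which the two orderings $\gamma_{(1)}\phi_{(2)}$ and $\phi_{(2)}\gamma_{(1)}$ differ by a conjugation by $\sigma$; this conjugation is precisely what produces the $\triangleright^{-1}$'s on both sides of \eqref{equation:first-relation-pair}--\eqref{equation:third-relation-pair}. (iii) The \emph{different-loop} case with $\phi$ situated above $\gamma$ in $\mathcal M^\epsilon_{g,f,1}$ comes from the relation expressing that the corresponding point-pushes commute up to a single $\sigma$-conjugation; the above/below convention fixes on which side of the conjugation the correction appears, and so pins down the exact placement of $\sigma^\gamma_t(y) \triangleright x$ in \eqref{equation:first-relation-different} and \eqref{equation:third-relation-different}.

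Once the correspondence between the three groups of identities and the three braid-group relations has been fixed, verifying each identity becomes a purely mechanical expansion using the two basic action rules above. The main obstacle is bookkeeping: each side of each braid-group relation is a word in several $\sigma^{\pm 1}$'s and point-pushes, and every application shuffles the two $c$-coordinates and the $T_0$-coordinate in a specific order, so tracking which $t$-value feeds into which $\tau^\gamma$ and which $x$-value into which $\sigma^\gamma$ is delicate. The pair case is trickiest because the handle commutator produces an asymmetric correction, and the different-loop case demands careful alignment of the paper's above/below convention in $\mathcal M^\epsilon_{g,f,1}$ with Bellingeri's orientation conventions.
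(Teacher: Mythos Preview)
Your overall strategy—apply a two-strand surface-braid relation to $(x,y,t)\in T_2$ and read off the three coordinates—is precisely the paper's. The paper implements it by evaluating the four words $\gamma\eta\phi\eta$, $\eta\phi\eta\gamma$, $\gamma\eta\phi\eta^{-1}$, $\eta\phi\eta^{-1}\gamma$ on $(x,y,t)$, where $\eta:=\sigma_1^{-1}$ acts by $(x,y,t)\mapsto(x\triangleright^{-1}y,x,t)$ and $\gamma,\phi$ act only on the second slot, and then equates the appropriate pair via Bellingeri's relations (R2)/(R8), (R4), and (R3)/(R6)/(R7) for the three cases respectively.

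The gap is in your case (i). With $\gamma_{(1)}:=\sigma\gamma_{(2)}\sigma^{-1}$, the commutation $[\gamma_{(1)},\gamma_{(2)}]=1$ is \emph{not} a relation in $B_2^{\Sigma^1_{g,f}}$: combined with (R2) it would force $[\gamma,\sigma_1^2]=1$, and if you run your own recipe the first-coordinate comparison yields $\sigma^\gamma_{\tau_y^\gamma(t)}(x)=\sigma^\gamma_t(x)$, which is neither \eqref{equation:first-relation-same} nor true for a general bijective Hurwitz module. Bellingeri's (R2) is $(\sigma_1\gamma)^2=(\gamma\sigma_1)^2$, i.e.\ $\gamma\eta\gamma\eta=\eta\gamma\eta\gamma$, which says $\gamma_{(2)}$ commutes with $\eta\gamma_{(2)}\eta=\sigma_1^{-1}\gamma_{(2)}\sigma_1^{-1}$ rather than with a $\sigma$-conjugate; geometrically, the disjoint parallel push through the other strand is $\eta\gamma_{(2)}\eta$, not $\sigma\gamma_{(2)}\sigma^{-1}$. (Your rule $\sigma\cdot(x,y,t)=(y,y\triangleright^{-1}x,t)$ is also off by a sign from the paper's $\sigma_1\cdot(x,y)=(y,y\triangleright x)$, which would compound the bookkeeping.) Working directly with Bellingeri's relations in their stated form, as the paper does, avoids both issues and makes the three-coordinate comparison mechanical.
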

\begin{remark}
	\label{remark:opposite-gamma-convention}
	We can think of the paths $\alpha_i, \beta_i, \gamma_i$ in 
	\autoref{lemma:three-conditions} in terms of the model $\mathcal
	M_{f,g,1}^\epsilon$ of \autoref{notation:point-pushed-quotient} as
	starting from a lower point on the left boundary and moving horizontally
	until it reaches a higher point.
	In particular, this is the opposite direction of the allowable paths we
	choose later in \autoref{definition:allowable-and-output}.
	However, it is convenient for us to use this opposite convention here to
	be able to directly apply the results of
	\cite{bellingeri:on-presentations-of-surface-braid-groups}.
\end{remark}

\begin{proof}
	Let $\eta \in B_2^{\Sigma_{0,0}^1} \subset B_2^{\Sigma_{g,f}^1}$
	denote the element corresponding to moving $p_1$ (labeled by $x$) counterclockwise under 
	$p_2$ (labeled by $y$), correspond to the map $c^2 \to c^2, (x,y) \mapsto (x
	\triangleright^{-1} y, x)$. 
	(This is notated as $\sigma_1^{-1}$ in \cite[Theorem
	1.1]{bellingeri:on-presentations-of-surface-braid-groups}.)
	Let us begin by computing the result of applying several braid group
	elements to $(x,y,t)$. We view an application of $\gamma$ or $\phi$ as
	taking the base point to be $p_1$ and moving $p_1$ around $\gamma$ or
	$\phi$.
	We compute
	\begin{equation}
	\begin{aligned}
		\label{equation:g-e-p-e}
		\gamma\eta\phi\eta(x,y,t)
		&= \gamma\eta\phi(x \triangleright^{-1} y, x, t) \\
		&= \gamma\eta(x \triangleright^{-1} y, \sigma_t^\phi(x),
		\tau_x^\phi(t)) \\
		&= \gamma\left( (x \triangleright^{-1} y) \triangleright^{-1}
		\sigma_t^\phi(x), x \triangleright^{-1} y, \tau_x^\phi(t)
	\right) \\
	&=  \left( (x \triangleright^{-1} y) \triangleright^{-1}
		\sigma_t^\phi(x), \sigma^\gamma_{\tau_x^\phi(t)}(x
		\triangleright^{-1} y), \tau_{x \triangleright^{-1}
		y}^\gamma(\tau_x^\phi(t))
	\right),
	\end{aligned}
\end{equation}
	\begin{equation}
	\begin{aligned}
		\label{equation:e-p-e-g}
		\eta\phi\eta\gamma(x,y,t) &=
		\eta\phi\eta\left( x, \sigma_t^\gamma(y), \tau_y^\gamma(t) \right) \\
		&= \eta\phi\left( x \triangleright^{-1} \sigma_t^\gamma(y), x,
		\tau_y^\gamma(t)\right) \\
		&= \eta \left( x \triangleright^{-1} \sigma_t^\gamma(y),
			\sigma^\phi_{\tau_y^\gamma(t)}(x),
		\tau_x^\phi(\tau_y^\gamma(t)) \right) \\
		&=\left( (x \triangleright^{-1} \sigma_t^\gamma(y))
			\triangleright^{-1} \sigma^\phi_{\tau_y^\gamma(t)}(x),
			x \triangleright^{-1} \sigma_t^\gamma(y),
		\tau_x^\phi(\tau_y^\gamma(t)) \right),
\end{aligned}
\end{equation}
	\begin{equation}
	\begin{aligned}
		\label{equation:g-e-p-e-1}
		\gamma\eta\phi\eta^{-1}(x,y,t)
		&= \gamma\eta\phi(y, y \triangleright x, t) \\
		&= \gamma\eta(y, \sigma_t^\phi(y\triangleright x),
		\tau_{y \triangleright x}^\phi(t)) \\
		&= \gamma\left( y \triangleright^{-1}
			\sigma_t^\phi(y \triangleright x), y, \tau_x^\phi(t)
	\right) \\
	&=  \left( y \triangleright^{-1}
			\sigma_t^\phi(y \triangleright x), 
			\sigma^\gamma_{\tau_x^\phi(t)}(y), \tau_{y}^\gamma(\tau_x^\phi(t))
		\right),
\end{aligned}
\end{equation}
	\begin{equation}
	\begin{aligned}
		\label{equation:e-p-e-1-g}
		\eta\phi\eta^{-1}\gamma(x,y,t) &=
		\eta\phi\eta^{-1}\left( x, \sigma_t^\gamma(y), \tau_y^\gamma(t) \right) \\
		&= \eta\phi\left( \sigma_t^\gamma(y), \sigma_t^\gamma(y)
			\triangleright x,
		\tau_y^\gamma(t)\right) \\
		&= \eta \left( \sigma_t^\gamma(y),
			\sigma^\phi_{\tau_y^\gamma(t)}(\sigma_t^\gamma(y)
			\triangleright x),
		\tau_{\sigma_t^\gamma(y)\triangleright x}^\phi(\tau_y^\gamma(t)) \right) \\
		&=\left( \sigma_t^\gamma(y)
			\triangleright^{-1} \sigma^\phi_{\tau_y^\gamma(t)}(\sigma_t^\gamma(y)
			\triangleright x),
			\sigma_t^\gamma(y),
		\tau_{\sigma_t^\gamma(y)
	\triangleright x}^\phi(\tau_y^\gamma(t)) \right).
\end{aligned}
\end{equation}

	We have the relation $\gamma \eta \gamma \eta = \eta \gamma \eta \gamma
	\in B_2^{\Sigma_{g,f}^1}$ 
	for $\gamma \in \Delta$ by
	\cite[Theorem 1.1,
	(R2),(R8)]{bellingeri:on-presentations-of-surface-braid-groups}. (Recall
		$\eta$ is notated as $\sigma_1^{-1}$ in 
		\cite[Theorem
	1.1]{bellingeri:on-presentations-of-surface-braid-groups}.)
	%corresponding to the fact that moving $p_1$
	%around $\gamma$ and $p_2$ around $\phi'$ is the same
	%element in $B_2^{\Sigma_{g,f}^1}$ as first moving $p_2$
	%around $\phi'$ and then moving $p_1$ around $\gamma$.
	Taking $\gamma = \phi$ in \eqref{equation:g-e-p-e} and
	\eqref{equation:e-p-e-g} and equating the three terms yields
	\eqref{equation:first-relation-same},
	\eqref{equation:second-relation-same}, and
	\eqref{equation:third-relation-same}.
	Next, 
\cite[Theorem 1.1,
	(R4)]{bellingeri:on-presentations-of-surface-braid-groups}
	implies that when $\gamma= \alpha_i, \phi = \beta_i$,
we can identify \eqref{equation:e-p-e-g} and \eqref{equation:g-e-p-e-1}.
Identifying the three terms yields \eqref{equation:first-relation-pair},
\eqref{equation:second-relation-pair}, and \eqref{equation:third-relation-pair}.
Finally,
upon comparing the terms of \eqref{equation:g-e-p-e-1} and
\eqref{equation:e-p-e-1-g},
\cite[Theorem 1.1,
(R3), (R6), (R7)]{bellingeri:on-presentations-of-surface-braid-groups}
implies \eqref{equation:first-relation-different}, \eqref{equation:second-relation-different} and
\eqref{equation:third-relation-different}.
\end{proof}

We can next deduce an important relation between $S_{c'}$ and
$S_{N_c(c')}$.
\begin{lemma}
	\label{lemma:normalizer-is-hurwitz-module}
	Suppose $c$ is a rack, $c' \subset c$ is a subrack.
	Let $S = ({\Sigma^1_{g,f}}, \{T_n\}_{n \in \mathbb
	Z_{\geq 0}},\{\psi_n\}_{n \in \mathbb Z_{\geq 0}})$ 
	be a bijective Hurwitz module over $c$
	and
	$S_{c'} = ({\Sigma^1_{g,f}}, \{T'_n\}_{n \in \mathbb
	Z_{\geq 0}}, \psi'_n)$
	be the system over $c'$ defined in
	\autoref{definition:subsystem-for-subrack}.
	Then 
	$S' := ({\Sigma^1_{g,f}}, \{N_c(c')^n \times T'_0\}_{n \in \mathbb
	Z_{\geq 0}}, \{\psi_n|_{N_c(c')^n \times T'_0}\}_{n \in \mathbb Z_{\geq
0}})$
	is a bijective Hurwitz module.
\end{lemma}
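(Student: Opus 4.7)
The plan is to verify the criterion of \autoref{lemma:criterion-to-be-subset}: for every $x \in N_c(c')$, $t \in T'_0$, and each $\gamma$ in a standard generating set $\{\alpha_i, \beta_i, \gamma_j\}$ of $B_1^{\Sigma^1_{g,f}}$, I need to show (A) $\sigma_t^\gamma(x) \in N_c(c')$ and (B) $\tau_x^\gamma(t) \in T'_0$. A rack-theoretic input I would first establish is that for any $e \in c'$ and $u \in c$, one has $u \in N_c(c')$ iff $e \triangleright u \in N_c(c')$ iff $e \triangleright^{-1} u \in N_c(c')$; this follows by applying the rack axiom $(e \triangleright u) \triangleright (e \triangleright w) = e \triangleright (u \triangleright w)$ to elements $w \in c'$, together with the fact that $e \triangleright$ restricts to a bijection of $c'$ (using finiteness).

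For (B), I would appeal to the maximality of $T'_0$ from \autoref{lemma:largest-preserving-c-prime}. Let $T''_0 \subseteq T_0$ be the smallest subset containing $T'_0$ and closed under every $\tau_x^\gamma$ with $x \in N_c(c')$ and $\gamma$ a generator. I plan to show that $(c', (c')^n \times T''_0)$ with the restricted action is a bijective Hurwitz submodule over $c'$. By \autoref{lemma:criterion-to-be-subset}, this reduces, for $y \in c'$, $s \in T''_0$, and a generator $\delta$, to checking $\tau_y^\delta(s) \in T''_0$ (immediate from $c' \subseteq N_c(c')$) and $\sigma_s^\delta(y) \in c'$. The latter I would prove by induction on the depth of $s$: writing $s = \tau_{x_k}^{\phi_k}(s'')$ with $x_k \in N_c(c')$, I apply the appropriate case of \autoref{lemma:three-conditions}---namely \eqref{equation:second-relation-same} when $\delta = \phi_k$, \eqref{equation:second-relation-pair} when $\{\delta, \phi_k\}$ is an $\{\alpha_i, \beta_i\}$ pair, and \eqref{equation:second-relation-different} otherwise---to reduce $\sigma_s^\delta(y) \in c'$ to the inductive hypothesis, exploiting $x_k \in N_c(c')$ to keep $c'$-elements in $c'$ under $\triangleright^{-1}$. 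Maximality then forces $T''_0 = T'_0$, yielding (B).

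For (A), with (B) in hand, I would exploit \eqref{equation:first-relation-same} applied with $y \in c'$: both $a := x \triangleright^{-1} y$ and $b := x \triangleright^{-1} \sigma_t^\gamma(y)$ lie in $c'$ (using $x \in N_c(c')$), and the relation becomes $a \triangleright^{-1} \sigma_t^\gamma(x) = b \triangleright^{-1} \sigma_{\tau_y^\gamma(t)}^\gamma(x)$. Combined with the rack-theoretic input above, this yields the equivalence $\sigma_t^\gamma(x) \in N_c(c')$ iff $\sigma_{\tau_y^\gamma(t)}^\gamma(x) \in N_c(c')$, showing (A) is invariant along $\tau_y^\gamma$-orbits in $T'_0$. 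A parallel analysis using \eqref{equation:first-relation-pair} and \eqref{equation:first-relation-different} gives further invariances that simultaneously translate $t$ and $x$ by $c'$-actions. Combined with the trivial base case that (A) holds for $x \in c'$ (since $\sigma_t^\gamma(x) \in c' \subseteq N_c(c')$ by $S_{c'}$ being a module over $c'$), bootstrapping through these invariances would extend (A) to all $x \in N_c(c')$.

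The hard part will be the final bootstrap: organizing the collection of invariances into a closure argument that exhausts $N_c(c')$, in analogy with the treatment of (B). I expect the cleanest implementation is to define a candidate subset $c'' \subseteq N_c(c')$ containing $c'$ and closed under the relevant invariances produced by \eqref{equation:first-relation-same}, \eqref{equation:first-relation-pair}, and \eqref{equation:first-relation-different}, then verify by a maximality-style argument on $c \times T'_0$ that $c'' = N_c(c')$, using the rack-theoretic invariance of $N_c(c')$ under $c'$-actions to control the inductive step.
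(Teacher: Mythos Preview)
Your overall plan---use \autoref{lemma:criterion-to-be-subset}, the relations of \autoref{lemma:three-conditions}, and a maximality argument via $S_{c'}$---matches the paper's strategy. But both the order of (A) and (B) and your method for (A) have genuine problems.

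The bootstrap for (A) cannot reach $N_c(c')$. The invariances you extract from \eqref{equation:first-relation-same}, \eqref{equation:first-relation-pair}, \eqref{equation:first-relation-different} with $y \in c'$ and $x \in N_c(c')$ only move $x$ within its $c'$-orbit (for instance, \eqref{equation:first-relation-pair} relates $x$ to $y \triangleright x$). Starting from the base case $x \in c'$ you therefore never leave $c'$, and no ``maximality-style argument on $c \times T'_0$'' rescues this: the only maximality available is that of $T'_0$ as a subset of $T_0$, which says nothing about which subrack of $N_c(c')$ you have reached.

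Moreover, (B) actually \emph{requires} (A). In your inductive step for $\sigma_s^\delta(y)$ with $s = \tau_{x_k}^{\phi_k}(s'')$, the case $\delta = \beta_i$, $\phi_k = \alpha_i$ is not handled by \eqref{equation:second-relation-pair}: the only relation computing $\sigma^{\beta_i}$ at a $\tau^{\alpha_i}$-shifted parameter is \eqref{equation:first-relation-pair}, and solving it for $\sigma^{\beta_i}_{\tau_{x_k}^{\alpha_i}(s'')}(y)$ requires knowing $\sigma^{\alpha_i}_{s''}(x_k) \in N_c(c')$, which is exactly (A). The analogous issue arises with \eqref{equation:first-relation-different}. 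So proving (B) first and (A) second is circular.

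The paper resolves this by proving (A) \emph{first and directly}, swapping the roles of $x$ and $y$ in \eqref{equation:first-relation-same}: take $x \in c'$ arbitrary and $y \in N_c(c')$ fixed. Both $(x \triangleright^{-1} y) \triangleright^{-1} \sigma_t^\gamma(x)$ and $\sigma^\gamma_{\tau_y^\gamma(t)}(x)$ lie in $c'$ (the latter via \eqref{equation:second-relation-same}), and the relation is then used to conclude $\sigma_t^\gamma(y) \in N_c(c')$. With (A) in hand for $t \in T'_0$, the paper runs the maximality argument using only the one-step closure $\widetilde{T'_0} = \{\tau_y^\gamma(t) : y \in N_c(c'),\, \gamma \in \Delta,\, t \in T'_0\}$, invoking (A) precisely at the points where \eqref{equation:first-relation-pair} and \eqref{equation:first-relation-different} are needed.
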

\begin{proof}
 	Let $\star$ be a fixed basepoint on
	the boundary of $\Sigma^1_{g,f}$.
	Take $\Delta$ to be the generating set of $\pi_1(\Sigma^1_{g,f})$ from
\autoref{lemma:three-conditions}.
	Let $\widetilde{T'_0} := \{ \tau_x^\gamma(t) : x \in N_c(c'), \gamma \in
	\Delta, t \in T'_0\}.$
		We claim that the action
	of $\psi_1(\delta, \bullet, \bullet) : c \times T_0 \to c\times T_0$
	preserves $c' \times \widetilde{T'_0}$ for every $\delta \in \Delta$.

	Choose 
	$x \in c',\gamma \in\Delta, y \in N_c(c'), t \in T'_0$ so that
	$\tau_y^\gamma(t) \in \widetilde{T'_0}$.
	First, we check $\sigma^\gamma_{\tau^\gamma_y(t)}(x) \in c'$.
	Since $y \in N_c(c')$ we have $y \triangleright x \in c'$, and hence 
	$\sigma_t^\gamma( y \triangleright x) \in c'$ as $t \in T_0'$. Finally 
	$y \triangleright^{-1} \sigma_t^\gamma( y \triangleright x) \in c'$ since $y \in
	N_c(c')$. This implies 
	$\sigma^\gamma_{\tau^\gamma_y(t)}(x) \in c'$ using 
	\eqref{equation:second-relation-same}, where we use $y$ here in place
		of $x$ there and $y \triangleright x$ here in place of $y$
	there.

	Next, we check that for $\gamma \in \Delta$, $y \in N_c(c')$, and $t \in
	T'_0$,
	$\sigma^\gamma_t(y) \in N_c(c')$.
	Indeed, choose $x \in c'$.
	We find $x \triangleright^{-1} y \in N_c(c')$ by
	\autoref{lemma:normalizer-property}, and therefore
	$(x \triangleright^{-1} y) \triangleright^{-1} \sigma_t^\gamma(x) \in c'$.
	It follows from \eqref{equation:first-relation-same} that $(x
	\triangleright^{-1} \sigma_t^\gamma(y)) \triangleright^{-1}
	\sigma^\gamma_{\tau^\gamma_y(t)}(x) \in c'$.
	Since we saw above $\sigma^\gamma_{\tau^\gamma_y(t)}(x) \in c'$, we find
	that $x	\triangleright^{-1} \sigma_t^\gamma(y) \in N_c(c')$ and hence 
	$\sigma_t^\gamma(y) \in N_c(c')$.

	Next, we check $\sigma^{(-)}_{(-)}$, with input in $\Delta \times \widetilde{T'_0}$, takes values in endomorphisms of $c'$.
	That is for $x \in c', \gamma \in \Delta, t' \in \widetilde{T'_0}$, we
	will show $\sigma^\gamma_{t'}(x) \in c'$.
	Let $x \in c',\gamma,\phi \in\Delta, y \in N_c(c'), t \in T'_0$ so that
	$\tau_y^\phi(t) \in \widetilde{T'_0}$.
	We already saw above that when $\gamma = \phi$, 
	$\sigma^\gamma_{\tau^\gamma_y(t)}(x) \in c'$
	above, using \eqref{equation:second-relation-same}.
	One can similarly verify that when $\phi \neq \gamma$, we still have
$\sigma^\gamma_{\tau^\phi_y(t)}(x) \in c'$ using one of
\eqref{equation:first-relation-pair}, \eqref{equation:second-relation-pair},
\eqref{equation:first-relation-different}, or \eqref{equation:second-relation-different},
depending on the case; note that it will be important to know
$\sigma_t^\gamma(y) \in N_c(c')$, as we established above, when we apply
\eqref{equation:first-relation-pair} or
\eqref{equation:first-relation-different}.
Therefore, we have $\sigma^{(-)}_{(-)}$, with input in $\Delta \times \widetilde{T'_0}$, takes values in endomorphisms of $c'$.

	Next, we check $\tau^{(-)}_{(-)}$, with input in $\Delta \times c'$ gets sent to an endomorphism preserving
	$\widetilde{T'_0}$. Indeed, let $x \in c',\gamma,\phi \in
	\Delta, y \in N_c(c'), t \in T'_0$ so that
	$\tau_y^\gamma(t) \in \widetilde{T'_0}$. 
	We first consider the case $\phi = \gamma$.
	Then $\tau^\gamma_x
	(\tau_y^\gamma(t)) = \tau_y^\gamma( \tau^\gamma_{y \triangleright
	x}(t))$
	by \eqref{equation:third-relation-same}.
	We want to show the left hand side lies in $\widetilde{T'_0}$, which indeed holds
	because $y \triangleright x \in c'$ and so 
	$\tau^\gamma_{y \triangleright	x}(t) \in T'_0$, and hence
	$\tau_y^\gamma( \tau^\gamma_{y \triangleright
	x}(t)) \in \widetilde{T'_0}.$
	One can similarly verify the remaining cases that $\phi \neq \gamma$
	using \eqref{equation:third-relation-pair} and
	\eqref{equation:third-relation-different}; in the latter case, one will
	either use that $\sigma^\gamma_t(y) \in N_c(c')$ when $y \in N_c(c')$,
	as shown above, or that $c'$ normalizes $N_c(c')$ as shown in
	\autoref{lemma:normalizer-property}.

	Combining the above, we have shown above that $\psi_1(\delta, \bullet, \bullet)$ preserves $c'
	\times \widetilde{T'_0}$.
	We will next show $\widetilde{T'_0} = T'_0$. 
First, \autoref{lemma:criterion-to-be-subset} implies
	$\widetilde{T'_n} := (c')^n \times \widetilde{T'_0}$ defines a
	bijective Hurwitz module $\widetilde{S}$ over $c'$ containing $S$ as a
	subset. Then, maximality of $S_{c'}$ implies
	$\widetilde{S} = S_{c'}$ so $\widetilde{T'_0} = T'_0$. 
%	In other words, we claim  $\Delta \times N_c(c')$ preserves
%	$T'_0$.

	We can reinterpret the condition that $\widetilde{T'_0} = T'_0$ as
	saying that $\Delta \times N_c(c')$ preserves $T'_0$.
	We also saw above that for $\gamma \in \Delta, t \in T'_0, y \in
	N_c(c')$, we have $\sigma^\gamma_t(y) \in N_c(c')$.
	This means that $\Delta \times T_0'$ preserves $N_c(c')$.
	Therefore, $\psi_1(\delta, \bullet, \bullet)$ preserves
	$N_c(c') \times T_0'$ for each $\delta \in \Delta$.
	Therefore, 
	$S'$ is a bijective Hurwitz module by 
	\autoref{lemma:criterion-to-be-subset}.
\end{proof}

\begin{lemma}
	\label{lemma:normalizer-subsystem-containment}
	Let $c$ be a rack, $S$ be a bijective Hurwitz module over $c$, and $c' \subset
	c$ be a subrack.
	Then $(c', S_{c'}) \subset (N_c(c'), S_{N_c(c')})$.
	In particular,
	$S_{c'} = (S_{N_c(c')})_{c'}$, viewed as bijective Hurwitz modules over $c'$.
\end{lemma}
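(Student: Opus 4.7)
The plan is to extract everything from Lemma \ref{lemma:normalizer-is-hurwitz-module}, which already does the substantive work, and then deduce the two claims by invoking maximality in the definition of $S_{c'}$ and $S_{N_c(c')}$.

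First, I would observe that $c' \subset N_c(c')$. This is immediate: if $x, y \in c'$ then $x \triangleright y \in c'$ because $c'$ is a subrack, so $x$ satisfies the defining condition for membership in $N_c(c')$. Thus $c'$ is indeed a subrack of $N_c(c')$, and the containment $(c', S_{c'}) \subset (N_c(c'), S_{N_c(c')})$ makes sense.

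Next, I would apply Lemma \ref{lemma:normalizer-is-hurwitz-module} directly to produce a bijective Hurwitz module $S'$ over $N_c(c')$ whose $0$-set equals the $0$-set $T'_0$ of $S_{c'}$, obtained by restricting the action maps $\psi_n$ of $S$ to $N_c(c')^n \times T'_0$. By construction $S'$ is a subset of $(c,S)$ over $N_c(c')$, so by maximality of $S_{N_c(c')}$ among such subsets (Notation \ref{definition:subsystem-for-subrack}, applied via Lemma \ref{lemma:largest-preserving-c-prime}), we conclude that $T'_0$ is contained in the $0$-set of $S_{N_c(c')}$ and that the action of $S'$ agrees with the restriction of the action of $S_{N_c(c')}$. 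Since $S_{c'}$ is by definition the further restriction of the action on $S'$ to $(c')^n \times T'_0$, the required inclusion of $0$-sets and compatibility of action diagrams (Definition \ref{definition:subset}) follow, proving $(c', S_{c'}) \subset (N_c(c'), S_{N_c(c')})$.

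For the second statement, I would use that the subset relation is transitive: if $(c'', S'') \subset (c', S')$ and $(c', S') \subset (c, S)$ with $c'' \subset c'$ subracks, then composing the inclusions of $0$-sets and the compatibility diagrams shows $(c'', S'') \subset (c, S)$. Applying this to $(c', (S_{N_c(c')})_{c'}) \subset (N_c(c'), S_{N_c(c')}) \subset (c, S)$, the maximality of $S_{c'}$ forces the $0$-set of $(S_{N_c(c')})_{c'}$ to sit inside $T'_0$. Conversely, the first part of the lemma exhibits $(c', S_{c'})$ as a subset of $(N_c(c'), S_{N_c(c')})$ over the subrack $c' \subset N_c(c')$, so maximality of $(S_{N_c(c')})_{c'}$ gives the reverse containment. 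The two inclusions yield equality as bijective Hurwitz modules over $c'$.

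The only non-trivial ingredient here is Lemma \ref{lemma:normalizer-is-hurwitz-module}, which is where the braid relations from Lemma \ref{lemma:three-conditions} were used; everything else is a formal consequence of maximality, so I do not expect any serious obstacle in this proof.
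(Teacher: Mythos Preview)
Your proof is correct and follows essentially the same route as the paper's: apply Lemma~\ref{lemma:normalizer-is-hurwitz-module} to produce the bijective Hurwitz module $S'$ over $N_c(c')$ with $0$-set $T'_0$, use maximality of $S_{N_c(c')}$ to get the first containment, and then argue both inclusions for the second claim via maximality and transitivity of the subset relation. The paper is slightly more terse (it omits the explicit check that $c' \subset N_c(c')$ and the transitivity remark), but the logic is identical.
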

\begin{proof}
	First, we verify 
$(c', S_{c'}) \subset (N_c(c'), S_{N_c(c')})$.
Let $S = (\Sigma^1_{g,f}, \{T_n\}, \{\psi_n\}_{n \in \mathbb Z_{\geq 0}})$
and let
$S_{c'} = (\Sigma^1_{g,f}, \{T'_n\}, \{\psi'_n\}_{n \in \mathbb Z_{\geq 0}})$.
Using \autoref{lemma:normalizer-is-hurwitz-module},
we find 
$S' := (\Sigma^1_{g,f}, \{N_c(c')^n \times T'_0\}_{n \in \mathbb Z_{\geq 0}}, \{\psi_n|_{N_c(c')^n \times T'_0}\}_{n \in \mathbb
Z_{\geq 0}})$
is a bijective Hurwitz module.
The definition of $S_{N_c(c')}$ implies $(N_c(c'), S') \subset (N_c(c'),
S_{N_c(c')})$.
Therefore, $(c', S_{c'}) \subset (N_c(c'), S_{N_c(c')})$, proving the first
part.

Since
$(c', S_{c'}) \subset (N_c(c'), S_{N_c(c')})$,
it follows that
$(c', S_{c'}) \subset (c', (S_{N_c(c')})_{c'})$
Moreover, since 
$(N_c(c'), S_{N_c(c')}) \subset (c, S)$, we also obtain
$(c', (S_{N_c(c')})_{c'}) \subset (c', S_{c'})$, and so
$S_{c'} = (S_{N_c(c')})_{c'}$.
\end{proof}

\subsection{Quotients of Hurwitz modules}
\label{subsection:quotients}

In this subsection, we discuss quotients of Hurwitz modules by certain subracks.
We start with defining quotients of racks by normal subracks.
Recall the normalizer of a subrack was defined in
\autoref{definition:normalizer}.
For $c' \subset c$ a normal subrack, we defined the quotient rack $c/c'$ in
\autoref{definition:quotient-rack}.
We needed the following lemma to show this notion of quotient is well defined.

\begin{lemma}
	\label{lemma:quotient-well-defined}
	If $c' \subset c$ is a normal subrack, the operation $\overline{x}
\triangleright \overline{y} := \overline{x \triangleright y}$
is independent of the choice of representatives $x$ and $y$.
\end{lemma}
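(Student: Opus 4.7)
The plan is to reduce to the case of a single elementary equivalence. The equivalence relation defining $c/c'$ is generated by relations of the form $x \sim w \triangleright x$ with $w \in c'$, so by transitivity of $\sim$ it suffices to show that replacing either argument of $\triangleright$ by an element in its $c'$-orbit leaves the class of $x \triangleright y$ unchanged. I will handle the two arguments separately, dealing with the right argument first because the left one will be reduced to it.

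First I would show that $x \triangleright (w \triangleright y) \sim x \triangleright y$ for any $w \in c'$ and any $x, y \in c$. The braid relation $\sigma_1 \sigma_2 \sigma_1 = \sigma_2 \sigma_1 \sigma_2$ forces rack self-distributivity, so
\[ x \triangleright (w \triangleright y) = (x \triangleright w) \triangleright (x \triangleright y). \]
Because $c'$ is normal in $c$, i.e.\ $N_c(c') = c$, the element $x \triangleright w$ lies in $c'$. Therefore the right hand side is of the form (element of $c'$) $\triangleright$ $(x \triangleright y)$, which is equivalent to $x \triangleright y$ by definition of $\sim$.

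Next I would show that $(w \triangleright x) \triangleright y \sim x \triangleright y$, using the previous step as an input. The key auxiliary fact is that for every $w \in c$ the map $w \triangleright (-)\colon c \to c$ is a bijection: this is built into the rack axiom, since otherwise the braid generator $\sigma_i$ would not be invertible. Given $y \in c$ and $w \in c'$, let $y' \in c$ be the unique element with $w \triangleright y' = y$. Self-distributivity gives
\[ (w \triangleright x) \triangleright y \;=\; (w \triangleright x) \triangleright (w \triangleright y') \;=\; w \triangleright (x \triangleright y'), \]
which is $\sim x \triangleright y'$ because $w \in c'$. By the previous paragraph applied with $y'$ in place of $y$, we have $x \triangleright y' \sim x \triangleright (w \triangleright y') = x \triangleright y$. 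Chaining these equivalences gives $(w \triangleright x) \triangleright y \sim x \triangleright y$.

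I do not anticipate a serious obstacle here beyond keeping the bookkeeping straight. The only subtle point is recognizing which axiom is doing which job: normality $N_c(c') = c$ is precisely what is needed to handle the right argument (because one must know $x \triangleright w \in c'$), while bijectivity of the rack action is what lets us reduce the left-argument case to the right-argument case by solving $w \triangleright y' = y$.
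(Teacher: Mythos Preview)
Your proof is correct and follows essentially the same approach as the paper: both first handle the right argument via self-distributivity and normality, then reduce the left argument to the right by solving $w \triangleright y' = y$ using bijectivity of the rack action. The only difference is notation (the paper writes $u$ for your $w$ and $w$ for your $y'$).
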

\begin{proof}
	Suppose $u \in c'$ and $x, y \in c$. First, we claim that $\overline{x
	\triangleright(u \triangleright y)} = \overline{x
	\triangleright y}$. Using the definition of a rack,
$x\triangleright(u \triangleright y) = (x \triangleright u) \triangleright (x
\triangleright y)$. The claim then follows since $x \triangleright u \in
	c'$ as $c'$ is normal.
	To conclude, it suffices to show that $\overline{ (u \triangleright x)
\triangleright y} = \overline{x \triangleright y}$.
Suppose $w\in c$ is such that $u \triangleright w = y$.
Then $\overline{(u \triangleright x) \triangleright (u \triangleright w)}
=\overline{u
\triangleright (x \triangleright w)} = \overline{x \triangleright w} =
\overline{x \triangleright y}$.
\end{proof}

We next define quotients of Hurwitz modules by
normal subracks. This was used to express our main result computing the stable
homology of Hurwitz modules in \autoref{theorem:one-large-stable-homology}.

\begin{definition}
	\label{definition:quotient-hurwitz-module}
	If $c$ is a rack and $c' \subset c$ is a
	subrack, and let $S = (\Sigma^1_{g,f}, \{T_n\}_{n \in \mathbb Z_{\geq 0}},
	\{\psi_n\}_{n \in \mathbb Z_{\geq 0}})$ be a bijective Hurwitz module
	over $c$.
	Suppose $c' \subset c$ is normal and closed under the action of
	$B_1^{\Sigma^1_{g,f}}
	\times T_0$ on $c$. 	
	Define the bijective Hurwitz module 
	$S/c' = (\Sigma^1_{g,f}, \{\overline{T}_n\}_{n \in \mathbb Z_{\geq 0}},
	\{\overline{\psi}_n\}_{n \in \mathbb Z_{\geq 0}})$ over $c/c'$
	as follows.
	Take $\overline{T}_0$ to denote the quotient of $T_0$ by the equivalence
	relation generated by 
	$s \sim s'$ if there is some $\gamma \in B_n^{\Sigma^1_{g,f}}$ and $x_1,
	\ldots, x_n \in c'$ with
	$\psi_n(\gamma, x_1, \ldots, x_n, s) \sim (y_1, \ldots, y_n,s')$
	such that $y_i$ and $x_i$ have the same image in $c'/c'$.
	Then, take $\overline{T}_n := (c/c')^n \times \overline{T}_0$.
	Finally, for $x \in T_n$, we use $\overline{x}$ to denote its image in
	$\overline{T}_n$ and for $\gamma \in B_n^{\Sigma^1_{g,f}}$ define
	$\overline{\psi}_n((\gamma, \overline{x})) :=
	\overline{\psi_n((\gamma, x))}$. 
	We will see this is well defined later in
	\autoref{lemma:quotient-module-well-defined}.
\end{definition}

\begin{warn}
	\label{warning:}
	We note that the ``quotient'' $S/c'$ is not a quotient in any
	categorical sense of the word. It is merely a convenient Hurwitz module
	for the proofs of our main results.
\end{warn}

To make sense of the above definition of quotient of Hurwitz modules,
we need to show it is well defined. We do so in the next couple lemmas.

\begin{lemma}
	\label{lemma:dependence-on-image}
	We claim that for $x_1, \ldots, x_n \in c', s \in T_0$,
	$(y_1, \ldots, y_n,t) := \psi_n(\gamma, x_1, \ldots, x_n,s)$
	then the values of $y_1, \ldots, y_n$ in $c/c'$ only depend on the
	values of $x_1, \ldots, x_n$ in $c/c'$.
\end{lemma}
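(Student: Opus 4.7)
The plan is to induct on the length of $\gamma$ written as a word in a set of generators of the surface braid group $B_n^{\Sigma^1_{g,f}}$. Since the equivalence relation on $c$ defining $c/c'$ is generated by single-step moves $x_j \mapsto w \triangleright x_j$ with $w \in c'$, it suffices to verify that each braid generator, when applied to an input differing from another by one such single-step move, produces outputs that also differ only by such moves. Composing through the word in $\gamma$ then yields the general claim.

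For a generating set of $B_n^{\Sigma^1_{g,f}}$, I would use the planar half-twists $\sigma_i$ for $1 \leq i \leq n-1$ together with the surface generators $\gamma \in \Delta$ from \autoref{lemma:three-conditions} acting on a single (say, the first) strand. Surface generators acting on other strands can be obtained by conjugating with planar braids, so it is enough to handle the first-strand case.

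For the planar generators $\sigma_i$, the action on the $c$-coordinates is the rack operation $(x_i, x_{i+1}) \mapsto (x_{i+1}, x_{i+1} \triangleright x_i)$, leaving the other entries and $s$ untouched. By \autoref{lemma:quotient-well-defined}, $\triangleright$ descends to $c/c'$, which yields the claim for these generators. For a surface generator $\gamma \in \Delta$ acting on the first strand, the key input is \autoref{lemma:three-conditions}; specifically, equation \eqref{equation:second-relation-same} (applied with $x$ replaced by $w \in c'$) shows that $\sigma^\gamma_s(w \triangleright x_1) = w \triangleright \sigma^\gamma_{\tau^\gamma_w(s)}(x_1)$, so that the outputs before and after the move $x_1 \mapsto w \triangleright x_1$ differ by an application of $w \triangleright$, which is trivial in $c/c'$. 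The analogous equations \eqref{equation:second-relation-pair} and \eqref{equation:second-relation-different} cover the case of $\alpha_i, \beta_i$ pairs and distinct generators of $\Delta$ situated above one another.

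The main obstacle I anticipate is bookkeeping the shift of the subscript from $s$ to $\tau^\gamma_w(s)$ as one composes braid generators, and verifying that the cumulative effect on subsequent applications of $\sigma^{(-)}_{(-)}$ produces only equivalences in $c/c'$ on the final $y_i$'s. The $\tau$-relations \eqref{equation:third-relation-same}, \eqref{equation:third-relation-pair}, and \eqref{equation:third-relation-different} provide precisely the coherence needed to match these subscript shifts across the braid relations used in the induction, allowing one to reorganize the computation as if an auxiliary $w$-strand had been inserted, acted on, and then absorbed via planar braids. Once this combinatorial bookkeeping is in place, each step of the induction produces an output differing from the previous one by a single application of some $u \triangleright$ with $u \in c'$, and the claim follows.
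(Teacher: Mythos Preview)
Your overall approach mirrors the paper's: reduce to single braid-group generators, handle the planar half-twists via \autoref{lemma:quotient-well-defined}, and for a surface generator $\gamma \in \Delta$ extract from \eqref{equation:second-relation-same} the identity $\sigma_s^\gamma(w \triangleright x_1) = w \triangleright \sigma^\gamma_{\tau_w^\gamma(s)}(x_1)$. You also correctly isolate the remaining obstacle, namely comparing $\sigma_s^\gamma(x_1)$ with $\sigma^\gamma_{\tau_w^\gamma(s)}(x_1)$ modulo $c'$.

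There is, however, a concrete gap in how you resolve that shift. The relations you invoke do not apply here. The identities \eqref{equation:second-relation-pair} and \eqref{equation:second-relation-different} involve a \emph{pair} of distinct generators $\gamma,\phi\in\Delta$; they are not alternate versions of \eqref{equation:second-relation-same} for special single generators (the ``same'' relations already hold for every $\gamma\in\Delta$), and the shift in question is by $\tau_w^\gamma$, not by some $\tau_w^\phi$ with $\phi\neq\gamma$. The $\tau$-identities \eqref{equation:third-relation-same}, \eqref{equation:third-relation-pair}, \eqref{equation:third-relation-different} constrain only the maps on $T_0$ and say nothing about $\sigma$. What is actually needed, and what the paper uses, is \eqref{equation:first-relation-same}: substituting $y\mapsto w\in c'$ there, normality of $c'$ in $c$ together with closure under the $B_1^{\Sigma^1_{g,f}}\times T_0$ action force both $x\triangleright^{-1}w$ and $x\triangleright^{-1}\sigma_t^\gamma(w)$ into $c'$, so the identity reads $u\triangleright^{-1}\sigma_t^\gamma(x)=v\triangleright^{-1}\sigma^\gamma_{\tau_w^\gamma(t)}(x)$ with $u,v\in c'$, giving $\sigma_t^\gamma(x)\sim\sigma^\gamma_{\tau_w^\gamma(t)}(x)$ in $c/c'$. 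Combining this with your formula from \eqref{equation:second-relation-same} yields $\sigma_s^\gamma(w\triangleright x_1)\sim\sigma_s^\gamma(x_1)$, which closes the single-generator case; the auxiliary-strand reorganization you sketch is then unnecessary.
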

\begin{proof}
	We can write $\gamma$ as a composite of paths in
	$B^{\Sigma^1_{0,0}}_n$ and $B^{\Sigma^1_{g,f}}_1$ in $\Delta$, it
	suffices to show the lemma when $n = 1$.
	Concretely, this means that we wish to show that for $x,y \in c', s \in
	T_0$, so that $x$ and $y$ have the same image in $c'/c'$, then
	$\sigma_s(x) = \sigma_s(y)$.
	To check this, it is enough to verify it for $\gamma \in \Delta$.
	Then, by \eqref{equation:first-relation-same},
(taking $y$ here to denote $x$ there and $z$ here to denote $y$
there,) $\sigma_s^\gamma(y)$ has the same image in $c/c'$ 
	as $\sigma^\gamma_{\tau_z^\gamma(t)}(y)$ for $z \in c'$.
	Hence, we find that
	$\sigma^\gamma_s(z \triangleright^{-1} y)$ lies in the same $c/c'$ component as
	$\sigma_{\tau_z^\gamma(t)}(z \triangleright^{-1} y)$, and by
	\eqref{equation:second-relation-same}, this also lies in
	the same component as $z\triangleright^{-1} \sigma_s^\gamma(y)$, and hence
	in the same component as $\sigma_s^\gamma(y)$.
	Therefore, $\sigma^\gamma_s$ sends $y$ and $z \triangleright^{-1} y$ to
	the same component for any $z \in c'$, so is well defined on $c/c'$ components.
\end{proof}

\begin{lemma}
	\label{lemma:quotient-module-well-defined}
	Suppose that $c$ is a rack, $S$ is a bijective Hurwitz module over $c$
	with $0$ set $T_0$, 
	and $c' \subset c$ is a subrack such that $c'$ is normal in $c$ and $c'$
	is preserved by the $B_1^{\Sigma^1_{g,f}} \times T_0$ action.
	Then set $S/c'$ is a bijective Hurwitz module.
\end{lemma}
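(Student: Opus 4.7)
The plan is to verify the three properties that define a bijective Hurwitz module: each $\overline{\psi}_n$ must be a well-defined map, the collection $\{\overline{\psi}_n\}$ must be a braid-group action satisfying the compatibility diagram of \autoref{definition:hurwitz-module}, and the bijectivity condition of \autoref{definition:bijective-hurwitz-module} must hold. Of these, the action and compatibility axioms descend formally from $\psi_n$, and bijectivity likewise descends once the maps $\overline{\sigma}$ and $\overline{\tau}$ are known to land in $\aut(c/c')$ and $\aut(\overline{T}_0)$. So the entire content of the lemma is the well-definedness of $\overline{\psi}_n$.

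Since $\overline{T}_n = (c/c')^n \times \overline{T}_0$, well-definedness amounts to showing that $\psi_n$ respects the equivalence relation on $T_n$ defined by: $(x_1,\ldots,x_n,s) \equiv (x'_1,\ldots,x'_n,s')$ iff each $x_i, x'_i$ coincide in $c/c'$ and $s, s'$ coincide in $\overline{T}_0$. That relation is generated by the following two families of moves, which I plan to treat separately: (i) replacing a single $x_i$ by $u \triangleright^{\pm 1} x_i$ for some $u \in c'$, with the other coordinates unchanged, and (ii) replacing $s$ by $s'$ whenever $\psi_m(\delta, \vec u, s) = (\vec v, s')$ for some $\delta$ and $\vec u, \vec v \in (c')^m$, in accordance with the defining relation of $\overline{T}_0$.

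For moves of type (ii), the argument is that the generating relation on $\overline{T}_0$ is tautologically preserved: given $\delta$ and $\vec u, \vec v$ as above, and any $\gamma \in B_n^{\Sigma^1_{g,f}}$ and $\vec x \in c^n$, I apply the compatibility diagram from \autoref{definition:hurwitz-module} with the inclusion $B_m^{\Sigma^1_{0,0}} \times B_n^{\Sigma^1_{g,f}} \hookrightarrow B_{n+m}^{\Sigma^1_{g,f}}$ to factor $\psi_n(\gamma,\vec x, s')$ through $\psi_{n+m}$ applied to $(\vec x, \vec u, s)$, followed by a braid that returns the last $m$ strands. This exhibits the outputs of $\psi_n(\gamma,\vec x, s)$ and $\psi_n(\gamma,\vec x, s')$ as differing only by a move of type (ii), and the $(c/c')^n$-image of the first $n$ coordinates is unchanged because no $c'$-strand crosses a $c$-strand in a way that is nontrivial modulo $c'$. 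For moves of type (i), I reduce to $n=1$ using that $B_n^{\Sigma^1_{g,f}}$ is generated by $B_n^{\Sigma^1_{0,0}}$ and copies of $B_1^{\Sigma^1_{g,f}}$: the former acts by the pure rack-braid action on $c^n$, which descends to $(c/c')^n$ by \autoref{lemma:quotient-well-defined}, while the $B_1^{\Sigma^1_{g,f}}$ factor on a single strand requires checking that for $\gamma \in \Delta$, $u \in c'$, $x \in c$, and $t \in T_0$, one has $\sigma^\gamma_t(u\triangleright x) \equiv \sigma^\gamma_t(x)$ in $c/c'$ and $\tau^\gamma_{u\triangleright x}(t) \equiv \tau^\gamma_x(t)$ in $\overline{T}_0$, and symmetrically when $t$ is modified instead of $x$.

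The main obstacle will be this last verification, which generalizes \autoref{lemma:dependence-on-image} from the case $x \in c'$ to arbitrary $x \in c$. The key inputs are the identities \eqref{equation:first-relation-same}--\eqref{equation:third-relation-different} of \autoref{lemma:three-conditions}, which hold for all $x, y \in c$; combining them with the normality of $c'$ (which ensures $u \triangleright^{-1}(\cdot)$ and its outputs stay within compatible $c/c'$-classes) lets me transport between $\sigma^\gamma_t(x)$ and $\sigma^\gamma_t(u \triangleright x)$ modulo $c'$, and between $\tau^\gamma_x(t)$ and $\tau^\gamma_{u \triangleright x}(t)$ modulo the generating relation on $\overline{T}_0$; the off-diagonal and distinct-generator cases of \autoref{lemma:three-conditions} handle the situation when the relevant $B_1^{\Sigma^1_{g,f}}$-factors correspond to different loops $\gamma, \phi \in \Delta$. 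Once these two families of moves are shown to be preserved, well-definedness of $\overline{\psi}_n$ follows, and all other properties of \autoref{definition:hurwitz-module} and \autoref{definition:bijective-hurwitz-module} descend from $S$ to $S/c'$ automatically.
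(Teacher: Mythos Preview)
Your overall plan is correct and matches the paper's approach: well-definedness of $\overline{\psi}_n$ is the only real issue, one reduces to $n=1$ with $\gamma\in\Delta$, and the identities of \autoref{lemma:three-conditions} are the key input. However, your argument for move (ii) does not work as written. You invoke the compatibility square of \autoref{definition:hurwitz-module} for $B_m^{\Sigma^1_{0,0}} \times B_n^{\Sigma^1_{g,f}} \hookrightarrow B_{n+m}^{\Sigma^1_{g,f}}$, but the witness $\delta$ for $s \sim s'$ lives in $B_m^{\Sigma^1_{g,f}}$ (a surface braid, not a disk braid), and $\gamma$ is also a surface braid; the compatibility square only ever has a disk-braid factor on the left, so there is no way to place both $\delta$ and $\gamma$ into it. Equivalently, there is no canonical lift of $\gamma \in B_n^{\Sigma^1_{g,f}}$ to an element of $B_{n+m}^{\Sigma^1_{g,f}}$ that ignores the extra $m$ points. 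So (ii) is not ``tautological.''

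The paper handles (ii) by the same $n=1$ reduction you use for (i): write the witness for $s\sim s'$ as a composite $\tau = \tau^{\gamma_{i_k}}_{u_{i_k}}\circ\cdots\circ\tau^{\gamma_{i_1}}_{u_{i_1}}$ with each $u_{i_j}\in c'$ and $\gamma_{i_j}\in\Delta$, then use \eqref{equation:third-relation-same}, \eqref{equation:third-relation-pair}, \eqref{equation:third-relation-different} iteratively to commute $\tau^\gamma_x$ past each factor, obtaining $\tau^\gamma_{x'}\circ\tau = \tau'\circ\tau^\gamma_{x''}$ with $x''$ in the same $c/c'$-class as $x$ and $\tau'$ again a $c'$-labelled composite witnessing equivalence. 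Your final paragraph gestures at exactly this (``the off-diagonal and distinct-generator cases''), and it should replace the compatibility-diagram argument for (ii). Finally, the remaining step $\tau^\gamma_{z\triangleright^{-1} x}(s)\sim\tau^\gamma_x(s)$ for $z\in c'$ is more delicate than simply quoting the relations: the paper first rewrites $\tau^\gamma_{z\triangleright^{-1}x}\circ(\tau^\gamma_x)^{-1}$ via \eqref{equation:third-relation-same} into the form $(\tau^\gamma_{w})^{-1}\circ\tau^\gamma_z$ with $w\in c'$, and then exhibits an explicit element of $B_2^{\Sigma^1_{g,f}}$ (a half-twist conjugated by $\gamma$) whose $\psi_2$-action produces the required $\overline{T}_0$-equivalence. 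Your outline should budget for this computation.
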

\begin{proof}
	The only difficult part to check is that the maps $\overline{\psi}_n$
	are well defined.

	Suppose we have some $(x'_1, \ldots, x'_n, s')$ which is equivalent to
	$(x_1, \dots, x_n,s)$ under the equivalence relation defining $S/c'$;
	that is, we can suppose $x'_i$ agrees with $x_i$ in $c/c'$ and 
	$s$ is equivalent to $s'$ as elements of $T_0$.
	Write $(y_1', \ldots, y_n',t') := \psi_n(\gamma, x'_1, \ldots, x'_n,s')$.
	Then \autoref{lemma:dependence-on-image} implies $y_i'$ agrees with $y_i$ in $c/c'$ for all
	$i$ (since $x'_i$ agrees with $x_i$).
	It remains to check that $t'$ is also equivalent to $t$.
	To simplify matters, by writing $\gamma$ as a composite of elements in
	$B^{\Sigma^1_{0,0}}_n$ and $B^{\Sigma^1_{g,f}}_1$ in $\Delta$,
	we may assume $n= 1$ and moreover that $\gamma \in
	\Delta$ as in \autoref{lemma:three-conditions}, so we just need to show
	that for $x,x' \in c$ with the same image in $c/c'$, that
	$\tau_x^\gamma(s) \sim \tau_{x'}^\gamma(s')$.
	By assumption, we can find $u_1, \ldots, u_j \in c'$ and $\eta \in
	B_j^{\Sigma^1_{g,f}}$ with
	$\psi_j(\eta, u_1, \ldots, u_j,s) = (u'_1, \ldots, u'_j,s')$ for $u_i$
	with the same image as $u_i'$ in $c'/c'$.
	This will allow us to write $s' = \tau(s),$ where $\tau$ is some composite of functions of the
	form $\tau_{u_{i_k}}^{\gamma_{i_k}}$ with $\gamma_{i_k} \in \Delta$.
	Then, using \eqref{equation:third-relation-same},
	\eqref{equation:third-relation-pair}, and
	\eqref{equation:third-relation-different} iteratively, we can rewrite
	$\tau_{x'}^\gamma(s') =
	\tau_x^\gamma(\tau(s))=\tau'(\tau_{x''}^\gamma(s))$ where $x'' \in c$ has the
	same image as $x'$ and $x$ in $c/c'$ and $\tau'$ is a composite of
	functions of the form $\tau_{v_{i_k}}^{\gamma_{i_k}}$ for $v_i \in c'$ elements
	in the same $c'$ component as $u_i$. This reduces us to verifying that 
	$\tau_{x''}^\gamma(s) \sim \tau_x^\gamma(s).$
	Finally, to check this, it suffices to verify the case that $x'' = z
	\triangleright^{-1} x$ for $z \in c'$.
	Hence, we want to show $\tau_{z \triangleright^{-1} x}^\gamma \circ
(\tau_x^\gamma)^{-1}(s) \sim s$, which holds using 
\eqref{equation:third-relation-same} because
\begin{align*}
	\tau_{z \triangleright^{-1} x}^\gamma \circ
(\tau_x^\gamma)^{-1}(s)
&=
(\tau_{(z \triangleright^{-1} x) \triangleright^{-1} z}^\gamma)^{-1} \circ
\tau_{(z \triangleright^{-1} x) \triangleright^{-1} z}^\gamma \circ
\tau_{z \triangleright^{-1} x}^\gamma \circ
(\tau_x^\gamma)^{-1}(s) 
\\
&=
(\tau_{(z \triangleright^{-1} x) \triangleright^{-1} z}^\gamma)^{-1} \circ
\tau_{z \triangleright^{-1} x}^\gamma \circ
\tau_z^\gamma \circ
(\tau_x^\gamma)^{-1}(s) 
\\
&=
(\tau_{(z \triangleright^{-1} x) \triangleright^{-1} z}^\gamma)^{-1} \circ
\tau_z^\gamma \circ
\tau_{x}^\gamma \circ
(\tau_x^\gamma)^{-1}(s) 
\\
&=
(\tau_{(z \triangleright^{-1} x) \triangleright^{-1} z}^\gamma)^{-1} \circ
\tau_z^\gamma(s).
\end{align*}
So it remains to check 
$(\tau_{(z \triangleright^{-1} x) \triangleright^{-1} z}^\gamma)^{-1} \circ
\tau_z^\gamma(s)$ is equivalent to $s$.
To see this, note that using \autoref{lemma:dependence-on-image}, note that $\sigma^\gamma_s(z)$
has the same image in $c'/c'$ as $w := \sigma^\gamma_s((z \triangleright^{-1} x)
\triangleright^{-1} z)$.
Therefore, if $\gamma' \in B_2^{\Sigma^1_{g,f}}$ is the path which first does $\gamma$, then applies the
half twist $\eta$ switching the two elements of $c$ and then applies $\gamma^{-1}$,
we find
\begin{align*}
	\psi_2(\gamma',w, z,s) &=  \psi_2(\gamma^{-1}, \psi_2(\eta, \psi_2(\gamma,
	w, z,s))) = \psi_2(\gamma^{-1}, \psi_2(\eta, w, \sigma_s(z),\tau_z(s))) \\
&=
	\psi_2(\gamma^{-1}, w \triangleright^{-1} \sigma_s(z), w, \tau_z(s)) \\
	&= 
	(w \triangleright^{-1} \sigma_s(z),(z \triangleright^{-1} x) \triangleright^{-1} z , (\tau_{(z \triangleright^{-1} x) \triangleright^{-1} z}^\gamma)^{-1} \circ
\tau_z^\gamma(s))
\end{align*}
so we see that indeed 
$(\tau_{(z \triangleright^{-1} x) \triangleright^{-1} z}^\gamma)^{-1} \circ
\tau_z^\gamma(s)$ is equivalent to $s$ because $w$ lies in the same $c'$ orbit
as 
$w \triangleright^{-1} \sigma_s(z)$
and $z$ lies in the same $c'$ component as 
$(z \triangleright^{-1} x) \triangleright^{-1} z$.
\end{proof}

We conclude the section with a simple lemma that will be important for our
application to the BKLPR heuristics.
\begin{lemma}
	\label{lemma:quotient-components-conf}
	Suppose $c$ is a rack with a single component and $S = (\Sigma^1_{g,f},
	\{T_n\}_{n \in \mathbb Z_{\geq 0}}, \{\psi_n\}_{n \in \mathbb Z_{\geq
0}}\}$ is a bijective Hurwitz module over $c$.
	Then, every component of $\Hur_n^{c/c, S/c}$ maps isomorphically to
	$\Conf_n^{\Sigma^1_{g,f}}$.
\end{lemma}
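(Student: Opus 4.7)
The plan is to reduce the statement to showing that the monodromy action of $B_n^{\Sigma^1_{g,f}}$ on the fiber of the relevant covering is trivial. Since $c$ has a single component, the quotient rack $c/c$ consists of a single point. Therefore $\overline{T}_n = (c/c)^n \times \overline{T}_0$ is canonically identified with $\overline{T}_0$, and, by \autoref{definition:configuration-space}, the space $\Hur_n^{c/c,S/c}$ is precisely the unramified cover of $\Conf_n^{\Sigma^1_{g,f}}$ corresponding to the induced map $B_n^{\Sigma^1_{g,f}} \to \aut(\overline{T}_0)$ coming from $\overline{\psi}_n$. Thus a component maps isomorphically to $\Conf_n^{\Sigma^1_{g,f}}$ precisely when the orbit of $B_n^{\Sigma^1_{g,f}}$ on $\overline{T}_0$ indexing it is a singleton, and showing that every component has this property amounts to showing the action of $B_n^{\Sigma^1_{g,f}}$ on $\overline{T}_0$ is trivial.

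Next I would verify this triviality directly from the definition of $\overline{T}_0$ in \autoref{definition:quotient-hurwitz-module}, applied with $c' = c$. Fix $\gamma \in B_n^{\Sigma^1_{g,f}}$ and $s \in T_0$; we may assume $n \geq 1$ and $c \neq \emptyset$, as otherwise the statement is either vacuous or immediate. Choose any lift $(x_1,\ldots,x_n) \in c^n$ of the unique element of $(c/c)^n$ and write $\psi_n(\gamma,x_1,\ldots,x_n,s) = (y_1,\ldots,y_n,s')$, which is automatically an element of $c^n \times T_0$ since the bijective Hurwitz module structure preserves $c^n \times T_0$. The condition in \autoref{definition:quotient-hurwitz-module} that $x_i$ and $y_i$ have the same image in $c/c$ is automatic because $c/c$ is a single point. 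Hence by definition $s \sim s'$ in $\overline{T}_0$, so $\overline{s} = \overline{s'}$. Under the identification $\overline{T}_n \cong \overline{T}_0$, this is precisely the statement that $\overline{\psi}_n(\gamma, \overline{s}) = \overline{s}$.

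Finally, since $B_n^{\Sigma^1_{g,f}} = \pi_1(\Conf_n^{\Sigma^1_{g,f}})$ acts trivially on the fiber $\overline{T}_0$, the associated covering $\Hur_n^{c/c,S/c} \to \Conf_n^{\Sigma^1_{g,f}}$ is the trivial cover, splitting as a disjoint union of $|\overline{T}_0|$ copies of $\Conf_n^{\Sigma^1_{g,f}}$, each mapped isomorphically to the base. The main (and in fact only) subtlety is the well-definedness of $\overline{\psi}_n$ via \autoref{lemma:quotient-module-well-defined}, which tells us that the choice of lift $(x_1,\ldots,x_n)$ above does not matter; no additional argument beyond unwinding the definitions is needed.
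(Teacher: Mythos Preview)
Your proof is correct and follows essentially the same approach as the paper: both reduce to showing that $B_n^{\Sigma^1_{g,f}}$ acts trivially on the fiber, and both verify this by noting that since $c/c$ is a single point, the condition ``$x_i$ and $y_i$ have the same image in $c/c$'' in \autoref{definition:quotient-hurwitz-module} is automatic, whence $s \sim s'$ in $\overline{T}_0$ directly from the definition. Your presentation is slightly more explicit about the identification $\overline{T}_n \cong \overline{T}_0$, but the substance is identical.
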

\begin{proof}
	Start with an element $(x_1, \ldots, x_n, s) \in \Hur_n^{c,S}$ mapping
	to an element $(z_1, \ldots, z_n,t) \in \Hur_n^{c/c,S/c}$. The statement
	of the lemma is equivalent to the statement that every element of $B^{\Sigma^1_{g,f}}_n$ acts trivially
	on $(z_1, \ldots, z_n,t)$. 
	Suppose we have some path $\gamma \in B_n^{\Sigma^n_{g,f}}$ so that
	$\psi_n(\gamma, x_1, \ldots, x_n, s) = (x_1', \ldots, x_n', s')$.
	Then, we wish to show $x_i$ is equivalent to $x_i'$ in $c/c$ and $s$ is
	equivalent to $s'$ in $S/c$.
	Since $c$ has a single component $x_i$ and $x_i'$ lie in the same
	component, so are equivalent in $c/c$.
	Finally, $s$ is equivalent to $s'$ in $S/c$ as is immediate from the
	definition of $S/c$, using that $x_i$ and $x_i'$ lie in
	the same component of $c$.
\end{proof}

\section{Scanning arguments}
\label{section:scanning}

Throughout this paper, it will be convenient to have particular topological
models for certain bar constructions, which are of the form
$M\otimes_{\Hur^c_+}\Hur^{c,S}_+ $,
where $c$ is a rack, $M$ is a discrete module for $\Hur^c_+$
and $S$ is a Hurwitz module over $c$.
%and more generally for 
%$M\otimes_{\Hur^c_+}\Hur^{c,S}_+$, for $M$ suitable discrete modules for
%$\Hur^c$.
Many of the models we will construct will be similar to those
constructed in \cite[Appendix
A]{landesmanL:the-stable-homology-of-non-splitting},
and so we will be somewhat brief.

The main result of this section will be \autoref{proposition:pointed-scanning},
which identifies a certain bar construction with an explicit topological space.
Along the way to proving that, we first introduce notation for a particular model
of Hurwitz spaces in \autoref{subsection:notation-scanning}.
We then relate this to a scanning model for the bar construction in
\autoref{subsection:scanning-model}. 
We next relate this to a quotient model in
\autoref{subsection:quotient-model}.
Finally, we make further refinements of
this quotient model in \autoref{subsection:quotient-refinements} in order to
prove \autoref{proposition:pointed-scanning}.

%In future sections, 
%it will be convenient to have topological spaces which are explicit models
%for certain bar constructions involving $\Hur^{c,S}$.
%We would like to apply \autoref{lemma:bar-to-b} in the case $N = \Hur^{c,S}$ and
%$M = \on{pt}$ (or a variant thereof with base points) in order to deduce 
%\autoref{proposition:module-cells-bounded} after adding a disjoint basepoint.

\subsection{Notation for scanning models}
\label{subsection:notation-scanning}

We begin by producing a topological monoid
modeling 
$\Hur^{c,S}$ and $\Hur^c$,
so that the former is a module over the latter.
To construct these, we define a ``Moore variant'' of
$\Hur^{c,S}$, where we also keep track of a time parameter.
We call this Moore variant $\on{hur}^{c,S}$ to match the notation in
\cite[Notation A.2.1 and Notation
A.2.4]{landesmanL:the-stable-homology-of-non-splitting}.
In order to define this, we first construct
$\Sigma^1_{g,f}$ as a quotient in a particular way,
depending on a time $t$, which we denote $\mathcal M_{g,f,t}$,
which will be useful for describing Hurwitz spaces.
This definition is a generalization of
\cite[\S4.2]{bianchiS:homology-of-configuration-spaces}
(where $t= 1, g = 0$)
and
\cite[Proof of Lemma
4.3.1]{ellenbergL:homological-stability-for-generalized-hurwitz-spaces}
(where $t = 1$).

\begin{notation}
	\label{notation:surface}
	Let $\mathbf R :=[0,t] \times [0,1]$ be a rectangle.
	Decompose the side $\{t\} \times [0,1]$ into $4g + 2f$ consecutive
	intervals $J_1, \ldots, J_{4g}, J'_1, \ldots, J'_{2f}$ of equal length, 
	ordered and oriented with increasing second coordinate, as in \autoref{figure:surface-picture}.
	Let $W$ be the set of the $f$ points consisting of the larger endpoint of $J'_{2i+1}$ for $0 \leq i \leq f - 1$.
	Let $\mathbf R - W$ denote the punctured rectangle where we remove
	$W$.
	Let $\mathcal M_{g,f,t}$ denote the quotient of $\mathbf R -W$ obtained by
	identifying $J_{4i+1}$ with $J_{4i+3}$, $J_{4i+2}$ with $J_{4i+4}$, and
	$J'_{2j+1}$ with $J'_{2j+2}$
	via their unique orientation reversing isometry
	for $0 \leq i \leq g-1$ and $0 \leq j \leq f - 1$ .
	Let $\mathfrak p : \mathbf R- W \to \mathcal M_{g,f,t}$ denote the quotient map.
	Then, $\mathcal M_{g,f,t}$ is homeomorphic to $\Sigma^1_{g,f}$.
\end{notation}
In the case $g = 1, f =2$, the quotient $\mathcal M_{g,f,t}$ of $\mathbf R - W$ is depicted in
\autoref{figure:surface-picture}.

\begin{figure}
\includegraphics[scale=.4]{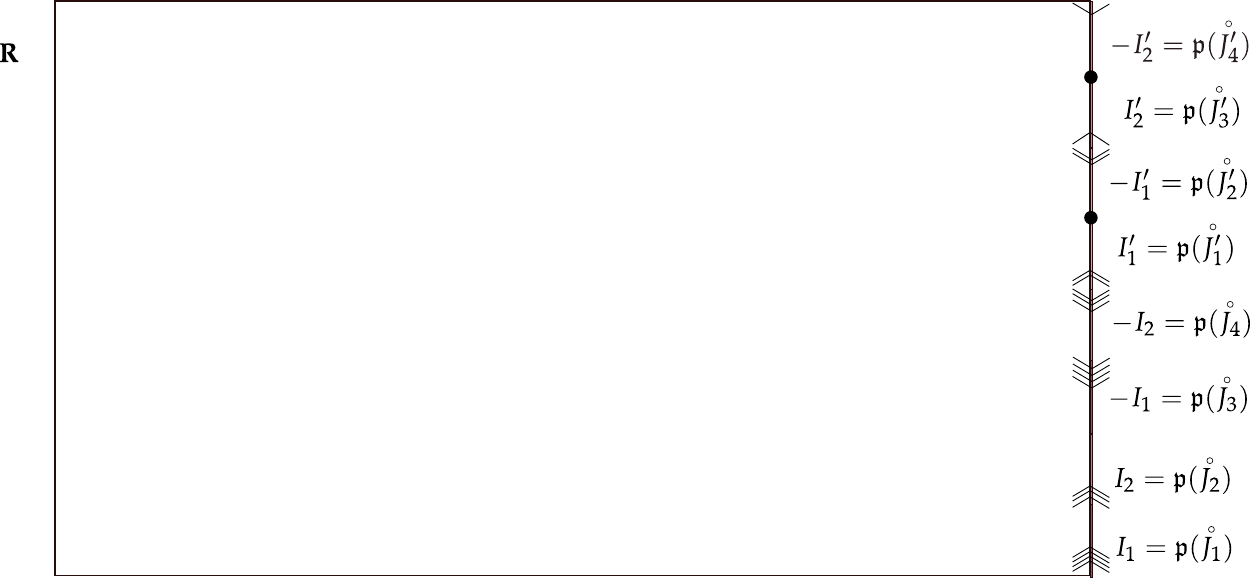}
\caption{
	This picture depicts the quotient $\mathcal M_{g,f,t}$ of the rectangle
	$\mathbf R - W$ in the case $g = 1, f = 2$. 
	The boundary component of $\mathcal M$
consists of the union of the upper, left, and lower edges.
The arrows indicate the orientations of the segments of the edges. 
The segments of the same color are glued to each other with the
orientations indicated.
The two black
dots indicate the two punctures comprising $W$.}
\label{figure:surface-picture}
\end{figure}

We next use the above to define a topological model for configuration space.
For the next notation, it will be useful to recall 
the topological space $\on{confbig}$
defined in
\cite[Notation A.2.1]{landesmanL:the-stable-homology-of-non-splitting},
whose points are given by pairs $t \in \mathbb R_{\geq 0}$ and configurations of
finitely many distinct unordered points in $(0,t) \times (0,1)$. 
As in \cite[Notation A.2.2]{landesmanL:the-stable-homology-of-non-splitting}, we
think of a standard generator of the braid group as rotating two adjacent points
clockwise in a half twist around each other.

\begin{notation}
	\label{notation:conf}
	Fix $g,f \in \mathbb Z_{\geq 0}$.
	Using notation from \autoref{notation:surface},
	define the topological space $\on{conf}^{\Sigma^1_{g,f}}$
as the set of pairs $(t,x)$ for $t \in \mathbb
R_{> 0}$ 
and $x$ a (possibly empty) configuration of finitely many distinct
unordered points in 
$\mathfrak p( [0,t] \times (0,1) -W) \subset \mathcal M_{g,f,t}$ that do not contain the image of the endpoints of $J_i$ or $J'_i$.
This topological space is a left module for the topological monoid
$\on{confbig}$,
as defined in \cite[Notation
A.2.1]{landesmanL:the-stable-homology-of-non-splitting}:
Let $(y,s) \in \on{confbig}$
so that $s \in \mathbb R_{\geq 0}$ and
$y \subset (0,s) \times (0,1) \subset [0,s] \times
[0,1]$ a configuration of points.
The left action is given by $(y,s) \cdot (x,t) = (y \cdot x, s+t)$, where $y
\cdot x$
denotes the concatenation of $y$ and $x$ viewed as a configuration in
$\mathcal M_{g,f,s+t}$.
We use $\on{conf}^{\Sigma^1_{g,f}}_n \subset \on{conf}^{\Sigma^1_{g,f}}$ to denote the
component parameterizing configurations $x$ consisting of $n$ points.
There is a map of topological spaces $t: \on{conf}^{\Sigma^1_{g,f}} \to \mathbb
R_{> 0}$ sending $(t,x) \mapsto t$.
There is a subset $\on{ord}^{\Sigma^1_{g,f}} \subset
\on{conf}^{\Sigma^1_{g,f}}$ consisting of
configurations $x = ( (a_1, b_1), \ldots, (a_n, b_n))$ where $b_i =1/2$
	for all $1 \leq i \leq n$.
For each $n$, the intersection $\on{ord}^{\Sigma^1_{g,f}} \cap \on{conf}^{\Sigma^1_{g,f}}_n$
is contractible, and we use this to view
$\on{ord}^{\Sigma^1_{g,f}} \cap \on{conf}^{\Sigma^1_{g,f}}_n \subset \on{conf}^{\Sigma^1_{g,f}}_n$
as a fixed contractible space, which we think of as a basepoint.

We also define $\on{conf}^{\circ, \Sigma^1_{g,f}} \subset\on{conf}^{\Sigma^1_{g,f}}$
	to be the subset of $(t,x)$ such that $x \subset 
	\mathfrak p( (0,t] \times ((0,1) -W)) \subset \mathcal M_{g,f,t}$, i.e. we
	prohibit any points of $x$ from lying on the left boundary of $\mathcal
	M_{g,f,t}$.
\end{notation}

%We are nearly ready to define our model for Hurwitz space, after introducing a
%notion of components for Hurwitz modules.
For the next notation, it will be useful to recall 
the topological space $\on{hur}^c$ and $\on{hurbig}^c$ from 
\cite[Notation A.2.4]{landesmanL:the-stable-homology-of-non-splitting}.
Indeed, $\on{hurbig}^c$ has points given as $B_n$ equivalence classes
$( (x_1, \ldots, x_n),t,\gamma,(\alpha_1, \ldots, \alpha_n))$ where $t \in
\mathbb R_{\geq 0}$, $(\{x_1, \ldots, x_n\},t)$ is a point of $\on{confbig}$,
$\gamma$ is a path from $(\{x_1, \ldots, x_n\},t)$ to a point of
$\ord^{\Sigma^1_{g,f}}_n$ with second coordinate $t$, and $\alpha_1, \ldots,
\alpha_n \in c$.
The space $\on{hur}^c$ is defined similarly except we require that the
configuration $x = \{x_1, \ldots, x_n\}$ is contained in $[1/2,t-1/2] \times
(0,1)$.
Here is our promised model for Hurwitz modules.

\begin{notation}
	\label{notation:hur}
	Fix a rack $c$ and a Hurwitz module $S = ({\Sigma^1_{g,f}}, \{T_n\}_{n \in \mathbb
	Z_{\geq 0}}, B^{\Sigma^1_{g,f}}_n \times T_n \to T_n)$ over $c$, as in
	\autoref{definition:hurwitz-module}.
	Using the contractible set $\on{ord}^{\Sigma^1_{g,f}}$ constructed in 
	\autoref{notation:conf} as a basepoint, we can identify the fundamental group of
	$\on{conf}^{\Sigma^1_{g,f}}_n$ with the surface braid group
	$B_n^{\Sigma^1_{g,f}}
	\simeq \pi_1(\on{conf}^{\Sigma^1_{g,f}}_n,
	\on{ord}^{\Sigma^1_{g,f}}_n)$.
	We recall $T_n = c^n \times T_0$ as in
	\autoref{definition:hurwitz-module}.
Let $\widetilde{\on{conf}}^{\Sigma^1_{g,f}}_n$ denote the universal cover of 
	$\on{conf}^{\Sigma^1_{g,f}}_n$.
	We may then construct $\on{hur}^{c,S}$
	as a cover of $\on{conf}^{\Sigma^1_{g,f}}$ given by the quotient of $T_n \times
\widetilde{\on{conf}}^{\Sigma^1_{g,f}}_n$ by the action of
	$B^{\Sigma^1_{g,f}}_n$.
	Explicitly, we can represent such a point by a $B^{\Sigma^1_{g,f}}_n$ equivalence
	class of data of the form $(x,t,\gamma, \alpha = (\alpha_1, \ldots,
	\alpha_n, s) )$
	for $(x,t) \in \on{conf}^{\Sigma^1_{g,f}}_n$, $\gamma$ a homotopy class of
	paths from $(x,t)$ to $\on{ord}^{\Sigma^1_{g,f}}_n$, $s \in T_0$, and $\alpha_i \in c$ for
	$1 \leq i \leq n$, so that $\alpha \in T_n$.

	Then, $\on{hur}^{c,S}$ has a left action of $\on{hurbig}^c$
	given as follows:
	Let $(y,t',\eta,\beta = (\beta_1, \ldots, \beta_j)) \in
	\on{hurbig}^c_j$, with $y \in (0,t') \times (0,1)$ a
configuration of $j$ points, 
$\eta$ a homotopy class of paths from $y$ to $\on{ord}$ the set of configurations of
$j$ points with second coordinate $1/2$, and
$\beta_i \in c$ for $1 \leq i \leq j$.
The left action is given by $(y,t',\eta, \beta) \cdot (x,t, \gamma, \alpha) = (y
\cdot x, t'+t, \eta \cdot \gamma, \alpha \cdot \beta)$, where $y
\cdot x$
denotes the concatenation of $y$ and $x$ viewed as a configuration in
$\mathcal M_{g,f,t+t'}$, $\eta \cdot \gamma$ denotes the homotopy class of paths by
concatenating $\eta$ on $(0,t') \times [0,1]$ with $\gamma$ on $\mathcal
M^t_{g,f}$, and
$\alpha \cdot \beta \in T_{j+n}$ denotes the concatenation of $\alpha$ and
$\beta$.

We also let $\on{hur}^{\circ, c,S} := \on{hur}^{c,S}
\times_{\on{conf}^{\Sigma^1_{g,f}}} \on{conf}^{\circ, \Sigma^1_{g,f}}.$

	Fix an $S$-component $z \subset c$ as defined in
	\autoref{definition:s-component}.
	We view $\on{hurbig}^c$, $\on{hur}^{c,S}$, $\on{hur}^{\circ,c,S}$
	as $\mathbb N$-graded topological spaces, with the grading defined as
	follows: a point of such a space has a corresponding configuration $x =
	\{x_1, \ldots, x_n\}$ with labels $\alpha_1, \ldots,
	\alpha_n$; the point is in grading $j$ if precisely $j$ of the
	$\alpha_1, \ldots, \alpha_n$ lie in $z$.
\end{notation}
%\begin{remark}
%	\label{remark:}
%	By definition, $\on{hur}^{c,c,S} = \on{hur}^{c,S}$.
%\end{remark}

\subsection{A scanning model}
\label{subsection:scanning-model}
Having created a topological model for Hurwitz space in \autoref{notation:hur},
we next wish to relate this to a more convenient model for our proofs.
The first step of this is to relate it to what we call a scanning model.
In \cite[Notation A.3.1]{landesmanL:the-stable-homology-of-non-splitting},
given two sets $M$ and $N$, we 
defined a certain topological space $B[M,\Hur^c,N]$, which we are referring to
as the scanning model.
We now introduce notation closely related to
\cite[Notation A.3.1]{landesmanL:the-stable-homology-of-non-splitting},
where we replace $N$ with a Hurwitz 
module.

\begin{notation}
	\label{notation:b-model-definition}
	Let $c$ be a rack, let $M$ be a graded set with a right action of
	$\Hur^{c}$, and let $S$ be a Hurwitz module over $c$.
	Consider the graded topological
	space $B[M,\on{hur}^{\circ, c,S}]$
	consisting of points which are of the form
	\begin{align}
		\label{equation:b-point}
		(a,y)
	\end{align}
	where $a \in M$ and 
	$y = (x,t,\gamma,\alpha = (\alpha_1, \ldots, \alpha_n,s)) \in
	\operatorname{hur}^{\circ,c,S}$. The topology on
	$B[M,\on{hur}^{\circ, c,S}]$ has a basis given as follows. Consider the following data:
	\begin{enumerate}[(a)]
		\item A number $d \in (0,t)$.
		\item A finite collection of pairwise disjoint open balls
			$U_1,\ldots,U_n$ in contained $\mathfrak p(\mathbf R -
			W) \subset \mathcal M_{g,f,t}$ whose preimage in $\mathbf R - W$ (as
			in \autoref{notation:surface}) is
			contained in
		$[d,t]\times [0,1]$.
		\item A homotopy class of paths $\phi$ from the configuration of
			the centers of the
			balls $U_i$, viewed as an element of
			$\mathrm{conf}_n^{\Sigma^1_{g,f}}$, to the contractible
			set $\ord_n^{\Sigma^1_{g,f}}$.
		\item Elements $\alpha'_1,\ldots,\alpha'_n \in c$ and $s' \in
			T_0$.
		\item An element $m \in M$.
	\end{enumerate}
	The grading of the point $(a,y)$ is the sum of the grading for $a$ and the
	grading for $y$.

	We next define subsets $\mathfrak B(d,U_i,\phi,\alpha'_i,s',m)$, in
	terms of data as above,
	which form a basis of the topology on $B[M,\on{hur}^{\circ, c,S}]$. 
	A point of the form \eqref{equation:b-point} lies in
	$\mathfrak B(d,U_i,\phi,\alpha'_i,s',m)$ 
	if the following conditions hold.
	\begin{enumerate}
		\item None of the points in $x$ lie in $\mathfrak p([d,t]\times
			[0,1])- \cup_1^n U_i$, 
			and there is a unique point from $x$ in each $U_i$.
		\item Recall the notion of cutting, as defined in
			\cite[Construction
			A.2.5]{landesmanL:the-stable-homology-of-non-splitting}.
			Cutting the element of $\operatorname{hur}^{\circ,c,S}$ to
		restrict it to $\mathfrak p([d,t] \times [0,1] -W)$ yields a point
		$y' \in \operatorname{hur}^{\circ,c,S}$. Then, using the homotopy
	class of $\phi$, the corresponding element of $T_{n'} = c^{n'}
			\times T_0$ associated to $y'$ is
			$(\alpha'_1,\ldots,\alpha'_{n'},s')$.
		\item[(3)] Define $y_1 \in \operatorname{hurbig}^c$ to be the
		element of $\operatorname{hurbig}^{c}$ 
		(analogously to \cite[Notation
		A.2.4]{landesmanL:the-stable-homology-of-non-splitting})
		obtained by
		cutting $y$ and restricting to the interval $\mathfrak p([0,d]
		\times[0,1])$.
		We then require that $ay_1 = m$.
	\end{enumerate}
\end{notation}

We now want to relate the above scanning model to a certain bar construction.
For the next statement, recall that we use $\on{hur}^c$ for the topological
model of Hurwitz space constructed in 
\cite[Notation A.2.4]{landesmanL:the-stable-homology-of-non-splitting}.
The following lemma is very similar to
\cite[Lemma A.3.4]{landesmanL:the-stable-homology-of-non-splitting}, but where the
set $N$ is replaced with $\on{hur}^{c,S}$.
Since the proof is quite similar, we will be brief in describing it.
In the next lemma, if $H$ is a topological monoid, $M$ is a right module, and
$N$
is a left module, we use notation $M \otimes_H N$ for the two-sided bar
construction, see, for example, \cite[Notation
A.3.3]{landesmanL:the-stable-homology-of-non-splitting}.
We note that this bar construction obtains a grading when $M$,
$\on{hur}^c,$ and $\on{hur}^{c,S}$ are all graded.

\begin{lemma}
	\label{lemma:bar-to-b}
	Let $c$ be a rack and let $S$ be a Hurwitz module over $c$.
	Let $M$ be a set with a right action of $\on{hur}^c$.
	There is a weak homotopy equivalence of graded spaces
	$\sigma: M \otimes_{\on{hur}^c} \on{hur}^{c,S} \to
	B[M,\on{hur}^{\circ, c,S}]$, natural in $c$
	and $M$.
\end{lemma}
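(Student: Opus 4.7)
The plan is to follow the strategy of the analogous result \cite[Lemma A.3.4]{landesmanL:the-stable-homology-of-non-splitting}, with the only essential change being that the right module, previously a discrete set $N$, is replaced by the topological space $\on{hur}^{c,S}$. I would first make the bar construction concrete as the geometric realization of the simplicial space whose space of $k$-simplices is $M \times (\on{hur}^c)^k \times \on{hur}^{c,S}$, with face maps induced by the right action of $\on{hur}^c$ on $M$, by multiplication in $\on{hur}^c$, and by the left $\on{hur}^c$-action on $\on{hur}^{c,S}$ from \autoref{notation:hur}; the Moore structure (positive time parameters) makes degeneracies well behaved.

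Next I would define $\sigma$ on simplices. Given a point $(m, h_1, \ldots, h_k, y, (s_1, \ldots, s_k)) \in M \times (\on{hur}^c)^k \times \on{hur}^{c,S} \times \Delta^k$, concatenate via the $\on{hurbig}^c$-action on $\on{hur}^{c,S}$ to form $y' := h_1 \cdots h_k \cdot y \in \on{hur}^{c,S}$. Use the simplex coordinates $(s_1, \ldots, s_k)$ to pick a threshold location $d$ in the total time interval of $y'$; cutting (in the sense of \cite[Construction A.2.5]{landesmanL:the-stable-homology-of-non-splitting}) produces a left piece $y'_1 \in \on{hurbig}^c$ and a right piece $y'_2 \in \on{hur}^{\circ, c, S}$, where the convention on the cut keeps $y'_2$ in $\on{hur}^\circ$ (leftmost column of positive width). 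The assignment $(m, h_1, \ldots, h_k, y; s) \mapsto (m \cdot y'_1,\, y'_2)$ lands in $B[M, \on{hur}^{\circ, c, S}]$. That this descends to the geometric realization is immediate from how face maps and the cutting operation interact; continuity into the basis $\mathfrak B(d, U_i, \phi, \alpha'_i, s', m)$ follows because perturbing the cut location or the underlying configuration changes $(m \cdot y'_1, y'_2)$ through the defining equivalence of $\mathfrak B$. Grading is preserved since concatenation sums labels.

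To prove $\sigma$ is a weak equivalence, I would work one grading at a time and fiber both sides over $\on{conf}^{\Sigma^1_{g,f}}_n$ via the forgetful map that remembers only the underlying configuration (composed with the left action). Over a fixed configuration $x$, the fiber on the $B$-side is a product of a discrete set of labelings/module data with a contractible space parameterizing the cut position and the left concatenation. On the bar-side, the fiber over $x$ is the geometric realization of the simplicial set of $\on{hur}^c$-decompositions of $x$, which is also contractible (a standard bar-resolution contraction; this is exactly where the Moore-length convention buys a nondegenerate identity). The scanning map identifies these contractible fibers compatibly on the discrete data. A standard local-to-global argument (e.g., Dold's theorem on maps of quasi-fibrations, as in \cite[Lemma A.3.4]{landesmanL:the-stable-homology-of-non-splitting}) promotes a fiberwise weak equivalence to a total weak equivalence.

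The main obstacle will be verifying the fiber identification and ensuring the map has the local-triviality properties needed to apply Dold's argument, rather than constructing $\sigma$ itself — essentially the same technicalities as in \cite[Lemma A.3.4]{landesmanL:the-stable-homology-of-non-splitting}, now carried out on a surface $\Sigma^1_{g,f}$ rather than a disc. Naturality in $c$ and $M$ is immediate, since every step of the construction uses only the rack structure, the right $\on{hur}^c$-action on $M$, and the Hurwitz-module data, each of which transforms functorially.
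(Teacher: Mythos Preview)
Your construction of $\sigma$ is essentially the same as the paper's: concatenate the $\on{hur}^c$ factors with the $\on{hur}^{c,S}$ factor, use the simplicial coordinates to choose a cut location, and output $(m\cdot(\text{left piece}),\text{right piece})$. One technical point you glossed over is the $\tfrac12$-shift: since $\on{hur}^c$ (as opposed to $\on{hurbig}^c$) carries configurations supported in $[1/2,t-1/2]$, the paper first extends the concatenated interval to $[-1/2,t]$ and cuts at $t'-\tfrac12+\epsilon$ for small $\epsilon$, then checks the outcome is independent of $\epsilon$. This is what guarantees the cut respects the bar-construction identifications, and you should incorporate it.

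Where you diverge from the paper is in the weak-equivalence argument. The paper does \emph{not} fiber over configuration space and invoke a Dold-style quasi-fibration theorem; rather, after checking surjectivity on $\pi_0$, it verifies directly the two lifting conditions (i) and (ii) from the proof of \cite[Lemma~A.3.4]{landesmanL:the-stable-homology-of-non-splitting} (lifting maps of pairs $(D^k,\partial D^k)$ and producing nullhomotopies). Your proposed fibering argument is plausible in spirit but, as written, has gaps: the map from $B[M,\on{hur}^{\circ,c,S}]$ to any fixed configuration space is not obviously a quasi-fibration (the topology on $B$ allows points to be absorbed into $M$, so fibers are not what you describe), and your account of the bar-side fiber conflates the simplicial direction with the discrete labeling data. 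The paper's direct lifting verification avoids these issues entirely and is what \cite[Lemma~A.3.4]{landesmanL:the-stable-homology-of-non-splitting} actually does, so you should follow that instead.
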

\begin{proof}
	We begin by defining $\sigma$.
	A point of $M \otimes_{\on{hur}^c} \on{hur}^{c,S}$ can be described
	as a tuple 
	\begin{align*}
		(m,z,(x_1, \ldots, x_n),(y_0, \ldots, y_n))
	\end{align*}
	 where $x_i \in
	\on{hur}^c, m \in M, z \in \on{hur}^{c,S}$ and $0 \leq y_i \leq 1$ with
	$\sum_{i=0}^n y_i = 1$.
	Let $x \in \on{hur}^c$ denote the product of $x_1 \cdots x_n$. Then $t := t(x) =
	\sum_{i=1}^n t(x_i)$. In this case, $x$ is a labeled configuration on
	$[0,t] \times [0,1]$. Extend this to $[-1/2,t] \times[0,1]$ to view
	$x$ as a labeled configuration in $[-1/2,t] \times[0,1]$ supported
	on $(0,t) \times (0,1)$ and let $t' := \sum_{i=1}^n y_i (\sum_{j=1}^i
	t_j)$. Choose $\epsilon > 0$ sufficiently small so that there are no
	points of the configuration associated to $x$ lie in $(t'-1/2, t' -
	1/2 + \epsilon] \times [0,1]$. We now use a cutting construction, as
	in \cite[Construction
	A.2.5]{landesmanL:the-stable-homology-of-non-splitting}.
	We cut $x$ at $t'-1/2 + \epsilon$ to obtain $w, x'' \in
	\on{hurbig}^c$, where $w$ is supported on $[-1/2,t'-1/2 + \epsilon]
	\times[0,1]$ and $x''$ is supported on $[t'-1/2 +
	\epsilon,t]\times[0,1]$. Extend $x''$ to lie in $[t'-1/2,t]\times[0,1]$
	by extending the length of the interval on the left by $\epsilon$ and
	let $x'$ denote the resulting element on $\on{hurbig}^{c}$.
	Observe that $x'$ does not depend on $\epsilon$ and the class of $w
	\in \pi_0\Hur^c$ is also independent of $\epsilon$.
	Now, define the map $\sigma$ to send the above point to the point $(m
	\cdot w, x' \cdot z) \in B[M,\on{hur}^{\circ, c,S}]$.

	In order to check the map $\sigma$ defined above is well defined, we
	need to check it glues along the identifications defining the two sided bar
	construction. We omit this verification, except to mention that
	verifying this glues along relation \cite[Notation
	A.3.3]{landesmanL:the-stable-homology-of-non-splitting}(1), related to
	the left action on $M$, involves using that $t' \geq t_1$ and the
	element $x_1$ lies in $\on{hur}^c$, and not $\on{hurbig}^c$.
	The verification that $\sigma$ is continuous is straightforward and
	similar to the verification carried out in
\cite[Lemma A.3.4]{landesmanL:the-stable-homology-of-non-splitting} so we omit it.
	One can also verify that $\sigma$ is surjective on path components in a
	fashion similar to the analogous step of the proof of \cite[Lemma
	A.3.4]{landesmanL:the-stable-homology-of-non-splitting}.
	The remainder of the verification that $\sigma$ is a homotopy
	equivalence is analogous to that carried out in the proof of 
	\cite[Lemma A.3.4]{landesmanL:the-stable-homology-of-non-splitting}, by
	demonstrating the analogs of conditions (i) and (ii) about lifting maps
	of pairs and nullhomotopies for maps of pairs
	occurring in the proof of \cite[Lemma A.3.4]{landesmanL:the-stable-homology-of-non-splitting}
	and we omit further details.
\end{proof}

\subsection{A quotient model}
\label{subsection:quotient-model}

We next re-express the scanning model
$B[M,\on{hur}^{\circ, c,S}]$ of
$M \otimes_{\on{hur}^c} \on{hur}^{c,S}$ as a quotient model.
We will ultimately identify it with the ind-homotopy type of a family of graded spaces
$\overline{Q}_\epsilon[M,\on{hur}^{c,S}]$ as $\epsilon$ approaches $0$
in \autoref{lemma:scan-to-epsilon-spacing}.

\begin{notation}
	\label{notation:q}
	Let $c$ be a rack and $S$ be a Hurwitz module over $c$.
	For $M$ a graded set with a right action of $\Hur^c$,
define	
$Q[M , \on{hur}^{c,S}]$
to be the graded topological space consisting of 
pairs $(a,b)$ with $a \in M$
and $b = (x = \{x_1, \ldots, x_n\},t,\gamma,(\alpha_1, \ldots, \alpha_n,s))\in
\on{hur}^{c,S}$.

Define 
$\overline{Q}[M , \on{hur}^{c,S}]$
as the quotient of 
$Q[M , \on{hur}^{c,S}]$
under the following equivalence relation:
Suppose we write the path $\gamma$ from $x = \{x_1, \ldots, x_n\}$ to
$\on{ord}_n^{\Sigma^1_{g,f}}$ as a tuple $\gamma = (\gamma_1, \ldots, \gamma_n)$ where each
$\gamma_i$ connects $x_i$ to one of the $n$ points in a particular element of
the contractible set $\on{ord}_n^{\Sigma^1_{g,f}}$.
Suppose further that
\begin{enumerate}
	\item the first coordinate of $x_1$ is $0$ and
	\item there is some $v$ so that $\gamma_1$ has image in $[0,v] \times[0,1]$ while
$\gamma_2, \ldots, \gamma_n$ have image in $(v,t] \times [0,1]$;
i.e. $\gamma_1$ is
left of $\gamma_2, \ldots, \gamma_n$.
\end{enumerate}
Then, we identify we identify the point
$(a,b)$ with the point 
\begin{align*}
	(a \cdot \alpha_1, ( \{x_2, \ldots, x_n\}, t,
(\gamma_2,\ldots, \gamma_n), (\alpha_2, \ldots, \alpha_n)),
\end{align*}
where
$a \cdot \alpha_1$ denotes the result of the right action of $\alpha_1 \in
\pi_0(\on{hurbig}^c)$ on
the element $a \in M$.
\end{notation}
\begin{remark}
	\label{remark:arrange-2}
	If we have a point of 
	$Q[M , \on{hur}^{c,S}]$
satisfying condition $(1)$, we can always arrange that condition $(2)$ is
satisfied by repeatedly using the action of $B_n^{\Sigma^1_{g,f}}$ and applying 
homotopies of $\gamma$ to move $\gamma_1$ to the left of $\gamma_2, \ldots,
\gamma_n$.
\end{remark}

We now relate the scanning model to the quotient model.
\begin{lemma}
	\label{lemma:b-to-q}
	For $c$ a rack, $S$ a Hurwitz module over $c$, and
	$M$ a right $\on{Hur}^c$ module,
	there is a weak equivalence of graded topological spaces
	$\overline{Q}[M , \on{hur}^{c,S}] \to B[M ,\on{hur}^{\circ, c,S}].$
\end{lemma}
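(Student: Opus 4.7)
The plan is to define a natural map $\Phi \colon \overline{Q}[M,\on{hur}^{c,S}] \to B[M,\on{hur}^{\circ, c,S}]$ via canonical representatives and then verify it is a weak equivalence by a scanning/lifting argument closely paralleling the proof of \cite[Lemma A.3.4]{landesmanL:the-stable-homology-of-non-splitting}. For the map, observe that every class in $\overline{Q}$ admits a representative $(a,b)$ in which no marked point of the underlying configuration lies on the left boundary $\{0\}\times[0,1]$ of $\mathcal{M}_{g,f,t}$: starting from any representative, iteratively apply the equivalence relation of \autoref{notation:q}, using \autoref{remark:arrange-2} at each step to arrange condition (2), an operation which strictly decreases the number of left-boundary points. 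For such a representative $b$ lies in $\on{hur}^{\circ, c,S}$, and I set $\Phi(a,b) := (a,b)$. Well-definedness reduces to checking independence from the order of absorption, which follows from the braid relations among elements of $c$ and the compatibility of the right $\on{hurbig}^c$-action on $M$ with concatenation.

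For continuity, it suffices to check that the preimage of each basic open set $\mathfrak B(d, U_i, \phi, \alpha_i', s', m)$ from \autoref{notation:b-model-definition} is open in $\overline{Q}$; working locally with a fixed canonical representative, each defining condition of $\mathfrak B$ (the placement of points relative to $[d,t]\times[0,1]$, the identification via $\phi$ of the labels $(\alpha_i',s')$, and the cut equation $a y_1 = m$) is visibly open on $Q$ and respects the equivalence. To show $\Phi$ is a weak equivalence I would verify the lifting properties for maps of pairs $(D^k, S^{k-1}) \to (B[M,\on{hur}^{\circ,c,S}], B_0)$ and for their nullhomotopies, exactly as in \cite[Lemma A.3.4]{landesmanL:the-stable-homology-of-non-splitting}: given a compact family in $B$, use compactness to choose a coherent cut height $d$ avoiding every marked point, apply the cutting construction of \cite[Construction A.2.5]{landesmanL:the-stable-homology-of-non-splitting} to split each element into a left piece in $\on{hurbig}^c$ (to be absorbed into $M$) and a right piece in $\on{hur}^{c,S}$, and assemble the result into a lift in $\overline{Q}$; a linear interpolation in $d$ produces the required nullhomotopies, and a check of surjectivity on $\pi_0$ using the equivalence relation completes the argument.

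The main obstacle is tracking the surface structure of $\mathcal{M}_{g,f,t}$ through the cutting argument and the quotient map $\mathfrak p$: one must verify that homotopy classes of paths $\gamma$ and the labelings $(\alpha_1,\ldots,\alpha_n,s) \in T_n$ behave well under cutting, and that the punctures $W$ and the boundary identifications defining $\mathcal M_{g,f,t}$ do not interfere. Since $W$ and all the gluings involve only the right edge $\{t\}\times[0,1]$, for cut heights $d$ chosen sufficiently close to $0$ the left piece is simply a rectangle, and the argument reduces cleanly to the disk setting treated in the previous paper, with the right piece inheriting its Hurwitz module structure from the $B^{\Sigma^1_{g,f}}_n$-action.
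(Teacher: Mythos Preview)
Your approach is essentially the same as the paper's, and the argument is correct. The paper's proof is terser: it writes the map simply as $(a,y)\mapsto(a,y)$ (which amounts to your canonical-representative description, since every $\overline{Q}$-class has a unique representative with no left-boundary points) and then defers to \cite[Proposition~A.4.4 and Lemma~A.4.7]{landesmanL:the-stable-homology-of-non-splitting} rather than Lemma~A.3.4. The latter is the analogue for the bar-construction-to-$B$ comparison, already invoked in \autoref{lemma:bar-to-b}; for the $\overline{Q}$-to-$B$ comparison the more precise parallels in the earlier paper are A.4.4 and A.4.7, though the lifting-and-cutting mechanics you describe are indeed common to both. The paper also records one simplification you might add: the flow argument in Lemma~A.4.7 that enforces vertical spacing between points is not needed here, since $\overline{Q}[M,\on{hur}^{c,S}]$ imposes no such spacing condition. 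Your observation that the gluings and punctures defining $\mathcal{M}_{g,f,t}$ sit entirely on the right edge, so that cuts near $d=0$ land in a plain rectangle, is exactly the reason the disk argument carries over without modification.
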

\begin{proof}
	The map is given by the map $(a, y) \mapsto (a,y)$, and one can verify this is
	a weak equivalence by imitating the proofs of 
	\cite[Proposition A.4.4 and Lemma
	A.4.7]{landesmanL:the-stable-homology-of-non-splitting};
	in our setting the argument is slightly easier because one does not need
	to worry about the part of \cite[Lemma
	A.4.7]{landesmanL:the-stable-homology-of-non-splitting} relating to
	applying the flow as we do not arrange any condition relating to the
	vertical spacing between points.
\end{proof}

\subsection{Refinements of the quotient model}
\label{subsection:quotient-refinements}

Ultimately, we are aiming to relate the bar construction to a certain refinement
of the quotient model. We accomplish this in
\autoref{proposition:pointed-scanning} after introducing a sequence of
refinements of the quotient model, and relating the quotient model to those
refinements.
We start by introducing a refinement where the time parameter is $1$.

\begin{notation}
	\label{notation:t-1-bar}
	Let $c$ be a rack and $S$ be a Hurwitz module over $c$.
	For $M$ a graded set with a right action of $\Hur^c$,
define	
$\overline{Q}_{t = 1}[M , \on{hur}^{c,S}] \subset \overline{Q}[M , \on{hur}^{c,S}]$
to be the graded topological space consisting of points of the form
$(a,(x,1,\gamma,\alpha))$, i.e. points such that $t= 1$.
\end{notation}

\begin{lemma}
	\label{lemma:deform-to-1}
	Let $c$ be a rack and $S$ be a Hurwitz module over $c$.
	For $M$ a graded right $\on{Hur}^c$ module,
	there is a deformation retraction of 
	$\overline{Q}[M , \on{hur}^{c,S}]$ onto
	$\overline{Q}_{t = 1}[M , \on{hur}^{c,S}]$.
\end{lemma}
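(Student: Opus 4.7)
The plan is to construct a deformation retraction by uniformly rescaling the horizontal (first) coordinate so that the time parameter $t$ is deformed linearly to $1$. Explicitly, for $s \in [0,1]$, set $t(s) := (1-s)t + s$ and define a rescaling map $\rho_s : \mathcal{M}_{g,f,t} \to \mathcal{M}_{g,f,t(s)}$ by multiplying the first coordinate by $t(s)/t$. This is well defined on the quotient of \autoref{notation:surface} because the identifications occur along segments contained in $\{0\}\times[0,1]$ and $\{t\}\times[0,1]$, which are mapped to the analogous segments in $\mathcal M_{g,f,t(s)}$; in particular $\rho_s$ preserves the punctures $W$ (which lie on $\{t\}\times[0,1]$).

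The homotopy is then $H_s(a,(x,t,\gamma,\alpha)) := (a,(\rho_s(x), t(s), \rho_s\circ\gamma, \alpha))$, where $\alpha = (\alpha_1,\ldots,\alpha_n,s_0)$ is unchanged. First I would check that $H$ is continuous, which is immediate from the continuity of $(t,s)\mapsto t(s)/t$ for $t > 0$, and that $H_0 = \id$ while $H_1$ lands in $\overline{Q}_{t=1}[M,\on{hur}^{c,S}]$. Since when $t = 1$ one has $t(s) = 1$ and $\rho_s = \id$ for all $s$, the homotopy fixes $\overline{Q}_{t=1}[M,\on{hur}^{c,S}]$ pointwise, so $H_1$ is indeed a retraction.

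The step requiring actual verification is that $H_s$ descends to the quotient $\overline{Q}[M,\on{hur}^{c,S}]$, i.e.\ respects the equivalence relation of \autoref{notation:q}. If $(a,b)$ satisfies the two conditions there with parameter $v \in (0,t)$, then after applying $\rho_s$ the first coordinate of the image of $x_1$ is still $0$, and the images of $\gamma_1$ and $\gamma_2,\ldots,\gamma_n$ are separated by the vertical line at first coordinate $v \cdot t(s)/t \in (0, t(s))$. Thus the rescaled point satisfies the analogous hypothesis with $v$ replaced by $v \cdot t(s)/t$, and the identification with $(a\cdot \alpha_1, \ldots)$ is compatible since the right action of $\alpha_1$ on $M$ and the discarded label $\alpha_1$ do not depend on the geometric rescaling. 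This shows $H$ descends, and delivers the claimed deformation retraction. The main (mild) obstacle is simply bookkeeping that the rescaling is compatible with the quotient presentation of $\mathcal M_{g,f,t}$ and with the equivalence relation defining $\overline{Q}$; no nontrivial homotopy-theoretic input is needed.
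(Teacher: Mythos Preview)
Your proof is correct and takes essentially the same approach as the paper: define the deformation by linearly rescaling the first coordinate so that the time parameter moves from $t$ to $1$ via $(1-s)t+s$, exactly as the paper does. One small inaccuracy: the identifications defining $\mathcal{M}_{g,f,t}$ occur only along $\{t\}\times[0,1]$, not along $\{0\}\times[0,1]$; but this does not affect your argument, and in fact you supply more detail than the paper does in verifying that the homotopy descends to the quotient $\overline{Q}$.
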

\begin{proof}
%	To define a deformation retraction we will define a function 
%	$\overline{Q}[M , \on{hur}^{c,S}] \times[0,1]\to \overline{Q}[M , \on{hur}^{c,S}]$
%	restricting to the identity on $\overline{Q}_{t = 1}[M ,
%	\on{hur}^{c,S}] \times[0,1]$ and to the identity on 
%	$\overline{Q}[M , \on{hur}^{c,S}] \times 0$.
	Define the retraction
$h: \overline{Q}[M , \on{hur}^{c,S}]
	\times[0,1]\to \overline{Q}[M , \on{hur}^{c,S}]$
	sending $( (a,(x,t,\gamma,\alpha)),s) \mapsto 
	(a,(x^s,(1-s)t+s,\gamma^s,\alpha))$, where $x^s$ and $\gamma^s$ are the
	configuration and paths obtained by stretching $x$ and $\gamma$ linearly to be length $(1-s)t+s$; explicitly 
	if $x = \{((u_1, v_2), \ldots,(u_n,v_n)\}$ then $x^s =  \{(( \frac{(1-s)t+s}{t}
	u_1, v_2), \ldots,(\frac{(1-s)t+s}{t}u_n,v_n)\}$ and if $\gamma_i(z) =
	(l,m)$ then $\gamma_i^s(z) = (\frac{(1-s)t+s}{t}l,m)$.
	This defines the desired deformation retraction of 
	$\overline{Q}[M , \on{hur}^{c,S}]$ onto
	$\overline{Q}_{t = 1}[M , \on{hur}^{c,S}]$.
\end{proof}

We next introduce a refinement of the quotient model which has an $\epsilon$
spacing between the vertical coordinates of the points of the configuration and
vertical coordinates of endpoints of the glued intervals on $\mathcal
M_{g,f,1}$.

\begin{notation}
	\label{notation:point-pushed-quotient}
	Let $\Phi$ denote the set of $y$-coordinates of endpoints of the
	intervals $J_1, \ldots, J_{4g}, J_1', \ldots, J'_{2f}$ defining
	$\mathcal M_{g,f,1} \simeq \Sigma_{g,f}^1$ as in
	\autoref{notation:surface}.

	Let $\delta := \min_{x,y \in \Phi} |x-y|$ denote the minimum
	difference between two elements of $\Phi$.
	Fix some $0< \epsilon < \delta$
	and
	let $(\mathbf R - W)^\epsilon$ denote the set of points whose $y$
	coordinates have distance $\geq \epsilon$ from $\Phi$ 
	and let $\mathcal M_{g,f,1}^\epsilon$ denote the 
	denote image of 
	$(\mathbf R - W)^\epsilon$ in $\mathcal M_{g,f,1}$.

	Let
	$\overline{Q}^\epsilon_{t=1}[M ,\on{hur}^{c,S}] \subset
	\overline{Q}_{t = 1}[M ,\on{hur}^{c,S}]$
	denote the closed subset  
	of points $(a,(x,1,\gamma,\alpha)) \in \overline{Q}_{t = 1}[M
	,\on{hur}^{c,S}]$ so that each point $x_i \in x$ lies in 
	$\mathcal M_{g,f,1}^\epsilon \subset \mathcal M_{g,f,1}$.
\end{notation}
\begin{remark}
	\label{remark:rectangles-description}
	The topological space $\mathcal M_{g,f,1}^\epsilon$ from 
	\autoref{notation:point-pushed-quotient} can be viewed as a disjoint union of
	$2g + f$ rectangles. The bottom $2g$ of these rectangles are obtained by
	gluing the rectangle in with right boundary $J_{i}^\epsilon$ (with $i \equiv 1$
	or $2 \bmod 4$) to the rectangle with right boundary
	$J_{i+2}^\epsilon$,
	where, if $J_j = 1 \times [a_j, b_j]$, we use $J_j^\epsilon := 1
	\times [a_j+\epsilon, b_j-\epsilon]$ .
	The top $f$ of these rectangles are obtained by gluing the rectangle
	with right boundary $(J'_{i})^\epsilon$ for $i \equiv 1 \bmod 2$
	to the rectangle with right boundary $(J'_{i+1})^\epsilon$, where if
	$J'_j = 1 \times [a'_j, b'_j]$, $(J'_j)^\epsilon = 1 \times
	[a'_j+\epsilon, b'_j-\epsilon]$.
	See \autoref{figure:rectangle-division} for a visual depiction.
\end{remark}

\begin{figure}
\includegraphics[scale=.4]{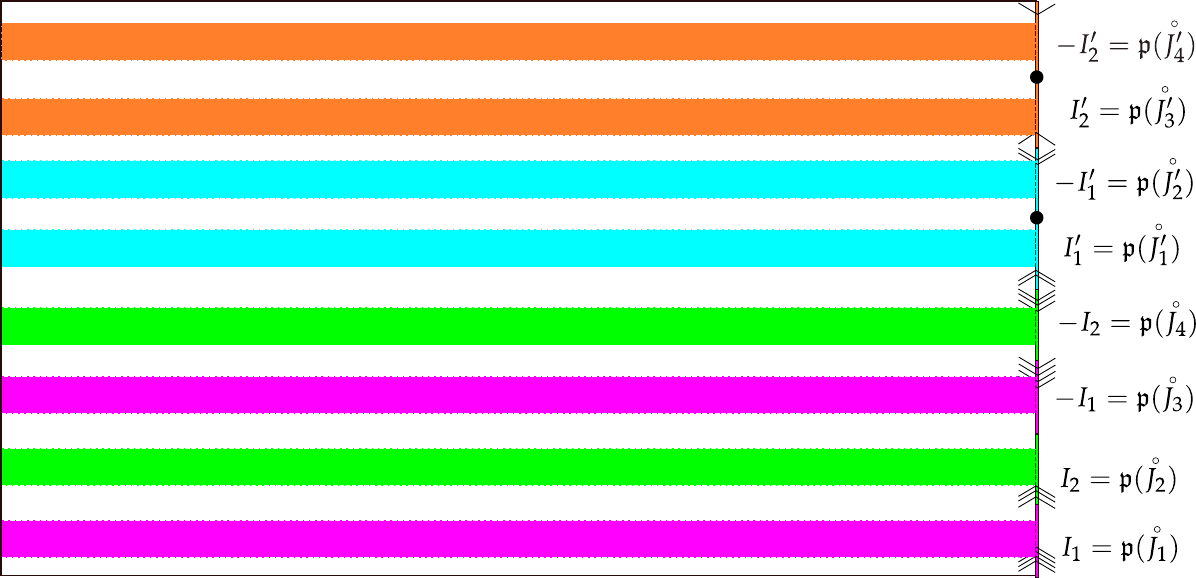}
\caption{
	This depicts $\mathcal M_{g,f,1}^\epsilon$ in the case $g = 1, f = 2$.
	The surface $\mathcal M_{1,2,1}^\epsilon$ is a union of $2g + f = 4$
	rectangles. There are eight rectangles pictured in four colors. Each pair
	of rectangles of the same color are glued along their right edge so that
	$\mathcal M_{1,2,1}^\epsilon$
	consists of four rectangles.}
\label{figure:rectangle-division}
\end{figure}

For the next lemma, we will need the notion of an ind-weak equivalence.
\begin{definition}
	\label{definition:}
	If $L: \on{Top}^{\mathbb N} \to \on{Spc}^{\mathbb N}$ is
the functor of infinity categories sending a pointed graded topological space to
its weak homotopy type. Then a pointed map $f: X \to X'$ of graded spaces in
$\on{Ind}(\on{Top}^{\mathbb N})$ is an
{\em ind-weak equivalence} if $\colim Lf$ is an equivalence.
\end{definition}

\begin{lemma}
	\label{lemma:deform-epsilon-distance}
	For $\delta$ as in \autoref{notation:point-pushed-quotient}, let $\epsilon < \delta$.
	For $c$ a rack, $S$ a Hurwitz module over $c$,
	and
	$M$ a graded right $\on{Hur}^c$ module,
	the inclusions 
	$\overline{Q}^\epsilon_{t=1}[M,\on{hur}^{c,S}]
	\to
	\overline{Q}_{t =1}[M , \on{hur}^{c,S}]$
	define an ind-weak homotopy equivalence of graded topological spaces over the poset of real numbers
	$0 < \epsilon < \delta$.
\end{lemma}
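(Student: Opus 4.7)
The plan is to construct a continuous point-pushing deformation of $\overline{Q}_{t=1}[M, \on{hur}^{c,S}]$ that deforms configurations so that all points have $y$-coordinates bounded away from the critical set $\Phi$, realizing the intuition from the introduction that the surface $\mathcal{M}_{g,f,1}$ is effectively cut into the disjoint union of rectangles described in \autoref{remark:rectangles-description}.

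First, I would construct a continuous family of self-maps $\phi_s: \mathcal{M}_{g,f,1} \to \mathcal{M}_{g,f,1}$ for $s \in [0,1]$, starting at $\phi_0 = \on{id}$, where $\phi_s$ acts only on the $y$-coordinate, is supported in a small neighborhood of the critical horizontal lines $\mathfrak{p}([0,1]\times\Phi)$, and scales points away from each $y_0 \in \Phi$ as $s$ increases. Care must be taken to ensure that $\phi_s$ is compatible with the orientation-reversing identifications $J_i \sim J_{i+2}$ and $J'_j \sim J'_{j+1}$ on the right edge: these identifications pair critical $y$-values symmetrically, so $\phi_s$ should be constructed to act symmetrically around each $y_0 \in \Phi$, at which point it descends well-definedly to $\mathcal{M}_{g,f,1}$.

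Applying $\phi_s$ pointwise to configurations yields a homotopy $H: \overline{Q}_{t=1} \times [0,1] \to \overline{Q}_{t=1}$. The key observation is that because $\phi_s$ fixes $x$-coordinates, it preserves the equivalence relation defining $\overline{Q}_{t=1}$ (which concerns the leftmost point's $x$-coordinate), and the paths $\gamma$ can be transported along $\phi_s$ continuously. We also need to verify that the ind-colimit in the definition of ind-weak equivalence commutes with passage to weak homotopy type; this follows from the standard fact that filtered colimits in topological spaces preserve weak equivalences, applied to the cofinal system indexed by $\epsilon \to 0$.

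The hard part is arranging $H_1$ to send an arbitrary configuration $x$ into $\overline{Q}^{\epsilon(x)}_{t=1}$ for some $\epsilon(x) > 0$ depending continuously on $x$. I would handle this by parametrizing the support width and push strength of $\phi_s$ continuously with the input configuration—specifically, letting them depend on the minimum distance from the $y$-coordinates of the configuration points to $\Phi$, using a continuous nonzero cutoff. For any map $K \to \overline{Q}_{t=1}$ from a finite CW complex, the image is compact, so this minimum distance is bounded below by some uniform $\epsilon_0 > 0$, and hence $H_1|_K$ factors through $\overline{Q}^{\epsilon_0}_{t=1}$. Combined with the deformation $H$ providing a homotopy from the identity to a factorization through the inclusion, this shows the inclusions are cofinal up to homotopy, yielding the desired ind-weak homotopy equivalence.
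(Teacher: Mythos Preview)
Your argument has a genuine gap. Configurations in $\overline{Q}_{t=1}[M,\on{hur}^{c,S}]$ are allowed to contain points with $y$-coordinate lying in $\Phi \cap (0,1)$: the definition of $\on{conf}^{\Sigma^1_{g,f}}$ only excludes the \emph{right-edge} endpoints $(1,y_0)$ for $y_0 \in \Phi$, not interior points $(x,y_0)$ with $x<1$. Any continuous self-map $\phi_s$ of the surface acting only on the $y$-coordinate must (by monotonicity and continuity) fix each $y_0 \in \Phi$, so a configuration point sitting exactly at height $y_0$ stays at height $y_0$ under your flow. Consequently your $H_1$ does not land in $\overline{Q}^{\epsilon}_{t=1}$ for any $\epsilon>0$ on such configurations, and your compactness claim---that for a map from a finite CW complex the minimum distance to $\Phi$ is bounded below---is simply false: a constant map to such a configuration already gives distance zero.

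The paper's construction repairs exactly this. Instead of a purely vertical push, the paper uses a vector field on $\mathcal{M}_{g,f,1}$ that has a non-positive $x$-component everywhere and, on the critical horizontal lines $y=y_0\in\Phi$, points \emph{directly left} so that any point on such a line reaches $x=0$ in finite time. When a point hits the left boundary $x=0$ it is absorbed into the $M$-label via the quotient relation defining $\overline{Q}$. Thus the problematic points are not pushed off the critical line vertically---they are pushed out of the configuration horizontally and swallowed by the module action. This is precisely why the argument is run on the quotient $\overline{Q}$ rather than on $Q$. Between critical lines the vector field also has a vertical component pushing toward the midpoint of the strip, guaranteeing that after flowing for time $1$ every surviving configuration point lies at distance $\geq \epsilon$ from $\Phi$. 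The two resulting properties (the flow preserves each $\overline{Q}^{\epsilon}_{t=1}$ and carries all of $\overline{Q}_{t=1}$ into $\overline{Q}^{\epsilon}_{t=1}$ at time $1$) are then fed into the ind-weak-equivalence criterion of \cite[Lemma~A.4.6]{landesmanL:the-stable-homology-of-non-splitting}, which plays the role your finite-CW compactness argument was aiming for.
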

\begin{proof}
	To prove the result, we use \cite[Lemma
	A.4.6]{landesmanL:the-stable-homology-of-non-splitting}, and verify the
	two conditions there.
	Choose a continuous vector field on the interior of $\mathcal M_{g,f,1}$ whose preimage in $\mathbf
	R - W$ has the following properties:
	\begin{enumerate}
		\item On each horizontal line in $\mathbf R$ with coordinate in
			$\Phi$, choose the
	vector field so that it points directly left (with vanishing $y$
	coordinate and with negative $x$ coordinate) such that the induced flow on this line reaches $t=0$ in finite time for all $x$ coordinates smaller than $1$.
\item On each horizontal line
	in $\mathbf R$ between two points $(w_1,1), (w_2,1)$ for $w_i \in \Phi$, choose the
	vector field at $(u,v)$ so that the flow has non-positive $x$
	coordinate,
	the $y$-coordinate is positive if $u < \frac{w_1+w_2}{2}$, and the
	$y$-coordinate is negative if $u > \frac{w_1+w_2}{2}$. 
	\end{enumerate}
	See \autoref{figure:vector-field} for a picture of a possible such
	vector field in the case $g = 1, f = 2$.
\begin{figure}
\includegraphics[scale=.4]{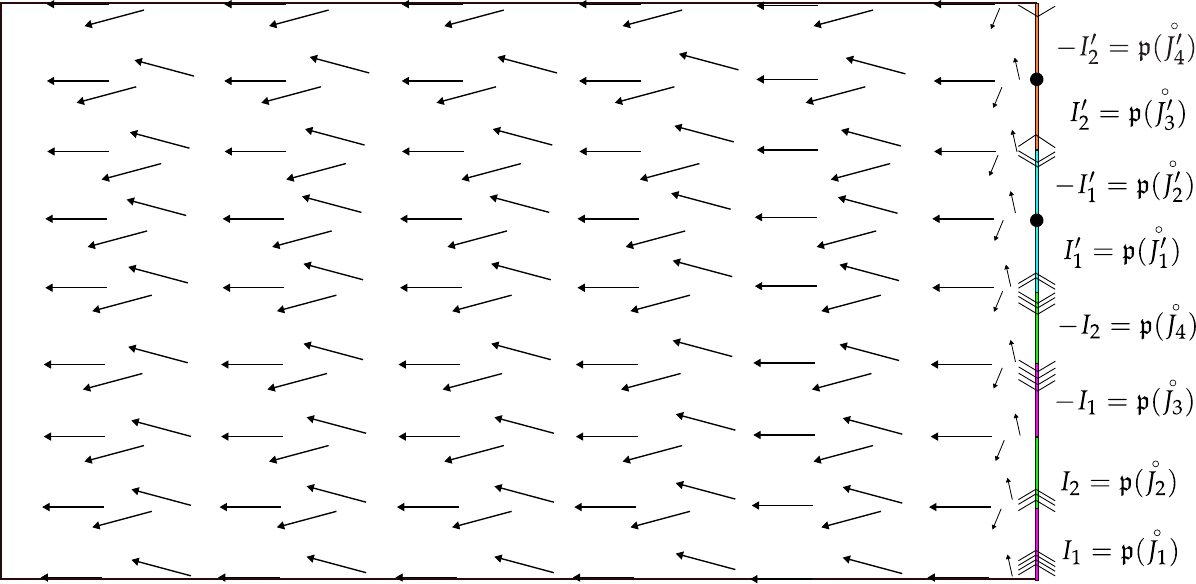}
\caption{
	This picture depicts a vector field on $\mathcal M_{g,f,1}$ as described
	in the proof of \autoref{lemma:deform-epsilon-distance} in the case $g=
1, f = 2$.}
\label{figure:vector-field}
\end{figure}
Then, flowing
	along a vector field as described above defines a function
	$\Psi: \overline{Q}_{t =1}[M , \on{hur}^{c,S}] \times [0,1] \to
	\overline{Q}_{t =1}[M , \on{hur}^{c,S}]$.

	One can choose the vector field as above so that
	the map $\Psi$ moreover has the following properties: 
	First, for any $x \in
	\overline{Q}^\epsilon_{t=1}[M ,\on{hur}^{c,S}]$, and any $s \in
	[0,1]$, we have $\Psi(x,s) \in \overline{Q}^\epsilon_{t=1}[M ,\on{hur}^{c,S}]$.
	Second, for any $x \in \overline{Q}_{t=1}[M ,\on{hur}^{c,S}]$,
	$\Psi(x,1) \in \overline{Q}^\epsilon_{t=1}[M ,\on{hur}^{c,S}]$.
		These two properties induce a pointed homotopy as in
		\cite[Lemma
		A.4.6]{landesmanL:the-stable-homology-of-non-splitting}
		satisfying the analogous two properties there,
		and so 
	$\overline{Q}^\epsilon_{t=1}[M ,\on{hur}^{c,S}] \to
	\overline{Q}_{t =1}[M , \on{hur}^{c,S}]$
	defines an ind-weak homotopy equivalence.
\end{proof}

We next also include an $\epsilon$ spacing between any two points of the
configuration.

\begin{notation}
	\label{notation:scanned-quotient}
	Let $c$ be a rack and $S$ be a Hurwitz module over $c$.
	Let $M$ be a graded $\on{hurbig}^c$ module.
	Let 
	$\overline{Q}_{\epsilon,\epsilon'}[M ,\on{hur}^{c,S}] \subset
	\overline{Q}^\epsilon_{t=1}[M ,\on{hur}^{c,S}]$
	denote the subset of
	$(a,(x,1,\gamma,\alpha)) \in \overline{Q}^\epsilon_{t=1}[M
	,\on{hur}^{c,S}]$ satisfying the following conditions:
	\begin{enumerate}
		\item For any two points $x_1, x_2$ in the
	configuration corresponding to $x$, with $y$ coordinates $y(x_1),
	y(x_2)$ we have $|y(x_1) - y(x_2)| \geq \epsilon'$.
		\item If $(1,y(x_1))$ is identified with some point $(1,y') \in
	\mathbf{R} = [0,1] \times [0,1]$ with $t = 1$ from \autoref{notation:surface}
	then we also have $|y' - y(x_2)| \geq \epsilon'$.
	\end{enumerate}
	We use $\overline{Q}_{\epsilon}[M ,\on{hur}^{c,S}]$ to denote $\overline{Q}_{\epsilon,\epsilon}[M ,\on{hur}^{c,S}]$.
		Also, define $Q_\epsilon[M ,\on{hur}^{c,S}] \subset
	Q[M ,\on{hur}^{c,S}]$ to be the preimage of 
	$\overline{Q}_\epsilon[M ,\on{hur}^{c,S}] \subset\overline{Q}[M
	,\on{hur}^{c,S}]$ under the quotient map
	$Q[M ,\on{hur}^{c,S}] \to \overline{Q}[M ,\on{hur}^{c,S}]$.

	Moreover, if $M$ is a pointed graded right $\on{Hur}^c$ module,
	define $Z_{M,S} \subset Q_{\epsilon}[M ,
		\on{hur}^{c,S}]$ to be the subspace consisting of all
		points whose projection to $M$ is the base point.
	Let $Q^*_\epsilon[M ,
		\on{hur}^{c,S}_+] :=
		{Q}_{\epsilon}[M ,
		\on{hur}^{c,S}]/Z_{M,S}$
		and define $\overline{Q}^*_\epsilon[M ,
		\on{hur}^{c,S}_+]$ to be the quotient of 
		$\overline{Q}_\epsilon[M,\on{hur}^{c,S}]$ by the image of
		$Z_{M,S}$.
\end{notation}

\begin{lemma}
	\label{lemma:scan-to-epsilon-spacing}
	For $c$ a rack, $S$ a Hurwitz module over $c$,
	and
	$M$ a graded right $\on{Hur}^c$ module,
	the inclusions
	$\overline{Q}_{\epsilon,\epsilon'}[M ,\on{hur}^{c,S}] \to
	\overline{Q}^\epsilon_{t=1}[M ,\on{hur}^{c,S}]$ over the poset of real
	numbers $\delta > \epsilon> 0, \delta > \epsilon' > 0$ 
	form an ind-weak equivalence. 
\end{lemma}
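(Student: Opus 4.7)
The plan is to mimic the proof of \autoref{lemma:deform-epsilon-distance}, applying \cite[Lemma A.4.6]{landesmanL:the-stable-homology-of-non-splitting} to a flow $\Psi$ that vertically spreads the points of each configuration apart from each other (rather than from the gluing locus, as in the previous lemma). Concretely, given a point $(a,(x,1,\gamma,\alpha)) \in \overline{Q}^\epsilon_{t=1}[M,\on{hur}^{c,S}]$, we will construct a continuous homotopy $\Psi\colon \overline{Q}^\epsilon_{t=1}[M,\on{hur}^{c,S}]\times[0,1]\to \overline{Q}^\epsilon_{t=1}[M,\on{hur}^{c,S}]$ such that (i) if the input already lies in $\overline{Q}_{\epsilon,\epsilon'}[M,\on{hur}^{c,S}]$ then so does $\Psi(-,s)$ for every $s$, and (ii) $\Psi(-,1)$ lands in $\overline{Q}_{\epsilon,\epsilon'}[M,\on{hur}^{c,S}]$ for every input. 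Once such a $\Psi$ is built, \cite[Lemma A.4.6]{landesmanL:the-stable-homology-of-non-splitting} immediately yields the claimed ind-weak equivalence.

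To build $\Psi$, I would use the decomposition from \autoref{remark:rectangles-description}, which expresses $\mathcal M_{g,f,1}^\epsilon$ as a disjoint union of $2g+f$ rectangles obtained by gluing pairs of thin vertical strips $J_i^\epsilon$ (respectively $(J_i')^\epsilon$). On each such rectangle $R$, regard the finite subset $x\cap R$ together with all gluing-images of points of $x$ on the right boundary of $R$ as a finite set $x_R$ of points in a rectangle in the plane, and apply the standard vertical spreading flow (essentially a one-dimensional version of Gauss' law repulsion between the $y$-coordinates, bounded by the rectangle's top and bottom walls, which lie at distance $\geq \epsilon$ from any point of $x_R$). After linearly rescaling speeds, we can arrange that after time $1$ each pair of $y$-coordinates within $x_R$ is at distance $\geq \epsilon'$, which simultaneously forces condition (2) of \autoref{notation:scanned-quotient}, since a point's identified image under the right-edge gluing lives in the same $x_R$. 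We then glue these local flows together; because the right-edge identification is an isometry on $y$-coordinates, the resulting vector field is continuous across the gluings, and because the flow is purely vertical and never crosses the horizontal walls at distance $\epsilon$ from $\Phi$, it preserves $\mathcal M_{g,f,1}^\epsilon$ and hence $\overline{Q}^\epsilon_{t=1}$.

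Property (i) is automatic from the monotone repulsive nature of the flow: if two points already have vertical separation $\geq \epsilon'$, the flow never brings them closer. Property (ii) follows because the spreading flow moves $y$-coordinates strictly apart at a bounded below rate whenever they are still within $\epsilon'$, so after a uniformly bounded time all pairs are separated by at least $\epsilon'$; rescaling time puts this into the unit interval. The homotopy lifts from $Q^\epsilon_{t=1}$ to $\overline{Q}^\epsilon_{t=1}$ because all operations (moving configuration points, applying the obvious induced homotopy on paths $\gamma$) are equivariant with respect to the equivalence relation defining $\overline{Q}$, and the action on the $M$-factor is trivial throughout.

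The main obstacle is ensuring continuity of the vector field at the glued right edges of the rectangles and handling the case where multiple configuration points sit on, or straddle, such a gluing, so that the "spreading" of a point on one side of the gluing is compatible with the spreading of its identified image on the other side. The bookkeeping device of combining $x\cap R$ with its gluing-images into a single finite set $x_R$ before running the spreading flow is the way I would finesse this; the fact that the gluing identifies the right edges via an orientation-reversing isometry (so vertical distances are preserved up to sign) is what makes the prescription consistent. Once this local model is verified, the global flow $\Psi$ and the verification of (i), (ii) are routine, and the proof concludes exactly as in \autoref{lemma:deform-epsilon-distance}.
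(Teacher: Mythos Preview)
Your approach differs from the paper's in an essential way, and the difference exposes a genuine gap.

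The paper does not use a vertical repulsion flow. It argues instead by analogy with \cite[Lemma~A.4.7]{landesmanL:the-stable-homology-of-non-splitting} (not A.4.6), using the flow of \cite[Construction~A.4.3]{landesmanL:the-stable-homology-of-non-splitting} adapted to each glued rectangle: points are pushed \emph{horizontally}, away from the glued side and toward the two ends of the rectangle, both of which sit on the left boundary of $\mathcal M_{g,f,1}^\epsilon$, where the quotient relation of \autoref{notation:q} absorbs them into $M$.

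Your ``Gauss' law'' spreading flow is configuration-dependent: each point's vertical velocity depends on the $y$-coordinates of all the other points in its rectangle. That is precisely what prevents it from descending to the quotient $\overline{Q}^\epsilon_{t=1}$. The relation of \autoref{notation:q} identifies a representative $(a,\{x_1,\dots,x_n\},\dots)$ having $x_1$ on the wall $\{x=0\}$ with $(a\cdot\alpha_1,\{x_2,\dots,x_n\},\dots)$. Running your flow on the first representative, the points $x_2,\dots,x_n$ feel repulsion from $x_1$; running it on the second, they do not. The two flowed configurations of $\{x_2,\dots,x_n\}$ therefore disagree, so the homotopy is not well-defined on $\overline{Q}^\epsilon_{t=1}$, and your assertion that the operations are ``equivariant with respect to the equivalence relation defining $\overline{Q}$'' is not justified. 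Damping the repulsion to vanish as $x\to 0$ would restore descent, but then a point just off the wall exerts almost no force on the others, and there is no uniform $\epsilon'$ for which your condition~(ii) can hold. The obstacle you single out (continuity across the glued right edges) is real but secondary; the absorption at the left wall is what breaks this line of argument, and it is exactly what the paper's horizontal flow is designed to exploit rather than fight.
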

\begin{proof}
	Note that 
	any point of $\overline{Q}^\epsilon_{t=1}[M ,\on{hur}^{c,S}]$
	corresponds to a configuration of points whose vertical coordinate lies
	at least distance $\epsilon$ from any element of $\Phi$, as in
	\autoref{notation:point-pushed-quotient}.
	As described in \autoref{remark:rectangles-description},
	such a configuration space can be identified with a disjoint union of
	configuration spaces in rectangles, and the result can be proven via an
	argument analogous to that in the proof of \cite[Lemma
	A.4.7]{landesmanL:the-stable-homology-of-non-splitting}, using the flow
	from \cite[Construction
	A.4.3]{landesmanL:the-stable-homology-of-non-splitting} to push points
	toward the two vertical boundaries of each rectangle. Specifically, if
	such a rectangle is obtained by gluing two rectangles with right sides
$J_i^\epsilon$ with $J_{i+2}^\epsilon$ or $(J'_i)^\epsilon$ 
with $(J'_{i+1})^\epsilon$, as in
\autoref{remark:rectangles-description}, then the flow is obtained from
	pushing the point away from this glued side and toward the left boundary
	of $\mathcal M_{g,f,1}^\epsilon$.
\end{proof}

Finally, we prove a version of the above lemma where we also include base points.

\begin{proposition}
	\label{proposition:pointed-scanning}
	Let $c$ be a rack, $S$ a Hurwitz module over $c$,
	and let $M$ be a graded pointed set with a right $\Hur^c$ action.
	With notation as in \autoref{notation:scanned-quotient}, 
	$M \otimes_{\Hur^c} \Hur^{c,S}$ is identified with the
	ind-weak homotopy type of 
	$\overline{Q}_\epsilon[M ,\on{hur}^{c,S}]$ and
	$M \otimes_{\Hur^c_+} \Hur^{c,S}_+$ is identified with the
	ind-weak homotopy type of 
	$\overline{Q}^*_\epsilon[M ,\on{hur}^{c,S}_+]$
\end{proposition}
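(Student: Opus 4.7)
The first statement will follow by composing the chain of (ind-)weak equivalences already established: by \autoref{lemma:bar-to-b}, $M \otimes_{\on{hur}^c} \on{hur}^{c,S} \simeq B[M,\on{hur}^{\circ,c,S}]$; by \autoref{lemma:b-to-q}, this is weakly equivalent to $\overline{Q}[M,\on{hur}^{c,S}]$; by \autoref{lemma:deform-to-1}, the latter deformation retracts onto $\overline{Q}_{t=1}[M,\on{hur}^{c,S}]$; by \autoref{lemma:deform-epsilon-distance}, the inclusions $\overline{Q}^\epsilon_{t=1}[M,\on{hur}^{c,S}] \to \overline{Q}_{t=1}[M,\on{hur}^{c,S}]$ form an ind-weak equivalence over $0 < \epsilon < \delta$; finally, \autoref{lemma:scan-to-epsilon-spacing} yields an ind-weak equivalence $\overline{Q}_{\epsilon,\epsilon'}[M,\on{hur}^{c,S}] \to \overline{Q}^\epsilon_{t=1}[M,\on{hur}^{c,S}]$. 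Restricting to the diagonal $\epsilon = \epsilon'$ (which is cofinal in the poset of pairs $(\epsilon,\epsilon')$ with both small) identifies $M \otimes_{\Hur^c} \Hur^{c,S}$ with the ind-weak homotopy type of $\overline{Q}_\epsilon[M,\on{hur}^{c,S}]$, as desired.

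For the pointed version, the plan is to verify that each equivalence in the above chain respects the distinguished base point subspace, and then pass to quotients. At the left end, the base point of $M$ determines a subspace consisting of points whose $M$-coordinate is the base point; similarly at each intermediate stage one has an analogous subspace (at the level of the bar construction, this is the image under the natural map of terms whose leftmost factor is the base point, and on the topological models one takes the preimages of the base point under projection to $M$). One then checks that the map $\sigma$ of \autoref{lemma:bar-to-b}, the identity map of \autoref{lemma:b-to-q}, and the deformation retractions of \autoref{lemma:deform-to-1}, \autoref{lemma:deform-epsilon-distance}, and \autoref{lemma:scan-to-epsilon-spacing} all carry the base point subspace to the base point subspace. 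This is essentially immediate because each homotopy only modifies the configuration data $(x,t,\gamma,\alpha)$ and leaves the $M$-coordinate untouched, so a point over the base point stays over the base point throughout the homotopy.

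Given this compatibility, to pass to quotients it suffices to know that the inclusions of the base point subspaces are cofibrations, so that the induced maps on mapping cones model the homotopy quotients. These inclusions are closed inclusions of reasonable (CW-like) spaces cut out by the single condition that the $M$-coordinate is the base point, and their cofibrancy can be verified directly by exhibiting a neighborhood deformation retraction onto the subspace. The quotient at the bar-construction end is by definition $M \otimes_{\Hur^c_+} \Hur^{c,S}_+$, and the quotient at the topological end is $\overline{Q}^*_\epsilon[M,\on{hur}^{c,S}_+]$ by \autoref{notation:scanned-quotient}, giving the second identification.

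The main technical obstacle is the bookkeeping to ensure that the homotopies used in \autoref{lemma:deform-epsilon-distance} and \autoref{lemma:scan-to-epsilon-spacing} — which are built from carefully chosen vector fields and flows — descend to each successive quotient, since one must check they are compatible not only with the base point subspace but also with the equivalence relation on $\overline{Q}$ defined in \autoref{notation:q}. This compatibility was already implicit in the proofs of those lemmas (the flows respect the leftmost-point quotienting relation because they do not reorder the configuration), so the additional pointed quotient introduces no new issue beyond verifying cofibrancy.
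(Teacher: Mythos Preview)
Your proposal is correct and follows essentially the same approach as the paper: chaining together \autoref{lemma:bar-to-b}, \autoref{lemma:b-to-q}, \autoref{lemma:deform-to-1}, \autoref{lemma:deform-epsilon-distance}, and \autoref{lemma:scan-to-epsilon-spacing} (with the diagonal-cofinality observation), and then passing to the pointed version by checking compatibility with the base point subspace and quotienting. The paper's own proof is briefer, deferring the pointed step to an argument analogous to \cite[Theorem A.4.9]{landesmanL:the-stable-homology-of-non-splitting}, but your expanded sketch of that step is in the same spirit.
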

\begin{proof}
	By combining 
	\autoref{lemma:bar-to-b},
	\autoref{lemma:b-to-q},
	\autoref{lemma:deform-to-1},
	\autoref{lemma:deform-epsilon-distance},
	\autoref{lemma:scan-to-epsilon-spacing}
	we obtain an ind-weak homotopy equivalence between 
	$\overline{Q}_{\epsilon}[M ,\on{hur}^{c,S}]$
	and
	$M \otimes_{\on{hur}^c} \on{hur}^{c,S}$. Here we use that the diagonal
	is cofinal in the product of two copies of the poset of real numbers
	between $0$ and $\delta$.
	One can then use an argument analogous to that in the proof of
	\cite[Theorem A.4.9]{landesmanL:the-stable-homology-of-non-splitting}
	to include base points and obtain an identification
	between
	$M \otimes_{\Hur^c_+} \Hur^{c,S}_+$ and
	the ind-weak homotopy type of 
	$\overline{Q}^*_\epsilon[M ,\on{hur}^{c,S}_+]$.
\end{proof}

\section{Stability of a quotient}
\label{section:quotient-stability}

Recall that our general strategy from
\cite{landesmanL:homological-stability-for-hurwitz} to prove homological
stability of Hurwitz spaces
was to first
prove that a suitable quotient satisfies homological stability, and then to
remove elements in the quotient one at a time, and show that even without
quotienting, these Hurwitz spaces still satisfy homological stability.
We will apply a similar approach to bijective Hurwitz modules.
In \autoref{theorem:all-quotient-bounded}, at the end of this section, we will complete the first step, where we show the quotient
satisfies homological stability.
This stabilization for the quotient
will essentially follow from a general theorem we proved in a previous paper
\cite[Theorem 3.1.4]{landesmanL:homological-stability-for-hurwitz},
and the main difficulty will be in verifying condition (b) of that result, which
is a statement about the cohomology of a certain bar construction,
$*_+\otimes_{\Hur^c_+} \Hur^{c,S}_+$.

The key input from our prior work we will need is that a suitable quotient of
Hurwitz space itself has homology which stabilizes.
%The general idea is to show that bijective 
%Hurwitz modules over a rack satisfy homological stability.
%The main difficulty in computing the cohomology will be to construct a suitable
%model for the bar construction, which we will do with various scanning
%arguments.
%
%A scanning argument identifies this space with a bunch of spheres.
%One then uses the spectral sequence
%\begin{align*}
%	\widetilde{C}_*(\Hur^{c,S}_+ \otimes_{\Hur^c_+} \on{pt}_+) \otimes
%	\widetilde{C}_*(\Hur^c_+/(\alpha_c^{\ord(c)})) \implies \widetilde{C}_*(\Hur^{c,S}_+/(\alpha_c^{\ord(c)}))  
%\end{align*}
%to show boundedness
%of $\Hur^{c,S}_+/(\alpha_c^{\ord(c)})$.

We next recall the notion of being bounded in a linear range, 
following \cite[Definition 3.1.1]{landesmanL:homological-stability-for-hurwitz}
which captures the
idea that the homology groups of some sequence of spaces stabilize to $0$ in a
linear range.
\begin{definition}
	\label{definition:bounded-linear}
	Suppose $k$ is a commutative ring and $X$ is a $\ZZ$-graded $k$-module spectrum,
with $X_j$ the $j$th graded part.
For a positive real number $r_1$ and a real number $r_2$, we say $X$ is $f_{r_1, r_2}$-bounded if
$\pi_i(X_j) = 0$ whenever $j > r_1 i + r_2$.
We then say $X$ is {\em bounded in a linear range} there exist real numbers
$r_1$ and $r_2$ with $r_1 \geq 0$ so that $X$ is $f_{r_1, r_2}$ bounded.
\end{definition}

We are aiming to prove a certain quotient of Hurwitz space stabilizes,
which will essentially follow from a general theorem we proved in a previous paper
\cite[Theorem 3.1.4]{landesmanL:homological-stability-for-hurwitz}.
The main difficulty will be in verifying condition (b) of that result, which
is a statement about the homology of a certain bar construction,
$*_+\otimes_{\Hur^c_+} \Hur^{c,S}_+$, which we verify next.
This measures the generators (or cells) of $\Hur^{c,S}$ over $\Hur^c$.
For the next proposition, recall that we have defined a grading on 
$\widetilde{C}_*(*_+\otimes_{\Hur^c_+}
	\Hur^{c,S}_+ )$ 
	coming from the grading defined in \autoref{notation:hur}
	keeping track of the number of points which lie in a chosen
	$S$-component $z$ of $c$.
\begin{proposition}
	\label{proposition:module-cells-bounded}
	Let $c$ be a rack and $S$ be a Hurwitz module.
	We have that $\widetilde{C}_*(*_+\otimes_{\Hur^c_+}
	\Hur^{c,S}_+ )$ is $f_{1,0}$-bounded.
\end{proposition}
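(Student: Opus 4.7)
The plan is to work with the topological model for the bar construction provided by Proposition \ref{proposition:pointed-scanning}. That result identifies $*_+ \otimes_{\Hur^c_+} \Hur^{c,S}_+$ with the ind-weak homotopy type of $\overline{Q}_\epsilon^*[*_+, \on{hur}^{c,S}_+]$, so it suffices to prove the $f_{1,0}$-bound for this pointed quotient space in each $z$-grading. Non-basepoint points have the form $(\,*, (x,1,\gamma,(\alpha_1,\ldots,\alpha_n,s)))$, and the right action of $\Hur^c_+$ on $*_+$ is necessarily trivial, so the defining equivalence relation of $\overline{Q}$ identifies $(\,*, b) \sim (\,*, b')$ whenever $b'$ is obtained from $b$ by moving one labeled point to the left boundary (first coordinate $0$), arranging it to the left of the others, and deleting it.

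The strategy is then to set up a filtration $F_0 \subset F_1 \subset \cdots$ on $\overline{Q}_\epsilon^*[*_+, \on{hur}^{c,S}_+]$ where $F_k$ is the subspace of configurations with at most $k$ points; the absorption relation turns the associated graded quotients into cofiber sequences built from configuration spaces of $\Sigma^1_{g,f}$ with labels. Along this filtration, I would prove by induction on the number of labeled points that the $z$-grading $j$ piece is $(j-1)$-connected. The base case is trivial since $F_0$ consists of empty configurations (with values in $T_0$) and lies entirely in $z$-grading $0$. In the inductive step, each absorption step either (i) removes a label outside $z$, preserving $z$-grading but adding a $1$-cell to the homotopy type, or (ii) removes a $z$-label, dropping $z$-grading by $1$ but again contributing in one higher homological degree. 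Combining this with the inductive hypothesis for fewer points gives the desired vanishing of $\pi_i$ for $j > i$.

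The main obstacle is making the ``push to the left boundary and absorb'' procedure precise in the presence of the surface topology of $\mathcal M_{g,f,1}^\epsilon$. A generic configuration will not have any point at the left boundary, so one must apply a scanning-type deformation (analogous to those in Lemma \ref{lemma:deform-epsilon-distance} and Lemma \ref{lemma:scan-to-epsilon-spacing}) to flow a designated leftmost point to the boundary while preserving the $\epsilon$-spacing constraints and the relative positions of the other points. Moreover, ``leftmost'' is not globally canonical on $\Sigma^1_{g,f}$ after the boundary identifications along the $J_i$ and $J'_i$, so the argument must be carried out locally on each of the $2g+f$ rectangular pieces described in Remark \ref{remark:rectangles-description}, with compatibility at the gluings verified by a braid-group bookkeeping using the relations established in Lemma \ref{lemma:three-conditions}. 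Once the local absorption procedure is set up, the cofiber sequences on associated graded pieces of the filtration combine with the inductive hypothesis to yield the required connectivity bound.
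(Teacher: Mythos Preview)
Your use of Proposition~\ref{proposition:pointed-scanning} to pass to the model $\overline{Q}^*_\epsilon[*_+,\on{hur}^{c,S}_+]$ is correct, but your description of the equivalence relation is not. The right $\Hur^c_+$-module $*_+$ is the augmentation module, so $*\cdot\alpha$ equals the basepoint $+$ for every $\alpha\in\pi_0\Hur^c_n$ with $n\geq 1$; in particular this holds for every $\alpha\in c$, regardless of whether $\alpha\in z$. Hence in $\overline{Q}^*_\epsilon[*_+,\on{hur}^{c,S}_+]$, whenever \emph{any} labeled point reaches the left boundary the entire configuration is collapsed to the basepoint, rather than being absorbed into a smaller configuration still carrying the label $*$. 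Your filtration by number of points therefore splits as a wedge at the basepoint, there are no attaching maps between strata, and the inductive bookkeeping in your steps (i) and (ii)---tracking how absorption shifts the $z$-grading and homological degree---describes a mechanism that does not occur in the model.

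With the correct picture the paper's argument is direct and avoids any induction or point-pushing. One deformation-retracts onto the subspace in which, within each of the $2g+f$ rectangles of Remark~\ref{remark:rectangles-description}, the points present are evenly spaced in the vertical direction; this is just an affine vertical motion inside each rectangle. After the retraction the only continuous parameters are the horizontal coordinates of the points, each ranging over an open interval whose two endpoints lie on the left boundary of $\mathcal M_{g,f,1}$. For a configuration with $m$ total points this gives a copy of $(0,1)^m$, and the one-point compactification over all discrete choices (labels, distribution among rectangles, element of $T_0$) is a wedge of $m$-spheres. In $z$-grading $j$ one always has $m\geq j$, so the grading-$j$ piece is a wedge of spheres of dimension at least $j$ and is therefore $j$-connective, which is exactly the $f_{1,0}$-bound. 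No flow of a ``leftmost'' point to the boundary and no compatibility check at the gluings via Lemma~\ref{lemma:three-conditions} is needed.
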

\begin{proof}
	Given \autoref{proposition:pointed-scanning}, the argument is now very
	similar to that presented in the proof of \cite[Lemma
	3.2.8]{landesmanL:homological-stability-for-hurwitz}, as we now
	explain. 
	
	Namely, \autoref{proposition:pointed-scanning} implies that $*_+\otimes_{\Hur^c_+} \Hur^{c,S}_+$ is identified with the
	ind-weak homotopy type of 
	$\overline{Q}^*_\epsilon[*_+ ,\on{hur}^{c,S}_+]$.

	Recall \autoref{remark:rectangles-description}, which implies that the configuration of points 
	associated to points in $\overline{Q}^*_\epsilon[*_+ ,\on{hur}^{c,S}_+]$  can be viewed as a disjoint union of
	$2g + f$ rectangles. 	
	
	We can consider the subspace $\overline{L}^*_\epsilon[*_+
	,\on{hur}^{c,S}_+]$ of $\overline{Q}^*_\epsilon[*_+ ,\on{hur}^{c,S}_+]$
	where, in each rectangle as above, the configuration of points are
	evenly spaced in the vertical direction, including spacing between the
	top point and the top of the rectangle as well as between the bottom
	point and the bottom of the rectangle. There is an evident deformation retraction of $\overline{Q}^*_\epsilon[*_+ ,\on{hur}^{c,S}_+]$ onto this subspace given by linearly moving points vertically until they are evenly spaced.
	
	In grading $n$,	we claim $\overline{L}^*_\epsilon[*_+
	,\on{hur}^{c,S}_+]$ is a wedge of $n$-spheres: This is because it is the
	$1$-point compactification of a disjoint union of copies of $(0,1)^n$, where there are $n$ points evenly spaced in the vertical direction of the rectangles.
	
	The result follows since $n$-spheres are $n$-connective, so $*_+\otimes_{\Hur^c_+} \Hur^{c,S}_+$ is $n$-connective in grading $n$.
	\end{proof}

\begin{notation}
	\label{notation:component-notation}
	Let $c$ be a finite rack. 
	Following notation in \cite[Notation
4.4.1]{landesmanL:the-stable-homology-of-non-splitting}, for $x \in c$, we use
$\alpha_x$ to denote the corresponding component of $\pi_0 \Hur_1^c$.
For $X \subset c$, we also write $\alpha_X := \{\alpha_x, x \in X\},$
we use $\alpha_X^i := \{\alpha_x^i, x \in X\}$ for $i$ an integer. Fix an
$S$-component $z \subset c$, and choose an ordering $x_1,\dots,x_{|c|}$ on $c$
where the elements of $z$ come first. For any subset $X \subset c$, we use
$\Hur^{c,S}_+/(\alpha_X^{\ord(X)})$ to denote the tensor product
$\Hur^{c}_+/(\alpha_{x_{i_1}}^{\ord(x_{i_1})})\otimes_{\Hur^{c}_+}\dots
\otimes_{\Hur^{c}_+}\Hur^{c}_+/(\alpha_{x_{i_{|X|}}}^{\ord(x_{i_{|X|}})})
\otimes_{\Hur^{c}_+}\Hur^{c,S}_+$ where $i_1,\dots,i_{|X|}$ are the indices of the elements of $X$ in order of the ordering on $c$. We use the same notation to denote iterated quotients after taking chains.
\end{notation}

The following lemma can be proven via a straightforward generalization of the proof of
\cite[Lemma 3.2.7]{landesmanL:homological-stability-for-hurwitz}.

\begin{lemma}
	\label{lemma:nilpotent-augmentation}
	Let $c$ be a finite rack and $S$ be a Hurwitz module over $c$.
	Let $I$ be the augmentation ideal of $\pi_0 {C}_*(\Hur^{c})\simeq
	\pi_0 \widetilde{C}_*(\Hur_+^{c})$, 
	and let $I_{>0} \subset I$ be the subset of $I$ with non-negative grading.
	Let $z = \{y_1, \ldots, y_{|z|}\}$.
	Then left multiplication
	by $I_{>0}^{1+\sum_{i=1}^{|z|} 2^i \ord(y_i)-1}$ acts by $0$ on 
	$\widetilde{C}_*(\Hur^{c,S}_+/(\alpha_c^{\ord(c)}))$.
\end{lemma}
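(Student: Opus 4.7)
The plan is to follow the structure of the Hurwitz-space analog \cite[Lemma 3.2.7]{landesmanL:homological-stability-for-hurwitz} essentially verbatim, observing that the bijective Hurwitz module $S$ contributes no genuinely new obstacle. Indeed, by the description in \autoref{notation:component-notation}, the object $\Hur^{c,S}_+/(\alpha_c^{\ord(c)})$ is an iterated tensor product of quotients $\Hur^{c}_+/(\alpha_{x}^{\ord(x)})$ over $\Hur^c_+$, followed on the right by $\Hur^{c,S}_+$. Left multiplication by $I$ acts through the leftmost $\Hur^c_+$-factor, and the vanishing relations $\alpha_x^{\ord(x)} \sim 0$ live entirely inside those $\Hur^c_+$-factors, so the $\Hur^{c,S}_+$-piece on the right is passive throughout.

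First I would reduce the claim to a combinatorial statement about monomials in the generators $\alpha_x$, $x\in c$. Since $\pi_0 \widetilde{C}_*(\Hur^c_+)$ is, in non-negative grading, generated as a monoid by $\{\alpha_x : x \in c\}$, any element of $I_{>0}^N$ is a sum of monomials of length at least $N$ in these generators, and it suffices to show that any such monomial of length $\sum_{i=1}^{|z|} 2^i \ord(y_i)$ acts trivially on the quotient. The ``$+1-1$'' in the exponent in the statement just accounts for the bookkeeping between an initial generator of $I_{>0}$ and pure-monomial multiplication.

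Next, the inductive core: I would process the elements $y_1, \ldots, y_{|z|}$ of $z$ in order. At step $k$, using the rack/braid relation $\alpha_x \alpha_y = \alpha_{x \triangleright y} \alpha_x$ inside $\pi_0 \widetilde{C}_*(\Hur^c_+)$, I would sweep occurrences of generators in the $\triangleright$-orbit of $y_k$ leftward until they can be collected into a consecutive block of length $\ord(y_k)$ sharing a common label $y_k'$ conjugate to $y_k$. Such a block equals (a conjugate of) $\alpha_{y_k'}^{\ord(y_k')}$, which is zero in the appropriate $\Hur^c_+/(\alpha_{y_k}^{\ord(y_k)})$ factor after sliding across the intermediate tensor factors, since $\triangleright$-conjugation preserves orders. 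Because each sweep may have to double the pool of candidate generators to guarantee enough of them survive interaction with the blocks collected at earlier steps, the aggregate exponent needed grows like $2^k\ord(y_k)$, yielding exactly the bound $\sum_i 2^i \ord(y_i)$.

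The main obstacle I anticipate is the careful bookkeeping behind this rearrangement, in particular verifying that a conjugated block $\alpha_{y_k'}^{\ord(y_k')}$ truly vanishes after being slid through the intervening $\Hur^c_+/(\alpha_{x_j}^{\ord(x_j)})$ factors to reach the matching quotient factor, and that the doubling estimate at each induction step gives precisely the factor $2^k$. This is exactly the delicate combinatorial estimate performed in the cited lemma. Because the whole manipulation is internal to the $\Hur^c_+$-factors and never involves the $T_0$-coordinate of $S$, the argument transfers from the Hurwitz-space case with only notational changes, completing the proof.
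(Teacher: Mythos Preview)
Your proposal is correct and matches the paper's approach: the paper itself gives no detailed proof, stating only that the lemma ``can be proven via a straightforward generalization of the proof of \cite[Lemma 3.2.7]{landesmanL:homological-stability-for-hurwitz}.'' Your observation that the $\Hur^{c,S}_+$-factor sits passively on the right while all the nilpotence manipulation happens in the $\Hur^c_+$-factors is exactly the point, and your sketch of the combinatorial sweep and doubling estimate is in line with the cited argument. One small slip: you refer to $S$ as a bijective Hurwitz module, but the lemma is stated (and your argument works) for an arbitrary Hurwitz module.
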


We can now deduce our main result on the stability of a quotient of Hurwitz
modules. We note that this works for general Hurwitz modules, and not
just bijective Hurwitz modules.
For the next statement, we fix a rack $c$ a Hurwitz module $S$ over $c$, $z
\subset c$ an $S$-component, and 
give 
$\widetilde{C}_*(\Hur^{c,S}_+/(\alpha_c^{\ord(c)}))$ the grading
induced by the grading on 
$\Hur^{c,S}$ 
described in \autoref{notation:hur}.
We use 
$\ord_c(z)$ to denote the maximal order of the action of an element $y_i \in z$.
\begin{theorem}
	\label{theorem:all-quotient-bounded}
	Let $c$ be a finite rack, and $S$ a Hurwitz module over $c$ with $0$ set
	$T_0$, and $z \subset c$ an $S$-component.
	With notation as above,
	$\widetilde{C}_*(\Hur^{c,S}_+/(\alpha_c^{\ord(c)}))$
	is $f_{r_1, r_2}$ bounded, where the values of $r_1$ and $r_2$ depend
	only on
	$|z|$ and $\ord_c(z)$.
\end{theorem}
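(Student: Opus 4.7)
The plan is to deduce this directly from \cite[Theorem 3.1.4]{landesmanL:homological-stability-for-hurwitz}, which provides a general criterion for concluding that an iterated quotient of a module over $\Hur^c_+$ by a collection of generators is bounded in a linear range, given (a) that the bar construction measuring cells of the module relative to $\Hur^c_+$ is bounded in a linear range, and (b) that the augmentation ideal of $\pi_0 \widetilde{C}_*(\Hur^c_+)$ acts nilpotently on the quotient. Both (a) and (b) have essentially been established above, so the main content of the proof is really just stitching them together and tracking the dependence of the constants.

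First, I would identify $\widetilde{C}_*(\Hur^{c,S}_+)$ as a module over the graded ring $\widetilde{C}_*(\Hur^{c}_+)$, with grading coming from the number of points whose labels lie in the chosen $S$-component $z$, as in \autoref{notation:hur}. Condition (a) of \cite[Theorem 3.1.4]{landesmanL:homological-stability-for-hurwitz} requires a linear bound on $\widetilde{C}_*(*_+ \otimes_{\Hur^c_+} \Hur^{c,S}_+)$, which is exactly \autoref{proposition:module-cells-bounded}: this bar construction is $f_{1,0}$-bounded. Note the proof of that proposition only depended on the scanning identification given by \autoref{proposition:pointed-scanning}, together with the rectangle decomposition of \autoref{remark:rectangles-description}, so the slope $1$ and intercept $0$ are universal and do not depend on $|z|$ or $\ord_c(z)$.

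Next, I would verify condition (b). \autoref{lemma:nilpotent-augmentation} supplies a concrete nilpotency exponent: writing $z = \{y_1, \ldots, y_{|z|}\}$, the power $I_{>0}^{1 + \sum_{i=1}^{|z|} 2^i \ord(y_i) - 1}$ annihilates $\widetilde{C}_*(\Hur^{c,S}_+/(\alpha_c^{\ord(c)}))$. Since each $\ord(y_i) \leq \ord_c(z)$, this exponent is bounded by a quantity depending only on $|z|$ and $\ord_c(z)$, which is precisely the dependence asserted by the theorem.

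With (a) and (b) in hand, applying \cite[Theorem 3.1.4]{landesmanL:homological-stability-for-hurwitz} to the module $\widetilde{C}_*(\Hur^{c,S}_+)$ quotiented iteratively by $\alpha_{x_i}^{\ord(x_i)}$ in the order prescribed by \autoref{notation:component-notation} yields that $\widetilde{C}_*(\Hur^{c,S}_+/(\alpha_c^{\ord(c)}))$ is bounded in a linear range, with slope $r_1$ and intercept $r_2$ produced by the general theorem from the slope and intercept of (a) and the nilpotency exponent from (b). Since both inputs depend only on $|z|$ and $\ord_c(z)$, so do $r_1$ and $r_2$. The only real obstacle I anticipate is bookkeeping to be sure that the grading conventions used by \cite[Theorem 3.1.4]{landesmanL:homological-stability-for-hurwitz} align with the $z$-grading on $\Hur^{c,S}$ used here, and in particular that the nilpotency input \autoref{lemma:nilpotent-augmentation} is applied after quotienting by the generators associated to $c \setminus z$ in the correct order; this is resolved by choosing the ordering in \autoref{notation:component-notation} so that elements of $z$ are quotiented last, matching the role of the distinguished component in the general theorem.
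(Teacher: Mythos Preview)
Your strategy is the same as the paper's—both deduce the result from \cite[Theorem 3.1.4]{landesmanL:homological-stability-for-hurwitz}—but your check of its hypotheses is incomplete.

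That theorem carries \emph{three} conditions (a), (b), (c), along with constants $v,w,d,t,\mu,b$ to be tracked. What you label ``(a)'' is the paper's condition (b), what you label ``(b)'' is the paper's condition (c), and you have omitted the paper's condition (a) entirely. Condition (a) is a hypothesis on the ring $R = \widetilde{C}_*(\Hur^{c}_+)$ itself, supplied by \cite[Lemma 3.2.8]{landesmanL:homological-stability-for-hurwitz} with $v=1$, $w=0$; the paper also records $d=1$ since $R$ is generated in degree~$1$. Without (a), the appeal to Theorem~3.1.4 is not justified.

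For the paper's condition (b), \autoref{proposition:module-cells-bounded} gives the $f_{1,0}$ bound on $\widetilde{C}_*(*_+ \otimes_{\Hur^c_+} \Hur^{c,S}_+)$, but one must still pass to the iterated quotient by the $\alpha_x^{\ord(x)}$. The paper observes that these act trivially after tensoring with $*_+$ and shift the $z$-grading by $0$ when $x \in c \setminus z$ and by at most $\ord_c(z)$ when $x \in z$, giving $\mu = \ord_c(z)$, $b=0$. This is precisely where the dependence on $\ord_c(z)$ enters the slope; your account attributes all such dependence to the nilpotency exponent $t$ alone.

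A minor point: \autoref{notation:component-notation} orders the elements of $z$ \emph{first}, not last.
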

\begin{proof}
	This follows from the final statement of
	\cite[Theorem
	3.1.4]{landesmanL:homological-stability-for-hurwitz}
	once we verify the three conditions (a), (b), and (c) stated there, and
	show that the constants $v,w,d,t,\mu,b$ defined there only depend on
	$|z|$ and $\ord_c(z)$.
	We take $R = \widetilde{C}_*(\Hur^{c}_+)$.
	We can take the constant $d$ to be $1$ because $R$ is generated in
	degree $1$ by the elements of $c$.
	Indeed, condition (a) was shown in 
	\cite[Lemma 3.2.8]{landesmanL:homological-stability-for-hurwitz},
	where it was shown we can take $v = 1, w = 0$.
	Using \autoref{proposition:module-cells-bounded}, we see 
	$\widetilde{C}_*(*_+ \otimes_{\Hur^c_+} \Hur^{c,S}_+/(\alpha_c^{\ord(c)}))$
	is $f_{1,0}$ bounded.
	For $x \in c$, each $\alpha_x^{\on{ord}(x)}$ either acts trivially on this and either has degree $0$ if
	$x \in c - z$ or has degree at most $\ord_c(z)$ if $x \in z$,
	we find that the quotient by the actions of $\alpha_x^{\on{ord}(x)}, x \in c$ is $f_{|\ord_c(z)|,0}$ bounded.
	Hence, condition (b) follows with $\mu = |\ord_c(z)|, b = 0$.
	Finally condition (c) was shown in
	\autoref{lemma:nilpotent-augmentation}, which also shows that we can take $t$ to only depend on $|z|$ and $\ord_c(z)$.
\end{proof}

\section{An equivalence of bar constructions}
\label{section:equivalence}

We have shown in \autoref{theorem:all-quotient-bounded} that a certain quotient
of a Hurwitz module has vanishing stable homology.
We next aim to show that the Hurwitz space itself has homology which stabilizes,
which we demonstrate by the technique of ``unquotienting'' via computing the stable homology of each
quotient in a fashion similar to that carried out in
\cite{landesmanL:homological-stability-for-hurwitz}.
The main result of this section is \autoref{proposition:subrack-comparison},
stating that a certain comparison of bar
constructions is a homology equivalence.
This will be used in \autoref{section:homological-stability}
as the key input for this unquotienting procedure.

In order to compare these bar constructions, we will construct a certain
nullhomotopy. This nullhomotopy involves the notion of {\em allowable moves}
which we define next.
Allowable moves describe how we are allowed to move the labeled points
around Hurwitz modules in 
$\overline{Q}_\epsilon[M,\on{hur}^{N_c(c'),S'}]$ 
introduced in
\autoref{notation:scanned-quotient}. 

\begin{definition}
	\label{definition:allowable-and-output}
	Using notation from \autoref{notation:point-pushed-quotient} and
	$c,c',S,S'$ as in 
	\autoref{lemma:normalizer-is-hurwitz-module},
	fix a point of
	$\overline{Q}_\epsilon[ \pi_0(\Hur^{c'})[\alpha_{c'}^{-1}]_+,
	\on{hur}^{N_c(c'),S'}]$
	which we may think of as a tuple $(m, (x, 1, \gamma, \alpha))$
	satisfying the constraints of \autoref{notation:scanned-quotient}.
	Say $x = \{x_1, \ldots, x_n\} \subset \mathcal
	M^\epsilon_{g,f,1}$.
	Choose a collection of horizontal paths $\eta_1, \ldots, \eta_{n+2g+f}$
	lying in $\mathcal M^\epsilon_{g,f,1}$
	which we describe next.
	First, identify $\mathcal M^\epsilon_{g,f,1}$
	with a collection of $2g + f$ rectangles as in
	\autoref{remark:rectangles-description} in a way so that the $\rho$th such
	rectangle has vertical coordinate ranging from $a_\rho$ to $b_\rho$,
	and the $\rho$th such rectangle, counting from the bottom, has $n_\rho$ points from $\{x_1, \ldots,
	x_n\}$ with vertical coordinates $v^\rho_0 := a_\rho < v^\rho_1 < \cdots <
	v^\rho_{n_i} < v^\rho_{n_\rho+1} := b_\rho$.
	Then there are $n_\rho+1$ such paths contained in the $i$th rectangle
	which are given by straight lines across the rectangle with vertical
	coordinates $\frac{v^\rho_j + v^\rho_{j+1}}{2}$, for $0 \leq j \leq
	n_\rho$.
	We orient $\eta_i$ so that the starting endpoint, viewed as a point in
	$\mathbf R$, always has higher
	second coordinate than the ending endpoint. 
	In particular, the starting point of the allowable path $\eta_{i+1}$ is
	higher than the starting point of the allowable path $\eta_i$.
	We call the $\eta_1, \ldots, \eta_{n+2g+f}$ the set of {\em allowable
	paths} of the point of $(m, (x, 1, \gamma, \alpha))$ and
	an {\em allowable move} consists of moving a point with label $\beta
	\in c'$ from the left boundary across one of the allowable paths $\eta_i$.
	See \autoref{figure:allowable-paths} for a pictorial depiction of the
	allowable paths associated to a particular configuration.

\begin{figure}
\includegraphics[scale=.6]{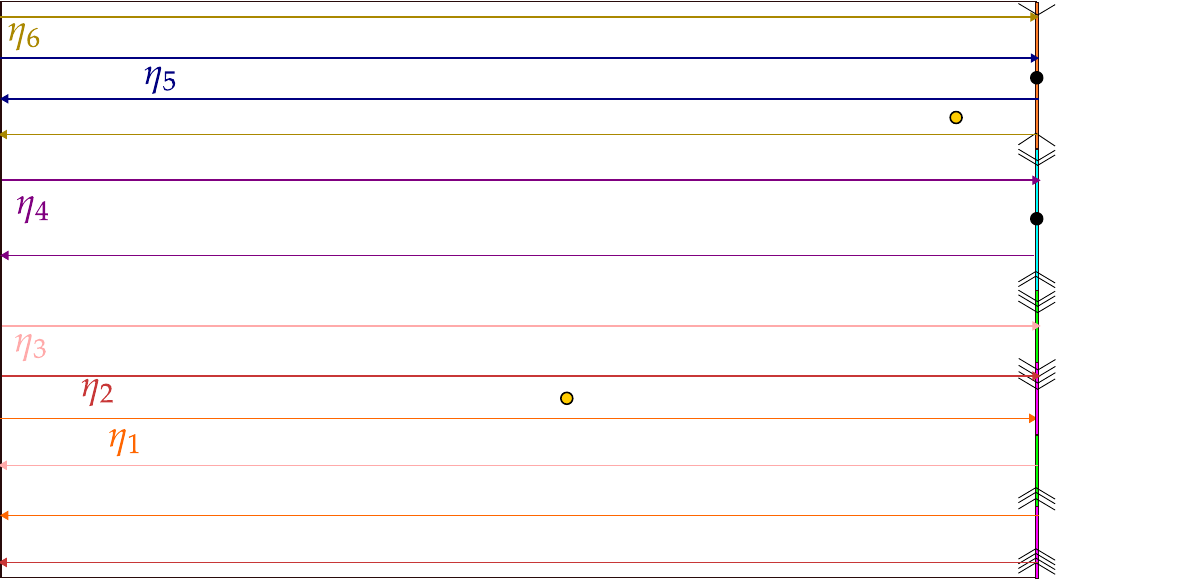}
\caption{
	This picture the allowable paths $\eta_1, \ldots, \eta_6$ in a
particular configuration in $\mathcal M_{1,2,1}$ with $2$ points.}
\label{figure:allowable-paths}
\end{figure}

	After moving a point with label $\beta$ through path $\eta_i$, we may
	consider $\eta_i$ as a path in $B_{n+1}^{\Sigma^1_{g,f}}$ and if
	$\eta_i (\beta, \alpha_1, \ldots, \alpha_n, s) = (\beta',
	\alpha_1',\ldots, \alpha_n', s')$ and $\alpha' := (\alpha_1', \ldots,
	\alpha_n',s')$, we denote by 
	$(\beta'; (x, 1, \gamma, \alpha')) \in c \times \Hur^{c,S}$
	the {\em output} of the allowable move $(\beta, \eta_i) \in c \times
	\pi_1(\Sigma^1_{g,f})$
	and we call $\beta' \in c$ the {\em left output}
	and we call $\alpha'$ the
	{\em right output}. 
		If we have a sequence of allowable moves $(\beta_1, \eta_{i_1}), \ldots,
	(\beta_j, \eta_{i_j})$, then inductively define the {\em output} of this sequence
	as follows: if $(\beta''; (x, 1, \gamma, \alpha''))$ is the output of
	applying the first $j-1$ allowable moves in the sequence, then the
	output of the sequence is the output $(\beta'; (x,1,\gamma, \alpha'))$ of applying the allowable move
		$(\beta_j,\eta_{i_j})$ to $(x, 1, \gamma, \alpha'')$, $\beta'$
		is the {\em left output}
	and $\alpha'$ is the {\em right output}.
\end{definition}

\begin{notation}
	\label{notation:allowable-no-points}
	We work in the setting of \autoref{definition:allowable-and-output}.
	In the special case 
	that $n = 0$, 
	so 
	$x = \emptyset \subset \mathcal	M^\epsilon_{g,f,1}$,
we use 
$\xi_1, \ldots, \xi_{2g+f}$ as alternate notation for the allowable paths
$\eta_1, \ldots, \eta_{2g+f}$.
See \autoref{figure:empty-allowable-paths} for a visualization in the case $g
=2, f = 1$.
We will also use $\overline{\xi}_i := \xi_{2g+f+1-i}$.
\end{notation}

\begin{figure}
\includegraphics[scale=.4]{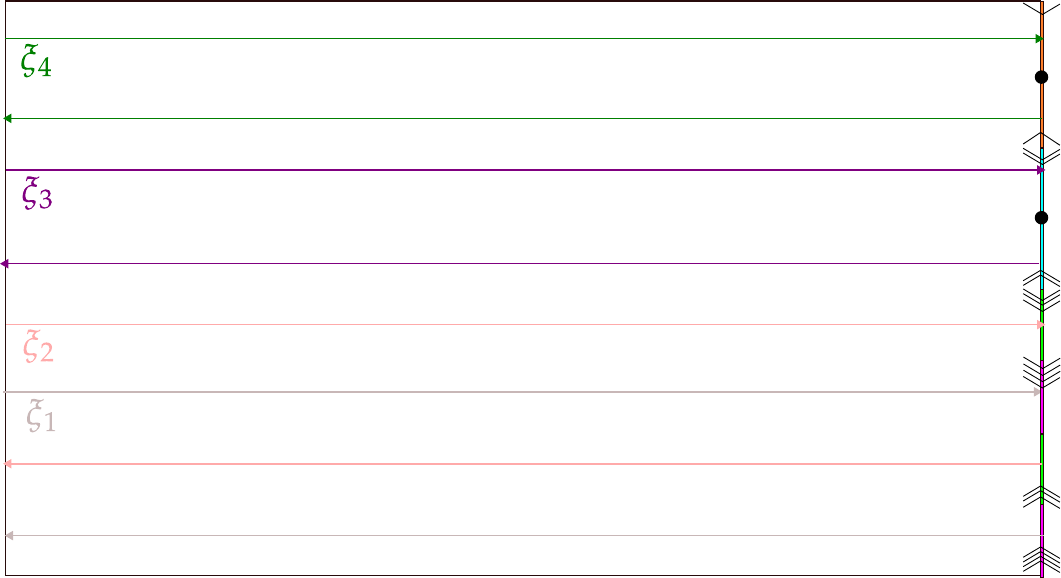}
\caption{
	This picture shows the paths $\xi_1, \ldots, \xi_4$ 
	$\mathcal M_{1,2,1}$
	which are the names
	we are using for the
	allowable paths $\eta_1, \ldots, \eta_4$
	associated to the empty configuration.}
\label{figure:empty-allowable-paths}
\end{figure}

The following lemma is fairly straightforward to see using that 
$\Sigma^1_{g,f} - \{x_1, \ldots,
x_n\}$ has $2g+n+f$ generators and there are also $2g + n + f$ allowable paths.

\begin{lemma}
	\label{lemma:generating-allowable}
	Fix a point of 
	$(m, (x = \{x_1, \ldots, x_n\}, 1, \gamma, \alpha)) \in
	\overline{Q}_\epsilon^*[
	\pi_0(\Hur^{N(c)})[\alpha_{N(c)}^{-1}]_+, \on{hur}^{c,S}_+]$.
The allowable paths $\xi_1, \ldots, \xi_{2g+f}$
defined in \autoref{notation:allowable-no-points}
generate the fundamental group $\pi_1(\Sigma^1_{g,f}, \star)$; here, we use
$\star$ to denote a contractible subset of the boundary of $\Sigma^1_{g,f}$ containing all the
endpoints of the $\xi_i$, such as the left boundary of the rectangle in
\autoref{figure:empty-allowable-paths}.
\end{lemma}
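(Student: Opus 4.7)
The plan is to show that $\Sigma^1_{g,f}$ (modeled by $\mathcal M^\epsilon_{g,f,1}$) deformation retracts onto the subspace $\star \cup \bigcup_i \xi_i$, which is homotopy equivalent to a wedge of $2g+f$ circles, each loop of the wedge being one of the $\xi_i$'s. Since $\Sigma^1_{g,f}$ has the homotopy type of a wedge of $2g+f$ circles (a genus $g$ surface with one boundary component deformation retracts onto a wedge of $2g$ circles, and each puncture adds one more circle), so $\pi_1(\Sigma^1_{g,f},\star)$ is free of rank $2g+f$, this will imply that $\xi_1,\ldots,\xi_{2g+f}$ form a free basis and, in particular, generate.

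The first step is to use the decomposition of $\mathcal M^\epsilon_{g,f,1}$ as a union of $2g+f$ rectangles described in \autoref{remark:rectangles-description}: the bottom $2g$ of these correspond to the handles (obtained from gluing pairs of rectangles along right edges $J^\epsilon_i \sim J^\epsilon_{i+2}$), and the top $f$ correspond to the punctured rectangles (obtained from $(J'_i)^\epsilon \sim (J'_{i+1})^\epsilon$). Importantly, all left edges of these rectangles lie in $\star$, and for each $\rho$ there is a unique allowable path $\xi_\rho$ crossing the $\rho$th rectangle horizontally; under the right-edge identification, $\xi_\rho$ becomes a loop based at a point on $\star$.

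The second step is to construct the deformation retraction rectangle by rectangle. On each rectangle, define a straight-line homotopy flowing each point vertically toward the horizontal line containing $\xi_\rho$, while keeping the left edge (contained in $\star$) fixed. These piecewise retractions glue: along each shared right edge, both rectangles being glued retract their right edges in a compatible way (since the identification is orientation reversing and the two retractions agree on the glued segment after identification, sending the whole right edge to the single point $\xi_\rho \cap (\text{right edge})$). The result is a deformation retraction of $\mathcal M^\epsilon_{g,f,1}$ onto $\star \cup \bigcup_\rho \xi_\rho$; collapsing the contractible set $\star$ produces a wedge of $2g+f$ circles indexed by the $\xi_\rho$'s.

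The main obstacle is the bookkeeping of the gluings to confirm the rectangle-wise retractions are compatible across identifications, especially for the handle rectangles where two right sub-edges $J^\epsilon_i$ and $J^\epsilon_{i+2}$ are glued; but since each such retraction collapses the entire right edge to a single point and the gluing is an isometry sending this single point to itself, compatibility is automatic. Everything else is routine counting: the rank matches the number of $\xi_i$'s, so surjectivity forces a basis.
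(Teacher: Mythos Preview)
Your strategy is sound and is in fact more detailed than what the paper offers (the paper's justification is the single sentence preceding the lemma, observing that the number of allowable paths matches the rank of the free group $\pi_1(\Sigma^1_{g,f})$). There is, however, a small but genuine gap in your execution: you conflate $\mathcal M^\epsilon_{g,f,1}$ with $\Sigma^1_{g,f}$. By \autoref{notation:point-pushed-quotient}, $\mathcal M^\epsilon_{g,f,1}$ is the image of those points whose $y$-coordinate lies at distance at least $\epsilon$ from $\Phi$, so it is a proper subspace of $\mathcal M_{g,f,1}$; in particular it does not contain all of $\star$, only the disjoint sub-intervals of the left edge that survive the $\epsilon$-cut. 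Thus your sentence ``a deformation retraction of $\mathcal M^\epsilon_{g,f,1}$ onto $\star \cup \bigcup_\rho \xi_\rho$'' is not well-posed as written, and in any event a retraction of $\mathcal M^\epsilon_{g,f,1}$ alone says nothing directly about $\pi_1(\Sigma^1_{g,f})$.

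The repair is short. First show that the inclusion $\mathcal M^\epsilon_{g,f,1}\cup\star \hookrightarrow \mathcal M_{g,f,1}$ is a homotopy equivalence; the vector field already constructed in the proof of \autoref{lemma:deform-epsilon-distance} does exactly this, since it flows every interior point either leftward onto $\star$ or into one of the $2g+f$ rectangles. Once you are working in $\mathcal M^\epsilon_{g,f,1}\cup\star$, your rectangle-by-rectangle vertical collapse onto $\star\cup\bigcup_\rho\xi_\rho$ goes through as described, and collapsing the contractible arc $\star$ exhibits the $\xi_\rho$ as the petals of a wedge of $2g+f$ circles. Your closing remark about matching ranks forcing a basis is then superfluous: the retraction already shows the $\xi_\rho$ are a free basis, not merely a generating set.
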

%\begin{proof}
%	We can identify 
%$\Sigma^1_{g,f} - \{x_1, \ldots,
%x_n\}$ with $\Sigma^1_{2g, f + n}$, whose fundamental group has $2g + f+ n$
%generators. We can choose a standard generating set for this.
%The bottom loop in each rectangle accounts for $2g + f$ of the standard such
%generators, and we label these $\gamma_1, \ldots, \gamma_{2g+f}$. The remaining $n$ loops do not directly correspond to loops passing
%in a small circle around each of the puncture, however, if one of the $x_i$ lies
%in the $j$th such rectangle, one may obtain a loop homotopic to a small loop
%around the $i$th puncture via the composite of the allowable path directly above
%$x_i$ with the allowable path directly below the $x_i$. This shows that the set
%of allowable paths generate a standard generating set of loops generating 
%$\pi_1(\Sigma^1_{2g, f + n})$, and hence form a set of generators.
%\end{proof}

In order to describe the desired equivalence of bar constructions, we next
describe the relation between allowable moves and certain Hurwitz spaces.

\begin{lemma}
	\label{lemma:output-leaving-c'}
	With notation for $c,c',S,S'$ as in \autoref{lemma:normalizer-is-hurwitz-module},
	let 
	$(m, (x = \{x_1, \ldots, x_n\}, 1, \gamma, \alpha = (\alpha_1,\ldots,
	\alpha_n, s))) \in \overline{Q}_\epsilon[
	\pi_0(\Hur^{c'})[\alpha_{c'}^{-1}]_+, \on{hur}^{N_c(c'),S'}]$.
	Viewing $\alpha$ as an element of $T_n$, suppose $\alpha_1, \ldots,
	\alpha_n \in N_c(c')$ but $\alpha$ is not in
	$N_c(c')^n \times T'_0 \subset T_n$.
Then there is some sequence of allowable moves 
whose left output does not lie in $c'$.
\end{lemma}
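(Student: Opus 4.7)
The plan is to extract an abstract escape trajectory from the hypothesis via the maximality in the definition of $T'_0$, and then realize it directly as a sequence of allowable moves.

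Since $\alpha_i\in N_c(c')$ for all $i$, the hypothesis reduces to $s\notin T'_0$. Let $\widetilde{T}\subseteq T_0$ be the smallest subset containing $s$ with the following closure property: for every $t\in\widetilde{T}$, every $\beta\in c'$, and every $i\in\{1,\dots,2g+f\}$, if $\sigma_t^{\xi_i}(\beta)\in c'$ then $\tau_\beta^{\xi_i}(t)\in\widetilde{T}$, where the $\xi_i$ are viewed as elements of $B_1^{\Sigma^1_{g,f}}$ via \autoref{notation:allowable-no-points}. Suppose toward contradiction that $\sigma_t^{\xi_i}(\beta)\in c'$ for every such $(t,\beta,i)$. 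Then $c'\times\widetilde{T}$ is forward-invariant under each $\psi_1(\xi_i,-,-)$; using finiteness of $T_0$ together with the fact that $\psi_1(\xi_i,-,-)$ is a bijection of $c\times T_0$, forward-invariance upgrades to full invariance, so $c'\times\widetilde{T}$ is preserved by each $\psi_1(\xi_i^{\pm 1},-,-)$. Since by \autoref{lemma:generating-allowable} the $\xi_i$ generate $B_1^{\Sigma^1_{g,f}}=\pi_1(\Sigma^1_{g,f},\star)$, it follows that $c'\times\widetilde{T}$ is invariant under the entire $B_1^{\Sigma^1_{g,f}}$-action. By \autoref{lemma:criterion-to-be-subset}, $(c',\widetilde{T})$ then defines a bijective Hurwitz submodule of $(c,S)$, and by maximality of $S_{c'}$ (see \autoref{definition:subsystem-for-subrack}) we obtain $\widetilde{T}\subseteq T'_0$, contradicting $s\notin T'_0$. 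Hence there exist $t\in\widetilde{T}$, $\beta_k\in c'$, and $i_k\in\{1,\dots,2g+f\}$ with $\sigma_t^{\xi_{i_k}}(\beta_k)\notin c'$; unwinding how $t$ was constructed from $s$ yields a preceding sequence $(\beta_1,i_1),\dots,(\beta_{k-1},i_{k-1})$ such that, setting $s_0:=s$ and $s_j:=\tau_{\beta_j}^{\xi_{i_j}}(s_{j-1})$, we have $\sigma_{s_{j-1}}^{\xi_{i_j}}(\beta_j)\in c'$ for each $j<k$ and $s_{k-1}=t$.

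I would then realize this trajectory by allowable moves. For each $j$, replace the abstract generator $\xi_{i_j}$ with the topmost allowable path $\eta^{\mathrm{top}}_{i_j}$ lying in the same rectangle of $\mathcal{M}_{g,f,1}^\epsilon$ (see \autoref{remark:rectangles-description}). Because this path lies above every point of the existing configuration $x_1,\dots,x_n$, the associated pure braid in $B_{n+1}^{\Sigma^1_{g,f}}$ fixes the existing labels $\alpha_1,\dots,\alpha_n$ and acts on the added strand together with the $T_0$-coordinate through exactly the same element of $\pi_1(\Sigma^1_{g,f})$ as $\xi_{i_j}$; thus the $\sigma$ and $\tau$ actions agree with those appearing in the abstract trajectory. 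Executing the allowable moves $(\beta_1,\eta^{\mathrm{top}}_{i_1}),\dots,(\beta_k,\eta^{\mathrm{top}}_{i_k})$ in order therefore sweeps the $T_0$-coordinate through $s_0,s_1,\dots,s_{k-1}$, and produces intermediate left outputs $\sigma_{s_{j-1}}^{\xi_{i_j}}(\beta_j)\in c'$ for $j<k$, with final left output $\sigma_{s_{k-1}}^{\xi_{i_k}}(\beta_k)=\sigma_t^{\xi_{i_k}}(\beta_k)\notin c'$, as required.

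The main obstacle is the finiteness-based upgrade from forward-invariance to full invariance in the first step: one must verify that if a bijection $\psi_1(\xi_i,-,-)$ of the finite set $c\times T_0$ maps $c'\times\widetilde{T}$ into itself, then it restricts to a bijection of $c'\times\widetilde{T}$, so that its inverse also preserves $c'\times\widetilde{T}$. A secondary technical point is identifying the topmost allowable path in each rectangle as a realization of the generator $\xi_i$ at the level of $\pi_1(\Sigma^1_{g,f})$ and checking that it induces no non-trivial braiding with the existing configuration, which is a direct inspection using the identifications of \autoref{notation:surface} and the rectangle description of \autoref{remark:rectangles-description}.
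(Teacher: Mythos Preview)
Your abstract escape argument (steps 1--3) is fine, modulo the finiteness hypothesis you already flag. The genuine gap is in step 5, the realization step.

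Your claim is that the topmost allowable path $\eta^{\mathrm{top}}_{i_j}$ in rectangle $i_j$ fixes the existing labels $\alpha_1,\dots,\alpha_n$ and acts on the pair $(\beta,s)$ exactly through $\psi_1(\xi_{i_j},-,-)$. This does not hold. It is true that $\eta^{\mathrm{top}}_{i_j}$ and $\xi_{i_j}$ represent the same class in $\pi_1(\Sigma^1_{g,f})$, but the allowable move is an element of $B_{n+1}^{\Sigma^1_{g,f}}$ moving the \emph{first} strand, whereas the inclusion $B_1^{\Sigma^1_{g,f}}\hookrightarrow B_{n+1}^{\Sigma^1_{g,f}}$ in the definition of a Hurwitz module places $B_1$ on the \emph{last} strand. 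These differ by conjugation by a disc braid that moves the new strand past all the $x_k$, and that conjugation intertwines the $\sigma^{\xi}$-- and $\tau^{\xi}$--actions with rack conjugation by the $\alpha_k$. Concretely (compare the differential $d^r_{n,j}$ in \autoref{notation:module-chain-complex}), the change in the $T_0$--coordinate under such a move is $\tau^{\overline{\xi}}_{\chi}(s)$ for $\chi$ a conjugate of $\beta$ by a product of the $\alpha_k$, not $\tau^{\xi_{i_j}}_\beta(s)$; likewise the left output is not simply $\sigma^{\xi_{i_j}}_s(\beta)$. So your realized sequence of allowable moves does \emph{not} sweep the $T_0$--coordinate through $s_0,s_1,\dots,s_{k-1}$ as in your abstract trajectory, and the computation of the final left output breaks down. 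Note also that ``lies above every point of the existing configuration'' is already false: the topmost path in rectangle $i_j$ is only above the points in that rectangle, and in $\mathbf R$ the rectangles are interleaved.

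The paper sidesteps this entirely by not attempting to track $s$. It extracts from $s\notin T'_0$ a full escape word $\sigma\in B_r^{\Sigma^1_{g,f}}$ taking $(\beta_1,\dots,\beta_r,s)$ (all $\beta_i\in c'$) to a tuple with some entry outside $c'$, conjugates so that the $r$ new strands sit in front of the $n$ old ones, and then decomposes $\sigma$ into disc braid moves and $\xi_i$--moves. The disc braid moves fix the product $\zeta_1\cdots\zeta_r$ in $\pi_0\Hur^{c'}$, while each $\xi_i$--move is literally an allowable move whose left output replaces $\zeta_r$. Since the net effect carries $\delta_1\cdots\delta_r\in\pi_0\Hur^{c'}$ to $\delta'_1\cdots\delta'_r\notin\pi_0\Hur^{c'}$, some allowable move along the way must have had left output outside $c'$. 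The point is that one tracks only the left label in $\pi_0\Hur^{c'}[\alpha_{c'}^{-1}]$, never the $T_0$--coordinate, so the interaction with the $\alpha_k$ is irrelevant.
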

\begin{proof}
	By definition of $S_{c'}$, we must have 
	$s \notin T'_0$ and there must be some sequence $(\beta_1, \ldots,
	\beta_r, s)$ with each $\beta_i \in c'$ which is equivalent under the
	action of $B_n^{\Sigma^1_{g,f}}$ to a
	sequence $(\beta'_1, \ldots, \beta'_r, s')$ with some $\beta'_i \notin
	c'$.
	We may assume $r$ is minimal so there is a unique such $\beta'_i$.
	In particular, for any $\alpha_1, \ldots, \alpha_n \in N_c(c')$, 
	$(\alpha_1, \ldots, \alpha_n,\beta_1, \ldots, \beta_r,
	s)$ is also equivalent under the 
$B_r^{\Sigma^1_{g,f}} \subset B_{n+r}^{\Sigma^1_{g,f}}$ (coming from the last
$r$ points)
	to 
$(\alpha_1, \ldots, \alpha_n,\beta'_1, \ldots, \beta'_r,
s')$.
Now, we apply the automorphism of the braid group moving the first $n$ points past
the last $r$ points. 
This gives an identification
of a sequence of the form
$(\delta_1, \ldots, \delta_r, \alpha_1, \ldots, \alpha_n, s)$ with
$\delta_1,\ldots, \delta_r \in c'$
with a sequence of the form 
$(\delta'_1, \ldots, \delta'_r, \alpha''_1, \ldots, \alpha''_n, s')$
where $\alpha''_i \in N_c(c')$ and a unique $\delta_i' \notin c'$, but where we only apply an element of 
$\sigma \in B_r^{\Sigma^1_{g,f}} \subset B_{n+r}^{\Sigma^1_{g,f}}$, this time acting on the
coming from the first $r$ points.

Let us now explain why the above observation implies the lemma.
Define $m' = m \delta_r^{-1} \cdots
\delta_1^{-1}$
so that $m = m' \cdot \delta_1 \cdots \delta_r$.
Now, as explained above, there is an element 
$\sigma \in B_r^{\Sigma^1_{g,f}}\subset B_{n+r}^{\Sigma^1_{g,f}}$ which sends
$(\delta_1, \ldots, \delta_r, \alpha_1, \ldots, \alpha_n, s)$
to
$(\delta'_1, \ldots, \delta'_r, \alpha''_1, \ldots, \alpha''_n, s')$.
Recall that
$B_r^{\Sigma^1_{g,f}}$
is generated by the joint actions of 
$B_r^{\Sigma^1_{0,0}}$, acting on the first $r$ points, together with the
allowable paths
$\xi_1, \ldots, \xi_{2g+f}$ associated to the
empty configuration) using
\autoref{lemma:generating-allowable},
where we view $\xi_i$ as elements of $\pi_1(\Sigma^1_{g,f},\star)$ for $\star$
a contractible subspace of the boundary of $\Sigma^1_{g,f}$ containing the
endpoints of all $\xi_i$.
Hence,
we can write $\sigma = \sigma_1 \cdots \sigma_j$ where each $\sigma_i$ either
lies in
$B_r^{\Sigma^1_{0,0}}$ or is one of the $\xi_i$.
The element $\xi_i$ acts on $(m' \zeta_1 \cdots \zeta_r, \theta_1, \ldots,
\theta_n, t)$
by sending it to $(m' \zeta_1 \cdots \zeta'_r, \theta_1', \ldots,
\theta_n',t')$ where $\zeta'_r$ is the left output of the allowable move
$(\zeta_r,\eta_i)$ associated to $\xi_i$ on $(x,1,\zeta,\theta)$ and 
the $i$th generator of the braid group sends the element 
$(m' \zeta_1 \cdots \zeta_r, \theta_1, \ldots,
\theta_n, t)$
to
$(m' \zeta_1 \cdots \zeta_i (\zeta_i^{-1} \zeta_{i+1}\zeta_i) \cdots \zeta_r, \theta_1, \ldots,
\theta_n, t)$.
However, we may observe that any such element of the braid group acts trivially
by definition of $\pi_0 \Hur^{c'}$, which implies that $\sigma$ can be expressed
as a sequence of allowable moves applied to 
$(m, (x,1,\gamma,\alpha))$.
Finally, since the product $\delta'_1 \cdots \delta'_r$ contains a unique
element not in $c'$ by assumption, the product does not lie in $\pi_0\Hur^{c'}$,
and hence must be identified with the basepoint.
It follows that the result of one of the allowable moves corresponding
to $\sigma$ must have some left output not in $c'$, as claimed.
\end{proof}

In the above lemma, we only showed one could use a sequence of allowable moves to escape
$c'$, but it turns out one can already escape $c'$ via a single allowable move,
as we next deduce.
\begin{lemma}
	\label{lemma:single-move-output-leaving-c'}
	With notation for $c,c',S,S'$ as in \autoref{lemma:normalizer-is-hurwitz-module},
	let 
	$(m, (x = \{x_1, \ldots, x_n\}, 1, \gamma, \alpha = (\alpha_1,\ldots,
	\alpha_n, s))) \in \overline{Q}_\epsilon[
	\pi_0(\Hur^{c})[\alpha_{c}^{-1}]_+, \on{hur}^{N_c(c'),S'}]$.
	Viewing $\alpha$ as an element of $T_n$, suppose $\alpha_1, \ldots,
	\alpha_n \in N_c(c')$ but $\alpha$ is not in
	$N_c(c')^n \times T'_0 \subset T_n$.
	Then there is a single allowable move whose left output does not lie in
	$c'$.
\end{lemma}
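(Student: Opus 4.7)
The plan is to upgrade the multi-move conclusion of \autoref{lemma:output-leaving-c'} to a single-move conclusion, by exploiting the fact that we now work over the larger coefficient ring $\pi_0(\Hur^c)[\alpha_c^{-1}]_+$ rather than $\pi_0(\Hur^{c'})[\alpha_{c'}^{-1}]_+$; every element of $c$ (and hence of $c'$) is invertible in this larger ring, which permits more equivalences of representatives of a given point in the bar construction.

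First I would apply a mild variant of \autoref{lemma:output-leaving-c'}. Its proof extends essentially verbatim to the larger ring, because it uses only the invertibility of $c'$-labels in the coefficient module, which still holds. Thus there is a sequence of allowable moves $(\beta_1, \eta_{i_1}),\dots,(\beta_j,\eta_{i_j})$, applied sequentially to the given representative $(m,(x,1,\gamma,\alpha))$, such that at least one move in the sequence produces a left output outside $c'$. Let $k$ be the smallest index with this property. If $k=1$, the first move is itself the desired single allowable move and the lemma follows.

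For $k \geq 2$ the first $k-1$ moves all have left outputs in $c'$. The key observation is that each such move can be realized as an equivalence of representatives in $\overline{Q}_\epsilon$: one regards the move as introducing a point on the left boundary with label $\beta_l \in c'$, acting by the braid element corresponding to $\eta_{i_l}$, and re-absorbing the resulting $c'$-labeled boundary point back into $m$ via the equivalence relation of \autoref{notation:q}. This absorption is legitimate precisely because $c'$-labels are invertible in $\pi_0(\Hur^c)[\alpha_c^{-1}]_+$. After carrying out these $k-1$ absorptions we obtain an equivalent representative $(m',(x,1,\gamma,\alpha^{(k-1)}))$ of the same point in the bar construction, with the marked points $\{x_1,\ldots,x_n\}$ (and hence the set of allowable paths) unchanged, and, by \autoref{lemma:normalizer-is-hurwitz-module}, with $\alpha^{(k-1)} \in N_c(c')^n \times T_0$.

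From this equivalent representative, the $k$th allowable move of the original sequence is itself a single allowable move whose left output lies in $c \setminus c'$, yielding the desired conclusion at the level of the bar construction. The main technical point, which requires careful verification, is that the first $k-1$ $c'$-valued moves genuinely yield an equivalence in $\overline{Q}_\epsilon$ compatible with the braid group action defining allowable moves, and that the intermediate representative remains within the hypotheses of the lemma; both checks are essentially formal consequences of \autoref{notation:q} combined with \autoref{lemma:normalizer-is-hurwitz-module}, and the hard part is organizing the bookkeeping so that this sequential-to-single reduction is unambiguous.
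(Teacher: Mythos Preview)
Your approach has a genuine gap. The central claim---that an allowable move with left output in $c'$ ``can be realized as an equivalence of representatives in $\overline{Q}_\epsilon$''---is not correct. The equivalence relation of \autoref{notation:q} only identifies a representative with $n+1$ configuration points (one of which lies on the left boundary with its path to the left of all others) with a representative having $n$ configuration points. It never identifies two representatives with the \emph{same} configuration $x$ but different labels $\alpha$ and $\alpha^{(k-1)}$, as you assert. What an allowable move produces is a \emph{path} in $\overline{Q}_\epsilon$ between such representatives, not an identification; being in the same path component is not the same as being equal as points of $\overline{Q}_\epsilon$. Since the lemma (and its application in \autoref{proposition:subrack-comparison}) requires an allowable move from the specific given representative, passing to a path-connected but distinct representative does not suffice.

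The paper's argument is quite different and avoids any appeal to the quotient relation. It shows directly, by a two-to-one induction, that if a sequence of two allowable moves $(y,\eta_{i_1}),(z,\eta_{i_2})$ has its final left output outside $c'$, then a single move $(v,\eta_{i_2})$ applied to the \emph{original} representative already has left output outside $c'$, where $v$ is $z$ or $y'\triangleright^{-1}z$ depending on how the two allowable paths overlap. The proof compares two homotopic paths in the configuration space of $n+2$ points---one performing move~1 then move~2, the other performing move~2 then move~1---and uses the homotopy to identify the left output of the single move $(v,\eta_{i_2})$ with the left output $z'\notin c'$ of the two-move sequence. This is a local geometric argument on the given representative, not a manipulation of equivalence classes.
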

\begin{figure}
\includegraphics[scale=.4]{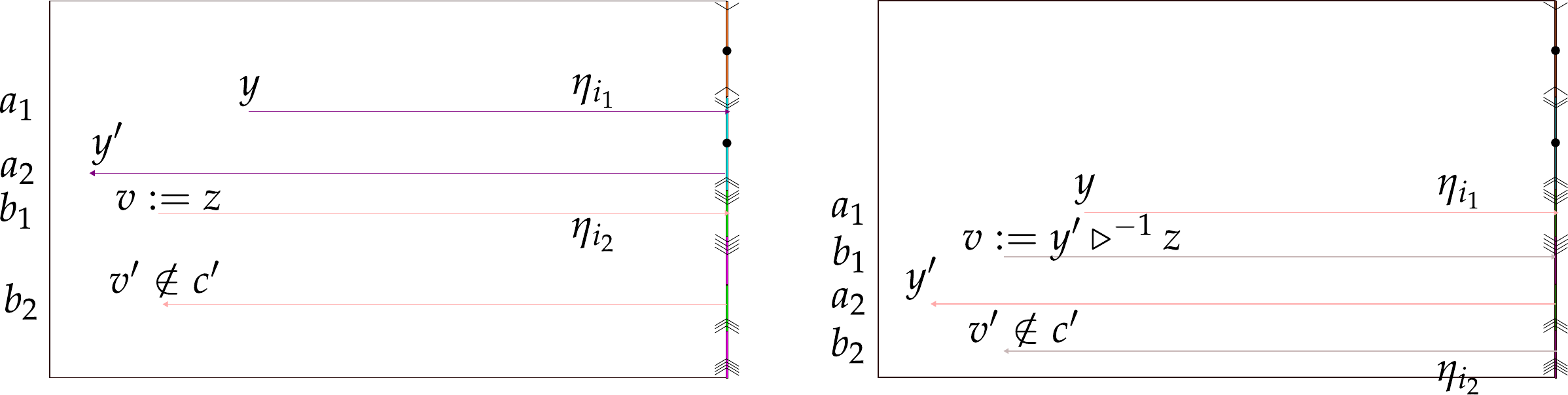}
\caption{
	This is a depiction of the proof of
	\autoref{lemma:single-move-output-leaving-c'}. The left hand side
	depicts the case that the first allowable move is above the second,
	while the right hand side depicts the case in which they overlap. In the
	first case $(z,\eta_{i_2})$ has left output not in $c'$
	while in the second case $(y' \triangleright^{-1} z,\eta_{i_2})$ has
	left output not in $c'$.
}
\label{figure:two-moves-in-one}
\end{figure}
\begin{proof}
	Using \autoref{lemma:output-leaving-c'},
	there is some sequence of allowable moves whose left output does not lie
	in $c'$.
	To conclude the proof, it suffices to show that if there is a sequence
	of two allowable moves $(y, \eta_{i_1}), (z, \eta_{i_2})$
	whose left output leaves $c'$ then the single move of the form $(v, \eta_{i_2})$
	already has left output not in $c'$.
	We will only analyze the case $i_2 < i_1$ (meaning that $\eta_{i_2}$ is
		below $\eta_{i_1}$, since the case $i_2 > i_1$ is
	similar. (We note here that it is also possible $i_2 = i_1$ in the way
		we have numbered things, but then it is also possible to
		slightly perturb the vertical coordinate of $\eta_{i_2}$ so
	that the paths $\eta_{i_1}$ and $\eta_{i_2}$ are disjoint. Hence we may assume $i_1 \neq i_2$.)

	Let the starting vertical coordinate of $\eta_{i_1}$ be $a_1$ and the
	ending vertical coordinate be $a_2 < a_1$. Similarly, let the starting
	vertical coordinate of $\eta_{i_2}$ be $b_1$ and the ending vertical
	coordinate be $b_2 < b_1$.
	In the case that $b_2 < b_1 < a_2 < a_1$ we take $v := z$ and otherwise
	(in which case $b_1 > a_2$)
	we take $v := y' \triangleright^{-1} z$, where $y'$ is the left output
	of the first move $(y,\eta_{i_1})$.
	See \autoref{figure:two-moves-in-one} for a visualization of these
	two cases.

	To prove our claim above, 
	we will construct two paths in the configuration space of $n+2$ points
	in $\Sigma^1_{g,f}$ which are homotopic.
	The initial points of these paths are obtained by first pulling
	$y$ a small distance $3\mu$ along $\eta_{i_1}$ from the boundary and then pulling
	$z$ a smaller distance $2\mu$ along $\eta_{i_2}$ from the boundary.
	The terminal points of these paths are obtained by passing $y$
	along $\eta_{i_1}$ until it reaches a distance $\mu$ from the boundary
	and then moving the second point initially labeled $z$ along $\eta_{i_2}$ until it reaches a distance
	$2\mu$ from the boundary.
	The first path $\gamma_1 = \epsilon_2 \circ \delta_1$ is given by
	applying $\delta_1$ which moves the first point along
	$\eta_{i_1}$ and then applying $\epsilon_2$ which moving the second point
	along $\eta_{i_2}$.
	The second path $\gamma_2 = \delta_2 \circ \epsilon_1$ is given by first
	applying $\epsilon_1$ which moves the second point
	along $\eta_{i_2}$ and then applying $\delta_2$ which moves the first point along $\eta_{i_1}$.
	Since $\gamma_1$ and $\gamma_2$ do not intersect, there is a homotopy
	between these two paths given by linearly changing
	the start time at which one moves the first point along $\eta_{i_1}$ and
	the second point along $\eta_{i_2}$, while maintaining their speeds.

	Now, we wish to show that in the above cases, the move $(v,
	\eta_{i_2})$ always has left output outside of $c'$.
	By construction of our path $\gamma_2$ above, we can identify the left output of this move
	with the label of the second point after applying $\epsilon_1$.
	First, suppose $b_2 < b_1 < a_2 < a_1$ so $v = z$.
	After applying $\delta_1$, the first point becomes $z \triangleright^{-1} y'$ while the second point remains $z$,
	and then after applying $\epsilon_2$ the second point becomes labeled
	$z' \notin c'$.
	On the other hand, if we first apply $\epsilon_1$, the label of the
	second point changes to some $v'$ and the label of the first point
	changes to some $y''$. After applying $\delta_2$, the label of the
	second point remains $v'$ and so we conclude $v' = z' \notin c'$ as
	desired.

	Next, we consider the case that $b_1 > a_2$.
	Recall in this case that we set $v = y' \triangleright^{-1} z$.
	First let us consider what happens after applying $\gamma_1 =
	\epsilon_2 \circ \delta_1$.
	In this case, after applying $\delta_1$, the first point changes to
	$y'$ which then passes below the second point and so changes the
	second point to $y' \triangleright v = z$. Applying $\epsilon_2$ then
	sends $z$ to $z' \notin c'$.
	On the other hand, let us examine what happens after applying 
	$\gamma_2 = \delta_2 \circ \epsilon_1$.
	After applying $\epsilon_1$, the first point becomes some $y''$ and
	the second point becomes some $v'$.
	We want to show $v'\notin c'$. However, after then applying $\delta_2$,
	the second point is unchanged, and also becomes $z'$ because $\gamma_1$
	is homotopic to $\gamma_2$. This implies $v' = z' \notin c'$, as
	desired.
\end{proof}

With the above set up, we can now prove our main technical result relating two
bar constructions, needed for proving homological stability of Hurwitz modules.
Recall the definition of normalizer of a rack from
\autoref{definition:normalizer}.
We note that although the statement and proof of the next result is very similar to that of \cite[Proposition
4.5.11]{landesmanL:the-stable-homology-of-non-splitting},
there was substantial subtlety in generalizing it to the setting of Hurwitz
modules,
which was primarily showed up in the earlier results of this section and
previous 
ones.

Recall as in \cite[Notation
4.5.8]{landesmanL:the-stable-homology-of-non-splitting} that given a subrack $c'
\subset c$ there is a map of $\EE_1$-algebras in pointed spaces
$\tilde{r}^{c'}_c: \Hur^c_+ \to \Hur^{c'}_+$ sending components not in
$\Hur^{c'}$ to the base point. We observe that if $(c',S') \subset (c,S)$ are
subsets in the sense of \autoref{definition:subset}, then there is a compatible
restriction map of modules $\Hur^{c,S}_+ \to \Hur^{c',S'}_+$.

\begin{proposition}
	\label{proposition:subrack-comparison}
	Retain notation for $c,c',S,S'$ as in
	\autoref{lemma:normalizer-is-hurwitz-module}.
	Then the natural restriction map
	\begin{align}
	\label{equation:s-to-s-prime-equivalence}
	\left( \pi_0 \Hur^{c'} \right)[\alpha_{c'}^{-1}]_+\otimes_{\Hur^c_+} \Hur^{c,S}_+ 
	\to \pi_0 \Hur^{c'}[\alpha_{c'}^{-1}]_+
\otimes_{\Hur^{N_c(c')}_+} \Hur^{N_c(c'),S'}_+ \end{align}
is a homology equivalence.
\end{proposition}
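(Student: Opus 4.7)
The plan is to adapt the proof of \cite[Proposition 4.5.11]{landesmanL:the-stable-homology-of-non-splitting} to the module setting. First I would apply \autoref{proposition:pointed-scanning} to identify both sides of \eqref{equation:s-to-s-prime-equivalence} with ind-weak homotopy types of the scanned quotient models $\overline{Q}^*_\epsilon[\pi_0\Hur^{c'}[\alpha_{c'}^{-1}]_+, \on{hur}^{c,S}_+]$ and $\overline{Q}^*_\epsilon[\pi_0\Hur^{c'}[\alpha_{c'}^{-1}]_+, \on{hur}^{N_c(c'),S'}_+]$ respectively, so that the restriction map becomes the projection that keeps cells whose labels lie in $N_c(c')$ and whose module coordinate is compatible with $T'_0$, while sending every other cell to the basepoint.

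It then suffices to show the kernel chain complex is contractible. A generator of this kernel is a cell $(m,(x=\{x_1,\ldots,x_n\},1,\gamma,\alpha=(\alpha_1,\ldots,\alpha_n,s)))$ for which either some $\alpha_i\notin N_c(c')$, or all $\alpha_i\in N_c(c')$ but $\alpha\notin N_c(c')^n\times T'_0$. Because the relevant contraction should only hold at the level of chains rather than spaces, I will construct a chain-level nullhomotopy $h$ directly rather than at the level of topological spaces.

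The nullhomotopy $h$ is defined as follows: to each generator, canonically assign an allowable move $(\beta,\eta_i)$ in the sense of \autoref{definition:allowable-and-output} with $\beta\in c'$ whose left output lies outside $c'$ (using the definition of the normalizer when some $\alpha_i\notin N_c(c')$, or \autoref{lemma:single-move-output-leaving-c'} in the remaining case), for instance by taking the minimal such $i$. Then $h$ adds a new point labeled $\beta$ on the left boundary of $\mathcal M_{g,f,1}^\epsilon$ and partially slides it along $\eta_i$, producing a cell of one higher dimension. Computing $dh+hd$, two extreme faces of the new cell produce (i) the original cell when $\beta$ is retracted back to the boundary and absorbed into $\pi_0\Hur^{c'}[\alpha_{c'}^{-1}]_+$, and (ii) the basepoint when $\beta$ is slid all the way across $\eta_i$ to give a left output outside $c'$, which kills the cell because the $\pi_0\Hur^{c'}[\alpha_{c'}^{-1}]_+$-factor has the basepoint action by any $\alpha_\gamma$ with $\gamma\notin c'$. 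The intermediate boundary terms should cancel with $hd$ applied to the corresponding lower-dimensional cells by naturality.

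The main obstacle is making the assignment $(\beta,\eta_i)$ sufficiently coherent across cells that the intermediate boundary terms cancel correctly. Unlike the argument in \cite[Proposition 4.5.11]{landesmanL:the-stable-homology-of-non-splitting}, where failure of a label to lie in $N_c(c')$ is always detected locally, here when all $\alpha_i$ lie in $N_c(c')$ but $\alpha\notin N_c(c')^n\times T'_0$ one must invoke the more delicate \autoref{lemma:single-move-output-leaving-c'} to produce the required allowable move; designing a filtration on the cofiber that separates these two regimes, and verifying compatibility of the nullhomotopy with the face and degeneracy maps of the bar simplicial set, will be the most technical step.
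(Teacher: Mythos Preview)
Your overall strategy matches the paper's: reduce via \autoref{proposition:pointed-scanning} to the scanned models, show the cofiber of the inclusion is contractible by a nullhomotopy built from allowable moves, and invoke \autoref{lemma:single-move-output-leaving-c'} to supply the needed move in the case where all labels lie in $N_c(c')$ but the module coordinate escapes $T'_0$. You have also correctly located the main difficulty, namely making the choice of allowable move coherent as configuration points collide with the boundary.

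One point is off, though. You assert that the contraction ``should only hold at the level of chains rather than spaces.'' For this proposition the paper actually produces a continuous \emph{space-level} homotopy. The statement is integral, so no denominators are introduced and the nullhomotopy can be realized geometrically; the chain-level arguments you may have in mind are reserved for later results (Propositions~\ref{proposition:ring-chain-homotopy} and~\ref{proposition:module-chain-homotopy}) where one inverts group orders and no space-level homotopy exists. Working at the chain level here is not incorrect, but it is harder, since you must then verify cancellation of boundary terms by hand rather than appeal to continuity.

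The paper's resolution of the coherence problem is also more specific than ``a filtration separating the two regimes.'' It filters the cofiber $M^{c,c'}_\epsilon$ by the integer $j_1+j_2$, where $j_1$ is the minimal index of an allowable path $\eta_{j_1}$ admitting a move from $c'$ with left output in $c\setminus c'$, and $j_2$ counts those configuration points whose leftward boundary action lies in $c'$. This single filtration treats both of your cases uniformly. On each associated graded piece the canonical choice $(\beta,\eta_{j_1})$, with $\beta$ minimal in a fixed ordering on $c'$, is locally constant, so one can define a continuous homotopy $H$ performing that move on the unquotiented space $Q^{c,c'}_\epsilon$. The technical content is then: first, that the filtration steps are closed subsets (hence cofibrations); second, that $H$ descends through the quotient relation, i.e.\ that when a configuration point hits the left boundary, the resulting element either drops to lower filtration or is sent to the basepoint. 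Both verifications reduce to a case analysis on the relative vertical positions of the endpoints of $\eta_{j_1}$ and the collision point; that case analysis, rather than any chain-level bookkeeping, is where the work lies.
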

\begin{proof}
	The map \eqref{equation:s-to-s-prime-equivalence} has a section induced
	by the inclusions of racks $c' \subset N_c(c') \subset c$.
	It suffices to show this section induces a homology equivalence.
Let $S'$ be as in 
	\autoref{lemma:normalizer-is-hurwitz-module}.
	By \autoref{proposition:pointed-scanning}, and using notation from
	there,
	we can identify the map \eqref{equation:s-to-s-prime-equivalence} with a
	collection of maps indexed by $\epsilon$
\begin{align*}
	\overline{Q}^*_\epsilon [ 
			\pi_0 \Hur^{c'}
		[\alpha_{c'}^{-1}]_+,\on{hur}_+^{c,S}]
		\to
		\overline{Q}^*_\epsilon [ 
		\pi_0 \Hur^{c'}[\alpha_{c'}^{-1}]_+,
		\on{hur}_+^{N_c(c'),S'}].
	\end{align*}
	We now use the notation $\delta \in \mathbb R$ for the number defined in
	\autoref{notation:point-pushed-quotient}.
		In order to prove the section above is an equivalence, it suffices to
	show the inclusion
	\begin{align*}
		\iota_\epsilon : 
		\overline{Q}^*_\epsilon [ 
			\pi_0 \Hur^{c'}[\alpha_{c'}^{-1}]_+,
		\on{hur}_+^{N_c(c'),S'}]
		\to 	\overline{Q}^*_\epsilon [ 
			\pi_0 \Hur^{c'}[\alpha_{c'}^{-1}]_+,\on{hur}_+^{c,S}]
	\end{align*}
	is an ind-weak homology equivalence (as defined in 
		\cite[Definition
	A.4.5]{landesmanL:the-stable-homology-of-non-splitting})
	as $\epsilon$ approaches $0$ with $0 < \epsilon < \delta$.
	Let $M^{c,c'}_\epsilon$ denote the quotient of the inclusion
	$\iota_\epsilon$.
	By an argument similar to the proof of \cite[Lemma
	A.4.8]{landesmanL:the-stable-homology-of-non-splitting},
	$\iota_\epsilon$ has the homotopy extension property.
	In order to show $\iota_\epsilon$ is an ind-weak homology
	equivalence, it suffices to prove 
	\begin{align}
		\label{equation:contractible-condition}
	\text{ $M^{c,c'}_\epsilon$ is ind-weakly homology
equivalent to a point.}
	\end{align}
	Any point of $M^{c,c'}_\epsilon$ apart from the basepoint can be
	represented by a point of 
$\overline{Q}^*_\epsilon [ \pi_0 \Hur^{c'}[\alpha_{c'}^{-1}]_+,\on{hur}_+^{c,S}]$
of the form $(m, (x,t=1,\gamma,\alpha=(\alpha_1, \ldots,
	\alpha_n,s))$ for $m \in \pi_0 \Hur^{c'}[\alpha_{c'}^{-1}]$ and
	either 
some $\alpha_i \in c- N_c(c')$
or $s \notin T'_0$, for $T'_0$ the $0$-set of $S'$.

	We next define a filtration and show
	\eqref{equation:contractible-condition} by demonstrating it for each
	associated graded part of the filtration.
	More precisely, define the filtration $F_\bullet M_\epsilon^{c,c'}$
	on $M_\epsilon^{c,c'}$
	where for $j \geq 0$, 
	$F_j M_\epsilon^{c,c'}$ is the subset of $M_\epsilon^{c,c'}$
consisting of the base point together with the image of those points whose associated values of $j_1$ and
$j_2$ satisfy $j_1 + j_2
\leq j$, with $j_1, j_2$ defined as follows:
\begin{enumerate}
	\item Define $j_1$ to be the minimum value of $\mu$ so that, for $(\alpha_{1}, \ldots, \alpha_n,s) \in
T_{n}$, the $n$-set of $S$,
there is some allowable move of the form $(\beta, \eta_{\mu})$
with left output not in $c'$.
\item Let $y \in \{x_1, \ldots, x_n\}$.
	When $y$ is moved horizontally, suppose it hits the left boundary of $\mathcal
	M_{g,f,1}$ at $(0,u_y)$ and $(0,v_y)$ with $u_y > v_y$. Suppose that
	when $y$ is
	moved to hit the point $(0,v_y)$, it acts on the label of the left
	boundary by some 
	$w_y \in c$. Then
	$j_2$ is the number of $y$ so that $w_y \in c'$.
\end{enumerate}
We will explain later in the proof why this filtration is a filtration by cofibrations.
Let $G_j M^{c,c'}_\epsilon := F_j M^{c,c'}_\epsilon/ F_{j-1} M^{c,c'}_\epsilon$
denote the associated graded of the filtration. If the filtration is by cofibrations, it implies that the chains of the associated graded space is the associated graded of the chains. Since the filtration is finite in each degree, it suffices to additionally prove the following:
	\begin{align}
		\label{equation:graded-contractible-condition}
	\text{	For each $j \geq 0$, 
		$G_jM^{c,c'}_\epsilon$ is ind-weakly homology
equivalent to a point.}
	\end{align}
	For fixed $j \geq 0$ and $\epsilon > 0$, it then suffices to find
	some smaller $\epsilon'$ so that
	$G_j M_\epsilon^{c,c'} \to G_j M_{\epsilon'}^{c,c'}$ is
	nullhomotopic.

	Define $Q^{c,c'}_\epsilon$ as shorthand notation for $Q^*_\epsilon [\pi_0 \Hur^{c'}[\alpha_{c'}^{-1}]_+,\on{hur}_+^{c,S}]$, as defined in
	\autoref{notation:scanned-quotient}.
Let $\theta : Q^{c,c'}_\epsilon \to 
\overline{Q}^*_\epsilon [ \pi_0 \Hur^{c'}[\alpha_{c'}^{-1}]_+,\on{hur}_+^{c,S}]
\to M^{c,c'}_\epsilon$ denote the
composite projection.
Define a filtration $F_\bullet Q^{c,c'}_\epsilon := \theta^{-1} (F_\bullet
M_\epsilon^{c,c'})$ and define $G_\bullet Q^{c,c'}_\epsilon$ as the
associated graded.

Let $\epsilon' := \epsilon/2$.
(This choice of $\epsilon'$ is coming from the fact that allowable paths pass
halfway between the vertical coordinates of any two points in the union of the relevant
configuration with $W$.)
Choose some $j \geq 0$. We next construct a continuous homotopy $H: F_j
Q^{c,c'}_\epsilon \times I \to G_j M^{c,c'}_{\epsilon'}$.
In order to define this homotopy, we begin by choosing a fixed ordering of the
elements of $c'$.
For the subset of $(m,y) \in Q^{c,c'}_\epsilon$ where either $m$ is the base
point or $y \in F_{j-1}Q^{c,c'}_\epsilon$, 
the image $\theta(m,y)$ is the base point and we choose the constant homotopy at
the base point. That is, for such $(m,y)$ we take $H ( (m,y), t) := H( (m,y),
0) = \theta(m,y)$.
It remains to define this homotopy for points of the form 
$( (m,y), t)$ where $m$ is not the base point and $y \in F_j
Q^{c,c'}_\epsilon - F_{j-1} Q^{c,c'}_\epsilon$.
For such a point $(m,y)$, we define 
we define the homotopy as follows:
by definition of the filtration and
\autoref{lemma:single-move-output-leaving-c'},
there is some allowable move of the form $(\beta,
\eta_{j_1})$ with left output in $c - c'$.
We choose the allowable move as above where $\beta$ appears earliest with
respect to the ordering on $c'$ we chose above.
We take the homotopy that performs this allowable move at constant speed.
At time $t = 0$, note that $H$ is given by the composite
$F_j Q^{c,c'}_\epsilon \to G_j M^{c,c'}_{\epsilon} \to G_j M^{c,c'}_{\epsilon'}$.
It therefore suffices to show that $H$ descends to a continuous map
$\overline H : G_j M^{c,c'}_\epsilon \times I \to G_j M^{c,c'}_{\epsilon'}$ which is the
constant map to the base point when $t = 1$,
as this will then imply $G_j M^{c,c'}_{\epsilon} \to G_j M^{c,c'}_{\epsilon'}$
is nullhomotopic.
The latter condition that 
$\overline{H}$ is the constant map to the basepoint when $t = 1$
holds because 
the definition of the filtration guarantees that the left output of the
allowable move $(\beta, \eta_{j_1})$ is in $c - c'$.
This means
that at the end of the homotopy, it is identified with the base
point in
$\overline{Q}^*_\epsilon [\pi_0 \Hur^{c'}[\alpha_{c'}^{-1}]_+,\on{hur}_+^{c,S}]$,
hence in 
$G_j M^{c,c'}_{\epsilon'}$.

Hence, it remains to show that $H$ descends to a continuous map 
$\overline{H}: G_j M^{c,c'}_\epsilon \times I \to G_j M^{c,c'}_{\epsilon'}$
and that
$F_\bullet M_\epsilon^{c,c'}$ is a filtration by cofibrations.
Note that $H$ is indeed compatible with the relation sending 
$F_{j-1} Q^{c,c'}_\epsilon$ to the base
point by construction. 
So, to check 
the map $H$ descends, we only need to verify it is compatible with the
relation from \autoref{notation:scanned-quotient} (see also
\autoref{notation:q})
defining 
$\overline{Q}^*_\epsilon [ \pi_0 \Hur^{c'}[\alpha_{c'}^{-1}]_+,\on{hur}_+^{c,S}]$
	as a quotient of
	$Q^{c,c'}_{\epsilon}$.

Next, we verify that the filtration 
$F_{j} M_{\epsilon}^{c,c'} \subset M_{\epsilon}^{c,c'}$ is by cofibrations. If
it is a closed subset, then it is easy to see that it is a cofibration as it is
a filtration by sub-CW-complexes.
To see
this filtration is closed, it suffices to check its preimage in 
$Q_{\epsilon}^{c,c'}$ 
is closed.
Equivalently, we wish to check that when we apply the equivalence relation used
to define 
$\overline{Q}^*_\epsilon [ \pi_0 \Hur^{c'}[\alpha_{c'}^{-1}]_+,\on{hur}_+^{c,S}]$
	as a quotient of
$Q_{\epsilon}^{c,c'}$,
a point in $F_jQ_{\epsilon}^{c,c'}$ is sent to another point in $F_j
Q_{\epsilon}^{c,c'}$.
We now suppose some point $x_1$ 
in the configuration
$(m,(x,t=1,\gamma,\alpha))$
is on a path so that if it moves horizontally it hits the left
boundary at $(0,u)$ and $(0,v)$ with $u > v$. Applying the equivalence relation
\autoref{notation:q}, $x_1$ is absorbed into the
boundary, and the resulting point is either
of the form 
$(m',(x',t',\gamma',\alpha'))$ or the basepoint.
We check that values of $j_1$ and $j_2$ associated to this new configuration are
at most their values associated to the previous configuration.
This will show the filtration is closed.
First, if $x_1$ hits the boundary and acts by some element of $c - c'$, the
new configuration is the base point, which lies in every step of the filtration by
assumption.
Hence, we may assume that $x_1$ acts on the boundary by some element of $c'$.

First, we argue that the value of $j_2$ decreases when $x_1$ hits the boundary, and it strictly decreases if
$x_1$ hits the boundary at $(0,v)$.
Assume that $x_1$ acts on the left boundary by an element $w \in c'$.
In this case, suppose $y \in \{x_2, \ldots, x_n\}$ is some other point that 
acts on the left boundary by $w_y$ as in the definition of the value of
$j_2$. Then, after $x_1$ hits the boundary at some point $(0,h)$ with $h$ either
$u$ or $v$, the value of $w_y$ associated to
$y$ in $(m',(x',t',\gamma',\alpha'))$
will still be
$w_y$
if $v_y < h$ and it will become $w \triangleright w_y$ if $v_y > h$.
Since $w \in c'$, $w \triangleright w_y \in c'$ if and only if $w_y \in c'$.
Hence, the value of $j_2$ associated to
$(m',(x',t',\gamma',\alpha'))$ 
is bounded above by the value associated to 
$(m,(x,t=1,\gamma,\alpha))$, and it is strictly smaller $h = v$.

Let $(a,0)$ denote the starting point of $\eta_{j_1}$ and $(b,0)$ denote its ending
point, so $a > b$.
Next, we claim that the value of $j_1$ decreases when $x_1$ hits the boundary,
and it strictly decreases if
$u < a.$
Again, we may assume $x_1$ acts by an element $w \in c'$, as if it acts by an
element in $c - c'$, the configuration is sent to the base point.
Let $j'_1$ denote $j_1$ if $u < a$ and let $j_1 - 1$ if $u < a$.
To demonstrate the above claim, it suffices to show that after $x_1$ collides
with the boundary, there is some $\beta' \in
c'$ so that the allowable move $(\beta',\eta_{j'_1})$ has left output in $c - c'$.
Up to homotopy, $\eta_{j'_1}$ starts at $a$ and ends at $b$, so we may assume it has the same
starting and ending points as $\eta_{j_1}$.
Suppose $x_1$ collides with the left boundary at some point $(0,h)$, with $h$
either $u$ or $v$.
Let $z \in c-c'$ denote  the left output of 
the allowable move
$(\beta, \eta_{j_1})$ for the original element
$(m,(x,t=1,\gamma,\alpha))$. 
If $h > a$, then the left output of $(\beta, \eta_{j'_1})$ for 
$(m',(x',t',\gamma',\alpha'))$ is also $z \in c-c'$.
To conclude, it remains to deal with the case $a > h$.
In this case, we claim that we can take $\beta' := w \triangleright\beta$.
When
$h > b$, we see the left output for
$(\beta',\eta_{j'_1})$ in 
$(m',(x',t',\gamma',\alpha'))$ is also $z$, using that
$\beta \cdot w = w \cdot \beta'$. 
Finally, if $h < b$, the left output for 
$(\beta',\eta_{j'_1})$ in 
$(m',(x',t',\gamma',\alpha'))$ 
is $w \triangleright^{-1} \beta$ since this satisfies
$(w \triangleright^{-1} \beta) w = \beta w.$
Note that $w \triangleright^{-1} \beta \in c - c'$ since $\beta \in c-c'$ and $w
\in c'$.
This shows that 
the filtration
$F_{j} M_{\epsilon}^{c,c'} \subset M_{\epsilon}^{c,c'}$ is by cofibrations. 

To conclude, it remains to show our map $H$ descends to $\overline{H}$, by
showing it is compatible with the equivalence relation from 
\autoref{notation:q}.
We consider the three cases that we apply the equivalence relation from
\autoref{notation:q},
where the point $x_1$ hits the left boundary.
To set up notation, we continue to assume $\eta_{j_1}$ meets the left boundary at the
points $(0,a)$ and
$(0,b)$ with $a > b$ and $x_1$ hits the boundary at $(0,u)$ and $(0,v)$ with $u
> v$.
We may assume that the action of $x_1$ on the left boundary is via an element of
$c'$, as if $x_1$ acts by some element of $c - c'$, the configuration will be
sent to the base point and the homotopy $H$ will be compatible with such
equivalences.

%We wish to show the map $H$ takes the same values on all pairs of equivalent points
%identified under this equivalence relation, as then the map $H$ will descend to
%$\overline{H}$.

\begin{figure}
\includegraphics[scale=.4]{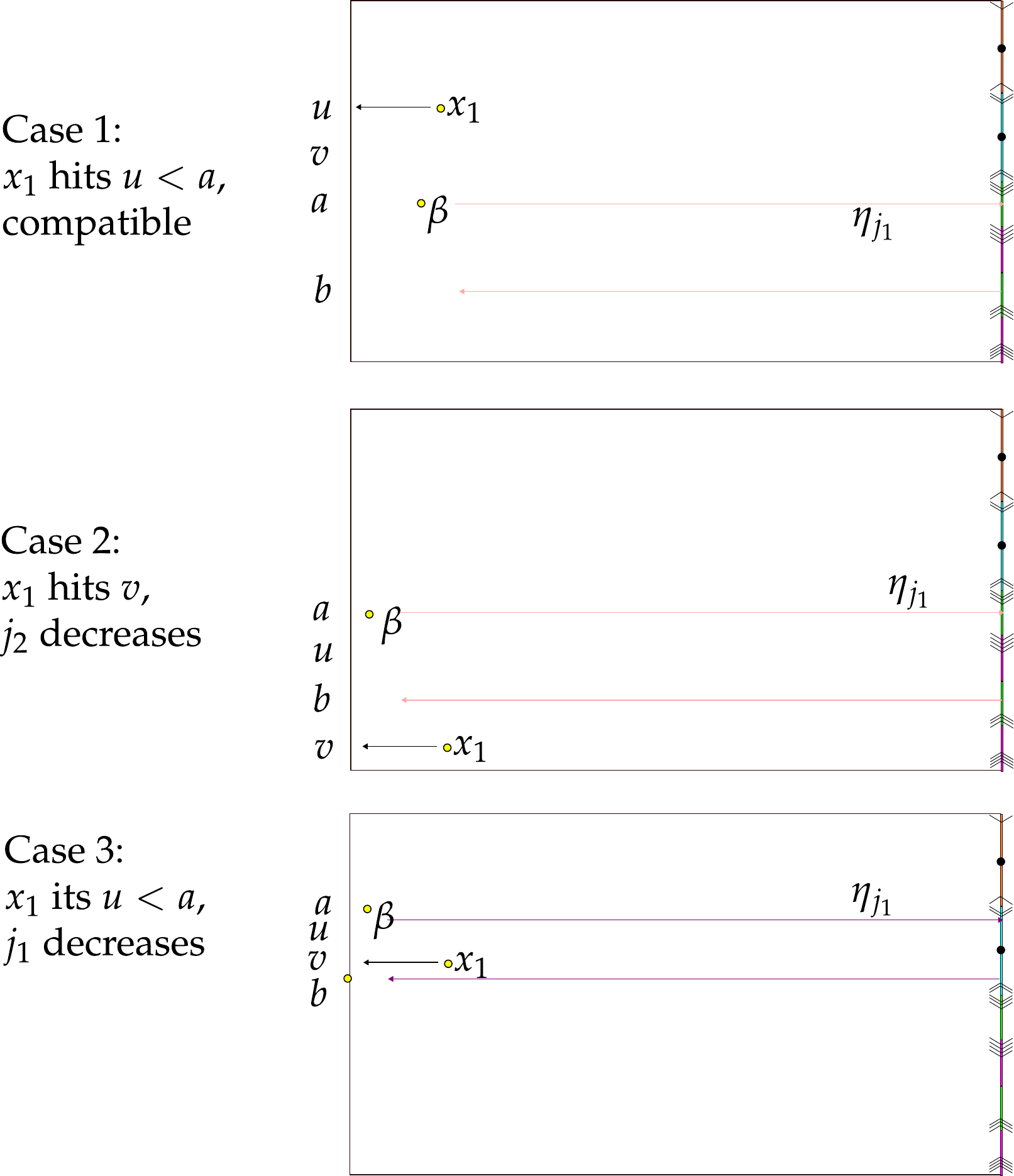}
\caption{
	This is a picture of the nullhomotopy $H$ in each of the three cases
	that $x_1$ hits the left boundary at $u < a$, $x_1$ hits the left
	boundary at $v$, and $x_1$ hits the left boundary at $u > a$.
	In the first case, the homotopy is compatible with $x_1$ hitting the
	boundary, while in the latter two cases, the filtration decreases.
}
\label{figure:stability-nullhomotopy}
\end{figure}

The remainder of the proof is divided into three cases which are visualized in
\autoref{figure:stability-nullhomotopy}.
First, we consider the case that $x_1$ hits the boundary at $(0,u)$ with $u >
a$.
In this case,
because $(0,u)$ lies completely above the path $\eta_{j_1}$,
the left output of the allowable move $(\beta,\eta_{j_1})$
will be the same before and after applying the equivalence relation from
\autoref{notation:q}, associated to $x_1$ hitting the left boundary at $(0,u)$.
Hence, the homotopy $H$ will be compatible with such
an equivalence.

Second, we consider the case that $x_1$ collides with the boundary at $(0,v)$.
As mentioned above, we may assume that $x_1$ acts on the left boundary by an element $w \in c'$.
In this case, we saw that the value of $j_2$ decreases by $1$ when we were
checking above that the filtration is closed. Since we also saw the value of
$j_1$ does not increase, the 
resulting point lies in 
$F_{j-1} Q^{c,c'}_\epsilon$
and hence is identified with the base point in the quotient
$F_j Q^{c,c'}_\epsilon/F_{j-1} Q^{c,c'}_\epsilon$, implying $H$ is compatible
with this equivalence.

To finish showing $H$ descends to $\overline{H}$, we only need to deal with the case that 
$x_1$ hits the boundary at $(0,u)$ with $a > u$ and acts via some element of
$c'$.
As we demonstrated when we were showing above that the filtration was closed,
since $a > u$, the value of $j_1$ strictly decreases.
This again implies that, when $x_1$ hits the boundary, the point is sent to
$F_{j-1} Q^{c,c'}_\epsilon$
and hence is identified with the base point in the quotient
$F_j Q^{c,c'}_\epsilon/F_{j-1} Q^{c,c'}_\epsilon$.
Therefore, the homotopy $H$ is again compatible with this equivalence,
completing the proof.
\end{proof}

For proving the homology of Hurwitz modules stabilizes, it will also be useful to
have the following $n$-fold tensor product version of the result of
\autoref{proposition:subrack-comparison}, which was the $2$-fold version.
\begin{lemma}	\label{lemma:n-fold-comparison}
	Retain notation for $c,c',S,S'$ as in
	\autoref{lemma:normalizer-is-hurwitz-module}.
	For every $n \geq 1$, there is a homology equivalence
	\begin{equation}
		\label{equation:n-fold-equivalence}
		((\pi_0\Hur^{c'})[\alpha_{c'}^{-1}]_+)^{\otimes_{\Hur^c_+} n}
	\otimes_{\Hur^c_+} \Hur^{c,S}_+ 
	\xrightarrow{\simeq}  ((\pi_0\Hur^{c'})[\alpha_{c'}^{-1}]_+)^{\otimes_{\Hur^{N(c')}_+} n}\otimes_{\Hur^{N_c(c')}} \Hur^{N_c(c'),S'}_+.
\end{equation}
\end{lemma}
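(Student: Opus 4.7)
The plan is to prove \autoref{lemma:n-fold-comparison} by induction on $n$, with the base case $n = 1$ being \autoref{proposition:subrack-comparison}. To simplify notation let $A := (\pi_0\Hur^{c'})[\alpha_{c'}^{-1}]_+$, $B := \Hur^c_+$, $B' := \Hur^{N_c(c')}_+$, $M := \Hur^{c,S}_+$ and $M' := \Hur^{N_c(c'),S'}_+$.

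For the inductive step, I would factor the map of \eqref{equation:n-fold-equivalence} as the composition
\[
A^{\otimes_B n} \otimes_B M = A^{\otimes_B(n-1)} \otimes_B (A \otimes_B M) \to A^{\otimes_B(n-1)} \otimes_B (A \otimes_{B'} M') \simeq A^{\otimes_{B'} n} \otimes_{B'} M',
\]
where the first arrow applies the base case $n = 1$ to the innermost pair after tensoring on the left with $A^{\otimes_B(n-1)}$ over $B$, and the second arrow comes from the inductive hypothesis applied to the combined left module $A \otimes_{B'} M'$. The base-case equivalence is realized at the level of topological spaces via the scanning model of \autoref{proposition:pointed-scanning} and the filtration-nullhomotopy argument in the proof of \autoref{proposition:subrack-comparison}; in particular it is a zig-zag of weak equivalences of spaces, so tensoring on the left with $A^{\otimes_B(n-1)}$ preserves it as a homology equivalence.

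The hard part will be the second step, since the inductive hypothesis in the form stated applies only when the final module is a Hurwitz module $\Hur^{c,S}_+$, whereas $A \otimes_{B'} M'$ is a more general left $B$-module. To overcome this obstacle, my preferred route is to bypass the induction altogether and instead re-run the scanning argument of \autoref{proposition:subrack-comparison} directly in the $n$-fold setting. The scanning model for $A^{\otimes_B n} \otimes_B M$ has $n$ tiers of labels drawn from $\pi_0\Hur^{c'}[\alpha_{c'}^{-1}]$ coming from the $n$ copies of $A$, and the allowable-moves definition of \autoref{definition:allowable-and-output}, together with the filtration by the earliest allowable path along which some label exits $c'$, carry over verbatim.

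The relations in \autoref{lemma:three-conditions}, the statement of \autoref{lemma:single-move-output-leaving-c'}, and the analysis of collisions with the left boundary depend only on the rack structure of $c$ and the bijective Hurwitz module structure on $S$, not on the precise number of tiers, so the verifications that the filtration is by cofibrations and that the nullhomotopy descends to the quotient go through just as in the proof of \autoref{proposition:subrack-comparison}. The only additional bookkeeping is that each allowable move acts on a single tier at a time, with the other tiers along for the ride. This gives the desired homology equivalence of \eqref{equation:n-fold-equivalence} in one pass, with essentially the same nullhomotopy argument, and sidesteps the technical compatibility check that the inductive approach would require.
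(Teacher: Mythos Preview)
Your proposal has a genuine gap. You correctly diagnose that the naive induction fails because $A \otimes_{B'} M'$ is not of the form $\Hur^{c,\widetilde S}_+$, so the inductive hypothesis cannot be invoked. But your proposed workaround---re-running the scanning argument ``directly in the $n$-fold setting''---is not a proof as written. The scanning model of \autoref{proposition:pointed-scanning} is only set up for $M_+ \otimes_{\Hur^c_+} \Hur^{c,S}_+$ with $M$ a \emph{single} discrete pointed set on the left; no multi-tier model is developed, and your assertion that the allowable moves, the filtration by earliest escaping path, and the boundary-collision analysis ``carry over verbatim'' to $n$ tiers is not justified. (A smaller point: the base case of \autoref{proposition:subrack-comparison} is only a \emph{homology} equivalence, obtained via nullhomotopies on associated gradeds of a cofiber, not a zig-zag of weak equivalences; your first step is still fine because bar-tensoring preserves homology equivalences after passing to chains, but not for the reason you give.)

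The paper takes a shorter and conceptually different route that avoids these issues entirely. It first proves the purely ``ring'' statement
\[
A^{\otimes_{B} n} \xrightarrow{\ \simeq\ } A^{\otimes_{B'} n}
\]
by induction on $n$, with base case $n=2$ supplied by \cite[Proposition 4.5.11]{landesmanL:the-stable-homology-of-non-splitting} (the analogue of \autoref{proposition:subrack-comparison} where the Hurwitz-module factor is replaced by another copy of $A$). The inductive step factors $A^{\otimes_B n}\simeq A^{\otimes_{B'}(n-1)}\otimes_B A\simeq A^{\otimes_{B'}(n-1)}\otimes_{B'}A$, using the induction hypothesis for the first map and the $n=2$ case (peeled off by associativity) for the second. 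The lemma then follows in one line by tensoring this ring equivalence \emph{over $A$}---not over $B$---with the $n=1$ module equivalence $A\otimes_B M\simeq A\otimes_{B'}M'$ from \autoref{proposition:subrack-comparison}, using $A^{\otimes_B n}\otimes_A(A\otimes_B M)\simeq A^{\otimes_B n}\otimes_B M$. The key move you are missing is this change of base ring to $A$, which cleanly decouples the iterated $A$-factors from the Hurwitz-module factor and makes the earlier Proposition 4.5.11 do all the inductive work.
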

\begin{proof}
	The case $n = 1$ is the content of
	\autoref{proposition:subrack-comparison}.
	To prove the case that $n \geq 1$, note that there is a homology equivalence
	\begin{equation}
		\label{equation:n-fold-ring-equivalence}
((\pi_0\Hur^{c'})[\alpha_{c'}^{-1}]_+)^{\otimes_{\Hur^c_+} n} \xrightarrow{\simeq}
((\pi_0\Hur^{c'})[\alpha_{c'}^{-1}]_+)^{\otimes_{\Hur^{N(c')}_+} n}.
\end{equation}
This homology equivalence for $n = 2$ 
was shown in \cite[Proposition
4.5.11]{landesmanL:the-stable-homology-of-non-splitting}.
To prove
\eqref{equation:n-fold-ring-equivalence} in general, by induction, we may assume
it holds for $n - 1$, so we obtain the homology equivalences
\begin{align*}
&((\pi_0\Hur^{c'})[\alpha_{c'}^{-1}]_+)^{\otimes_{\Hur^c_+} n-1} 
\otimes_{\Hur^c_+}
((\pi_0\Hur^{c'})[\alpha_{c'}^{-1}]_+) \\
&\to
((\pi_0\Hur^{c'})[\alpha_{c'}^{-1}]_+)^{\otimes_{\Hur^{N(c')}_+} n-1} 
\otimes_{\Hur^c_+}
((\pi_0\Hur^{c'})[\alpha_{c'}^{-1}]_+)
\\
&\to
((\pi_0\Hur^{c'})[\alpha_{c'}^{-1}]_+)^{\otimes_{\Hur^{N(c')}_+} n-1} 
\otimes_{\Hur^{N(c')}_+}
((\pi_0\Hur^{c'})[\alpha_{c'}^{-1}]_+)
\end{align*}
where the last homology equivalence uses 
\cite[Proposition
4.5.11]{landesmanL:the-stable-homology-of-non-splitting}
again.
Tensoring the homology equivalence \eqref{equation:n-fold-ring-equivalence} over
$\pi_0 \Hur^{c'}[\alpha_{c'}^{-1}]_+$
with the homology equivalence 
\eqref{equation:s-to-s-prime-equivalence}
yields the desired homology equivalence
\eqref{equation:n-fold-equivalence}.
\end{proof}

\section{Proving homological stability}
\label{section:homological-stability}

In this section we prove that the homology of Hurwitz modules stabilize in a
linear range.
The main result of this section is \autoref{theorem:stable-homology}, which immediately implies
\autoref{theorem:homology-stabilizes-intro} from the introduction.
The first step to proving our homological stability result is the relate the
chains on a quotient of $\Hur^{c,S}$ to the chains on a quotient of
$\Hur^{N_c(c'),S'}$, which uses our identification of bar constructions from
\autoref{lemma:n-fold-comparison}, the output of the previous section, as input
for a descent argument.

%Recall that given an inclusion $c' \to c$ of racks there is a restriction map of pointed $\EE_1$-algebras
%$\Hur^{c}_+ \to
%\Hur^{c'}_+$ sending all components parameterizing covers containing some
%monodromy in $c- c'$ to the base point. If $(S',c')\subset(S,c)$ is a subset of a Hurwitz module, then there is similarly a restriction map $\Hur^{c,S}_+ \to
%\Hur^{c',S'}_+$ of $\Hur^{c}_+$-modules in pointed spaces sending all components not in the image of the inclusion to the base point.

\begin{lemma}
	\label{lemma:comparison-to-normalizer}
	Let $S$ be a bijective Hurwitz module over a finite rack $c$, $c' \subset c$ be a subrack.
	Let $c' \subset c$ be a subrack, let
	$S_{c'}$ be as in \autoref{definition:subsystem-for-subrack} and assume
	it has $0$ set $T'_0$. Take $(S',N_c(c')) \subset (S,c')$ to be the
	subset with $n$-set $N_c(c')^n \times T'_0$, which is a bijective Hurwitz
	module by 
\autoref{lemma:normalizer-is-hurwitz-module}.
	Using the notation $A_{c,S} := C_*(\Hur^{c,S}; \mathbb Z)$,
	the restriction map induces an equivalence
\begin{align}
	\label{equation:normalizer-comparison}
	f_{S,S'} : 
\left(A_{c,S}/\alpha_x^{\ord(x)}, x \in
	c-c')\right)[\alpha_{c'}^{-1}]
	\simeq
	\left(A_{N_c(c'),S'}/(\alpha_x^{\ord(x)}, x \in
	c-c')\right)[\alpha_{c'}^{-1}]
\end{align}
\end{lemma}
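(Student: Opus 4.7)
The plan is to deduce \eqref{equation:normalizer-comparison} from the bar-construction comparison of \autoref{lemma:n-fold-comparison} via a descent argument. Set $B := \pi_0 \Hur^{c'}[\alpha_{c'}^{-1}]_+$, and let $R := \widetilde{C}_*(\Hur^c_+)$ and $R' := \widetilde{C}_*(\Hur^{N_c(c')}_+)$; the algebra $B$ is naturally an $\EE_1$-algebra over both $R$ and $R'$ via the restriction maps, and both sides of \eqref{equation:normalizer-comparison} are modules over $B$ via the inclusion $c' \subset c$.

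First I would reinterpret each side of \eqref{equation:normalizer-comparison} as the geometric realization of a two-sided bar construction, whose $n$-th simplicial levels are
\[
B^{\otimes_R (n+1)} \otimes_R \bigl(A_{c,S}/(\alpha_x^{\ord(x)}, x \in c-c')\bigr)
\quad\text{and}\quad
B^{\otimes_{R'} (n+1)} \otimes_{R'} \bigl(A_{N_c(c'),S'}/(\alpha_x^{\ord(x)}, x \in c-c')\bigr),
\]
respectively. The identification of these realizations with the localized quotients in \eqref{equation:normalizer-comparison} uses the fact that once one kills $\alpha_x^{\ord(x)}$ for $x \in c - c'$ and inverts $\alpha_{c'}$, the $R$-module structure on the quotient factors, up to homology equivalence, through $B$, so the standard bar resolution of a $B$-module recovers the module itself.

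Next I would apply \autoref{lemma:n-fold-comparison} in each simplicial degree $n$. That lemma furnishes a homology equivalence between $B^{\otimes_R (n+1)} \otimes_R A_{c,S}$ and $B^{\otimes_{R'} (n+1)} \otimes_{R'} A_{N_c(c'),S'}$, and this equivalence descends to the quotient by $\alpha_x^{\ord(x)}$ for $x \in c-c'$: each such $\alpha_x$ acts trivially on $B$, so the quotient only affects the last tensor factor, and the restriction map sends $\alpha_x^{\ord(x)} \in R$ to $\alpha_x^{\ord(x)} \in R'$ when $x \in N_c(c')$ and to zero when $x \in c - N_c(c')$. Passing to geometric realizations of the resulting levelwise homology equivalence of simplicial objects produces the map $f_{S,S'}$ and shows it is a homology equivalence.

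The main obstacle will be justifying the first step, namely that the two sides of \eqref{equation:normalizer-comparison} really do coincide with the geometric realizations of the stated bar constructions. This requires a careful compatibility between localization at $\alpha_{c'}$, the quotient by $\alpha_x^{\ord(x)}$ for $x \in c-c'$, and the relative tensor product over $R$ (respectively $R'$), ensuring that the $R$-action factors through $B$ strongly enough for the bar resolution to converge to the correct object. This compatibility is implicit in the proof of \autoref{proposition:subrack-comparison}, and extending that argument to the setting with the quotient in place is the technical heart of the present lemma.
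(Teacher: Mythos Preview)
Your overall architecture matches the paper's: reduce \eqref{equation:normalizer-comparison} to a descent argument over $R' := H_0(\Hur^{c'})[\alpha_{c'}^{-1}]$ and then invoke \autoref{lemma:n-fold-comparison} level by level. You have also correctly located the one nontrivial step --- convergence of the cobar/Amitsur complex --- but you point to the wrong place for its proof.

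The convergence statement is \emph{not} implicit in \autoref{proposition:subrack-comparison}; that proposition is a space-level result establishing the single-level comparison and says nothing about whether the tower converges. What the paper actually does is show that the ideal $I \subset C_*(\Hur^c)[\alpha_{c'}^{-1}]$ generated by $\alpha_x$ for $x \in c-c'$ acts nilpotently on each homotopy group of the quotient module $M := \bigl(A_{c,S}/(\alpha_x^{\ord(x)}, x \in c-c')\bigr)[\alpha_{c'}^{-1}]$. Each individual $\alpha_x$ acts nilpotently on the quotient by $\alpha_x^{\ord(x)}$ by \cite[Lemmas 3.5.1 and 3.5.2]{landesmanL:the-stable-homology-of-non-splitting}; one then needs a short pigeonhole argument, using the commutation relation $y\alpha_x = \alpha_x \phi_x(y)$ with $\phi_x$ the rack automorphism $u \mapsto x \triangleright u$, to promote this to nilpotence of arbitrary long products $w_1 \cdots w_N$ with $w_j \in I$. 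Nilpotence on homotopy then gives $I$-nilpotent completeness by \cite[Lemma 4.0.4]{landesmanL:homological-stability-for-hurwitz}, and it is this completeness that licenses recovering $M$ from the cobar complex over $R'$. With that in hand, your levelwise identification via \autoref{lemma:n-fold-comparison} (after taking chains and passing to the quotient by $\alpha_x^{\ord(x)}$) goes through exactly as you outline.
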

\begin{proof}
	Consider $c(1) := c, c(2) := N_c(c')$ and for $i \in \{1,2\}$ 
let
$R_i := C_*(\Hur^{c(i)})[\alpha_x^{-1},x \in c']$.
Let 
$R' := H_0(\Hur^{c'})[\alpha_x^{-1},x \in c']$.
Let $f_i: R_i \to R'$ be the map induced by the restriction map.

Let $I_i \subset R_i$ denote the ideal
generated by $\alpha_x$ for $x \in c-c'$. (In the case $i = 2$, so $c(i) = N_c(c')$, the
elements in $c - N_c(c')$ act by $0$.) 
Let $S_1 := S$ and $S_2 := S'$.
For $i \in \{1,2\}$, define the left $R$ module $M_i := C_* \left(\Hur^{c(i),S_i}/(\alpha_x^{\ord(x)}, x \in
c-c')\right)[\alpha_{c'}^{-1}]$.

We claim now that for a fixed $i \in \{1,2\}$, $I_i$ acts nilpotently on $\pi_j(M_i)$ for each $j$.
To see this, first note that it follows from \cite[Lemma 3.5.1 and Lemma
3.5.2]{landesmanL:the-stable-homology-of-non-splitting}
that each $\alpha_x$ for $x \in c - c'$ acts nilpotently on $\pi_j(M_i)$ for each
$j$.
A general element of $I_i$ can be written as $w = \sum_{x \in c - c'} y_x \alpha_x$
for some $y_x \in R_i$. We wish to show a product $w_1*\dots * w_N$ with $w_j
\in I_i, 1 \leq j \leq n$ acts by $0$ for $N\gg0$.
Note that for any
$y \in R_i$, we have $y \alpha_x =
\alpha_x\phi_x(y)$, where $\phi_x$ is induced by the automorphism $c(i) \to c(i), u
\mapsto x \triangleright u$.
Using the above and the pigeonhole principle we find that for any $t > 0$ there
is some $N$ so that $w_1*\dots*w_N$ is in the left ideal generated by $\{\alpha_x^t, x \in
c - c'\}$, proving the desired claim because each $\alpha_x$ acts nilpotently.

Since $I_i$ acts nilpotently on each $\pi_j(M_i)$, it follows from \cite[Lemma
4.0.4]{landesmanL:homological-stability-for-hurwitz}
that $M_i$ is $I_i$-nilpotent complete in the sense of \cite[Definition
4.0.1]{landesmanL:homological-stability-for-hurwitz}.
To prove the desired equivalence
\eqref{equation:normalizer-comparison}, as $M_i$ is $I_i$-nilpotent complete,
it suffices to prove compatible equivalences $R'^{\otimes_{R_1} n} \otimes_{R_1}
M_1 \simeq R'^{\otimes_{R_2} n} \otimes_{R_2} M_2$ for each $n \geq 1$.
This follows from \autoref{lemma:n-fold-comparison} upon applying
reduced chains to
\eqref{equation:n-fold-equivalence}
and quotienting by $(\alpha_x^{\ord(x)}, x \in c - c')$.
\end{proof}

We will now next put a filtration on $A_{c,S}$ so as to isolate the ``connected
part'' which is the analog of $C_*(\CHur^c)$ of chains on connected covers,
where the labels of the points generate $c$.
Let us explain the idea for where we are going next.
Once we define the filtration, \autoref{lemma:comparison-to-normalizer} will enable us to show
that the connected part associated to $A_{c,S}$ in
\eqref{equation:normalizer-comparison} is identified with the top graded part
for $A_{N_c(c'),S'}$. Since the latter vanishes, the former does as well, which
enables us to show this connected part vanishes, which means each $\alpha_{c'}$
acts invertibly and so we can remove it and still obtain something that
stabilizes.

\begin{construction}
	\label{construction:S-filtration}
	Given a finite rack $c$ and a finite bijective Hurwitz module $S$ over $c$, we put a doubly filtered
	structure on 
	$\Hur^{c,S}$. 
	We define $F_{*,*} \Hur^{c,S} : \mathbb N^2 \to \Mod_{\Hur^c}(\Spc^{\NN})$
	as follows.

	Suppose $c'' \subset c$ and $S''$ is a bijective Hurwitz module over $c''$
	which is a subset of the bijective Hurwitz module $S$
	in the sense of \autoref{definition:subset}.
%	In particular the action
%	$\pi_1(\Sigma^1_{g,f}, \star) \times T'_0 \times c' \to c$ factors through
%	$c'$. 

	We then define 
	the $(i,j)$th part of the bifiltration
	$F_{i,j} \Hur^{c,S}$ to be the union
	of all components contained in some 
	$\Hur^{c'',S''}$ for $(c'',S'') \subset (c,S)$ with
	$|c''| \geq i$ and $|T''_0| \geq j$ for $T''_0$ the $0$-set of
	$S''$.

	We use $A_{c,S} := C_*(\Hur^{c,S}; \mathbb Z)$.
	We use $F_{*,*} A_{c,S}$ to denote the associated functor $\mathbb N^2 \to
	\Mod_{A_c}(\Mod(\ZZ)^{\NN})$ 
	obtained from $F_{*,*} \Hur^{c,S}$ by taking chains.
	We will also view $F_{*,*} A_{c,S}$ as giving a bifiltration on $A_{c,S}$ as an $A_c$
	module.
	If $T_0$ is the $0$-set of $S$,
	define $C A_{c,S} := F_{|c|,|T_0|} A_{c,S}$.
\end{construction}

The following lemma is immediate from \autoref{construction:S-filtration}.
\begin{lemma}
	\label{lemma:graded-filtration}
	Let $c$ be a finite rack. There is a natural isomorphism of bigraded left $A_c$ modules
	\begin{align*}
		\gr_{i,j} A_{c,S} \simeq
			\oplus_{(c'',S'') \subset (c,S), |c'| = i, |T''_0| = j} C
	A_{c'', S''},
	\end{align*}
	where above $T''_0$ is the $0$-set of $S''$,
	the sum is taken over all subsets $(S'', c'') \subset (S,c)$ in the sense of \autoref{definition:subset}, and 
	each $C A_{c'',S''}$, as defined in \autoref{construction:S-filtration}
	is given the structure of an $A_c$ module by letting elements of $c -
	c'$ act by $0$.
\end{lemma}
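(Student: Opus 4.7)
The lemma is essentially bookkeeping, so my plan is to reduce the claim to a single structural fact about subsets and then read off the decomposition from the construction of $F_{*,*}$.

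The key observation I would establish first is that each component $Z \subset \Hur^{c,S}$ admits a unique minimal subset $(c_Z, S_Z) \subset (c,S)$ (in the sense of \autoref{definition:subset}) with $Z \subset \Hur^{c_Z,S_Z}$. Existence is automatic from intersection, so the only substantive point is that the intersection of two subsets is again a subset. For this I would invoke criterion (2) of \autoref{lemma:criterion-to-be-subset}: given $(c_1,S_1), (c_2,S_2) \subset (c,S)$ with $0$-sets $T_{1,0}, T_{2,0}$, the candidate intersection has underlying rack $c_1 \cap c_2$ and $0$-set $T_{1,0} \cap T_{2,0}$, and for any $x \in c_1 \cap c_2$, $t \in T_{1,0} \cap T_{2,0}$, $\gamma \in B_1^{\Sigma^1_{g,f}}$, the element $\psi_1(\gamma,x,t)$ lies in $c_1 \times T_{1,0}$ (by the subset property of $S_1$) and in $c_2 \times T_{2,0}$, hence in $(c_1 \cap c_2) \times (T_{1,0} \cap T_{2,0})$. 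This verifies the criterion.

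With the uniqueness of $(c_Z,S_Z)$ in hand, the bifiltration defined in \autoref{construction:S-filtration} partitions components by the value of $(|c_Z|, |T_{Z,0}|)$: on passing to the associated graded $\gr_{i,j}$, only the components with $(|c_Z|, |T_{Z,0}|) = (i,j)$ survive. Grouping such components by their common minimal subset $(c'', S'')$ yields at the level of spaces
\begin{equation*}
    \gr_{i,j} \Hur^{c,S} \;\simeq\; \coprod_{\substack{(c'',S'') \subset (c,S)\\ |c''|=i,\, |T''_0|=j}} \Bigl\{\, Z \subset \Hur^{c'',S''} : (c_Z, S_Z) = (c'', S'')\,\Bigr\}.
\end{equation*}
The indexed coproduct is, summand by summand, the space underlying $CA_{c'',S''}$ from the construction: it is the part of $\Hur^{c'',S''}$ not lying in any proper subset. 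Taking chains yields the claimed isomorphism of bigraded abelian groups.

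The remaining check is that this isomorphism is one of left $A_c$-modules. Here I would observe that multiplication by $\alpha_x$ for $x \in c \setminus c''$ carries a configuration in the $(c'',S'')$-summand to one whose minimal subset strictly contains $c''$, hence into a higher filtration level; on $\gr_{i,j}$ this action is therefore zero. This matches the $A_c$-module structure imposed on $CA_{c'',S''}$ in the statement (elements of $c - c''$ acting trivially). No serious obstacle arises; the only point requiring minor care is the intersection step above, which is exactly where \autoref{lemma:criterion-to-be-subset} is designed to apply.
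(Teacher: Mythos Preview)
Your argument is correct and supplies exactly the content the paper suppresses: the paper simply declares the lemma ``immediate from \autoref{construction:S-filtration}'' and gives no proof. The one substantive point you isolate—that subsets $(c'',S'')\subset(c,S)$ are closed under intersection, verified via criterion~(2) of \autoref{lemma:criterion-to-be-subset}—is precisely what makes the minimal subset $(c_Z,S_Z)$ of a component well defined, after which the associated-graded decomposition and the $A_c$-module check are routine.
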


We next aim to show the stable homology of $CA_{c,S}/(\alpha_x^{\on{ord}(x)}, x \in
	c-c')[\alpha_x^{-1}, x \in c']$ vanishes if $c'$ is not a union of
	$S$-components of $c$.
	We will need the following two elementary lemmas.
This first lemma was proven in the final paragraph of \cite[Theorem
5.0.6]{landesmanL:homological-stability-for-hurwitz}.
\begin{lemma}
	\label{lemma:normalizer-containment}
	Suppose $c' \subset c$ is a subrack which is not a union of components
	of $c$. Then $N_c(c') \neq c$.
\end{lemma}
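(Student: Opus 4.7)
The plan is to prove the contrapositive: assuming $N_c(c') = c$, I will show that $c'$ must be a union of components of $c$. Unpacking the definition of $N_c(c')$, the hypothesis $N_c(c') = c$ says that for every $x \in c$ and every $y \in c'$, we have $x \triangleright y \in c'$. In other words, every map $x \triangleright (-) \colon c \to c$ restricts to a map $c' \to c'$.

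The next step is to upgrade this to closure under $x \triangleright^{-1}(-)$ as well. Since the paper works throughout with finite racks, $c'$ is a finite set, and the restriction $x \triangleright (-)\colon c' \to c'$ is an injective self-map of a finite set, hence a bijection. This forces $x \triangleright^{-1}(-)$ to also send $c'$ into $c'$.

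Consequently, $c'$ is closed under every generator $x \triangleright^{\pm 1}$ of the equivalence relation whose classes are the components of $c$: for any $y \in c'$ and $x \in c$, all elements obtainable from $y$ via iterated application of $x \triangleright^{\pm 1}$ land in $c'$. Thus the component of $c$ containing any element of $c'$ lies entirely inside $c'$, so $c'$ is a union of components, contradicting the hypothesis. This completes the argument.

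The main (mild) obstacle is the upgrade from closure under $x \triangleright$ to closure under $x \triangleright^{-1}$; absent finiteness one would need to invoke some auxiliary input, but within the conventions of this paper finiteness of $c$ is standing, so the pigeonhole argument suffices.
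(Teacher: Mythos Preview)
Your proof is correct and is essentially the contrapositive of the paper's argument: the paper works directly, choosing a component $c''$ that meets $c'$ but is not contained in it, and then exhibits $x \in c'' \cap c'$ and $y \in c$ with $y \triangleright x \notin c'$, so that $y \notin N_c(c')$. Both arguments rest on the same point---that closure of $c'$ under all $x \triangleright (-)$ forces $c'$ to be a union of orbits---and you are right that finiteness is what makes this work (your pigeonhole upgrade to $x \triangleright^{-1}$ is exactly the step the paper's terse proof leaves implicit).
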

\begin{proof}
	By assumption, there is some component $c'' \subset c$ not contained in $c'$ but which
	meets $c'$. Hence there is some $x \in c'' \cap c'$ and some $y$ with $y
	\triangleright x \notin c'$. Therefore, $y \notin N_c(c')$.
\end{proof}

\begin{lemma}
	\label{lemma:s-prime-containment}
	Suppose $c' \subset c$ is a subrack which is not a union of
	$S$-components of $c$ and let $(N_c(c'),S') \subset(c,S)$ be the associated subset as in
	\autoref{lemma:normalizer-is-hurwitz-module}.
	Then we cannot have equality $N_c(c') = c$ and $S' = S$ as bijective
	Hurwitz	modules.
\end{lemma}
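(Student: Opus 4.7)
The plan is to split into two cases according to whether $c'$ is a union of connected components of $c$ as a rack (forgetting the $S$-structure entirely).

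First, suppose $c'$ is not a union of rack components of $c$. Then \autoref{lemma:normalizer-containment} applies and gives $N_c(c') \neq c$, so the conclusion holds immediately.

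Next, suppose $c'$ is a union of rack components of $c$. Then for any $x \in c$ and $y \in c'$, the element $x \triangleright y$ lies in the same rack component as $y$, hence in $c'$; so $N_c(c') = c$. We must therefore show $S' \neq S$, which under the identification $N_c(c') = c$ means exactly that $T'_0 \neq T_0$, where $T'_0$ is the $0$-set of $S_{c'}$. I would argue this by contrapositive: assume $T'_0 = T_0$. Then by definition of subset and the maximality characterization of $S_{c'}$, the triple $(c', S)$ itself is a subset of $(c,S)$ in the sense of \autoref{definition:subset}. Applying the equivalent second criterion of \autoref{lemma:criterion-to-be-subset}, this means that for every $x \in c'$, every $t \in T_0$, and every $\gamma \in B_1^{\Sigma^1_{g,f}}$, one has $\psi_1(\gamma, x, t) \in c' \times T_0$. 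In particular $\sigma_t^\gamma(x) \in c'$, so $c'$ is closed under the joint action of $B_1^{\Sigma^1_{g,f}} \times T_0$ on $c$. Combined with closure under the rack action (automatic since $c'$ is a union of rack components), this shows $c'$ is closed under both actions appearing in \autoref{definition:s-component}, hence is a union of $S$-components, contradicting the standing hypothesis on $c'$.

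There is essentially no hard step here; the only thing to be careful about is correctly unwinding what the maximality of $S_{c'}$ gives when $T'_0 = T_0$, and invoking \autoref{lemma:criterion-to-be-subset} in the right direction to conclude that closure of $T'_0$ under all the $\psi_1(\gamma, x, \cdot)$ for $x \in c'$ translates into the statement that $\sigma^\gamma_t$ preserves $c'$ for every $(\gamma,t)\in B_1^{\Sigma^1_{g,f}}\times T_0$. Once that is in hand, the contradiction with the definition of $S$-component is immediate.
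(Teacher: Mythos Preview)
Your proof is correct and follows essentially the same approach as the paper. Both split on whether $c'$ is a union of rack components (invoking \autoref{lemma:normalizer-containment} when it is not), and in the remaining case both use the criterion of \autoref{lemma:criterion-to-be-subset} to link $T'_0 = T_0$ with closure of $c'$ under the $B_1^{\Sigma^1_{g,f}} \times T_0$ action; the only cosmetic difference is that you phrase the second case as a contrapositive while the paper argues it directly by exhibiting a specific $t \notin T'_0$.
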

\begin{proof}
	By \autoref{lemma:normalizer-containment}, we must have that $c' \subset
	c$ is a union of components of $c$. 
	Suppose $T'_0$ is the $0$ set of $S'$ and $T_0$ is the $0$ set of $S$.
	By definition of the $S$-components of $c$,
	there must be some $t \in T_0$, $x \in c'$, and $\gamma \in B_1^{\Sigma^1_{g,f}}$ so
	that $\sigma^\gamma_t(x) \notin c'$.
	Therefore, $t \notin T'_0$ and so $T'_0 \neq T_0$ and hence
	$(S',c') \subsetneq (S,c)$.
\end{proof}

We are now prepared to show the stable value of 
$CA_{c,S}/(\alpha_x^{\on{ord}(x)}, x \in
c-c')[\alpha_x^{-1}, x \in c']$ vanishes.
This will enable us to remove one of the $\alpha_x^{\on{ord}(x)}$ in the quotient and
proceed inductively.

\begin{lemma}
	\label{lemma:quotient-localization-vanishes}
	Suppose $c$ is a finite rack,
	$S$ is a finite
	bijective Hurwitz module over $c$,
	and $c' \subset c$ is a subrack that is not a union of
$S$-components of $c$.
Then
	\begin{align}
		\label{equation:connected-vanishing}
		CA_{c,S}/(\alpha_x^{\on{ord(x)}}, x \in c-c')[\alpha_x^{-1}, x \in
		c'] = 0.
	\end{align}
\end{lemma}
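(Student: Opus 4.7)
The plan is to apply \autoref{lemma:comparison-to-normalizer} together with the bifiltration from \autoref{construction:S-filtration} in order to extract the ``top piece'' $CA_{c,S}$ and show that it must vanish after the operation
$L(M) := (M/(\alpha_x^{\ord(x)}, x \in c-c'))[\alpha_{c'}^{-1}]$,
since no corresponding top piece exists on the normalizer side. By \autoref{lemma:s-prime-containment}, $(N_c(c'), S') \subsetneq (c, S)$; in particular, $(c,S)$ is not a subset of $(N_c(c'), S')$. By \autoref{lemma:comparison-to-normalizer}, the restriction map $\tilde r : A_{c,S} \to A_{N_c(c'), S'}$ induces an equivalence $f_{S,S'} : L(A_{c,S}) \simeq L(A_{N_c(c'), S'})$.

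Next I would verify that the bifiltration $F_{*,*}$ is by $A_c$-submodules: for a component $Z$ of $\Hur^{c,S}$ with minimal containing sub-Hurwitz module $(c^Z, S^Z)$, left multiplication by $\alpha_x$ with $x \in c$ yields components whose minimal sub-Hurwitz module contains $(c^Z \cup \{x\}, S^Z)$, so the bifiltration degree can only grow. The restriction $\tilde r$ preserves the bifiltration, since it either preserves $Z$---with the same $(c^Z, S^Z)$, viewed inside $\Hur^{N_c(c'), S'}$---or sends $Z$ to zero when $(c^Z, S^Z) \not\subset (N_c(c'), S')$. Since $L$ is an exact $A_c$-linear operation preserving filtrations by submodules, $f_{S,S'}$ is a filtered equivalence.

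By \autoref{lemma:graded-filtration}, the associated gradeds decompose as
$\gr L(A_{c,S}) \simeq \bigoplus_{(c'',S'') \subset (c,S)} L(CA_{c'',S''})$
and
$\gr L(A_{N_c(c'),S'}) \simeq \bigoplus_{(c''',S''') \subset (N_c(c'), S')} L(CA_{c''',S'''})$.
The induced map $\gr f_{S,S'}$ is the identity on summands indexed by $(c'',S'') \subset (N_c(c'), S')$ and is zero on the remaining summands, because $\tilde r$ preserves the invariant $(c^Z, S^Z)$ on surviving components and kills components whose minimal sub-Hurwitz module escapes $(N_c(c'), S')$. Since the bifiltration is bounded (as $c$ and $T_0$ are finite) and $f_{S,S'}$ is an equivalence, $\gr f_{S,S'}$ must also be an equivalence. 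This forces every ``extra'' LHS summand to vanish: $L(CA_{c'',S''}) = 0$ for each $(c'',S'') \subset (c,S)$ with $(c'',S'') \not\subset (N_c(c'), S')$. Specializing to $(c'',S'') = (c,S)$---which is not a subset of $(N_c(c'), S')$ by the first paragraph---yields $L(CA_{c,S}) = 0$, the desired conclusion. The hard part will be rigorously justifying the passage from ``$f_{S,S'}$ is a bounded filtered equivalence'' to vanishing of the individual non-matching associated graded summands on the LHS, which requires care with the exactness of localization and quotients as they interact with the bifiltration in the derived setting.
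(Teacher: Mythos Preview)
Your proposal has a genuine gap at the step you flag as the ``hard part'': the implication that a bounded filtered equivalence induces an equivalence on associated gradeds is simply false, and not for technical reasons about exactness or the derived setting. The converse holds---for a bounded filtration, a filtered map that is an equivalence on associated gradeds is an equivalence on the total object---but in the direction you need, differentials in the associated spectral sequence can produce cancellation among graded pieces. A minimal counterexample: take $M$ to be the acyclic two-term complex $k \xrightarrow{\id} k$ with the two-step filtration $0 \subset (0 \to k) \subset M$, and map it to $N = 0$. This is a filtered quasi-isomorphism, yet $\gr M \simeq k \oplus k[1] \to 0$ is not.

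The missing idea is induction on $|c|$ and $|T_0|$. With your setup in place, the paper does not attempt to pass from the total equivalence to the associated graded directly. Instead it verifies that each non-top associated graded piece $\gr_{i,j} f_{S,S'}$ with $(i,j) \neq (|c|,|T_0|)$ is already an equivalence: the matching summands are identical on both sides, and for each unmatched summand $L(CA_{c'',S''})$ with $(c'',S'') \subsetneq (c,S)$ but $(c'',S'') \not\subset (N_c(c'),S')$, one checks via \autoref{lemma:s-prime-containment} that $c'$ is not a union of $S''$-components of $c''$, so the induction hypothesis applied to $c' \subset c''$ gives $CA_{c'',S''}/(\alpha_x^{\ord(x)}, x \in c''-c')[\alpha_{c'}^{-1}] = 0$, whence $L(CA_{c'',S''}) = 0$ as well. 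Only then does the bounded-filtration argument apply---in its valid direction---to conclude that the top piece $L(CA_{c,S}) \to 0$ is an equivalence. Your sketch has all the structural ingredients except this inductive step, and without it the argument cannot close.
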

\begin{proof}
	We prove our result by induction on $|c|$ and $|S|$.
	The map of \autoref{lemma:comparison-to-normalizer} is an equivalence, and its top associated bigraded piece is the map $CA_{c,S}/(\alpha_x^{\on{ord(x)}}, x \in c-c')[\alpha_x^{-1}, x \in
	c'] \to 0$. It thus suffices to show that all of the associated graded pieces $\gr_{i,j}f_{S,S'}$ with either $i<|c|$ or $j<|S|$ is an equivalence.
	
	Note that all summands in these associated graded terms match up on the
	source and target except for those where either $c''$ strictly contains
	$N_c(c')$ or $S''$ strictly contains $S'$. In this case, the
	contrapositive of
\autoref{lemma:s-prime-containment} implies that $c'$ is not a union of $S''$-components in $c''$. 
Therefore, applying the induction hypothesis to $c'\subset c''$, we find that $CA_{c'',S}/(\alpha_x^{\on{ord(x)}}, x \in c''-c')[\alpha_x^{-1}, x \in
	c']=0$. 
	Thus $\gr_{i,j}f_{S,S'}$ with either $i<|c|$ or $j<|S|$ are equivalences,
	and so 
	$\gr_{|c|,|S|}f_{S,S'}$ is as well, implying 
	\eqref{equation:connected-vanishing} holds.
\end{proof}

Using the vanishing established in
\autoref{lemma:quotient-localization-vanishes}, we can now inductively remove
elements from the quotient, to show the homology of Hurwitz modules stabilize.
The input for the base case comes from the stability of the quotient from
\autoref{theorem:all-quotient-bounded}.

\begin{lemma}
	\label{lemma:full-quotient-bounded}
	Let $c$ be a finite rack and $S$ be a finite bijective Hurwitz module over $c$.
	For any subset $V \subset c$ which contains some element of each
	$S$-component of $c$, $CA_{c,S}/(\alpha^{\ord(x)}_x, x \in V)$ is
	$f_{\mu(|z|,\ord_c(z)),b(|z|,\ord_c(z))}$ bounded with respect to the
	grading induced by $z \subset c$, where
	the functions 
	$\mu(|z|,\ord_c(z))$, and $b(|z|,\ord_c(z))$
	depend only on $|z|$ and $\ord_c(z)$.
\end{lemma}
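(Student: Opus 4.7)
The plan is to argue by induction on $|c \setminus V|$, with base case $V = c$ supplied by \autoref{theorem:all-quotient-bounded}.

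For the inductive step, pick $x \in c \setminus V$ and set $V^+ := V \cup \{x\}$; since $V \subset V^+$ and $V$ already meets every $S$-component, so does $V^+$, so the inductive hypothesis yields that $CA_{c,S}/(\alpha_y^{\ord(y)}, y \in V^+)$ is bounded in a linear range with constants depending only on $|z|$ and $\ord_c(z)$. The cofiber sequence
\[
CA_{c,S}/(\alpha_y^{\ord(y)}, y \in V) \xrightarrow{\alpha_x^{\ord(x)}} CA_{c,S}/(\alpha_y^{\ord(y)}, y \in V) \to CA_{c,S}/(\alpha_y^{\ord(y)}, y \in V^+)
\]
exhibits the bounded term as the cofiber of multiplication by $\alpha_x^{\ord(x)}$ on the module we wish to bound. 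The plan is then to combine the associated long exact sequence in homotopy with the vanishing supplied by \autoref{lemma:quotient-localization-vanishes} to promote the bound from the cofiber back to the module itself.

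Concretely, in bigradings strictly above the linear bound for the cofiber, both adjacent terms $\pi_{i+1}$ and $\pi_i$ of the cofiber vanish, so the long exact sequence forces $\alpha_x^{\ord(x)}$ to act as an isomorphism on $\pi_*\left(CA_{c,S}/(\alpha_y^{\ord(y)}, y \in V)\right)$, with grading shift equal to $\ord(x) \leq \ord_c(z)$ if $x \in z$ and to $0$ otherwise. On the other hand, \autoref{lemma:quotient-localization-vanishes}, applied to the subrack $\langle c \setminus V \rangle$ generated by $c \setminus V$ (which is not a union of $S$-components, because $V$ meets each $S$-component), yields that an appropriate localization vanishes. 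Combined with the finiteness of $c$ and $S$, this forces $\alpha_x^{\ord(x)}$ to act with uniformly bounded nilpotence index on each $\pi_i$; an isomorphism that is simultaneously nilpotent must vanish, collapsing the homotopy groups above the required linear range.

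The main obstacle is twofold. First, \autoref{lemma:quotient-localization-vanishes} requires a subrack hypothesis, so one must pass to $\langle c \setminus V \rangle$ rather than $c \setminus V$ itself and then use exactness, together with successive cofiber sequences, to descend the conclusion back to $V$. Second, and more delicate, is the bookkeeping needed to ensure the constants in the final linear bound depend only on $|z|$ and $\ord_c(z)$ rather than on the depth of the recursion: one must verify that the grading shift of $\alpha_x^{\ord(x)}$ and the nilpotence index controlling the collapse are uniformly controlled by $|z|$ and $\ord_c(z)$ at every step of the induction, so that the constants do not accumulate with the length of the induction.
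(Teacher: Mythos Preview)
Your overall architecture matches the paper's: descending induction on $|V|$, base case $V=c$ from \autoref{theorem:all-quotient-bounded}, and the inductive step via the cofiber of $\alpha_x^{\ord(x)}$ together with \autoref{lemma:quotient-localization-vanishes}. But there are two genuine gaps.

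First, the base case is not as immediate as you suggest. \autoref{theorem:all-quotient-bounded} bounds $A_{c,S}/(\alpha_x^{\ord(x)}, x\in c)$, not $CA_{c,S}/(\alpha_x^{\ord(x)}, x\in c)$. The paper bridges this using the bifiltration of \autoref{construction:S-filtration}: by \autoref{lemma:graded-filtration} the associated graded pieces are direct sums of $CA_{c'',S''}$ for strictly smaller $(c'',S'')$, so an induction on $|c|$ and $|T_0|$ lets you control all lower graded pieces, and then the top piece $CA_{c,S}$ is controlled via the cofiber of $CA_{c,S}\to A_{c,S}$.

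Second, and more seriously, your proposed application of \autoref{lemma:quotient-localization-vanishes} to $\langle c\setminus V\rangle$ does not work. Your claim that $\langle c\setminus V\rangle$ is not a union of $S$-components because $V$ meets each one is false: the subrack \emph{generated} by $c\setminus V$ can easily be all of $c$ (for instance if $c$ is a single conjugacy class and $V$ is a singleton). The paper avoids this entirely: it aims to show $CA_{c,S}/(\alpha_x^{\ord(x)}, x\in V)[\alpha_y^{-1}]=0$ for a single $y\in c\setminus V$, and reduces (via a further induction and commuting localizations with quotients) to showing the full localization $CA_{c,S}/(\alpha_x^{\ord(x)}, x\in V)[\alpha_x^{-1}, x\in c\setminus V]=0$. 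If $c\setminus V$ happens to be a subrack, \autoref{lemma:quotient-localization-vanishes} applies directly. If not, there are $x,y\in c\setminus V$ with $x\triangleright y\in V$; then the relation $\alpha_y\alpha_x=\alpha_x\alpha_{x\triangleright y}$ makes $\alpha_{x\triangleright y}$ simultaneously invertible (from the localization) and nilpotent (from the quotient), forcing the module to vanish. This dichotomy is the missing idea in your plan, and without it the argument cannot close.
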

\begin{proof}
	The proof will be by descending induction on $|V|$.
	First, recall that from \autoref{construction:S-filtration} and
	\autoref{lemma:graded-filtration}, $A_c$ has a finite 
	bifiltration with
	$\gr_{i,j} A_{c,S} \simeq
	\oplus_{(c'',S'') \subset (c,S), |c''| = i,|T''_0|=j} C
	A_{c'', S''},$
	where the sum is taken over all subracks $c'' \subset c$ and bijective
	Hurwitz modules $S''$ over $c''$ which are subsets of the bijective Hurwitz module $S$
	over $c$ so that $|c''| = i$ and the $0$ set $T''_0$ of $S''$ has
	$|T''_0| = j$.
	In the case $|V| = |c|$, we must have $V = c$, in which case
	\autoref{theorem:all-quotient-bounded} implies 
	$A_{c,S}/(\alpha^{\ord(x)}_x, x \in V)$ is 
	$f_{\mu^0(|z|,\ord_c(z)),b^0(|z|,\ord_c(z))}$ bounded,
	for $\mu^0,b^0 : \mathbb N^2 \to \mathbb N$ two functions. 
	Inducting on the size of $c$ and $T_0$, 
	we claim
	the associated graded pieces
	$\gr_{i,j} A_{c,S}/(\alpha^{\ord(x)}_x, x \in c)$
	are then
$f_{\mu^1(|z|,\ord_c(z)),b^1(|z|,\ord_c(z))}$ bounded
for $i < |c|$ or $j < |T_0|$, where
\begin{align*}
	\mu^1(s,t) &:= \max(t, \max_{s'\leq s, t'\leq t} \mu^0(s',t')) \\
b^1(s,t) &:= \max_{s'\leq s, t' \leq t} (b^0(s',t') + st+ \mu^1(s,t)).
\end{align*}
Indeed, by 
\autoref{lemma:graded-filtration},
the associated graded pieces are of the form
$CA_{c'',S''}/(\alpha_x^{\on{ord}(x)}, x \in c)$.
By induction, we may assume 
$CA_{c'',S''}/(\alpha_x^{\on{ord(x)}},x \in c'')$ are 
$f_{\mu^0(|z|,\ord_c(z)),b^0(|z|,\ord_c(z))}$ bounded.
If we assume $|c''| = s$ and $\ord_{c''}(z) = t$
we see that 
$CA_{c'',S''}/(\alpha_x^{\on{ord(x)}},x \in c)$
is a quotient of
$CA_{c'',S''}/(\alpha_x^{\on{ord(x)}},x \in c'')$
by elements of $c - z$ which act by $0$ and at most $|z|$ additional elements
$y_i \in z$, living in bidegree
$(\ord(y_i),1)$ with $\ord(y_i) \leq \ord_c(z) = t$.
It follows that
$CA_{c'',S''}/(\alpha_x^{\on{ord(x)}},x \in c)$
is 
$f_{\max(\ord_c(z), \mu^0(|c''|,\ord_c(c''))),b^0(|z|,\ord_c(z))+ |z| \ord_c(z)}$
bounded.
This implies the claim that 
$\gr_{i,j} A_{c,S}/(\alpha^{\ord(x)}_x, x \in c)$
is
$f_{\mu^1(|z|,\ord_c(z)),b^1(|z|,\ord_c(z))-\mu^1(|z|,\ord_c(z))}$ bounded.
Now, the cofiber $Q$ of the map 
\begin{align}
	\label{equation:connected-to-all-cofiber}
CA_{c,S}/(\alpha^{\ord(x)}_x, x \in c) \to A_{c,S}/(\alpha^{\ord(x)}_x, x \in c)
\end{align}
is filtered by 
the associated graded pieces of the bifiltration $F_{i,j}$, except $CA_{c,S}$, which are 
$f_{\mu^1(|z|,\ord_c(z)),b^1(|z|,\ord_c(z)) -  \mu^1(|z|,\ord_c(z))}$ bounded.
Therefore, the $-1$ suspension, $\Sigma^{-1} Q$,
is the fiber of of \eqref{equation:connected-to-all-cofiber}.
Since, $Q$ is 
$f_{\mu^1(|z|,\ord_c(z)),b^1(|z|,\ord_c(z)) -  \mu^1(|z|,\ord_c(z))}$ bounded,
we find
$\Sigma^{-1} Q$ is 
$f_{\mu^1(|z|,\ord_c(z)),b^1(|z|,\ord_c(z))}$ bounded.
As $A_{c,S}/(\alpha^{\ord(x)}_x, x \in c)$ is also
$f_{\mu^1(|z|,\ord_c(z)),b^1(|z|,\ord_c(z))}$ 
bounded we obtain that 
$CA_{c,S}/(\alpha^{\ord(x)}_x, x \in c)$,
is
$f_{\mu^1(|z|,\ord_c(z)),b^1(|z|,\ord_c(z))}$ 
bounded
as well.

	Having established the base case that $V = c$, we next suppose that
	$CA_{c,S}/(\alpha^{\ord(x)}_x, x \in V')$ is 
$f_{\mu^1(|z|,\ord_c(z)),b^1(|z|,\ord_c(z))+ |(c - V')\cap z| \cdot \mu^1(|z|,\ord_c(z))}$ bounded
	for all $V'$ with $|V'| > |V|$ and verify that 
$CA_{c,S}/(\alpha^{\ord(x)}_x, x \in V)$ is 
$f_{\mu^1(|z|,\ord_c(z)),b^1(|z|,\ord_c(z))+|(c - V')\cap z| \cdot \mu^1(|z|,\ord_c(z))}$ bounded.
	By \cite[Lemma 5.0.1]{landesmanL:homological-stability-for-hurwitz}
	(which we use to remove elements in $z$ from the quotient), 
	and \cite[Lemma 5.0.2]{landesmanL:homological-stability-for-hurwitz}
	(which we use to remove elements in $c - z$ from the quotient), 
	it
	suffices to show 
	$CA_{c,S}/(\alpha^{\ord(x)}_x, x \in V)[\alpha_y^{-1}] = 0$ for each $y
	\in c- V$.
	Once we establish this, we will conclude by taking
	\begin{align*}
		\mu(|z|, \ord_c(z)) &:= \mu^1(|z|,\ord_c(z)) \\
		b(|z|,\ord_c(z) &:=
	b^1(|z|,\ord_c(z))+|z| \cdot \mu^1(|z|,\ord_c(z)).
	\end{align*}

	By induction on $|V|$ and on $|T_0|$, we claim $CA_{c,S}/(\alpha^{\ord(x)}_x, x \in
	V)[\alpha_y^{-1}]/(\alpha_w^{\ord(w)}) = 0$ for each $w \in c - V - y$.
	We know $CA_{c,S}/(\alpha^{\ord(x)}_x, x \in
V \cup \{w\})[\alpha_y^{-1}] = 0$ by induction so now explain why 
$CA_{c,S}/(\alpha^{\ord(x)}_x, x \in
	V)[\alpha_y^{-1}]/(\alpha_w^{\ord(w)}) = CA_{c,S}/(\alpha^{\ord(x)}_x, x \in
V \cup \{w\})[\alpha_y^{-1}]$
	This holds because inverting $\alpha_y$ commutes with tensoring and quotients by
	$\alpha_x^{\on{ord}(x)}$ by 
\cite[Lemma 3.4.4]{landesmanL:the-stable-homology-of-non-splitting}, which
applies as $\alpha_x^{\on{ord}(x)}$ is $\mathbb E_2$ central; 
here, \cite[Lemma 3.4.4]{landesmanL:the-stable-homology-of-non-splitting}
applies because $\alpha_x^{\ord(x)}$ is $\EE_2$-central (\cite[Lemma
	3.2.3]{landesmanL:homological-stability-for-hurwitz}), and inverting a central element is base changing along a homological epimorphism (by \cite[Remark 3.3.2]{landesmanL:the-stable-homology-of-non-splitting}, the localized ring, which is always homological epimorphism by \cite[Example 3.3.1]{landesmanL:the-stable-homology-of-non-splitting}, is computed as the colimit along multiplication by $r$).
This establishes the above claim.
%see the proof of
%\cite[Theorem 5.0.6]{landesmanL:homological-stability-for-hurwitz} for
%additional explanation of this step.

Therefore, applying 
\cite[Lemma 5.0.1]{landesmanL:homological-stability-for-hurwitz}
and iteratively applying 
\cite[Lemma 3.3.4]{landesmanL:the-stable-homology-of-non-splitting},
it suffices to show
$CA_{c,S}/(\alpha^{\ord(x)}_x, x \in V)[\alpha_x^{-1},x \in c - V]=0$.
In case $c - V$ is not a subrack of $c$, we find that there is some $x,y \in c -
V$ with $x \triangleright y \in V$.
As $\alpha_y \alpha_x  = \alpha_x \alpha_{x \triangleright y} \in \pi_0 \Hur^c$,
we find $\alpha_{x \triangleright y}$ acts both nilpotently and invertibly on 
$CA_{c,S}/(\alpha^{\ord(x)}_x, x \in V)[\alpha_x^{-1},x \in c - V]$, implying it
is $0$.
Hence, we may assume $c-V$ is a nonempty subrack of $c$.
In this case, 
\autoref{lemma:quotient-localization-vanishes}.
implies
$CA_{c,S}/(\alpha^{\ord(x)}_x, x \in V)[\alpha_x^{-1},x \in c - V]=0$
holds.
\end{proof}

Finally, we conclude by giving a straightforward rephrasing of
\autoref{lemma:full-quotient-bounded}
so that this rephrasing is equivalent to the version stated in the introduction,
\autoref{theorem:homology-stabilizes-intro}.
\begin{theorem}
	\label{theorem:stable-homology}
	Let $c$ be a finite rack and $S$ be a finite bijective Hurwitz module over $c$ and let
	$CA_{c,S} := C_*(\Hur^{c,S})$.
	Let $z \subset c$ denote and $S$-component of $c$ and suppose $y \in z$.
	Then,
	$z$ induces a grading on $\Hur^{c,S}$ where a component of $\Hur_n^{c,S}$
	lies in grading $j$ if $j$ of the $n$ labeled points lie in $z$.
	Then, $CA_{c,S}/\alpha_y$ is 
	$f_{\mu(|z|,\ord_c(z)),b(|z|,\ord_c(z))}$ bounded with respect to the
	grading induced by an $S$-component $z \subset c$, where
	$\mu(|z|,\ord_c(z)),b(|z|,\ord_c(z))$ are functions
	depending only on $|z|$ and $\ord_c(z)$.
\end{theorem}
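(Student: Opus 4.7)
The plan is to deduce \autoref{theorem:stable-homology} from \autoref{lemma:full-quotient-bounded} by an algebraic rephrasing. First, I apply \autoref{lemma:full-quotient-bounded} with $V = \{y\} \cup V_0$, where $V_0 \subset c$ contains a single representative from each $S$-component of $c$ other than $z$. This yields that $CA_{c,S}/(\alpha_y^{\ord(y)}, \alpha_{y'}^{\ord(y')} : y' \in V_0)$ is bounded in the $z$-grading with constants depending only on $|z|$ and $\ord_c(z)$.

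Next, I would peel off the extraneous quotients by $\alpha_{y'}^{\ord(y')}$ for $y' \in V_0$. Because these elements lie in $S$-components other than $z$, they have $z$-degree zero, so their action preserves the $z$-grading and one can inductively remove them using the same strategy as in the proof of \autoref{lemma:full-quotient-bounded}, combining \cite[Lemma 5.0.2]{landesmanL:homological-stability-for-hurwitz} with the vanishing supplied by \autoref{lemma:quotient-localization-vanishes}. After this step, I am left with the statement that $CA_{c,S}/\alpha_y^{\ord(y)}$ is bounded in the $z$-grading.

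Finally, I would upgrade from $\alpha_y^{\ord(y)}$ to $\alpha_y$. Boundedness of $CA_{c,S}/\alpha_y^{\ord(y)}$ is equivalent to saying that multiplication by $\alpha_y^{\ord(y)}$ induces an isomorphism $\pi_i(CA_{c,S})_j \to \pi_i(CA_{c,S})_{j+\ord(y)}$ for $j$ exceeding a linear function of $i$. A short diagram chase then shows that multiplication by $\alpha_y$ itself induces an isomorphism $\pi_i(CA_{c,S})_j \to \pi_i(CA_{c,S})_{j+1}$ in essentially the same range: injectivity is immediate from the factorization $\alpha_y^{\ord(y)} = \alpha_y^{\ord(y)-1}\cdot\alpha_y$, and for surjectivity one observes that the isomorphism property for $\alpha_y^{\ord(y)}$ at two adjacent gradings $j$ and $j+1$ forces $\alpha_y^{\ord(y)-1}$ to also be an isomorphism in range, so any target element can be lifted.

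I expect the main obstacle to be the peeling-off step in the middle paragraph: while morally clear because the removed elements have $z$-degree zero, one must verify in each inductive step that the precise vanishing hypothesis needed to invoke \cite[Lemma 5.0.2]{landesmanL:homological-stability-for-hurwitz} really does hold, and that the final constants depend only on $|z|$ and $\ord_c(z)$, independently of the auxiliary choice of $V_0$.
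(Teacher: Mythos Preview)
Your three-step outline matches the paper's proof exactly: apply \autoref{lemma:full-quotient-bounded} with $V=\{y\}\cup V_0$, peel off the $V_0$ quotients, then upgrade from $\alpha_y^{\ord(y)}$ to $\alpha_y$. The differences are in how steps two and three are executed.

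For the peeling-off step, the paper's mechanism is simpler than what you describe, and in particular does not use \autoref{lemma:quotient-localization-vanishes}. The paper introduces a \emph{bigrading} on $CA_{c,S}$: the first grading is the $z$-grading, the second is the total count of points in all other $S$-components. Each $\alpha_{y'}^{\ord(y')}$ with $y'\in V_0$ has bidegree $(0,\ord(y'))$ with positive second coordinate, and the second grading is bounded below. In this setting \cite[Lemma 5.0.2]{landesmanL:homological-stability-for-hurwitz} removes such a quotient with no change to the first-grading bound. So your anticipated obstacle (verifying a vanishing-of-localization hypothesis at each stage) simply does not arise; the bigrading does all the work. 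Your proposed route via the strategy of \autoref{lemma:full-quotient-bounded} would in fact run into trouble, since once you remove an element of $V_0$ the remaining set no longer meets every $S$-component, and the hypotheses needed to invoke \autoref{lemma:quotient-localization-vanishes} are no longer available.

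For the final upgrade, your diagram chase on homotopy groups is a valid alternative. The paper instead argues as follows: from $CA_{c,S}/\alpha_y^{\ord(y)}$ bounded one gets $(CA_{c,S}/\alpha_y^{\ord(y)})/\alpha_y$ bounded; this is isomorphic as a $\mathbb Z$-module spectrum to $(CA_{c,S}/\alpha_y)/\alpha_y^{\ord(y)}$; and by \cite[Lemma 3.5.2]{landesmanL:the-stable-homology-of-non-splitting} the element $\alpha_y^{\ord(y)}$ acts by zero on $CA_{c,S}/\alpha_y$ (assuming $\ord(y)\ge 2$), so $CA_{c,S}/\alpha_y$ is a retract of this double quotient and inherits its bound. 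Both routes give constants depending only on $|z|$ and $\ord_c(z)$.
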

\begin{proof}
	By \autoref{lemma:full-quotient-bounded},
	there is a subset $V \subset c$ so that $y$ is the only element of $V$
	lying in the $S$-component $z$ and
$CA_{c,S}/(\alpha^{\ord(x)}_x, x \in V)$ is 
$f_{\mu(|z|,\ord_c(z)),b(|z|,\ord_c(z))}$ bounded.
Moreover, we will assume 
$\mu(|z|,\ord_c(z)) \geq 1$ (and in fact this is satisfied by the specific
function constructed in \autoref{lemma:full-quotient-bounded}).
	Define a bigrading on $CA_{c,S}$ so that the first grading is induced by
	the component of $z$ and the second grading is induced by all other
	components of $c$.
	Repeatedly applying \cite[Lemma
	5.0.2]{landesmanL:homological-stability-for-hurwitz} to each element of
	$V-y$ for this bigrading,
	we find $CA_{c,S}/\alpha_y^{\ord(y)}$ is also
$f_{\mu(|z|,\ord_c(z)),b(|z|,\ord_c(z))}$ bounded. 
If $\ord(y) = 1$, we are done, so we may assume $\ord(y) \geq 2$.
The above implies that $(CA_{c,S}/\alpha_y^{\ord(y)})/\alpha_y$ is $\max(f_{\mu(|z|,\ord_c(z)),b(|z|,\ord_c(z))},f_{\mu(|z|,\ord_c(z)),b(|z|,\ord_c(z))-\mu((|z|,\ord_c(z))) + 1})$ bounded. 
Since we assumed $\mu \geq 1$, this maximum is equal to
$f_{\mu(|z|,\ord_c(z)),b(|z|,\ord_c(z))}$.
Note next that we have an equivalence of $\mathbb Z$ modules
$(CA_{c,S}/\alpha_y^{\ord(y)})/\alpha_y \simeq
(CA_{c,S}/\alpha_y)/\alpha_y^{\ord(y)}$, though this is not necessarily an
equivalence of $A_c$ modules.
Since we are assuming $\ord(y) \geq 2$, it follows from
\cite[Lemma 3.5.2]{landesmanL:the-stable-homology-of-non-splitting} that
$\alpha_y^{\ord(y)}$ acts by $0$ on 
$CA_{c,S}/\alpha_y$, hence also by $0$ on 
$(CA_{c,S}/\alpha_y)/\alpha_y^{\ord(y)} \simeq
(CA_{c,S}/\alpha_y^{\ord(y)})/\alpha_y,$ which means this has
$CA_{c,S}/\alpha_y$ as a retract.
Therefore, 
$CA_{c,S}/\alpha_y$
is also $f_{\mu(|z|,\ord_c(z)),b(|z|,\ord_c(z))}$ bounded, as desired.
\end{proof}

\section{Chain homotopies}
\label{section:chain-homotopies}

Having shown the homology of Hurwitz modules stabilize, we next wish to
compute their stable homology. That is, we wish to prove
\autoref{theorem:one-large-stable-homology}.
The general approach will be somewhat similar in nature to showing the homology
stabilizes. However, in showing the homology stabilizes, we needed to show a
certain complex was integrally nullhomotopic, and so we could realize the
nullhomotopy of chain complexes as coming from a nullhomotopy of spaces. 
However, when we compute the stable homology, we will invert the size of the
structure group, so the result will not be integral, and it seems unlikely it
will be induced by a nullhomotopy of spaces. Instead, we will construct a
nullhomotopy of chain complexes in this section, which we use to compute the
stable homology in the next section.
After defining the relevant chain complexes in
\autoref{subsection:defining-chain-complexes},
the main results of this section are \autoref{proposition:ring-chain-homotopy},
which computes the relevant chain homotopy for Hurwitz spaces, and
\autoref{proposition:module-chain-homotopy}, which computes the relevant chain
homotopy for Hurwitz modules.

\subsection{Defining the chain complexes}
\label{subsection:defining-chain-complexes}

Fix a rack $c$, a bijective Hurwitz module $S$ over $c$ and an $S$-component $c'
\subset c$.
Let $k$ be a ring.
We will define two related chain complexes.
The first, defined in \autoref{notation:space-chain-complex} gives a chain
complex whose homology agrees with that of a certain bar construction related to 
Hurwitz space and the second one introduced in \autoref{notation:module-chain-complex}
computes the homology of a certain bar construction related to Hurwitz 
modules. We prove this relation in \autoref{lemma:chains-identification}.
We now introduce some notation for various generalizations of the
$\triangleright$ action.

\begin{notation}
	\label{notation:iterated-action}
	If $w = w_1 \cdots w_k \in \pi_0 \Hur^c$ and $z \in c$, we use the notation $w
\triangleright z := w_k \triangleright \left( w_{k-1} \triangleright \cdots
\triangleright (w_1 \triangleright z) \right)$
and
$w
\triangleright^{-1} z :=w_1 \triangleright^{-1} \left( w_2
	\triangleright^{-1} \cdots
\triangleright^{-1} (w_k \triangleright^{-1} z) \right)$.
We omit the verification that the above definition is independent of the choice of
representative $w = w_1 \cdots w_k$ for $w$.
\end{notation}

We next introduce notation which extends linearly the $\triangleright$ action from an
action of $c$ on itself to an action of $k\{c\}$ on itself.
\begin{notation}
	\label{notation:}
Fix a ring $k$.
We will extend the action $\triangleright$ linearly to define an action of
$k\{c\}$ on $k\{c\}$. This means that if $x = \sum_i \alpha_i x_i$ and $y =
\sum_j \beta_j y_j$ for $x_i, y_i \in c$ and $\alpha_i, \beta_j \in k$, then $x \triangleright y := \sum_{i,j}
\alpha_i \beta_j x_i \triangleright y_j$.
Similarly,
$x \triangleright^{-1} y := \sum_{i,j}
\alpha_i \beta_j x_i \triangleright^{-1} y_j$.
Generalizing \autoref{notation:iterated-action}, for $v, v_1, \ldots, v_j \in
k\{c\}$,
we use $(v_1  \cdots v_j)
\triangleright v := v_j \triangleright \left( v_{j-1} \triangleright \cdots
\triangleright (v_1 \triangleright v) \right)$
and
$(v_1 \cdots v_j)
\triangleright^{-1} v :=v_1 \triangleright^{-1} \left( v_2
	\triangleright^{-1} \cdots
\triangleright^{-1} (v_k \triangleright^{-1} v) \right)$.
\end{notation}

With the above notation in place, we next define a chain complex that computes
the homology of bar constructions related to Hurwitz spaces.
\begin{notation}
	\label{notation:space-chain-complex}
	Suppose $c$ is a rack, $M_+$ is a discrete right $\Hur^c_+$ module and
	$P_+$ a
discrete left $\Hur^c_+$ module, so that the actions of $\Hur^c_+$ on $M_+$ and
$P_+$
factor through $\pi_0(\Hur^c)_+$.
Define the free $k$ module $V^{c,M,P;k}_n := k\{M\} \otimes k\{c^n\} \otimes
k\{P\}$.
We next define differentials to make a chain complex $V^{c,M,P;k}$ whose $n$th
graded part is $V^{c,M,P;k}_n$.
We can represent a basis element of $V^{c,M,P;k}_n$ as a tuple $(m,x_1, \ldots,
x_n,p)$ with $m \in M, p \in P, x_i \in c$.
For $1 \leq j \leq n$, using notation from \autoref{notation:iterated-action},
define
\begin{align*}
	\delta^l_{n,j}(m, x_1, \ldots, x_n,p) &:= \left( mx_j, x_j \triangleright
	x_1, \ldots, x_j \triangleright x_{j-1}, x_{j+1}, \ldots, x_n,p\right),
	\\
	\delta^r_{n,j}(m, x_1, \ldots, x_n,p) &:= \left( m,x_1, \ldots, x_{j-1},
	x_{j+1}, \ldots, x_n, ((x_n \cdots x_{j+1}) \triangleright x_j)p\right).
\end{align*}
Then, define the differential $\delta_n: V^{c,M,P;k}_n \to V^{c,M,P;k}_{n-1}$ by
\begin{align*}
	\delta_n \left( m, x_1, \ldots, x_n,p \right) := \sum_{j=1}^n (-1)^{j-1}
	\delta^l_{n,j}\left( m, x_1, \ldots, x_n, p \right) + \sum_{j=1}^n
	(-1)^j \delta^r_{n,j}(m, x_1, \ldots, x_n,p).
\end{align*}
\end{notation}

\begin{remark}
	\label{remark:}
	The complex in \autoref{notation:space-chain-complex} is nearly the same
	as the two-sided $\mathcal K$-complex we introduced in 
	\cite[Definition 3.2.1]{landesmanL:an-alternate-computation}, except
	that the complex there is bigraded, whereas here we only keep track of a
	single grading,
	and the sign convention for the differentials there is slightly
	different than the one here.
\end{remark}

Finally, we define a chain complex that computes the homology of a bar
construction related Hurwitz 
modules.

\begin{notation}
	\label{notation:module-chain-complex}
	Let $k$ be a ring, let $c$ be a rack, let $S$ be a
	bijective Hurwitz module over $c$ and let $c' \subset c$ be an $S$-component
	of $c$.
	Let $M$ be a set so that $M_+$ is a discrete right pointed $\Hur^{c}_+$ module.
	Define the free $k$-module
	$W^{c,S,M;k}_n := M\otimes k\{T_n\},$
where $k\{T_n\}$ denotes the free module over $k$ generated by the elements of $T_n$.
The homological degree refers to the value of $n$ while the  grading
of a term $(x_1, \ldots, x_n, s) \in T_n$ is the number of elements among $x_1,
\ldots, x_n$ lying in $z$, and corresponds to the grading on $\on{hur}^{c,S}$
obtained from
\autoref{notation:hur}.
We next define the differentials to make a chain complex which we call
$W^{c,S,M;k}$,
whose term in the $n$th homological degree is
$W^{c,S,M;k}_n$.
A general element
of
$W^{c,S,M;k}_n$
can be represented as a linear combination of elements of the
form
\begin{align}
	\label{equation:y-data}
	\left (m, y_1^1, \ldots, y_{i_1}^1, \ldots, y_1^{2g+f}, \ldots,
	y_{i_{2g+f}}^{2g+f} ,t\right)
\end{align}
where
$n = i_1 + \cdots + i_{2g+f},m \in k\{M\}, t \in
k\{T_0\},$ and $y_i^j \in k\{c\}$.
At this point, we suggest glancing at \autoref{figure:chain-connection} for a
visualization of the geometric meaning of these indices.
In order to define the differentials, it will be convenient to give
additional names to the elements as above.
Namely, we write an element as above in the form
\begin{align}
	\label{equation:x-data}
	\left( m, x_1, \ldots, x_n, t \right)
\end{align}
where $n = i_1 + \cdots + i_{2g+f}$ and $x_j$ is equal to the $j$th element to
the right of $m$, i.e. if $j = i_1 + \cdots + i_{q-1} + u$ then $x_j = y_u^q$.
In the above setting, if $x_j = y_u^q$, we say $q_{( m, x_1, \ldots, x_n, t)}(j) := q$
$u_{( m, x_1, \ldots, x_n, t)}(j) := u$
and define 
\begin{align*}
	b_{( m, x_1, \ldots, x_n, t)}(j) :=
\begin{cases}
	i_1 + \cdots +
	i_{q} & \text{ if } q \leq f \text{ or }q \equiv f \bmod 2  \\
	i_1 + \cdots + i_{q+1} & \text{ if } q >f \text{ and }q \equiv f+1
	\bmod 2. \\
\end{cases}
\end{align*}
We note that the data in \eqref{equation:y-data} is equivalent to the data in
\eqref{equation:x-data} together with the function
$q_{( m, x_1, \ldots, x_n, t)}$, which then uniquely determines the functions
$u_{( m, x_1, \ldots, x_n, t)}$ and $b_{( m, x_1, \ldots, x_n, t)}$.
Then, the differential is given as follows.
Using notation from \autoref{notation:iterated-action},
define
\begin{align}
	\label{equation:chi-right-differential}
	\chi_j^{(m, x_1, \ldots, x_n, t)}
	:= (x_{j+1} \cdots x_n) \triangleright x_j.
\end{align}
Also, use notation $\overline{\xi}_1, \ldots, \overline{\xi}_{2g+f} \in
\pi_1(\Sigma^1_{g,f})$ from \autoref{notation:allowable-no-points}, (recalling
$\overline{\xi}_i = \xi_{2g+f+1-i}$,)
and define
\begin{align}
	\label{equation:zeta-right-differential}
	\zeta_j^{(m, x_1, \ldots, x_n, t)} :=  (x_{b_{( m, x_1, \ldots, x_n, t)}(j)+1} \cdots x_{j-1}
	x_{j+1} \cdots x_n) \triangleright^{-1}
	\left( \sigma_{t}^{\overline{\xi}_{q_{( m, x_1, \ldots, x_n, t)}(j)}} \left( \chi_j^{(m, x_1, \ldots, x_n, t)}
	\right) \right).
\end{align}
For $1 \leq j \leq n$
let
\begin{align*}
	&d^l_{n,j} \left( m, x_1, \ldots, x_n, t \right) :=
(m x_j ,x_j\triangleright x_1, \ldots, x_j \triangleright x_{j-1}
, x_{j+1}, \ldots, x_n,t) \\
&d^r_{n,j} \left( m, x_1, \ldots, x_n, t \right) := \left( m \zeta_j^{(m, x_1, \ldots, x_n, t)}, \zeta_j^{(m, x_1, \ldots, x_n, t)} \triangleright x_1, \ldots, \zeta_j^{(m, x_1, \ldots, x_n, t)} \triangleright
	x_{b_{( m, x_1, \ldots, x_n, t)}(j)},
\right.
\\
	& \qquad \qquad \qquad \qquad \qquad \qquad
	\left. 
x_{b_{( m, x_1, \ldots, x_n,
t)}(j)+1}, 
\ldots, x_{j-1}, x_{j+1}, \ldots, x_n,
\tau_{\chi_j^{(m, x_1, \ldots, x_n, t)}}^{\overline{\xi}_{q_{( m, x_1, \ldots, x_n, t)}(j)}}(t)
\right).
\end{align*}
Define the differential by
\begin{equation}
\begin{aligned}
	\label{equation:differential}
	d_n 
	\left( m, x_1, \ldots, x_n, t \right) :=
	\sum_{j=1}^n (-1)^{j-1} d^l_{n,j}\left( m, x_1, \ldots, x_n, t \right)
	+
	\sum_{j = 1}^n
	(-1)^{j} d^r_{n,j}\left( m, x_1, \ldots, x_n, t \right).
\end{aligned}
\end{equation}

\end{notation}
\begin{remark}
	\label{remark:}
	The main cases of
the construction in \autoref{notation:module-chain-complex}
to keep in mind are the cases
	$M =\pi_0\Hur^{c}[\alpha_{c'}^{-1}]$ and
	$M =\pi_0\Hur^{c/c'}[\alpha_{c'/c'}^{-1}]$, for $c'$ a subrack of $c$.
\end{remark}

We now show that the above chain complexes compute the homology of certain bar
constructions involving Hurwitz spaces and Hurwitz modules.

\begin{figure}
\includegraphics[scale=.4]{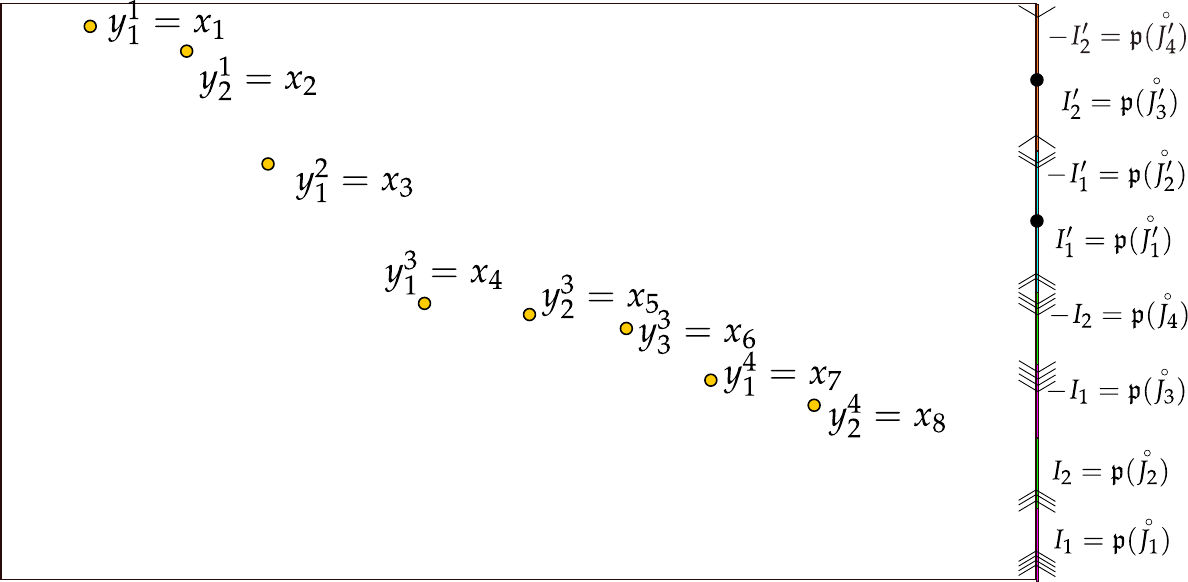}
\caption{
	This is a representation of the arrangement of the points $y_v^r$ in the
	configuration, giving a correspondence between cells of 
$\overline{Q}^*_\epsilon(M_+, \on{hur}^{c,S}_+)$
and cells of
$W^{c,S,M;k}$.
In the picture, $g = 1, f = 2, i_1 = 2, i_2 = 1, i_3 = 3, i_4 = 2$.
}
\label{figure:chain-connection}
\end{figure}

\begin{lemma}
	\label{lemma:chains-identification}
	Let ${C}_*$ denote the chains functor and $\widetilde{C}_*$ to denote
	the reduced chains functor.
	We use notation from \autoref{notation:space-chain-complex} and
	\autoref{notation:module-chain-complex}.
	There is an equivalence 
	$W^{c,S,M;k} \simeq \widetilde{C}_* \left( M_+
	\otimes_{\Hur^{c}_+}\Hur^{c,S}_+;k\right)$
	sending the grading defined on $W^{c,S,M;k}$
	to the grading on the right hand side induced by the trivial grading on
	$M$ and the gradings on
	$\Hur^{c} \simeq \on{hurbig}^c$ and on $\Hur^{c,S} \simeq \on{hur}^{c,S}$
	defined in 
	\autoref{notation:hur}.

	Additionally, there is an equivalence $V^{c,M,N;k} \simeq
	\widetilde{C}_* \left(M_+ \otimes_{\Hur^{c}_+}N_+;k\right)$
	which identifies the grading on $V^{c,M,N;k}$ with the grading
		induced by the grading on $\Hur^c\simeq \on{hurbig}^c$ defined in
		\autoref{notation:hur} and the trivial gradings on $M$ and $N$.
	\end{lemma}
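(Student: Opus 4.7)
The plan is to use \autoref{proposition:pointed-scanning} to realize the bar construction $M_+ \otimes_{\Hur^c_+} \Hur^{c,S}_+$ as the ind-weak homotopy type of the explicit topological model $\overline{Q}^*_\epsilon[M_+, \on{hur}^{c,S}_+]$, and then equip this model with a CW structure whose cellular chain complex is manifestly $W^{c,S,M;k}$. The analogous identification for $V^{c,M,N;k}$ will be a simpler special case, since there is no Hurwitz module in the picture and one only needs the two-sided bar complex for the discrete monoid $\pi_0\Hur^c$ acting on $M$ and $N$.

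For the CW structure on $\overline{Q}^*_\epsilon[M_+, \on{hur}^{c,S}_+]$, I would use the decomposition of $\mathcal M_{g,f,1}^\epsilon$ into $2g+f$ rectangles from \autoref{remark:rectangles-description} together with the deformation retraction to evenly spaced vertical positions used in the proof of \autoref{proposition:module-cells-bounded}. After these retractions, a configuration is specified by the data of: a basepoint $m \in M$, a partition $n = i_1 + \cdots + i_{2g+f}$ recording how many points lie in each rectangle, labels $y_v^r \in c$ for each point together with $t \in T_0$, and the horizontal positions of the $n$ points in their respective rectangles. This gives an open $n$-cell whose interior is parametrized by the $x$-coordinates, matching exactly the basis element $(m, y_1^1, \ldots, y_{i_{2g+f}}^{2g+f}, t)$ of $W^{c,S,M;k}_n$ indexed as in \eqref{equation:y-data}.

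The boundary map is then computed geometrically by analyzing the two kinds of degenerations a cell can undergo when one of the horizontal coordinates reaches the endpoint of its interval. When the $j$-th point's $x$-coordinate reaches $0$, it hits the left boundary of $\mathcal M^\epsilon_{g,f,1}$; under the equivalence relation from \autoref{notation:q} it is absorbed into $m$. To apply this absorption rule one first commutes the point past $x_{j-1}, \ldots, x_1$ using the braid group action described in \autoref{definition:rack}, which modifies each $x_i$ ($i < j$) to $x_j \triangleright x_i$, producing precisely $d^l_{n,j}$. When instead the $j$-th point's $x$-coordinate reaches $1$, it crosses one of the glued edges $\overline{\xi}_{q}$ from \autoref{notation:allowable-no-points} (where $q = q(j)$), and reappears at the identified position on the paired edge with a label and basepoint altered by the monodromy around $\overline{\xi}_q$. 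Tracking this monodromy using the bijective Hurwitz module operations $\sigma^{\overline{\xi}_q}_t$ and $\tau^{\overline{\xi}_q}_x$, together with commuting the point past $x_{b(j)+1}, \ldots, x_{j-1}$ and then back past $x_{j+1}, \ldots, x_n$ on the other side of the glued edge, yields exactly the formulas \eqref{equation:chi-right-differential} and \eqref{equation:zeta-right-differential} for $\chi_j$ and $\zeta_j$, and thus the term $d^r_{n,j}$. Signs come from the standard orientation conventions on cube boundaries.

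The $V^{c,M,N;k}$ case follows from the same strategy applied to the bar construction $M_+ \otimes_{\Hur^c_+} N_+$; here both factors are discrete, so no rectangles or glued edges appear and $d^r_{n,j}$ reduces to moving $x_j$ to the right past $x_{j+1},\ldots, x_n$ and absorbing it into the $N$ factor, producing the stated formula. The main obstacle I anticipate is the bookkeeping in verifying the $d^r_{n,j}$ formula: one must carefully match the sequence of braid-group moves used to push the $j$-th point to the relevant edge, cross the edge via the Hurwitz module structure, and then push the resulting point back through the unchanged points, against the abstract combinatorial formulas \eqref{equation:chi-right-differential}–\eqref{equation:differential}. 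The compatibility relations among $\sigma, \tau,$ and $\triangleright$ worked out in \autoref{lemma:three-conditions} should make these identifications go through, and will in particular explain why the definition is independent of the ambiguity in ordering points within each rectangle.
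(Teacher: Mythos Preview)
Your proposal is correct and follows essentially the same approach as the paper: invoke \autoref{proposition:pointed-scanning}, use the rectangle decomposition of $\mathcal M_{g,f,1}^\epsilon$ to produce a cubical CW structure on $\overline{Q}^*_\epsilon[M_+,\on{hur}^{c,S}_+]$ whose cells are indexed exactly by the basis of $W^{c,S,M;k}_n$, and identify the two faces of each cube coordinate with $d^l_{n,j}$ (point absorbed at the left boundary) and $d^r_{n,j}$ (point moved right across a glued edge and back to the left boundary), with signs coming from cubical orientation conventions. One small note: the paper does not invoke \autoref{lemma:three-conditions} here---the identification of the right-hand face with $d^r_{n,j}$ is a direct unwinding of the surface braid group action rather than a consequence of those relations---and for $V^{c,M,N;k}$ the paper cites the analogous scanning model \cite[Theorem~A.4.9]{landesmanL:the-stable-homology-of-non-splitting} rather than redoing the argument, but your plan amounts to the same thing.
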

\begin{proof}
	We first explain the equivalence
$W^{c,S,M;k} \simeq \widetilde{C}_* \left( M_+
	\otimes_{\Hur^{c}_+}\Hur^{c,S}_+;k\right)$.
%	Equivalently, by adding a basepoint, we will show
%	$W^{c,S,M;k} \simeq \widetilde{C}_* \left( M_+
%	\otimes_{\Hur^{c}_+}\Hur^{c,S}_+;k\right)$.
	The key is to use the description of 
$M_+ \otimes_{\Hur^{c}_+}\Hur^{c,S}_+$
as a colimit over $\epsilon$ of the homotopy type of $\overline{Q}^*_\epsilon(M_+,
\on{hur}^{c,S}_+)$, as shown in 
\autoref{proposition:pointed-scanning}.
However, we will see that the $i$th homology of these spaces are independent of
$\epsilon$ once $\epsilon$ is sufficiently small, so that we can just work with a
fixed, sufficiently small, $\epsilon$ to compute the $i$th homology.
We describe a bijection between the cells of
$W^{c,S,M;k}$
and the components of
${Q}^*_\epsilon(M_+, \on{hur}^{c,S})$ where the left label isn't $+$.
Then, it only remains to identify the differentials in 
$W^{c,S,M;k}$
with the attaching maps for the components of 
${Q}^*_\epsilon(M_+, \on{hur}^{c,S}_+)$
by realizing
$\overline{Q}^*_\epsilon(M_+, \on{hur}^{c,S}_+)$
as a quotient of
${Q}^*_\epsilon(M_+, \on{hur}^{c,S}_+)$.
To obtain the bijection,
consider a component of
${Q}^*_\epsilon(M_+, \on{hur}^{c,S}_+)$. 
Staying within the component, arrange the points in the corresponding configuration so that they are of the form $y_1^r, \ldots, y_{i_r}^r$, and have
have preimage in $\mathbf{R}-W$ whose vertical coordinate lies in either $J'_i$
for $i$ even or
$J_j$ where $j$ is $3$ or $4$ modulo $4$; i.e. for each pair of glued edges
among the $J'_i$ and $J_j$ the corresponding $y_v^r$ lie to the left of the
higher of the two, and choose the path $\gamma$ to be the path that linearly moves the second coordinate towards $\frac 1 2$ for all points. This gives a well defined label to each point, which by abuse of notation we also denote $y_{i_j}$. When it is convenient, we also rename these labels as $x_1, \ldots, x_n$, so that $x_i$ is positioned below and to the
right of $x_{i-1}$ in $\mathcal M_{g,f,1}$.
See \autoref{figure:chain-connection} for a figure depicting a typical situation
as above.

The gluing maps come from moving each of the $n$ points $x_1, \ldots, x_n$
either to the left until they hit the boundary or to the right (when we say we
	move them right, we mean that we move $x_i$ until it hits the right side of $\mathcal
	M_{g,f,t}$, in which case it is identified with a lower vertical
	coordinate, and then we move it left at that lower coordinate
until it hits the boundary).
Said briefly,
we claim that
if one keeps track of the relabelings coming from the surface braid group action
described in \autoref{notation:hur}, the gluing for the points moving left come
from the first sum in \eqref{equation:differential}
and the gluing maps from the points moving right come from the second sum in 
\eqref{equation:differential}.

We now explain the above claims.
First, consider the relabelings obtained from moving the point $x_i$ to the
left. We claim the result is $d_{n,i}^l(m, x_1, \ldots, x_n,t)$.
To see this, the corresponding element of the braid group associated to moving
$x_i$ below $x_{i-1},x_{i-2}, \ldots, x_1$ is $\sigma_1\cdots
\sigma_{i-1}$. 
Applying this transformation sends the point labeled $x_i$ to the left unchanged
until it hits the left boundary which becomes $m \cdot x_i$, and each of the
points labeled $x_j$ for $j < i$ become $x_j \triangleright x_i$, which is
precisely 
$d_{n,i}^l(m, x_1, \ldots, x_n,t)$.
%We see 
%\begin{align*}
%	\sigma_1\cdots \sigma_{i-1}(m, x_1, \ldots, x_{i-1},x_i,
%\ldots, x_n,t) &=
%\sigma_1\cdots \sigma_{i-2}(m, x_1, \ldots, x_{i-2}, x_i,x_i \triangleright
%x_{i-1}, x_n,t) \\
%&= \cdots = (m,x_i, x_i \triangleright x_1, \ldots, x_i
%\triangleright x_{i-1}, x_{i+1}, \ldots, x_n,t).
%\end{align*}
%After applying this transformation, we move $x_i$, which is now on the left, into the
%left boundary resulting in
%$(mx_i, x_i \triangleright x_1, \ldots, x_i \triangleright x_{i-1}, x_{i+1},
%\ldots, x_n,t)
% = d_{n,i}^l(m, x_1, \ldots, x_n,t)$.
Similarly, one can see that the result of moving $x_i$ to the right is precisely
$d_{n,i}^r(m, x_1, \ldots, x_n,t)$.
%Indeed, the result of moving $x_i$ all the way to the right can be viewed as
%first moving it to the right side of 
%$\mathcal M_{g,f,1}$
%then moving it to a lower vertical coordinate, and then moving it to the left
%side of $\mathcal M_{g,f,1}$ until it hits the boundary.
%Moving $x_i$ to the right corresponds to applying the braid group elements
%$\sigma_{n-1}^{-1} \cdots \sigma_i^{-1}$ 
%and the point labeled $x_i$ changes to 
%the label
%$\chi_i^{(m,x_1, \ldots,x_n,t)}$, while no other points change labels.
%Then, when one moves it to the lower vertical coordinate, only the point
%originally labeled $x_i$ changes to
%$\sigma_{t}^{\overline{\xi}_{q_{( m, x_1,
%\ldots, x_n, t)}(i)}} \left( \chi_i^{(m, x_1, \ldots, x_n, t)}
%\right)$ and the value of $t$ changes to $\tau_{\chi_i^{(m, x_1, \ldots, x_n, t)}}^{\overline{\xi}_{q_{( m, x_1, \ldots, x_n,
%t)}(i)}}(t)$.
%Finally, upon moving the point originally labeled $x_i$ to the left so that it passes below
%the points originally labeled
%$x_n, \ldots, x_{b_{(m,x_1, \ldots, x_n,t)}(i)+1}$ and then above the points
%originally labeled $x_{b_{(m,x_1, \ldots, x_n,t)}(i)}, \ldots, x_1$ and collides with
%the left boundary, one obtains
%$d_{n,i}^r(m, x_1, \ldots, x_n,t)$.

To conclude 
the proof that
$W^{c,S,M;k} \simeq {C}_* \left( M_+
\otimes_{\Hur^{c}}\Hur^{c,S};k\right)$, 
it remains to explain how we chose orientations of the
cells to explain the signs appearing in the boundary maps in
\eqref{equation:differential}.
We can view our complex as a cubical complex with the cell parameterizing
locations of $n$ points as being an $n$-dimensional cube, and the boundaries of
the cube are $n-1$ dimensional cubes where one point moves to the boundary
on each codimension $1$ face. From this perspective, we have described a cubical
complex, and so the signs on the differentials are the usual convention for
cubical complexes, as described, for example, in \cite[Proposition
2.36]{kaczynskiMM:computational-homology}.

The proof of the second claimed equivalence $V^{c,M,N;k} \simeq
\widetilde{C}_* \left(M_+ \otimes_{\Hur^{c}_+}N_+;k\right)$
is obtained similarly, where one uses
the description 
$M_+ \otimes_{\Hur_+^c} N_+ \simeq \overline{Q}^*_\epsilon[M_+,\Hur_+^c, N_+]$ from
\cite[Theorem A.4.9]{landesmanL:the-stable-homology-of-non-splitting} (with 
$\overline{Q}^*_\epsilon[M_+,\Hur_+^c, N_+]$ defined in the statement of 
\cite[Theorem A.4.9]{landesmanL:the-stable-homology-of-non-splitting})
in place of
\autoref{proposition:pointed-scanning}.
The remainder of the proof is similar to the above argument and we omit
further details.
\end{proof}

\subsection{Chain homotopies for Hurwitz space bar constructions}
\label{subsection:chain-homotopy-space}

In this subsection, we verify a certain equivalence of chain complexes related
to bar constructions of Hurwitz spaces in
\autoref{proposition:ring-chain-homotopy}.
%In order to prove this equivalence, it will be convenient to form a specific
%type of basis for $k\{c\}$, which we now introduce.

The following notation will be crucially used in the ensuing nullhomotopies.
\begin{notation}
	\label{notation:group-elements}
	Let $c$ be a rack and $c' \subset c$ a normal subrack. Recall we use
	$G^{c'}_{c}$ to denote the relative structure group as in
\autoref{example:normal-relative-subrack}.
For each $g \in G^{c'}_{c}$ choose an expression $g = w^g_1 \cdots
	w^g_{i_g}$ with each $w^g_i \in c'$.
	Let $E_{c,c'}$ denote the set of pairs of the form 
	$\{ (x,g) : x \in
	{G}^{c'}_{c'}, g \in {G}^{c'}_{c} \}$.
	In particular, $E_{c,c'}$ has $|G^{c'}_{c'}| \cdot |G^{c'}_{c}|$ elements.
	Associated to each pair $(x,g) \in E_{c,c'}$ we define the operation $x
	\succ g := (x \triangleright w^g_1) \cdots (x \triangleright
	w^g_{i_g})$, which we view as a product of $i_g$ elements
	$\pi_0(\Hur^c_1)$.

	Let $G^{c'}_{\pi_0}$ denote the 
	kernel
	of the map $\pi_0 \Hur^{c'}[\alpha_{c'}^{-1}] \to
	\Hur^{c'/c'}[\alpha_{c'/c'}^{-1}]$.
	We can write any $g \in G^{c'}_{\pi_0}$ as a sequence of elements of the
	form ${y^g_1} (z^g_1)^{-1} \cdots {y^g_{i_g}}
({z^g_{i_g}})^{-1}$ for $y^g_i, z^g_i \in c'$.
	Let $E^{\pi_0}_{c'}$ denote the of tuples of the form
	$\{ (x; g) 
		 : x \in
	 G^{c'}_{c'}, g \in G^{c'}_{\pi_0} \}$.
	 In particular, $E^{\pi_0}_{c'}$ has $|G^{c'}_{\pi_0}| \cdot |G^{c'}_{c'}|$
	 many elements.
	 Associated to each pair $(x,g) \in E^{\pi_0}_{c'}$, we define the
	 operation 
	 $ x \succ g := (x \triangleright{y^g_1}) (x \triangleright
	 (z^g_1)^{-1}) \cdots
	 (x\triangleright {y^g_{i_g}})
	 (x \triangleright {z^g_{i_g}}^{-1})$.
\end{notation}

We next record a simple lemma in the structure theory of racks, which describes the fibers of
$\pi_0 \Hur^c[\alpha_{c'}^{-1}] \to \pi_0
\Hur^{c/c'}[\alpha_{c'/c'}^{-1}]$.

\begin{lemma}
	\label{lemma:same-image-relation}
	Suppose $c$ is a rack and ${c'} \subset c$ is a normal subrack. Suppose
	$u,v \in \pi_0 \Hur^c[\alpha_{c'}^{-1}]$ have the same image in $\pi_0
	\Hur^{c/{c'}}[(\alpha_{c'/c'})^{-1}]$.
	Then there is some $w \in \pi_0 \Hur^{c'}[\alpha_{c'}^{-1}]$ so that $uw= v$.
\end{lemma}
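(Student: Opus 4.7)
The strategy is to show that the equality $\overline{u} = \overline{v}$ in the localized quotient monoid lifts to an equation $v = uw$ in the source, with $w$ coming from $\pi_0 \Hur^{c'}[\alpha_{c'}^{-1}]$. The key structural input is normality of $c'$ in $c$, i.e.\ $N_c(c') = c$, which (together with \autoref{lemma:normalizer-property}) ensures that the rack action of $c$ preserves $c'$ setwise; this lets us push elements of $c'$ across arbitrary elements of $c$ while staying inside $c'$.

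First I would reduce to the case $u, v \in \pi_0 \Hur^c$. Every element of $\pi_0 \Hur^c[\alpha_{c'}^{-1}]$ can be rewritten as $\alpha^{-1} u_0$ with $u_0 \in \pi_0 \Hur^c$ and $\alpha$ a product of elements of $c'$: the rack relation $xy = (x \triangleright y) x$ combined with normality allows the inverse of any element of $c'$ to be moved past an element of $c$ at the cost of rack-conjugating it, which keeps it inside $c'$. After left-multiplying both $u$ and $v$ by a common product of $c'$ elements (which only modifies the desired $w$ by an element of $\pi_0 \Hur^{c'}[\alpha_{c'}^{-1}]$), we may assume $u, v \in \pi_0 \Hur^c$.

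Next, represent $u$ and $v$ as words $x_1 \cdots x_n$ and $y_1 \cdots y_m$ in $c$. From the presentation of $\pi_0 \Hur^{c/c'}[\alpha_{c'/c'}^{-1}]$ coming from \autoref{definition:quotient-rack} together with the localization, the equality $\overline{u} = \overline{v}$ is witnessed by a finite sequence of elementary moves: (i) rack relations in $c/c'$, which lift verbatim to rack relations in $c$ and therefore preserve the element in $\pi_0 \Hur^c[\alpha_{c'}^{-1}]$; (ii) replacement of a letter $\overline{a}$ by $\overline{a'}$ when $a' = z \triangleright a$ for some $z \in c'$, which lifts to the identity $a = z^{-1} a' z$ in $\pi_0 \Hur^c[\alpha_{c'}^{-1}]$ (inserting $z^{\pm 1}$ at two positions); and (iii) insertion or deletion of cancelling pairs $\overline{z}\,\overline{z}^{-1}$ coming from the localization, which lifts to $z z^{-1} = 1$ and is trivial. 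Applying these lifts one at a time along the rewriting chain produces an identity $v = \gamma_1 u \gamma_2$ in $\pi_0 \Hur^c[\alpha_{c'}^{-1}]$ with $\gamma_1, \gamma_2 \in \pi_0 \Hur^{c'}[\alpha_{c'}^{-1}]$.

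Finally, normality lets me convert the left multiplication into a right multiplication. Iterating the rack relation $xy = (x \triangleright y) x$ (and its inverse) along the word representing $u$ shows that for any $\gamma \in \pi_0 \Hur^{c'}[\alpha_{c'}^{-1}]$ there exists $\gamma' \in \pi_0 \Hur^{c'}[\alpha_{c'}^{-1}]$ with $\gamma u = u \gamma'$; this is precisely where normality of $c'$ is used, to ensure the various rack-conjugates stay inside $c'$. Applying this to $\gamma_1$ and taking $w := \gamma_1' \gamma_2$ gives $v = u w$ with $w \in \pi_0 \Hur^{c'}[\alpha_{c'}^{-1}]$, as required. The main obstacle will be making the lifting of the elementary moves rigorous in the localized setting -- in particular, verifying that every equality in $\pi_0 \Hur^{c/c'}[\alpha_{c'/c'}^{-1}]$ is actually witnessed by a finite chain of the three moves above, rather than only by a passage through the group completion. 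This amounts to checking a one-sided Ore-type property for the localization of the rack monoid at the invertible component $c'/c'$, which in turn follows from the fact that $c \triangleright c' \subseteq c'$ allows one to move factors of $c'$ past arbitrary elements of $c$ on either side.
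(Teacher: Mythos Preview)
Your outline is different from the paper's more direct argument. After the common reduction to $u,v \in \pi_0\Hur^c$, the paper does not attempt to lift a presentation of the localized quotient: it arranges (by further right-multiplication by elements of $c'$) that $u = u_1\cdots u_n$ and $v = v_1\cdots v_n$ with $\overline{u_i} = \overline{v_i}$ in $c/c'$ for each $i$, and then matches the first letter by an explicit trick. Since $\overline{u_1} = \overline{v_1}$, choose $x \in \pi_0\Hur^{c'}$ with $x\triangleright u_1 = v_1$, write $u = uxx^{-1}$, and push $x$ across $u$ to obtain
\[
u = (x\triangleright u_1)\cdots(x\triangleright u_n)\cdot w'
\]
with $w' \in \pi_0\Hur^{c'}[\alpha_{c'}^{-1}]$ by normality; the first letter is now $v_1$ and the remaining letters still match in $c/c'$, so induct on $n$. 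No Ore condition or presentation of the localized monoid is ever invoked.

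The obstacle you flag is the genuine gap in your route. The missing observation that closes it is that the image of $c'/c'$ is \emph{central} in $\pi_0\Hur^{c/c'}$: with the paper's convention $ab = b(b\triangleright a)$ one has $az = z(z\triangleright a)$, and for $z \in c'$ the relation $\overline{z\triangleright a} = \overline{a}$ gives $\overline{a}\,\overline{z} = \overline{z}\,\overline{a}$. Central localization then says $\overline u = \overline v$ in $\pi_0\Hur^{c/c'}[\alpha_{c'/c'}^{-1}]$ iff $\overline{u}\,p = \overline{v}\,p$ in $\pi_0\Hur^{c/c'}$ for some product $p$ of elements of $c'/c'$; after replacing $u,v$ by $u\tilde p,\, v\tilde p'$ you land precisely at the paper's letter-by-letter starting point, and either your lifting argument or the paper's induction finishes the job. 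Two smaller issues in your write-up: you use the convention $xy = (x\triangleright y)x$, which is opposite to the paper's, so your lift of move (ii) comes out reversed (in the paper's convention $a' = z\triangleright a$ gives $az = za'$, hence $a = z a' z^{-1}$); and your intermediate claim $v = \gamma_1 u \gamma_2$ with $\gamma_i$ already on the outside is premature --- the inserted $c'$-factors sit at interior positions, and you need a separate pass with normality to push them all to one side before doing the left-to-right conversion.
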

\begin{proof}
	After multiplying by a suitable power of elements in ${c'}$, we can assume $u,v \in
	\pi_0 \Hur^c$, with ${c'}$ not inverted, and we can write $u = u_1 \cdots u_n$ and $v = v_1 \cdots v_n$, with $u_i, v_i\in
	c$ so that $u_i$ has the same image as $v_i$ in $\Hur^{c/{c'}}$. By induction on
	$n$, it suffices to show we can find some $w \in \pi_0
	\Hur^{c'}[\alpha_{c'}^{-1}]$ so that $u w$ is
	equivalent under the braid group action to an element of the form $v'_1
	v'_2\cdots v'_n w'$ with $w' \in \pi_0 \Hur^{c'}[\alpha_{c'}^{-1}]$ and $v'_1 = v'$.
	By assumption, $u_1$ and $v_1$ have the same image in $c/{c'}$, which means
	that by definition there is some $x = x_1 \cdots x_j$ 
	(using notation from \autoref{notation:iterated-action})
	with $x_1, \ldots, x_j \in {c'}$ so
	that $x \triangleright u_1 = v_1$.
Then, $u = u x x^{-1} = x (x \triangleright u_1) \cdots (x \triangleright
u_n) x^{-1} = 
(x \triangleright u_1) \cdots (x \triangleright
u_n) (((x \triangleright u_1) \cdots (x \triangleright u_n))
\triangleright x)x^{-1}$, which indeed starts with $v_1 = x
\triangleright u_1$.
We can use this construction to produce our desired
element $v_1' \cdots v_n' w' \in
\Hur^c[\alpha_{c'}^{-1}]$ whose first
term is $v'_1 =v_1$, completing the proof.
\end{proof}

The next lemma is an important step in proving the upcoming
\autoref{proposition:ring-chain-homotopy}. It shows that if the module on the
right side of the bar construction is averaged we can also arrange that the
module on the left side of the bar construction is averaged.
\begin{figure}
	\includegraphics[scale=.4]{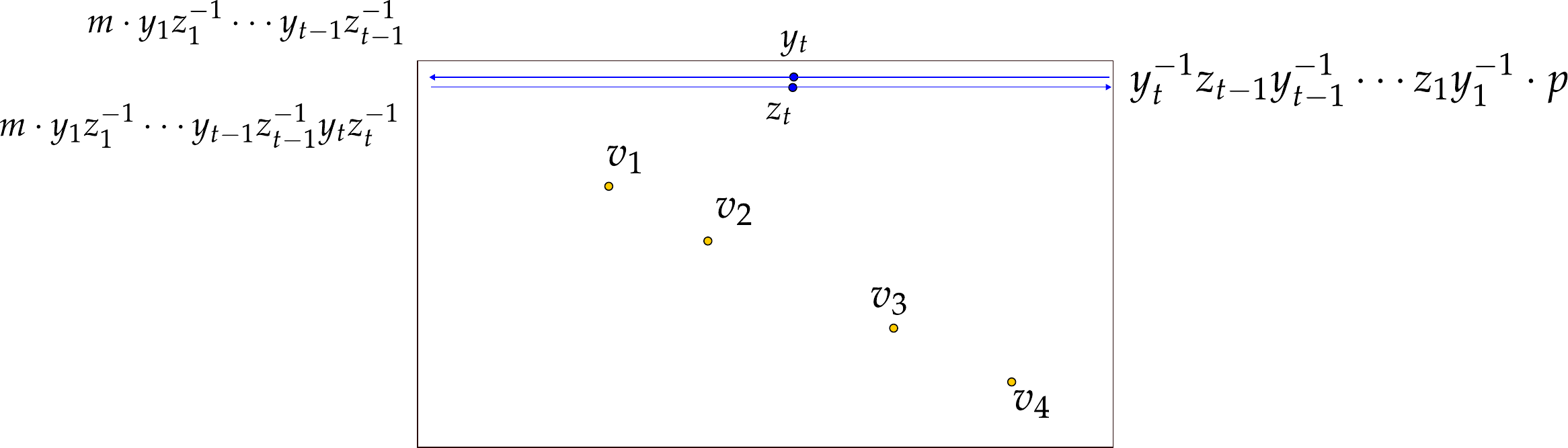}
\caption{
	This is a visualization of part of the nullhomotopy $K$ in the proof of
\autoref{lemma:ring-reduction}.
We pull $y_t$ to the left across the top and then move $z_t$ to the right back
across the top. After pulling $y_t$ out, the right label becomes $y_t^{-1}
z_{t-1} y_{t-1}^{-1} \cdots z_1 y_1^{-1} p$ and the left label starts as $m
\cdot y_1 z_1^{-1} \cdots y_{t-1} z_{t-1}^{-1}$. When the $y_1$ hits the left
label, and $z_1$ comes out, the left label becomes
$m \cdot y_1 z_1^{-1} \cdots y_{t} z_{t}^{-1}$ and then $z_1$ traverses back to
the right.
We then perform this up through $t = \ell$ and average over $E^{\pi_0}_{c'}$.
}
\label{figure:left-ring-nullhomotopy}
\end{figure}
\begin{remark}
	\label{remark:}
In \autoref{lemma:same-image-relation}, we pursue an algebraic approach to verify the 
	nullhomotopy depicted in
	\autoref{figure:left-ring-nullhomotopy}.
	because it seemed technically trickier to make the idea from
	\autoref{figure:left-ring-nullhomotopy}
	rigorous. 
	Nevertheless, this picture served as the inspiration for our algebraic
	nullhomotopy.
	A similar comment applies to 
	\autoref{figure:ring-vrack-nullhomotopy},
	\autoref{figure:module-rectangle-action}, and
	\autoref{figure:module-vrack-nullhomotopy}.
\end{remark}
\begin{lemma}
	\label{lemma:ring-reduction}
	Let $c$ be a finite rack and ${c'} \subset c$ a union of components of $c$.
	The natural maps induce an equivalence
	\begin{align*}
	&H_0(A_{c})[\alpha_{{c'}}^{-1}] \otimes_{A_c[\alpha_{{c'}^{-1}}]}
H_0(A_{c/{c'}}[\alpha_{c'/c'}^{-1}])[|G^{c'}_{c'}|^{-1}] \\
	&\simeq
	\left( H_0(A_{c/{c'}})[\alpha_{c'}^{-1}] \otimes_{A_{c}[\alpha_{c'}^{-1}] }
	H_0(A_{c/c'}[\alpha_{(c'/c')}^{-1}]) \right)[|G^{c'}_{c'}|^{-1}].
	\end{align*}
\end{lemma}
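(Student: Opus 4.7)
The plan is to compute both sides using the explicit chain model $V^{c,M,P;k}$ of \autoref{notation:space-chain-complex}, via the identification of \autoref{lemma:chains-identification}. Writing $N := H_0(A_{c/c'}[\alpha_{c'/c'}^{-1}])$, regarded as a left $\pi_0\Hur^c[\alpha_{c'}^{-1}]$-module through the quotient map, the LHS and RHS become
\begin{align*}
H_*\!\left(V^{c,\,H_0(A_c)[\alpha_{c'}^{-1}],\,N;\,k}\right)\!\left[|G^{c'}_{c'}|^{-1}\right]
\quad\text{and}\quad
H_*\!\left(V^{c,\,H_0(A_{c/c'})[\alpha_{c'}^{-1}],\,N;\,k}\right)\!\left[|G^{c'}_{c'}|^{-1}\right],
\end{align*}
and the natural comparison map is induced by the surjection $H_0(A_c)[\alpha_{c'}^{-1}] \twoheadrightarrow H_0(A_{c/c'})[\alpha_{c'}^{-1}]$. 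Because $V^{c,M,P;k}$ is additive and exact in $M$ (in each degree it is just $k\{M\}\otimes k\{c^n\}\otimes k\{P\}$), setting $K$ to be the kernel of this surjection, we obtain a short exact sequence of chain complexes, and the task reduces to showing that $V^{c,K,N;k}[|G^{c'}_{c'}|^{-1}]$ is acyclic.

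By \autoref{lemma:same-image-relation}, the submodule $K$ is generated as a right $A_c[\alpha_{c'}^{-1}]$-module by elements of the form $1-g$ with $g \in G^{c'}_{\pi_0}$. Using the decomposition $g = y_1^g(z_1^g)^{-1}\cdots y_{i_g}^g(z_{i_g}^g)^{-1}$ from \autoref{notation:group-elements}, a standard telescoping reduces the problem to exhibiting, for arbitrary $y,z\in c'$ and an arbitrary chain $(m,x_1,\dots,x_n,p)$, an explicit null-bounding of
\begin{align*}
(myz^{-1},x_1,\dots,x_n,p) - (m,x_1,\dots,x_n,p)
\end{align*}
in $V^{c,M,N;k}[|G^{c'}_{c'}|^{-1}]$.

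Guided by \autoref{figure:left-ring-nullhomotopy}, I would construct this nullhomotopy by an averaging procedure over $E^{\pi_0}_{c'}$. For each $x \in G^{c'}_{c'}$, form the higher-degree cell $c_x := (m,\,x\triangleright y,\,x\triangleright z^{-1},\,x_1,\dots,x_n,\,p)$ in degree $n+2$, whose boundary under the differentials $\delta^l$ and $\delta^r$ picks up the desired telescoped term together with ``error'' pieces coming from rack-conjugation of $x\triangleright y$ and $x\triangleright z^{-1}$ past the remaining entries $x_1,\dots,x_n$ and $p$. The weighted average $|G^{c'}_{c'}|^{-1}\sum_{x \in G^{c'}_{c'}} c_x$ is the candidate nullhomotopy: the hypothesis that $c'$ is a union of components of $c$ ensures that each $x\triangleright y, x\triangleright z^{-1}$ still lies in $c'$, and transitivity of $G^{c'}_{c'}$ on conjugacy orbits inside $c'$ makes the error terms pair up into $|G^{c'}_{c'}|$ copies of the same expression, which cancel after dividing by $|G^{c'}_{c'}|$. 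Since $g \in G^{c'}_{\pi_0}$ acts trivially on $N$ via the quotient to $\pi_0 \Hur^{c'/c'}[\alpha_{c'/c'}^{-1}]$, the right-hand side of the boundary simplifies as needed.

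The main obstacle will be the precise combinatorial verification that, after summing over $x \in G^{c'}_{c'}$, the conjugation-twisted terms in $\delta^l$ and $\delta^r$ cancel against one another modulo the generators of $K$ we are trying to kill. Concretely, one must establish an averaging identity of the form
\begin{align*}
\frac{1}{|G^{c'}_{c'}|}\sum_{x \in G^{c'}_{c'}}\!\left[(x\triangleright y)\cdot\text{conj.\,data}\right]\;\equiv\;\text{$\partial$-exact term} \pmod{\ker\text{surj.}},
\end{align*}
which combines the $\mathbb E_1$-centrality of powers of $\alpha_{c'}$ with the fact that $x\triangleright y$ runs over a full orbit of $G^{c'}_{c'}$ on $c'$. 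Once this combinatorial step is in hand, descending induction on the length $i_g$ of $g$ assembles the nullhomotopy for arbitrary $g \in G^{c'}_{\pi_0}$ and completes the proof.
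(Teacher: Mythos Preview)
Your overall setup is correct: both sides are computed by the chain model $V^{c,-,P;k}$ via \autoref{lemma:chains-identification}, and the claim reduces to showing that the quotient complex (or, in your formulation, $V^{c,K,N;k}$) is acyclic after inverting $|G^{c'}_{c'}|$. The reduction to generators $1-g$ with $g\in G^{c'}_{\pi_0}$ via \autoref{lemma:same-image-relation} is also fine.

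The genuine gap is in your candidate nullhomotopy. The cell
\[
c_x \;=\; (m,\ x\triangleright y,\ x\triangleright z^{-1},\ x_1,\dots,x_n,\ p)
\]
is not an element of $V^{c,M,P;k}_{n+2}$: by \autoref{notation:space-chain-complex} the middle coordinates must lie in the rack $c$, and $z^{-1}$ (hence $x\triangleright z^{-1}$) is only a formal inverse in $\pi_0\Hur^{c'}[\alpha_{c'}^{-1}]$, not an element of $c$. There is no way to ``insert an inverse'' as a bar coordinate, so this construction does not get off the ground. The same objection applies to any attempt to package $yz^{-1}$ into a single inserted slot.

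The paper's homotopy $K_n$ avoids this by inserting only one genuine element of $c'$ at a time (so $K_n$ raises degree from $n$ to $n+1$, not $n+2$). For each $t$ it produces a pair of terms
\[
-(m\cdot y_1z_1^{-1}\cdots y_{t-1}z_{t-1}^{-1},\ y_t,\ v_1,\dots,v_n,\ y_t^{-1}z_{t-1}\cdots y_1^{-1}p)
\;+\;
(m\cdot y_1z_1^{-1}\cdots y_tz_t^{-1},\ z_t,\ v_1,\dots,v_n,\ y_t^{-1}z_{t-1}\cdots y_1^{-1}p),
\]
so the inverses are absorbed into the $M$- and $P$-slots, where localization makes them legitimate. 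The telescoping that recovers $\id - \on{Avg}_{c'}$ comes from applying $\delta^r_{n+1,1}$ to this sum (equation \eqref{equation:telescope-left-change}); the remaining differentials are then matched against $K_{n-1}\delta_n$ only \emph{modulo a filtration} by the number of $v_j\in c-c'$, using the bijection $(x,g)\mapsto(v_j\cdot x,g)$ on $E^{\pi_0}_{c'}$ to reindex. Your sketch does not mention this filtration, and without it the cross-terms coming from $\delta^l_{n+1,j}$ with $v_j\in c-c'$ do not cancel.

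So two things need to change: replace the illegal degree-$(n+2)$ cell by a degree-$(n+1)$ homotopy that inserts a single element of $c'$ and pushes the inverse into $m$ and $p$; and introduce the filtration by $\#\{j:v_j\in c-c'\}$ so that the commutation identities \eqref{equation:ring-average-homotopy-cancel} only need to hold on associated graded.
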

\begin{proof}
	Let $k := \mathbb Z[|G^{c'}_{c'}|^{-1}]$.
	Let 
	$P := \pi_0(\Hur^{c/c'})[\alpha_{{c'/c'}}^{-1}]$
and let
	$M := \pi_0(\Hur^{c})[\alpha_{{c'}}^{-1}]$.
	Define $\on{Avg}_{c'} : M \to M$ given by $m \mapsto 
\frac{1}{|\Pi^{-1}(\Pi(m))|}
\sum_{\substack{m' \in
\pi_0 \Hur^c[\alpha_{c'}^{-1}] \\
\Pi(m') = \Pi(m)}} m'$, for 
$\Pi: \pi_0 \Hur^c[\alpha_{c'}^{-1}] \to \pi_0
\Hur^{c/c'}[\alpha_{c'/c'}^{-1}]$ the projection map.
We now explain why 
$|\Pi^{-1}(\Pi(m))|$ is invertible in $k$ so that 
$\on{Avg}_{c'}$ makes sense with $k$ coefficients.
Any fiber of $\Pi$ has a transitive action of $\ker(\pi_0
\Hur^{c'}[\alpha_{c'}^{-1}] \to \pi_0 \Hur^{c'/c'}[\alpha_{c'}^{-1}])$ 
by \autoref{lemma:same-image-relation}
and so by \cite[Lemma 6.0.4]{landesmanL:homological-stability-for-hurwitz},
any prime dividing $|\Pi^{-1}(\Pi(m))|$
also divides $|G^{c'}_{c'}|$, which we have inverted.
Hence, $\on{Avg}_{c'}$ makes sense with $k$ coefficients.

The projection map
$V^{c,M,P;k} \to V^{c,P,P;k}$ has a section
$V^{c,P,P;k} \to V^{c,M,P;k}$ so that the composite map
$V^{c,M,P;k} \to V^{c,P,P;k} \to V^{c,M,P;k}$
sends $(m,v_1, \ldots, v_n, p) \mapsto (\on{Avg}_{c'}(m), v_1,
\ldots, v_n, p)$.
Moreover, \autoref{lemma:same-image-relation} implies that any two elements of
$c$ in
the same $c'$ orbit act the same way on $P$
and so
differential on $V^{c,M,P;k}$ restricts to the differential on $V^{c,P,P;k}$.
Hence,
the above section defines a subcomplex.
Then, by \autoref{lemma:chains-identification}, it suffices to show the above section
induces an
equivalence on homology.

Now, define a filtration 
$F^\bullet$ on $V^{c,M,P;k}/V^{c,P,P;k}$ so that $F^e$ consists of those $(m,v_1, \ldots,
v_n, p)$ with $m \in M, v_1, \ldots, v_n \in
c, p \in P$ with at most $e$ elements among
$v_1, \ldots, v_n \in c-c'$.
To accomplish our goal, we produce a suitable homotopy 
$K_n : V^{c,M,P;k}_n/V^{c,P,P;k}_n \to V^{c,M,P;k}_{n+1}/V^{c,P,P;k}_{n+1}$
with the property that $K_n$ preserves the filtration $F^\bullet$ and
$\delta_{n+1} K_n + K_{n-1} \delta_n - \id|_{F^e} \subset F^{e-1}$.
Once we show this, it will follow that each associated graded piece of the
filtration is nullhomotopic, and hence 
it will follows that
$V^{c,M,P;k}/V^{c,P,P;k}$ is nullhomotopic.
Note that basis elements for the quotient
$V^{c,M,P;k}_n/V^{c,P,P;k}_n$ can be written in the form $(m, v_1, \ldots,
v_n,p)$ with $\on{Avg}_{c'}(m) = 0$.
Here is the claimed homotopy, which is visually depicted in
\autoref{figure:left-ring-nullhomotopy}:
\begin{align*}
	&K_n(m, v_1, \ldots, v_n, p) \\
	&:=
	\frac{1}{|E^{\pi_0}_{c'}|} \sum_{\substack{(x,g)\in E^{\pi_0}_{c'} \\
	x \succ g = 
y_1z_1^{-1} \cdots y_\ell z_\ell^{-1}}}
\sum_{t =
	1}^\ell \left(-\left( m \cdot y_1 \cdot z_1^{-1} \cdots y_{t-1} \cdot
		z_{t-1}^{-1}, y_t, v_1, \ldots, v_n,y_t^{-1} z_{t-1}
	y_{t-1}^{-1} \cdots z_1 y_1^{-1} p\right)
	\right.
		\\
		&\qquad \qquad\qquad\qquad+ \left. \left( m \cdot y_1 \cdot z_1^{-1} \cdots y_{t} \cdot
			z_{t}^{-1}, z_t, v_1, \ldots, v_n,y_t^{-1} z_{t-1}
		y_{t-1}^{-1} \cdots z_1 y_1^{-1} p\right)
			\right),
\end{align*}
We next verify
\begin{align}
\label{equation:ring-average-homotopy-telescope}
-\delta_{n+1,1}^r K_n(m, v_1,\ldots, v_n,p) =(m, v_1,\ldots, v_n,p)
\end{align}
using the assumption that $\on{Avg}_{c'}(m) = 0$.
We next perform this calculation, whose steps we will explain following it.
\begin{equation}
	\label{equation:telescope-left-change}
\begin{aligned}
	&-\delta_{n+1,1}^r K_n(m, v_1,\ldots, v_n,p)
	\\
	&= 
	\delta_{n+1,1}^r
\frac{1}{|E^{\pi_0}_{c'}|} \sum_{\substack{(x,g)\in E^{\pi_0}_{c'} \\
	x \succ g = 
y_1z_1^{-1} \cdots y_\ell z_\ell^{-1}}}
 \sum_{t =
	1}^\ell \left(\left( m \cdot y_1 \cdot z_1^{-1} \cdots y_{t-1} \cdot
		z_{t-1}^{-1}, y_t, v_1, \ldots, v_n,y_t^{-1} z_{t-1}
	y_{t-1}^{-1} \cdots z_1 y_1^{-1} p\right)
	\right.
		\\
		&\qquad \qquad\qquad\qquad - \left. \left( m \cdot y_1 \cdot z_1^{-1} \cdots y_{t} \cdot
			z_{t}^{-1}, z_t, v_1, \ldots, v_n,y_t^{-1} z_{t-1}
		y_{t-1}^{-1} \cdots z_1 y_1^{-1} p\right)
			\right)\\
&= 
\frac{1}{|E^{\pi_0}_{c'}|} \sum_{\substack{(x,g)\in E^{\pi_0}_{c'} \\
	x \succ g = 
y_1z_1^{-1} \cdots y_\ell z_\ell^{-1}}}
 \sum_{t =
	1}^\ell \left(\left( m \cdot y_1 \cdot z_1^{-1} \cdots y_{t-1} \cdot
		z_{t-1}^{-1}, v_1, \ldots, v_n, z_{t-1}
	y_{t-1}^{-1} \cdots z_1 y_1^{-1} p\right)
	\right.
		\\
		&\qquad \qquad\qquad\qquad - \left. \left( m \cdot y_1 \cdot z_1^{-1} \cdots y_{t} \cdot
			z_{t}^{-1}, v_1, \ldots, v_n,z_t y_t^{-1} z_{t-1}
		y_{t-1}^{-1} \cdots z_1 y_1^{-1} p\right)
			\right)\\
&=\frac{1}{|E^{\pi_0}_{c'}|} \sum_{\substack{(x,g)\in E^{\pi_0}_{c'} \\
	x \succ g = 
y_1z_1^{-1} \cdots y_\ell z_\ell^{-1}}}
 \left(\left( m, v_1, \ldots, v_n, p\right)- \left( m \cdot y_1 \cdot
				z_1^{-1} \cdots y_{\ell} \cdot
			z_{\ell}^{-1}, v_1, \ldots, v_n,z_\ell y_\ell^{-1}\cdots z_1 y_1^{-1} p\right)
			\right)\\
			&=
\left( m, v_1, \ldots, v_n, p\right)
- \frac{1}{|E^{\pi_0}_{c'}|} \sum_{\substack{(x,g)\in E^{\pi_0}_{c'} \\
	x \succ g = 
y_1z_1^{-1} \cdots y_\ell z_\ell^{-1}}}
\left( m \cdot y_1 \cdot
				z_1^{-1} \cdots y_{\ell} \cdot
			z_{\ell}^{-1}, v_1, \ldots, v_n,z_\ell y_\ell^{-1}\cdots
		z_1 y_1^{-1} p\right) \\
			&=
\left( m, v_1, \ldots, v_n, p\right)
- \frac{1}{|E^{\pi_0}_{c'}|} \sum_{\substack{(x,g)\in E^{\pi_0}_{c'} \\
	x \succ g = 
y_1z_1^{-1} \cdots y_\ell z_\ell^{-1}}}
 \left( m \cdot y_1 \cdot
				z_1^{-1} \cdots y_{\ell} \cdot
			z_{\ell}^{-1}, v_1, \ldots, v_n,p\right)
			\\
			&=
\left( m, v_1, \ldots, v_n, p\right)
-	\left( \on{Avg}_{c'}(m), v_1, \ldots, v_n,p\right)
	\\
	&=
	\left( m, v_1, \ldots, v_n, p\right).
		\end{aligned}
\end{equation}
The second equality in 
\eqref{equation:telescope-left-change} 
uses the condition that $p \in P$ and so for any
$y, y' \in c$ with the same image in $c/c'$, $y \cdot p = y' \cdot p$.
More precisely, we use
\begin{align*}
	((v_1 \cdots v_n) \triangleright y_t) \cdot 
y_t^{-1} z_{t-1} y_{t-1}^{-1} \cdots z_1 y_1^{-1} \cdot p
= y_t \cdot 
y_t^{-1} z_{t-1} y_{t-1}^{-1} \cdots z_1 y_1^{-1}\cdot p
=
z_{t-1} y_{t-1}^{-1} \cdots z_1 y_1^{-1} \cdot p.
\end{align*}
The fifth equality uses that $z_\ell y_\ell^{-1} \cdots z_1 y_1^{-1}$ maps to
the trivial element in $\pi_0 \Hur^{c/c'}$ by construction of $E^{\pi_0}_{c'}$.
The sixth equality uses that $\on{Avg}_{c'}(m) = \frac{1}{|G^{c'}_{\pi_0}|}
\sum_{g \in G^{c'}_{\pi_0}} g \triangleright m$, which follows from
\autoref{lemma:same-image-relation} because it implies the fibers of
$\pi_0\Hur^c[\alpha_{c'}^{-1}] \to \pi_0 \Hur^{c/c'}[\alpha_{c'/c'}^{-1}]$
have a transitive action of $G^{c'}_{\pi_0}$.

We next claim
\begin{align}
\label{equation:ring-average-homotopy-cancel}
	\delta^l_{n+1,j+1} K_n &= K_{n-1} \delta^l_{n,j} \\
	\delta^r_{n+1,j+1} K_n &= K_{n-1} \delta^r_{n,j}
\end{align}
for $j \geq 1$,
modulo 
$F^{e-1}$. 
The second relation is fairly immediate upon writing out the definitions.
The first relation can be seen to hold
by working modulo $F^{e-1}$ we can ignore any differentials
removing $v_j \in c - c'$, and so may assume that the $v_j$, which the relevant
differential removes, lies
in $c'$. The above relations can then be deduced from the assumption
that $E^{\pi_0}_{c'}$ is closed under the bijective operation 
$(x, g) \mapsto (v_j \cdot x, g)$,
where $v_j \cdot x$ is multiplication in $G^{c'}_{c'}$,
and this sends
\begin{align*}
	y_1 z_1^{-1} \cdots y_\ell
z_\ell^{-1} = x \succ g \mapsto (v_j \cdot x) \succ g = (v_j \triangleright y_1) (v_j \triangleright z_1^{-1}) \cdots (v_j
\triangleright y_\ell)
(v_j \triangleright z_\ell^{-1}).
\end{align*}
The reader may consult \eqref{equation:h-then-delta-ring-left-big}
and \eqref{equation:delta-then-h-ring-left-big}
for a similar computation, spelled out in more detail.
Finally, 
\begin{align}
	\label{equation:ring-average-homotopy-last}
	\delta^l_{n+1,n} K_n = 0
\end{align}
because
\begin{align*}
(m \cdot y_1 \cdot z_1^{-1} \cdots y_{t-1} \cdot z_{t-1}^{-1}) \cdot y_t =
(m \cdot y_1 \cdot z_1^{-1} \cdots y_{t} \cdot
z_{t}^{-1}) \cdot z_t.
\end{align*}
agree as elements in $\pi_0 \Hur^c[\alpha_{c'}^{-1}]$.
Summing \eqref{equation:ring-average-homotopy-telescope},
\eqref{equation:ring-average-homotopy-cancel},
and
\eqref{equation:ring-average-homotopy-last},
we obtain the claim that 
$\delta_{n+1} K_n + K_{n-1} \delta_n - \id|_{F^e} \in
F^{e-1}$.
This implies the identity acts nilpotently on
$V^{c,M,P;k}/V^{c,P,P;k}$, and therefore this quotient vanishes, concluding the
proof.
\end{proof}

In order to set up notation for our ensuing equivalence of bar constructions, we
introducing an averaging operator that will be used to relate a bar construction
associated to $c$ to one associated to $c/c'$.

\begin{notation}
	\label{notation:u-operator}
		Fix a finite rack $c$ and a subrack $c' \subset c$ which is a union of
	components of $c$. Let $k$ be a ring on which the order of the relative structure
	group $G^{c'}_c$, as in
	\autoref{example:relative-subrack}, is invertible.
		Let 
	$U_{c'}: k\{c\}\to k\{c\}$ be the operator
	$U_{c'} := \frac{1}{|G^{c'}_{c}|} \sum_{g \in G^{c'}_{c}} g \triangleright$
	which sends $x \mapsto \frac{1}{|G^{c'}_{c}|} \sum_{g \in G^{c'}_{c}} g
	\triangleright x$.
\end{notation}

\begin{definition}
	\label{definition:averaged-basis-rack}
	Let $r_1, \ldots, r_{|c/c'|}$ denote a collection of representatives
	of the $G^{c'}_{c}$ orbits of $c'$, 
	The image of $U_{c'} : k\{c\} \to k\{c\}$ is the free $k$-module
	generated by the basis $\{U_{c'}(r_i)\}_{1\leq i \leq |c/c'|}$. We refer
	to such elements $U_{c'}(r_i)$ as \textit{averaged basis elements} 
	Because the map $U_{c'}$ is base changed from the PID $\ZZ[\frac 1
	{|G^{c'}_{c}|}]$ to $k$, the kernel of $U_{c'}$ is free, so we may extend
	the averaged basis elements to a basis of $k\{c\}$ by including elements
	of the kernel of $U_{c'}$ which additionally are supported in a single
	$c'$ orbit (so they are of the form $\sum_{y \in G^{c'}_{c} \cdot z}
	\alpha_y y$ for some $z \in c$). We refer to the additional elements as
	\textit{antiaveraged basis elements}. 
	We refer to elements in the image
	of $U_{c'}$ as \textit{averaged} elements and elements in the kernel of
	$U_{c'}$ as \textit{antiaveraged}. 
\end{definition}

\begin{remark}
	\label{remark:}
	Equivalently to the above definition, averaged elements are linear combinations of averaged
	basis elements and antiaveraged elements are linear combinations
	of antiaveraged basis elements.
\end{remark}

We now record the main equivalence relating to bar constructions of Hurwitz
spaces which will be crucial for our results on computing the stable homology of
Hurwitz space in all directions.

\begin{proposition}
	\label{proposition:ring-chain-homotopy}
	Let $c$ be a finite rack and ${c'} \subset c$ a normal subrack.
	There is an equivalence
	\begin{align*}
	&H_0(A_{c})[\alpha_{{c'}}^{-1}] \otimes_{A_c[\alpha_{{c'}^{-1}}]}
	H_0(A_{c/{c'}}[\alpha_{{c'}/{c'}}^{-1}])[
	|G^{c'}_{c}|^{-1}] \\
	&\simeq
	\left( H_0(A_{c/{c'}})[\alpha_{c'}^{-1}] \otimes_{A_{c/{c'}}[\alpha_{{c'}/{c'}}^{-1}] }
	H_0(A_{c/{c'}}[\alpha_{({c'}/{c'})}^{-1}])
\right)[|G^{c'}_{c}|^{-1}].
	\end{align*}
\end{proposition}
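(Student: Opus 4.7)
The plan is to generalize the argument of \autoref{lemma:ring-reduction} from the case where $c'$ is a union of components of $c$ to the case where $c'$ is merely a normal subrack, at the cost of averaging over the larger group $G^{c'}_c$ instead of $G^{c'}_{c'}$. I would work with the chain complex $V^{c,M,P;k}$ from \autoref{notation:space-chain-complex} supplied by \autoref{lemma:chains-identification}, taking $M = \pi_0(\Hur^c)[\alpha_{c'}^{-1}]$, $P = \pi_0(\Hur^{c/c'})[\alpha_{c'/c'}^{-1}]$, and $k = \mathbb Z[|G^{c'}_c|^{-1}]$. The projection $c \to c/c'$ gives a natural map $V^{c,M,P;k} \to V^{c/c',P,P;k}$, and after reinterpreting via \autoref{lemma:chains-identification}, it suffices to prove this map is a homology equivalence.

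The first step is to construct, using the averaging operator $U_{c'}$ of \autoref{notation:u-operator}, a section realizing $V^{c/c',P,P;k}$ as a subcomplex $V^{c,M,P;k}_{\mathrm{avg}} \subset V^{c,M,P;k}$ whose generators are tuples $(U_{c'}(m), U_{c'}(v_1), \ldots, U_{c'}(v_n), p)$ built out of averaged basis elements in the sense of \autoref{definition:averaged-basis-rack}. Verifying that this is a subcomplex requires two inputs: normality of $c'$, which guarantees that $U_{c'}(x) \triangleright U_{c'}(y)$ is again averaged and that its image in $c/c'$ is the $\triangleright$-product of the respective images; and \autoref{lemma:same-image-relation}, which ensures that any two elements of $c$ in the same $G^{c'}_c$-orbit act identically on $P$, so the right differentials $\delta^r_{n,j}$ depend only on the image in $c/c'$.

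The second step is to show the quotient $Q := V^{c,M,P;k}/V^{c,M,P;k}_{\mathrm{avg}}$ is nullhomotopic. I would filter $Q$ by $F^e$, the subcomplex generated by tuples with at most $e$ antiaveraged entries, and produce a chain homotopy $K_n$ satisfying $\delta_{n+1} K_n + K_{n-1}\delta_n \equiv \mathrm{id} \pmod{F^{e-1}}$ on $F^e$. The homotopy is modeled on the formula in \autoref{lemma:ring-reduction} but uses the set $E_{c,c'}$ of \autoref{notation:group-elements} in place of $E^{\pi_0}_{c'}$; schematically,
\[
K_n(m, v_1, \ldots, v_n, p) := \tfrac{1}{|E_{c,c'}|} \sum_{\substack{(x,g) \in E_{c,c'} \\ x\succ g = w_1 \cdots w_{\ell}}} \sum_{t=1}^\ell (-1)^?\bigl(m w_1 \cdots w_{t-1},\, w_t,\, v_1, \ldots, v_n,\, p^g_t\bigr),
\]
where $p^g_t$ is chosen so that the telescoping calculation analogous to \eqref{equation:telescope-left-change} produces $(m, v_1, \ldots, v_n, p)$ minus its average over $G^{c'}_c$, which vanishes modulo $V^{c,M,P;k}_{\mathrm{avg}}$. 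The bookkeeping for the inner boundaries $\delta^l_{n+1, j+1}$ and $\delta^r_{n+1, j+1}$ uses the closure of $E_{c,c'}$ under $(x,g) \mapsto (v_j \cdot x, g)$ for $v_j \in c'$, which relies on $c'$ being a subrack.

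The main obstacle, beyond the computation of the sign conventions and the correct form for $p^g_t$, is that $g \in G^{c'}_c$ does not in general act trivially on $P$, in contrast to elements of $G^{c'}_{\pi_0}$ in \autoref{lemma:ring-reduction}. Consequently the right coordinate $p^g_t$ cannot be simply a telescoping product of elements of $c'$ applied to $p$; one must instead exploit that after averaging over $G^{c'}_c$ both sides of the boundary see the same $c/c'$-class, and then use \autoref{lemma:same-image-relation} to reidentify the action on $P$. The hardest step will be getting the interaction between the averaging (which requires inverting $|G^{c'}_c|$) and the differentials on $M$ to produce the identity on $Q$ modulo $F^{e-1}$, particularly along the cells where $v_j \in c - c'$; this requires passing to the associated graded of $F^\bullet$ to discard the contributions of antiaveraged inner entries absorbed into $m$ during the homotopy.
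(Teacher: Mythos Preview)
Your proposal conflates two distinct steps that the paper separates. The paper first applies \autoref{lemma:ring-reduction} as-is (a normal subrack is automatically a union of components by \autoref{lemma:normalizer-containment}, and $|G^{c'}_{c'}|$ divides $|G^{c'}_c|$) to reduce the left module from $M = \pi_0(\Hur^c)[\alpha_{c'}^{-1}]$ to $P = \pi_0(\Hur^{c/c'})[\alpha_{c'/c'}^{-1}]$. Only then does it attack the passage $V^{c,P,P;k} \to V^{c/c',P,P;k}$, and for that it builds a \emph{different} homotopy $H_n = \tfrac{1}{|E_{c,c'}|}(H^u_n + H^d_n)$ which inserts $w_s$ at positions $i$ and $i+1$, flanking the \emph{first antiaveraged} coordinate $v_i$, so that the telescoping of $\delta^r_{n+1,i}H^u_n$ against $\delta^r_{n+1,i+1}H^d_n$ replaces $v_i$ by $U_{c'}(v_i) = 0$.

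Your single homotopy $K_n$, which inserts $w_t$ at position $1$, cannot do this second job: every term of $\delta_{n+1}K_n + K_{n-1}\delta_n$ leaves $v_1,\ldots,v_n$ untouched (only $m$ and $p$ change), so on a basis vector with an antiaveraged $v_i$ you will never produce the identity modulo fewer antiaveraged entries. Moreover, swapping $E^{\pi_0}_{c'}$ for $E_{c,c'}$ in that formula is a type mismatch: $E_{c,c'}$ indexes elements of $G^{c'}_c \subset \aut(c)$, which is what one needs to average a coordinate in $k\{c\}$, whereas averaging the left module $M$ requires the group $G^{c'}_{\pi_0} = \ker(\pi_0\Hur^{c'}[\alpha_{c'}^{-1}] \to \pi_0\Hur^{c'/c'}[\alpha_{c'/c'}^{-1}])$ acting by multiplication. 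The obstacle you flag (that $g \in G^{c'}_c$ need not act trivially on $P$) is precisely why the paper does not try to combine the two reductions; the resolution is not a cleverer choice of $p^g_t$ but to run \autoref{lemma:ring-reduction} first and then build a homotopy targeted at the antiaveraged $v_i$.
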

\begin{proof}
	Let $k := \mathbb Z[|{G}^{c'}_{c}|^{-1}]$.
	Let 
$P := \pi_0(\Hur^{c/c'})[\alpha_{{c'/c'}}^{-1}]$.
By \autoref{lemma:ring-reduction} and \autoref{lemma:chains-identification}
we only need show the projection map
$V^{c,P,P;k} \to V^{c/c',P,P;k}$
is an equivalence on homology.
We let $v_i \in c$ and use $\overline{v}_i$ as notation for the image of $v_i$
in $c/c'$.
Note that the above map has a section
$V^{c/c',P,P;k} \to V^{c,P,P;k}$ given by
$(m,\overline{v}_1, \ldots, \overline{v}_n, p) \mapsto (m, U_{c'}(v_1), \ldots, U_{c'}(v_n), p)$, with
$U_{c'}$ as defined in \autoref{notation:u-operator}.
It suffices to show
this section induces an equivalence on homology.

Equivalently, it suffices to produce a nullhomotopy of the quotient
$V^{c,P,P;k}/V^{c/c',P,P;k}$, which we do next.
Any element of this quotient can be presented as a linear combinations of
tuples
$(m, v_1, \ldots, v_n,p)$ with $m,p \in P, v_1, \ldots, v_n \in k\{c\}$ where there is some $i$ so that $v_1,
\ldots, v_{i-1}$ are averaged basis elements and $v_i$ is an antiaveraged basis
element and $v_{i+1}, \ldots, v_n \in c$ (meaning they are elements of $k\{c\}$
of the form $1 \cdot x$ for $x \in c$).
Now, define a filtration 
$F^\bullet$ on $V^{c,P,P;k}/V^{c/c',P,P;k}$ so that $F^e$ is spanned by those $(m,v_1, \ldots,
v_n, p)$ with $m,p \in P, v_1, \ldots, v_n \in
k\{c\}$ so that $v_1, \ldots, v_n$ either lie in $k\{c'\}$ or $k\{c-c'\}$ and
there are at most $e$ elements among $v_1, \ldots, v_n \in k\{c-c'\}$.

\begin{figure}
	\includegraphics[scale=.4]{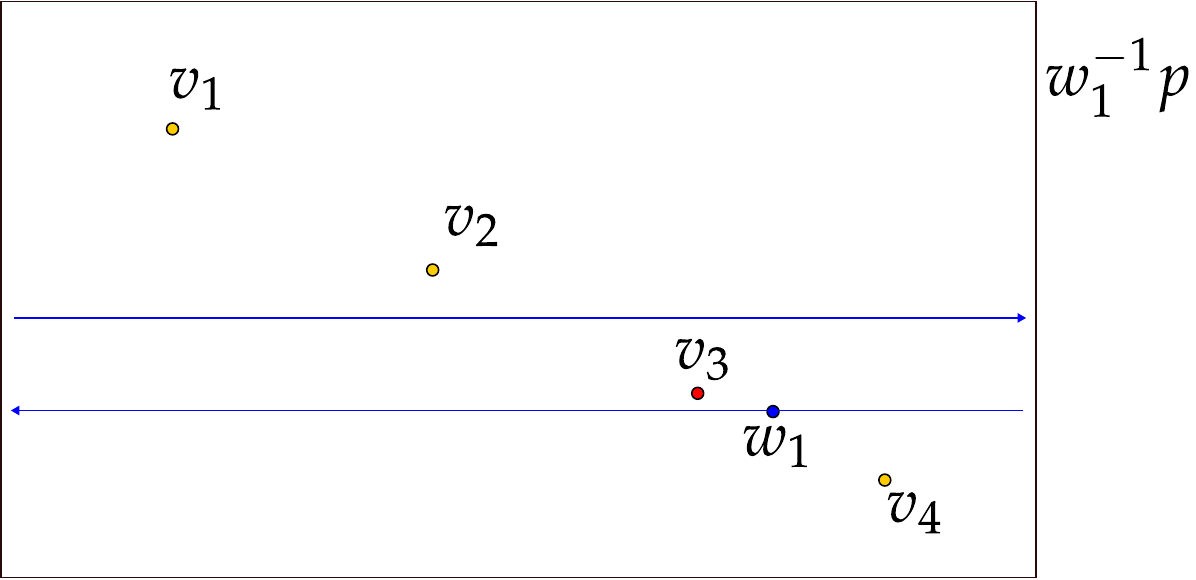}
\caption{
	This is a visualization of part of the nullhomotopy $H$ in the proof of
	\autoref{proposition:ring-chain-homotopy}.
	The $v_i$ are written in the averaged basis, and the yellow
	$v_1,v_2,v_4$
	are averaged while the red dot $v_3$ is antiaveraged. When we move $w_1$ to the left below $v_3$ and then back to the right above
	$v_3$, we cause $v_3$ to be changed to $w_1 \triangleright v_3$.
	In the homotopy, we then repeat this for 
	$w_2, \ldots, w_t$ so that $v_3$ is changed to $(w_1 \cdots w_t) \triangleright v_3$.
	Now, $w_1 \cdots w_t$ was made to realize one of the group elements in
	$G^{c'}_{c'}$, and averaging over all such elements
	modifies $v_3$ to $U_{c'}(v_3)$, which vanishes because $v_3$
	is antiaveraged.
	This operation may not be compatible with other $v_j$ hit the boundary,
	but by summing over all of $E_{c, c'}$, it becomes compatible.
}
\label{figure:ring-vrack-nullhomotopy}
\end{figure}

Define
	$H_n :
	V^{c,P,P;k}_n \to
	V^{c,P,P;k}_{n+1}$ as follows:
Suppose $(m,v_1, \ldots, v_n,p)$ as above with $m,p \in P$, $v_1, \ldots, v_{i-1}$
averaged and $v_i$ antiaveraged, and $v_{i+1}, \ldots, v_n \in c$.
Recall the notation $E_{c,c'}$ from \autoref{notation:group-elements}.
	Define
	\begin{equation}
	\begin{aligned}
	\label{equation:homotopy-ring}
	H_n(m, v_1, \ldots, v_n, p) &:= \frac{1}{|E_{c,c'}|} \cdot \left(
		H^{u}_n(m, v_1, \ldots, v_n, p) +
	H^{d}_n(m, v_1, \ldots, v_n, p) \right) \\
	H^{u}_n(m, v_1, \ldots, v_n, p) &:= \sum_{\substack{ (x,g) \in 
	E_{c,c'} \\  x \succ g = w_1 \cdots w_t}}
		\sum_{s = 1}^t (-1)^{i+1}(m , v_1, \ldots,
			v_{i-1}, w_s, (w_1 \cdots w_s) \triangleright
			v_i,v_{i+1}
		\ldots, v_n, w_s^{-1} p) \\
	H^{d}_n(m, v_1, \ldots, v_n, p) &:= \sum_{\substack{ (x,g) \in 
	E_{c,c'} \\  x \succ g = w_1 \cdots w_t}}
\sum_{s = 1}^t(-1)^{i+1} (m , v_1, \ldots,
			v_{i-1}, (w_1 \cdots w_{s-1}) \triangleright v_i, w_s ,
	v_{i+1}, \ldots, v_n, w_s^{-1} p).
	\end{aligned}
\end{equation}
	and extend $H_n$ to all of 	
	$V^{c,P,P;k}_n$ by linearity.

	To show $H_n$ forms a nullhomotopy we concretely wish to show
$\delta_{n+1} H_n + H_{n-1}
\delta_n -\id|_{F^e} \subset F^{e-1}.$	
The reader may consult \autoref{figure:ring-vrack-nullhomotopy} for a
visualization of this chain homotopy.

	We will check this by writing the above as a sum of terms.
	The main point is that, for $(m, v_1, \ldots, v_n, p)$ in $F^e$
	with $v_1, \ldots, v_{i-1}$ are averaged and $v_i$ is antiaveraged,
	we have
	\begin{align}
		\label{equation:ring-telescope}
	\frac{1}{|E_{c,c'}|} \cdot
	\left((-1)^{i}\delta^r_{n+1,i}H^{u}_n +
	(-1)^{i+1}\delta^r_{n+1,i+1}H^{d}_n \right)
	(m, v_1, \ldots, v_n,p)
	=
	(m, v_1, \ldots, v_n,p),
	\end{align}
	and the remaining terms in the expression for 
$\delta_{n+1} H_n + H_{n-1}
\delta_n$ sum an element of $F^{e-1}$.
We next verify \eqref{equation:ring-telescope}.
One key fact we will use is that for $y,y' \in c$ with the same image in $c/c'$,
and $p \in M$, we have $y \cdot p = y' \cdot p$ by definition of $M$.
In particular, $y' \cdot y^{-1} \cdot p = p$.
Using this and expanding the above, and simplifying the telescoping sum gives
	\begin{equation}
		\label{equation:ring-telescope-computation}
	\begin{aligned}
&\frac{1}{|E_{c,c'}|} \cdot
\left((-1)^{i}\delta^r_{n+1,i}H^{u}_n +
(-1)^{i+1}\delta^r_{n+1,i+1}H^{d}_n -\id \right) (m, v_1, \ldots, v_n, p) \\
	&=
-(m, v_1, \ldots, v_n, p) +
\frac{1}{|E_{c,c'}|}(
\\
& -\sum_{\substack{ (x,g) \in 
	E_{c,c'} \\  x \succ g = w_1 \cdots w_t}}
 \sum_{s = 1}^t (m, v_1, \ldots,
			v_{i-1}, (w_1 \cdots w_s) \triangleright v_i,
	\ldots, v_n, 
	(( (w_1 \cdots w_s) \triangleright v_i) v_{i+1} \cdots v_n) \triangleright w_s
	\cdot w_s^{-1} \cdot p)\\
	&+ \sum_{\substack{ (x,g) \in 
	E_{c,c'} \\  x \succ g = w_1 \cdots w_t}}
 \sum_{s = 1}^t (m, v_1, \ldots,
			v_{i-1}, (w_1 \cdots w_{s-1}) \triangleright v_i, v_{i+1},
\ldots, v_n, 
((v_{i+1} \cdots v_n) \triangleright w_s) \cdot w_s^{-1} \cdot
p))\\
	&=
-(m, v_1, \ldots, v_n, p)+
	\frac{1}{|E_{c,c'}|}\left(	-\sum_{\substack{ (x,g) \in 
	E_{c,c'} \\  x \succ g = w_1 \cdots w_t}}
\sum_{s = 1}^t (m, v_1, \ldots,
			v_{i-1}, (w_1 \cdots w_s) \triangleright v_i,
	\ldots, v_n, p) \right.\\
	&\qquad \left.+ \sum_{\substack{ (x,g) \in 
	E_{c,c'} \\  x \succ g = w_1 \cdots w_t}}
 \sum_{s = 1}^t (m, v_1, \ldots,
			v_{i-1}, (w_1 \cdots w_{s-1}) \triangleright v_i, v_{i+1},
\ldots, v_n, p)\right) \\
&= -(m, v_1, \ldots, v_n, p) + (m, v_1, \ldots, v_n, p) \\
&\qquad \qquad- \frac{1}{|E_{c,c'}|} \sum_{\substack{ (x,g) \in 
	E_{c,c'} \\  x \succ g = w_1 \cdots w_t}}
(m, v_1, \ldots, v_{i-1}, (w_{1} \cdots w_t) \triangleright v_i, v_{i+1},
	\ldots, v_n, p) \\
	&= - \frac{1}{|G^{c'}_{c'}| \cdot |{G}^{c'}_{c}|}\sum_{x \in
	G^{c'}_{c'}, g \in {G}^{c'}_{c}} 
	(m, v_1, \ldots, v_{i-1}, (x \triangleright g) \triangleright v_i, v_{i+1},
	\ldots, v_n, p)\\
	&=- \frac{1}{|G^{c'}_{c'}|} \sum_{x \in
	G^{c'}_{c'}} \left(
		(m, v_1, \ldots,
			v_{i-1}, U_{c'} (v_i), v_{i+1},
	\ldots, v_n, p) \right) = 0,
	\end{aligned}
\end{equation}
	where the final expression vanishes since we are assuming $v_i$ is
	antiaveraged so $U_{c'}(v_i) = 0$.

	So, it is enough to show the remaining terms in the expression for
$\delta_{n+1} H_n + H_{n-1}
\delta_n$, other than those in \eqref{equation:ring-telescope},
cancel when evaluated on $(m,v_1,\ldots, v_n,p)$ with $v_1, \ldots, v_{i-1}$
in the averaged basis and $v_i$ in the antiaveraged basis.
	Indeed, expanding term by term, we next claim
	\begin{align}
		\label{equation:left-small}
		\delta^l_{n+1,j} H_n(m, v_1, \ldots, v_n, p) &=
		-H_{n-1}\delta^l_{n,j}(m, v_1, \ldots, v_n, p) \text{ for } 1
		\leq j < i \\
		\label{equation:right-small}
		\delta^r_{n+1,j} H_n(m, v_1, \ldots, v_n, p) &=  -H_{n-1}\delta^r_{n,j}(m, v_1, \ldots, v_n, p) \text{ for } 1
		\leq j < i\\
		\label{equation:left-big}
		\delta^l_{n+1,j+1} H_n(m, v_1, \ldots, v_n, p) &=
		H_{n-1}\delta^l_{n,j}(m, v_1, \ldots, v_n, p) \text{ for }i+1 \leq j \leq n\\
		\label{equation:right-big}
		\delta^r_{n+1,j+1} H_n(m, v_1, \ldots, v_n, p) &=  H_{n-1}\delta^r_{n,j}(m, v_1, \ldots, v_n, p)
		\text{ for }i+1 \leq j \leq n
	\end{align}
	on $F^e$, modulo $F^{e-1}$.
	Let us start by explaining the proof of \eqref{equation:left-big}. The other
	three relations are similar, but easier to verify.
	We note that because we are working modulo $F^{e-1}$, we are free to
	assume that $v_j \in c'$, as otherwise the terms above will lie in
	$F^{e-1}$.
	We let $i+ 1 \leq j \leq n$ and hence $v_j \in c$.
	We can separately show 
\begin{align}
	\label{equation:delta-and-h-down}
	\delta^l_{n+1,j+1} H_n^{d}(m, v_1, \ldots, v_n, p) &=
	H_{n-1}^{d}\delta^l_{n,j}(m, v_1, \ldots, v_n, p) \\
	\label{equation:delta-and-h-up}
	\delta^l_{n+1,j+1} H_n^{u}(m, v_1, \ldots, v_n, p) &=
	H_{n-1}^{u}\delta^l_{n,j}(m, v_1, \ldots, v_n, p).
\end{align}

Let us just explain \eqref{equation:delta-and-h-down}, as
\eqref{equation:delta-and-h-up} is
		similar.
		Expanding the two sides, we obtain
		\begin{equation}
	\begin{aligned}
		\label{equation:h-then-delta-ring-left-big}
		&\delta^l_{n+1,j+1} H_n^{d}(m, v_1, \ldots, v_n, p)  \\
		&= \sum_{\substack{ (x,g) \in 
	E_{c,c'} \\  x \succ g = w_1 \cdots w_t}}
 \sum_{s = 1}^t(-1)^{i+1}
		(m v_j , v_j
			\triangleright  v_1, \ldots,
		v_j \triangleright v_{i-1}, v_j \triangleright ((w_1 \cdots
			w_{s-1})
	\triangleright v_i), v_j \triangleright  w_s , \\
	& \qquad \qquad \qquad \qquad v_j \triangleright
v_{i+1}, \ldots, v_j \triangleright v_{j-1}, v_{j+1}, \ldots, v_n, w_s^{-1} p)
\\
&= \sum_{\substack{ (x,g) \in 
	E_{c,c'} \\  x \succ g = w_1 \cdots w_t}}
 \sum_{s = 1}^t(-1)^{i+1} (m v_j , v_j
			\triangleright  v_1, \ldots,
			v_j \triangleright v_{i-1},((v_j \triangleright w_1)
			\cdots (v_j \triangleright w_{s-1}))
	\triangleright (v_j \triangleright v_i), v_j \triangleright w_s ,
	\\
	& \qquad \qquad \qquad \qquad v_j \triangleright
v_{i+1}, \ldots, v_j \triangleright v_{j-1}, v_{j+1}, \ldots, v_n, w_s^{-1} p)
	\end{aligned}
\end{equation}
	and 
	\begin{equation}
	\begin{aligned}
		\label{equation:delta-then-h-ring-left-big}
		&H_{n-1}^{d}\delta^l_{n,j}(m, v_1, \ldots, v_n, p) \\
		&= H_{n-1}^{d} (m v_j, v_j \triangleright v_1, \ldots, v_j \triangleright
		v_{j-1}, v_{j+1}, \ldots, v_n, p) \\
		&= 
		\sum_{\substack{ (x,g) \in 
	E_{c,c'} \\  x \succ g = w_1 \cdots w_t}}
 \sum_{s = 1}^t(-1)^{i+1} (mv_j, v_j
			\triangleright  v_1, \ldots,
			v_j \triangleright v_{i-1}, (w_1 \cdots w_{s-1})
		\triangleright (v_j \triangleright v_i), w_s, \\
		& \qquad \qquad \qquad \qquad v_j \triangleright
	v_{i+1}, \ldots, v_j \triangleright v_{j-1}, v_{j+1}, \ldots, v_n, w_s^{-1} p) \\
&=\sum_{\substack{ (x,g) \in 
	E_{c,c'} \\  x \succ g = w_1 \cdots w_t}}
 \sum_{s = 1}^t(-1)^{i+1} (mv_j, v_j
			\triangleright  v_1, \ldots,
		v_j \triangleright v_{i-1}, ((v_j \triangleright w_1) \cdots (v_j
		\triangleright w_{s-1}))
	\triangleright (v_j \triangleright v_i), v_j \triangleright w_s, \\
 & \qquad \qquad \qquad \qquad 
	v_j \triangleright
v_{i+1}, \ldots, v_j \triangleright v_{j-1}, v_{j+1}, \ldots, v_n, w_s^{-1} p).
	\end{aligned}
\end{equation}
	The last equation used that the $E_{c,c'}$ is closed under the
	bijective operation $(x,g) \mapsto (v_j \cdot x,g)$, where $v_j \cdot x$
	denotes multiplication in $G^{c'}_{c'}$, which sends
	\begin{align*}
	w_1 \cdots w_t = x \succ g
\mapsto (v_j \cdot x) \succ g = (v_j \triangleright w_1) \cdots (v_j
\triangleright w_t).
	\end{align*}
Since the final lines in 
	\eqref{equation:h-then-delta-ring-left-big}
	and \eqref{equation:delta-then-h-ring-left-big} agree, we obtain
	\eqref{equation:delta-and-h-down}.
	As mentioned above, the verification of
	\eqref{equation:delta-and-h-up} is similar to that of
	\eqref{equation:delta-and-h-down}, and hence summing these two
	establishes
	\eqref{equation:left-big}.
	The verifications of \eqref{equation:right-big} and
	\eqref{equation:right-small} are relatively easier, and do not involve
	any reordering of the summations, but follow from the fact that $w_s$
	and $w'_s$ act the same way on $P$ for $w'_s$ in the same $c'$ orbit as
	$w_s$.
	The verification of \eqref{equation:left-small}
	is also straightforward.
	One point that is important to note in the verification of
	\eqref{equation:left-small} and \eqref{equation:right-small}, is that
	$\delta_{i+1,j}^l(m,v_1, \ldots, v_n,m)$ and 
	$\delta_{i+1,j}^r(m,v_1, \ldots, v_n,m)$ are elements such that the first $i-2$ coordinates in
	$k\{c\}$ are averaged and the $i-1$th entry is antiaveraged. Hence, when
	we apply $H_{n-1}$ to these elements, 
	the homotopy will insert $w_e$ at the $i-1$th and $i$th slots.
	This is in contrast to $H_n$, which inserts $w_e$ at the $i$th and 
	$i+1$th slots.
	
Next, we observe
	\begin{align}
		\label{equation:ring-cancel-setup}
		\delta^l_{n+1,i+1}H^{d}_n(m, v_1, \ldots, v_n, p) =
		\delta^l_{n+1,i}H^{u}_n(m, v_1, \ldots, v_n, p),
	\end{align}
	as both are equal to 
$\sum_{\substack{ (x,g) \in 
	E_{c,c'} \\  x \succ g = w_1 \cdots w_t}}
 \sum_{s = 1}^t (-1)^{i+1}(mw_s, v_1, \ldots,
	v_{i-1}, (w_1 \cdots w_{s}) \triangleright v_i,
\ldots, v_n, w_s^{-1} p).$ 
Finally, one can also verify
		\begin{equation}
	\begin{aligned}
		\label{equation:anti-averaged-vanishing}
		\delta^l_{n,i}(m, v_1, \ldots, v_n, p) &= \delta^r_{n,i}(m, v_1,
		\ldots, v_n, p) = 0 \\
\delta^l_{n+1,i+1} H^{u}_n(m, v_1, \ldots, v_n, p) &=
\delta^r_{n+1,i+1} H^{u}_n(m, v_1, \ldots, v_n, p) = 0 \\
\delta^l_{n+1,i}H^{d}_n(m, v_1, \ldots, v_n, p) &=
\delta^r_{n+1,i}H^{d}_n(m, v_1, \ldots, v_n, p) =0
	\end{aligned}
\end{equation}
using that $v_1, \ldots, v_{i-1}$ are averaged $v_i$ is antiaveraged, and the
actions of elements of $c$ on $P$ only depends on their $c'$ orbit.
For example, if $v_i = \sum_y \alpha_y y$ with $y \in c'$ all in the
same orbit as some fixed $z \in c$ (using the assumption that $v_i$ was an
antiaveraged basis element),
	we have
	\begin{align*}
	\delta^l_{n,i}(m, v_1, \ldots, v_n, p) &= \sum_y \alpha_y(m
			\cdot y,
		y \triangleright v_1, \ldots, y \triangleright v_{i-1}, v_{i+1},
	\ldots, v_n, p)\\
	&= \sum_y \alpha_y(m \cdot y,
		v_1, \ldots, v_{i-1}, v_{i+1},
	\ldots, v_n, p) \\
	&= ((\sum_y \alpha_y) m \cdot z, v_1, \ldots, v_{i-1}, v_{i+1}, \ldots,
	v_n,p) \\
	&= 0
	\end{align*}
	since  $\sum_y \alpha_y = 0$.
	The verifications of the other statements in
	\eqref{equation:anti-averaged-vanishing} have similar proofs.
	Finally, summing
	\eqref{equation:ring-telescope},
	\eqref{equation:left-small},
	\eqref{equation:right-small},
	\eqref{equation:left-big},
	\eqref{equation:right-big},
	\eqref{equation:ring-cancel-setup}, and
	\eqref{equation:anti-averaged-vanishing},		
and keeping track of signs yields the desired statement 
that $\delta_{n+1} H_n + H_{n-1}\delta_n =\id$.
\end{proof}

\subsection{Chain homotopies for Hurwitz module bar constructions}
\label{subsection:module-chain-homotopies}

Having verified an equivalence relevant for Hurwitz spaces in
\autoref{proposition:ring-chain-homotopy}, we next compute an
equivalence relevant for bijective Hurwitz modules in
\autoref{proposition:module-chain-homotopy}.
For the main result of this section relating two bar constructions, we will have
to invert the order of a group $G^{c'}_{S}$ coming from the action of a subrack on a Hurwitz module,
which plays an analogous role to that played by the group $G^{c'}_c$ in the previous
subsection.
It will take a bit of notation to define this; the definition is given in
\autoref{definition:module-structure-group}.

\begin{notation}
	\label{notation:looping-operator}
	Let $c$ be a rack,
$S = ({\Sigma^1_{g,f}}, \{T_n\}_{n \in \mathbb
Z_{\geq 0}}, \{\psi_n\}_{n \in \mathbb
Z_{\geq 0}})$ a bijective Hurwitz module over $c$ and $c' \subset c$ an
$S$-component.
Let $k$ be an arbitrary ring and let $M :=
\pi_0(\Hur^{c/c'})[\alpha_{c'/c'}^{-1}]$.
	With notation as in \autoref{notation:module-homotopy-notation},
	fix $1 \leq \rho \leq 2g + f$.
	Given $(m,v_1, \ldots, v_n,s) \in W^{c,S,M;k}$, with $m \in M, s \in
	T_0, v_1, \ldots, v_n \in c$, suppose $i$ is the minimal index such that
	$q_{(m,v_1, \ldots, v_n,s)}(i) = \rho$. 
	Define
	$\iota^\rho_x(m, v_1, \ldots, v_n,s) := (mx^{-1}, x \triangleright^{-1}v_1, \ldots, x \triangleright^{-1}v_{i-1}, x, v_i, \ldots, v_n,s)$
where 
\begin{align*}
	q_{(m, x \triangleright^{-1} v_1, \ldots, x \triangleright^{-1} v_{i-1}, x, v_i, \ldots, v_n,s)}(i') :=
q_{(m, v_1, \ldots, v_n,s)}(i'-\epsilon)
\end{align*}
where $\epsilon = 0$ if $i' \leq i$ and
$\epsilon = 1$ if $i' >i$.
\end{notation}
\begin{remark}
	\label{remark:}
	We note that 
$\iota^\rho_x(m, v_1, \ldots, v_n, s)$ can be characterized as the unique tuple
with $x$ in the $i$th position and
$q_{\iota^\rho_x(m, v_1, \ldots, v_n, s)}(i) = \rho$
such that
$d^l_{n,i} \iota^\rho_x(m, v_1, \ldots, v_n, s) = (m, v_1, \ldots, v_n,s)$.
\end{remark}

\begin{definition}
	\label{definition:module-structure-group}
	With notation as in 
\autoref{notation:looping-operator},
	for each $x \in c', 1 \leq \rho \leq 2g + f$, the operation
\begin{align*}
	w \cdot^\rho (m, v_1, \ldots, v_n,s) := d^r_{n,i} \iota_w^\rho(m,v_1, \ldots,
	v_n,s)
\end{align*}
defines
automorphism 
$w \cdot^\rho: W^{c,S,M;k}\to W^{c,S,M;k}$.
We suggest the reader consult \autoref{figure:module-rectangle-action} for a
visual depiction of what this action means.

Consider 
the subgroup of automorphisms 
$G^{c'}_S \subset \on{Aut}(W^{c,S,M;\mathbb Z})$ ranging over all actions of the form $w_1 \cdot^{\rho_1} \cdots
w_k \cdot^{\rho_k}$ so that the induced map on $M$ is the identity. (This is
	equivalent to the condition that the tuple of elements of $c'/c'$ associated to
$w_1 \cdots w_k$ is the same as the corresponding tuple after ``looping $w_i$
around boundary of the $\rho_i$th rectangle,'' see \autoref{remark:looping}.)
We define $G^{c'}_S$ to be the
{\em module structure group} associated to the bijective Hurwitz module $S$.
For any ring $k$, any element of $G^{c'}_S$ also determines an element of
$\on{Aut}(W^{c,S,M;k})$ via base change along $k \to \mathbb Z$.
For $(m, v_1, \ldots, v_n, s) \in W^{c,S,M;k}$ and $h \in G^{c'}_S$ we use 
$(m, v_1, \ldots, v_n, s)^h$ to denote the result of acting on 
$(m, v_1, \ldots, v_n, s)$ by $h$, thought of as an element of
$\on{Aut}(W^{c,S,M;k})$.
\end{definition}
\begin{remark}
	\label{remark:looping}
	Loosely speaking, the operation $x \cdot^\rho$ for $x,y \in c'$ corresponds to looping $x$ around
	the $\rho$th rectangle.
%In general, if $g \in G^{c'}_{S}$ is expanded to some word in $g
%= x_1 \cdots x_j$,
%then the action of $g$ corresponds to looping the $x_1 \cdots
%x_j$ around
%$\rho$ starting with $x_j$ and finishing with $x_1$.
\end{remark}

Soon, we will want to invert the order of
$G^{c'}_S$. In order to make sense of this, we will need to know
it is a finite group, which we now verify.
\begin{lemma}
	\label{lemma:module-structure-group-finite}
	For $c$ a finite rack, $S = ({\Sigma^1_{g,f}}, \{T_n\}_{n \in \mathbb
Z_{\geq 0}}, \{\psi_n\}_{n \in \mathbb
Z_{\geq 0}})$ a finite bijective Hurwitz module over $c$,
and $c' \subset c$ an $S$ component.
Then, the group $G^{c'}_S$ is a finite group.
\end{lemma}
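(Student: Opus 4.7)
The plan is to exhibit an injective homomorphism from $G^{c'}_S$ into a finite group; since $c$ and $T_0$ are finite by hypothesis, any such finite group has finite order, and so $G^{c'}_S$ does too.

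First, I will construct a candidate map $\Phi \colon G^{c'}_S \to \mathrm{Sym}(T_0) \times \mathrm{Sym}(T_1)^{2g+f}$. For $h \in G^{c'}_S$, set $\Phi_0(h)$ to be the permutation of $T_0$ induced by the action of $h$ on degree-$0$ tuples $(m,s)$: because $h$ is $M$-trivial by definition of $G^{c'}_S$, the map $(m,s) \mapsto h(m,s)$ takes the form $(m, s')$ with $s'$ depending only on $s$. For each $\rho \in \{1,\ldots,2g+f\}$, set $\Phi_\rho(h)$ to be the permutation of $T_1$ induced by the action of $h$ on degree-$1$ tuples $(m, v, s)$ with $q(1) = \rho$; again $M$-triviality guarantees the action only permutes the $(v,s)$-coordinate. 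Both $T_0$ and $T_1 = c \times T_0$ are finite, so the codomain is a finite group.

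Second, I will verify that $\Phi$ is a group homomorphism, which is immediate from the definitions once one checks that the operations $w \cdot^\rho$ and their composites preserve $\{(m,s)\}$ and $\{(m,v,s) : q(1) = \rho\}$ up to $M$-action.

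The main content is then injectivity of $\Phi$. Suppose $h \in \ker\Phi$. I will show by induction on $n$ that $h$ fixes every basis element of $W^{c,S,M;\mathbb{Z}}_n$, forcing $h = \mathrm{id}$. The cases $n = 0$ and $n = 1$ are the hypothesis $h \in \ker \Phi$. For the inductive step, one writes $h = w_1 \cdot^{\rho_1} \cdots w_k \cdot^{\rho_k}$ and uses the explicit formulas for $\iota^\rho_w$ and $d^r_{n+1,i}$: the effect of $h$ on a tuple $(m, v_1, \ldots, v_n, s)$ is built from the basic bijections $\sigma^\gamma_t \colon c \to c$ and $\tau^\gamma_x \colon T_0 \to T_0$ together with rack conjugations $u \triangleright (-)$. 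The compatibility relations from \autoref{lemma:three-conditions} (in particular \eqref{equation:first-relation-same}--\eqref{equation:third-relation-different}) let one express the action of $h$ on a degree-$n$ tuple as an iterated composition of actions on degree-$(n-1)$ tuples, combined with the operators $\sigma^{\bar\xi_\rho}_t$ and $\tau^{\bar\xi_\rho}_x$. Since $h$ acts trivially on $T_0$ and on all $T_1^\rho$, each $\sigma$- and $\tau$-piece contributed by $h$ is the identity, so inductively $h$ is trivial on $T_n$.

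The main obstacle will be making this inductive step rigorous: one has to translate the abstract statement "the action on $T_n$ is determined by the action on $T_0$ and on each $T_1^\rho$" into a concrete decomposition of $h$ on a degree-$n$ tuple. This is a bookkeeping exercise in the relations of \autoref{lemma:three-conditions}, and it is precisely the bijectivity of $\sigma^\gamma_t$ and $\tau^\gamma_x$ in a bijective Hurwitz module that allows each relation to be inverted and applied in the reduction; hence the induction closes.
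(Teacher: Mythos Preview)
Your overall strategy---embed $G^{c'}_S$ into a finite permutation group by showing its action on $W^{c,S,M;\mathbb Z}$ is determined in bounded degree---is exactly the paper's. The difference is quantitative: the paper proves the action is determined by degrees $n\le N_0:=(2g+f)(K+1)$ (where $K$ is chosen so that every element of $G^c_{c'}$ is a product of at most $K$ elements of $c$), whereas you assert that degrees $\le 1$ already suffice.

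The gap is in your inductive step. For a generator $w\cdot^\rho$, the formulas for $\iota^\rho_w$ and $d^r$ show that the new $T_0$-coordinate is $\tau^{\bar\xi_\rho}_\chi(s)$ with $\chi$ a rack-conjugate of $w$ by \emph{all} the $v_j$ after the insertion point, and that the action on each $v_j$ is via an element of $G^{c'}_c$ that likewise depends on those elements. The paper observes (final paragraphs of the proof) that for a composite $h$ this dependence factors through the tuple of products $(g_1,\ldots,g_{2g+f})\in (G^c_{c'})^{2g+f}$ of the elements in each rectangle, together with $s$. Degree-$1$ tuples realize at most one nontrivial rectangle product at a time, and only those products that are images of single elements of $c$; your map $\Phi$ therefore cannot distinguish elements of $G^{c'}_S$ that differ only on tuples with nontrivial products in several rectangles simultaneously, or with a rectangle product not represented by a single element. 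The relations in \autoref{lemma:three-conditions} constrain the fixed structure maps $\sigma,\tau,\triangleright$ of $S$, not the automorphisms in $G^{c'}_S$, so they do not yield the recursion you describe; in particular, ``each $\sigma$- and $\tau$-piece contributed by $h$ is the identity'' conflates triviality of the composite $h$ on low degrees with triviality of the individual maps $\sigma,\tau$. The paper's remedy is to use test tuples of length $N_0$ large enough to realize every combination of rectangle products together with an arbitrary probe element $x_\rho$ in each rectangle, so that the action of $h$ on any longer tuple is already determined.
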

\begin{proof}	
	Each element of $G^{c'}_S$ acts 
	$W^{c,S,M;\mathbb Z}$ in a specific way.
	Namely, for a fixed value of $m \in M$, there is a basis of
	the subset of
$W^{c,S,M;\mathbb Z}_n$
spanned by elements of the form $(m, x_1, \ldots, x_n,s)$ with $s \in T_0, x_1, \ldots, x_n
\in c$, with $m \in M$ a fixed value. 
There are $|c|^n \cdot |T_0|$ such elements 
as $x_i \in c$ and $s \in T_0$ vary.
By construction, the action of $G^{c'}_S$ is trivially on $M$.
Therefore, the action of $G^{c'}_S$ on 
$W^{c,S,M;\mathbb Z}$ factors through
a subgroup of
$\prod_{n \geq 0} \aut( c^n \times T_0)$.

To conclude, it suffices to show the action of $G^{c'}_S$ on 
$W^{c,S,M;\mathbb Z}$ is determined by the action on
$W^{c,S,M;\mathbb Z}_n$ for a fixed finite set of values of $n$.
That is, we wish to show
there is some constant $N_0$ so that for
$n > N_0$, the
action of $G^{c'}_S$ on $W^{c,S,M;\mathbb Z}_n$
is determined by its action on
$W^{c,S,M;\mathbb Z}_m$ for $m \leq N_0$.
Suppose that every element of $G^c_{c'}$ can be written as a product of $K$
elements.
Then we claim we may take $N_0 = (2g+f)(K+1)$.
By choosing $N_0$ this way, we claim can find an element of
$W^{c,S,M;\mathbb Z}_{N_0}$
so that the product of the elements in the $\rho$th each scanned rectangle is
$g_\rho \in G^c_{c'}$ and each scanned rectangle contains an element $x_\rho \in
c$:
Said more precisely, for any sequence $g_1, \ldots, g_{2g+f} \in G^{c}_{c'},
x_1, \ldots, x_{2g+f} \in c,$
we can choose
$(m, v_1, \ldots, v_{N_0},s)$ with 
$q_{(m,\ldots, s)}(\rho(K+1)+j) = \rho$ for $1 \leq j \leq K+1$,
$v_{\rho(K+1)+1} = x_\rho$, and $v_{\rho(K+1)+1} \cdots v_{\rho(K+1) + K+1} =
g_\rho \in G^{c}_{c'}$ for each $1 \leq \rho \leq 2g+f$.
Indeed, the above is possible because we can choose $v_{\rho(K+1)+2} \cdots v_{\rho(K+1) + K+1}$ to have product
$\alpha_{x_\rho}^{-1} g_\rho$ by definition of the constant $K$.

Now, we wish to show that knowing the action of a given element of $G^{c'}_S$ on
all such elements $(m, v_1, \ldots, v_{N_0},s)$ as above determines the action on all elements of
$W^{c,S,M;\mathbb Z}_n$ for arbitrary $n$.
Note that if two elements $y_i$ and $y_j$ satisfy
$q_{(m,y_1, \ldots, y_n,s)}(i)= q_{(m,y_1, \ldots, y_n,s)}(j)$,
(meaning that $y_i$ and $y_j$ lie in the same rectangle after scanning,)
then the action of $G^{c'}_S$ on $y_i$ and $y_j$ acts through the same element
of $G^{c'}_c$, and
this action only depends on 
the value of $s$ and the product of the elements in each of the $\rho$ rectangles $1
\leq \rho \leq 2g + f$
(those elements $y_j$ with $q_{(m,y_1, \ldots, y_n,s)}(j)  = \rho$), as follows
from the formula for the action given in
\autoref{definition:module-structure-group}.
Using the collection of elements $(m, v_1, \ldots, v_{N_0},s)$ described above, if we fix the product of the elements in the
$\rho$th rectangle to be $g_\rho$, the action of an element of $G^{c'}_S$ acts
on the $\rho$th rectangle by an element of $G^{c'}_c$ whose value on any
$x_\rho \in c$ is determined by our assumption.
Therefore, the action on 
$W^{c,S,M;\mathbb Z}_n$ is determined by its actions on those tuples $(m, v_1,
\ldots, v_{N_0},s)$ described above, as we wished to show.
\end{proof}

We next state our main equivalence relating to bar constructions of bijective Hurwitz
modules.
\begin{proposition}
	\label{proposition:module-chain-homotopy}
	Let $c$ be a finite rack, 
	$S$ a finite bijective Hurwitz module over $c$,
	and
	${c'} \subset c$ be an $S$-component of $c$.
	There is an equivalence
	\begin{align*}
		& \left( H_0(A_{c/c'})[\alpha_{c'/c'}^{-1}] \otimes_{A_c[\alpha_{c'}^{-1}]}
		A_{c,S}
	[\alpha_{c'}^{-1}]\right)[|G^{c}_{c'}|^{-1},|G^{c'}_{c}|^{-1},|G^{c'}_{S}|^{-1}]
		\\
	&\simeq
	\left( H_0(A_{c/c'})[\alpha_{{c'/c'}}^{-1}] \otimes_{A_{c/{c'}}[\alpha_{{c'}/{c'}}^{-1}] }
	A_{c/{c'},S/{c'}}[\alpha_{{c'}/{c'}}^{-1}]
\right)[|G^{c}_{c'}|^{-1},|G^{c'}_{c}|^{-1},|G^{c'}_{S}|^{-1}].
	\end{align*}
\end{proposition}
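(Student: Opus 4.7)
The plan is to follow the strategy of \autoref{proposition:ring-chain-homotopy}, but working instead with the module complex $W^{c,S,M;k}$ of \autoref{notation:module-chain-complex}. Set $k := \mathbb{Z}[|G^c_{c'}|^{-1}, |G^{c'}_c|^{-1}, |G^{c'}_S|^{-1}]$ and $M := \pi_0(\Hur^{c/c'})[\alpha_{c'/c'}^{-1}]$. By \autoref{lemma:chains-identification}, the left-hand side of the displayed equivalence is computed by $W^{c,S,M;k}$ and the right-hand side by $W^{c/c',S/c',M;k}$. The projection $W^{c,S,M;k} \to W^{c/c',S/c',M;k}$ admits a section of the form $(m, \overline{v}_1, \ldots, \overline{v}_n, \overline{t}) \mapsto (m, U_{c'}(v_1), \ldots, U_{c'}(v_n), t)$, with $U_{c'}$ as in \autoref{notation:u-operator}, provided we also first average the $T_0$-coordinate over its $c$-orbit modulo $c'$, which forces the inversion of $|G^c_{c'}|$ (analogous to the preliminary \autoref{lemma:ring-reduction} in the ring case, and relying on an analog of \autoref{lemma:same-image-relation} for the action on $T_0$). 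It then suffices to construct a chain nullhomotopy of the quotient complex $W^{c,S,M;k}/W^{c/c',S/c',M;k}$.

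I would define the nullhomotopy $H_n$ in close analogy with \eqref{equation:homotopy-ring}. For a tuple $(m, v_1, \ldots, v_n, t)$ whose first antiaveraged entry in the sense of \autoref{definition:averaged-basis-rack} occurs at index $i$, with $\rho := q_{(m, v_1, \ldots, v_n, t)}(i)$, the homotopy sums, over pairs $(x,g) \in E_{c,c'}$ with $x \succ g = w_1 \cdots w_u$ and $1 \le \ell \le u$, terms inserting $w_\ell$ either immediately before $v_i$ (the ``up'' piece $H_n^u$) or immediately after $v_i$ (the ``down'' piece $H_n^d$), with the entry at position $i$ transformed to $(w_1 \cdots w_\ell) \triangleright v_i$ or $(w_1 \cdots w_{\ell-1}) \triangleright v_i$ respectively, and with all inserted elements assigned to the same rectangle $\rho$. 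Filtering the quotient by the number of entries in $c - c'$ exactly as in the ring case, the key telescoping identity
\begin{equation*}
\tfrac{1}{|E_{c,c'}|}\bigl((-1)^i \delta^r_{n+1,i} H^u_n + (-1)^{i+1}\delta^r_{n+1,i+1} H^d_n\bigr)(m, v_1, \ldots, v_n, t) \equiv (m, v_1, \ldots, v_n, t),
\end{equation*}
combined with the analogs of \eqref{equation:left-small}--\eqref{equation:anti-averaged-vanishing} and the vanishing $U_{c'}(v_i)=0$ for antiaveraged $v_i$, would yield $\delta_{n+1}H_n + H_{n-1}\delta_n \equiv \id$ modulo the lower filtration.

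The main obstacle will be handling the surface-geometry contribution to $W^{c,S,M;k}$. Unlike in the ring setting, the right differential $d^r_{n,i}$ of \autoref{notation:module-chain-complex} drags $v_i$ up through the top of the $\rho$th rectangle and back in through its bottom, applying $\sigma^{\overline{\xi}_\rho}_t$ to the label and $\tau^{\overline{\xi}_\rho}_{\chi_i}$ to the $T_0$-coordinate $t$. Consequently, the telescoping $\delta^r H_n^u - \delta^r H_n^d$ does not return cleanly to the original tuple, but rather to one in which $t$ has been replaced by $\tau^{\overline{\xi}_\rho}_{w_u} \circ \cdots \circ \tau^{\overline{\xi}_\rho}_{w_1}(t)$ and each earlier $v_j$ has been twisted by a sequence of $\sigma^{\overline{\xi}_\rho}_{\bullet}$-operators. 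By construction $w_1 \cdots w_u \in G^{c'}_c$, so after summing over $(x,g) \in E_{c,c'}$ the resulting endomorphism of $W^{c,S,M;k}$ is precisely an element of the module structure group $G^{c'}_S$ of \autoref{definition:module-structure-group}; averaging over $G^{c'}_S$ (finite by \autoref{lemma:module-structure-group-finite}) then collapses the result back to $(m, v_1, \ldots, v_n, t)$, which is exactly why we invert $|G^{c'}_S|$. Inverting $|G^{c'}_c|$ handles the averaging of actions on the $v_j$ entries, exactly as in \autoref{proposition:ring-chain-homotopy}. The remaining identities among $\delta^l, \delta^r$ and the two pieces of $H_n$ are verified by reordering arguments paralleling \eqref{equation:h-then-delta-ring-left-big}--\eqref{equation:delta-then-h-ring-left-big}, with additional bookkeeping for the twists introduced by the rectangle structure.
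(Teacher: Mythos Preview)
Your proposal has the right ingredients but a genuine structural gap in how the homotopy is assembled.

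In the ring case, the telescoping in \eqref{equation:ring-telescope} worked by sending the inserted $w_s$ \emph{rightward}, where it cancelled a factor $w_s^{-1}$ placed in the discrete module $P$. In the module complex $W^{c,S,M;k}$ there is no discrete module on the right: the rightmost slot is $s\in T_0$, which cannot absorb a $w_\ell^{-1}$. Your displayed identity using $\delta^r_{n+1,i}$ and $\delta^r_{n+1,i+1}$ therefore cannot be set up as written. The paper's $H_n$ instead places $w_e^{-1}$ on the \emph{left} in $M$ and telescopes via the \emph{left} differentials $d^l_{n+1,i}$, $d^l_{n+1,i+1}$; see \eqref{equation:homotopy-module-unaveraged} and \eqref{equation:unaveraged-module-leftward}.

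With this left-telescoping $H_n$, one does not get a nullhomotopy directly: modulo the filtration one finds
\[
(d_{n+1}H_n + H_{n-1}d_n)(m,v_1,\ldots,v_n,s) \;=\; (m,v_1,\ldots,v_n,s) + (-1)^i H_{n-1}\, d^r_{n,i}(m,v_1,\ldots,v_n,s),
\]
as in \eqref{equation:almost-h-homotopy}. The extra term is not a $G^{c'}_S$-translate of the original tuple, and averaging it away in one stroke does not work as you describe. The paper's remedy is a genuine two-step factorization through the subcomplex $\overline{W}^{c,S,M;k}$ of $G^{c'}_S$-invariants. First, a \emph{separate} nullhomotopy $K_n$ summed over $E_{c',S}$ (built from $G^{c'}_S$ and $(G^{c'}_{c'})^{2g+f}$, using the insertion operators $\iota^\rho_x$ and the $w\cdot^\rho$ actions) shows $\overline{W}^{c,S,M;k}\hookrightarrow W^{c,S,M;k}$ is a homology equivalence (\autoref{lemma:module-s-homotopy}). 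Second, on $\overline{W}^{c,S,M;k}$ the extra term vanishes: for antiaveraged $v_i=\sum_y \alpha_y\, y$, the tuples $d^r_{n,i}(m,\ldots,y,\ldots,s)$ for the various $y$ differ precisely by $G^{c'}_S$-actions (this is where the definition of $G^{c'}_S$ via $d^r\circ\iota^\rho$ is used), so $U^{c'}_S$ makes them coincide and $\sum_y\alpha_y=0$ kills the sum (\autoref{lemma:module-c-homotopy}). Thus the $G^{c'}_S$-averaging is applied \emph{before} the ring-style homotopy, not folded into its telescoping.
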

We give the proof after introducing some notation.
	\begin{notation}
		\label{notation:module-homotopy-notation}
		Let $c$ be a finite rack, let $S$ be a finite bijective Hurwitz module over
		$c$ and let $c' \subset c$ be an $S$ component.
		Let $k := \mathbb
		Z[|G^{c}_{c'}|^{-1},|G^{c'}_{c}|^{-1},|G^{c'}_{S}|^{-1}]$.
	Let 
	$M :=\pi_0(\Hur^{c/c'})[\alpha_{c'/c'}^{-1}]$.

	There is a projection
	$W^{c,S,M;k} \to W^{c/c',S/c',M;k}$. This has a section given by a
	map
	$W^{c/c',S/c',M;k} \to W^{c,S,M;k}$ defined as follows.
	The source is spanned by elements of the form $(m, \overline{v}_1,
	\cdots, \overline{v}_n,p)$
	where
	$m \in M$,
	$v_i \in c$ with image $\overline{v} \in c/c'$,
	and $p \in S/c',$ which we can think of as a $c'
	\times \pi_1(\Sigma_{g,f}^1)$ orbit of $T_0$.
	The section is given by 
	$(m, \overline{v}_1, \ldots, \overline{v}_n,p) \mapsto (m,U_{c'}(v_1),
		\ldots, U_{c'}(v_n),\frac{1}{|p|}
		\sum_{t \in p} t)$.
	\end{notation}
\begin{proof}[Proof of \autoref{proposition:module-chain-homotopy} assuming \autoref{lemma:module-s-homotopy} and
	\autoref{lemma:module-c-homotopy}]
	First, by 
\autoref{lemma:chains-identification} we can identify the two sides of the
statement with
$W^{c,S,M;k}$ and $W^{c/c',S/c',M;k}$,
so we only need show these two complexes are homotopic.
This follows from composing the homotopies defined below in 
\autoref{lemma:module-s-homotopy}
and
\autoref{lemma:module-c-homotopy}.
\end{proof}

To conclude the proof of \autoref{proposition:module-chain-homotopy} it remains
to prove
\autoref{lemma:module-s-homotopy} and
\autoref{lemma:module-c-homotopy}.
This will occupy the remainder of the section.

We next define $\overline{W}^{c,S,M;k}$ to
	as the subcomplex invariant under the action of 
	$G^{c'}_{S}$ in \autoref{notation:loop-averaged}.
	Then, \autoref{lemma:module-s-homotopy}
	shows 
${W}^{c,S,M;k}$ is homotopic to 
$\overline{W}^{c,S,M;k}$ and then 
\autoref{lemma:module-c-homotopy} shows
$\overline{W}^{c,S,M;k}$ is homotopic to $W^{c/c',S/c',M;k}$.

\begin{notation}
	\label{notation:loop-averaged}
	With notation as in \autoref{notation:module-homotopy-notation},
	there is an averaging operator
	$U^{c'}_S: W^{c,S,M;k} \to W^{c,S,M;k}$ which sends 
	$(m,v_1, \ldots, v_n, s) \mapsto \frac{1}{|G^{c'}_{S}|} \sum_{h \in
G^{c'}_{S}} (m, v_1, \ldots, v_n,s)^h$, where the notation 
$(m, v_1, \ldots, v_n,s)^h$ denotes the action defined in
\autoref{definition:module-structure-group}.
Let $\overline{W}^{c,S,M;k}$ denote the image of $U^{c'}_S$.
\end{notation}

\begin{notation}
	\label{notation:e-set-for-S}
	With notation as in 
	\autoref{definition:module-structure-group},
	for each element $h \in G^{c'}_S$ choose a representative way to write $h$ in the form
	$w_{i_h}^h \cdot^{\rho_{i_h}^h} \cdots w_2^h \cdot^{\rho_2^h} w_1^h
	\cdot^{\rho_1^h}$
	with each $w_i^h \in c'$ and $1 \leq \rho_i^h \leq 2g+f$.
	Define the set
	\begin{align*}
		E_{c',S} := \{ (z,h)	: h \in G^{c'}_S, 
		z = (z_1, \ldots, z_{2g+f}) \in (G^{c'}_{c'})^{2g+f}\}
	\end{align*}
	and use the notation
	$z \succ h$ to denote the tuple
	$(\rho^h_{i_h}, \ldots, \rho^h_1; z_{\rho_{i_h}^h} \triangleright
	w_{i_h}^h, \ldots, z_{\rho^h_1} \triangleright w_1^h)$.
\end{notation}

\begin{remark}
	\label{remark:size-of-E}
	By \autoref{lemma:module-structure-group-finite}, 
	$|E_{c',S}|$ is a finite set and 
	any prime dividing its order divides either $|G^{c'}_S|$ or
	$|G^{c'}_{c'}|$.
Note that there is a surjective map $G^{c'}_{c} \subset G^{c'}_{c'}$ coming from
restricting the automorphism of $c$ to one of $c'$, so any prime dividing
$|G^{c'}_{c'}|$
also divides $|G^{c'}_c|$.
\end{remark}

We now verify that each element of $E_{c',S}$ corresponds to an element of
$G^{c'}_S$.
\begin{lemma}
	\label{lemma:z-action-well-defined}
	For
	$h \in G^{c'}_{S}$ in the form
	$w := w^h_{i_h} \cdot^{\rho^h_{i_h}} \cdots w^h_1 \cdot^{\rho^h_1}$
	and any $z := (z_1, \ldots, z_{2g+f}) \in (G^{c'}_{c'})^{2g+f}$,
	we also have that	
	$w^z := (z_{\rho^h_{i_h}} \triangleright w^h_{i_h}) \cdot^{\rho^h_{i_h}} \cdots
		(z_{\rho^h_1} \triangleright w^h_1) \cdot^{\rho^h_1}$
acts by an element of $G^{c'}_S$.
\end{lemma}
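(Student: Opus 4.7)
The element $w^z$ is already a product of operations of the form $x \cdot^{\rho}$ for $x \in c'$, so it automatically lies in $\aut(W^{c,S,M;\mathbb Z})$ as a composition of such operations. Thus the only thing to verify is that $w^z$ induces the identity on $M$. The plan is to deduce this from the corresponding property of $h$ via the projection $\pi \colon W^{c,S,M;k} \to W^{c/c',S/c',M;k}$, using that $w_i^h$ and $z_{\rho_i^h} \triangleright w_i^h$ have the same image in $c/c'$.

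I will first check that each generator $x \cdot^{\rho}$ of $G^{c'}_S$ is compatible with $\pi$ in the sense that $\pi(x \cdot^{\rho}(m,v_1,\ldots,v_n,s))$ depends only on $\pi(m,v_1,\ldots,v_n,s)$ and on the image $\overline{x} \in c/c'$. This is essentially a repackaging of the well-definedness of the quotient bijective Hurwitz module $S/c'$ over $c/c'$ established in \autoref{lemma:quotient-module-well-defined}: unwinding the formulas for $\iota^{\rho}_x$ and $d^r$ in \autoref{notation:module-chain-complex}, the new entries involve only $\triangleright$, $\sigma^{\overline\xi_\rho}$ and $\tau^{\overline\xi_\rho}$ applied to the data, and each of these descends to the quotient rack $c/c'$ and quotient module $S/c'$. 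Moreover, the $M$-coordinate changes by right multiplication by an element of $\pi_0 \Hur^c$ which, since the action of $\pi_0 \Hur^c$ on $M = \pi_0 \Hur^{c/c'}[\alpha_{c'/c'}^{-1}]$ factors through $\pi_0 \Hur^{c/c'}$, depends only on the images in $c/c'$. Consequently each $x \cdot^{\rho}$ descends under $\pi$ to an operator $\overline{x \cdot^{\rho}}$ on $W^{c/c',S/c',M;k}$, and this operator depends only on $\overline{x}$.

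Next, I will observe that since $z = (z_1,\ldots,z_{2g+f}) \in (G^{c'}_{c'})^{2g+f}$ and $G^{c'}_{c'}$ is generated by the actions $y \triangleright(-)$ for $y \in c'$, the element $z_\rho \triangleright w$ is obtained from $w$ by iteratively applying $y \triangleright$ for $y \in c'$. By the very definition of the quotient rack $c/c'$ in \autoref{definition:quotient-rack}, such operations fix the image in $c/c'$, so $\overline{z_{\rho_i^h} \triangleright w_i^h} = \overline{w_i^h}$ in $c/c'$. Combining with the previous paragraph, the descended actions of $h$ and of $w^z$ on $W^{c/c',S/c',M;k}$ coincide: both equal $\overline{w_{i_h}^h} \cdot^{\rho_{i_h}^h} \cdots \overline{w_1^h} \cdot^{\rho_1^h}$.

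Finally, since $\pi$ is the identity on the $M$-coordinate (both complexes have the same left factor $k\{M\}$), the induced map of $h$ on $M$ equals the map on the $M$-component induced by its descent, and likewise for $w^z$. Therefore $w^z$ and $h$ induce the same map on $M$; since $h \in G^{c'}_S$ induces the identity on $M$, so does $w^z$. I expect the main technical obstacle to be verifying cleanly that the formulas for $\iota^{\rho}$ and $d^r$ really do descend to the quotient at the level of chain complexes, but this essentially amounts to reading off the compatibility from \autoref{lemma:quotient-module-well-defined} together with the compatibility of $\triangleright$ on $c/c'$ encoded in \autoref{lemma:quotient-well-defined}.
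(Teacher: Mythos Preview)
Your argument is correct and follows essentially the same strategy as the paper's proof: both reduce to showing that $w^z$ induces the same map on the $M$-coordinate as $h$ does, by exploiting that each $w^h_i$ and $z_{\rho^h_i}\triangleright w^h_i$ have the same image in $c/c'$. The paper's proof is terser, citing only \autoref{lemma:dependence-on-image} and asserting directly that ``the action of $w\cdot^\rho$ on $M$ agrees with the action of $(x\triangleright w)\cdot^\rho$ on $M$,'' from which the conclusion for the full composite is claimed to follow.

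Your version is somewhat more explicit on a point the paper leaves implicit: after the first step, the non-$M$ coordinates of the two outputs may genuinely differ in $c$ and $T_0$ (only their images in $c/c'$ and $\overline{T}_0$ agree), so the single-step comparison does not by itself immediately propagate to the composite. Your descent through $\pi\colon W^{c,S,M;k}\to W^{c/c',S/c',M;k}$ handles this cleanly, and you are right that the relevant input is \autoref{lemma:quotient-module-well-defined} (of which \autoref{lemma:dependence-on-image} is the main ingredient). So your route and the paper's are the same in spirit, with yours filling in the inductive bookkeeping that the paper elides.
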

\begin{proof}
Suppose $w$ above acts by an element
$h \in G^{c'}_S$ and $w^z$ acts by an element $h^z$. We claim $h^z \in G^{c'}_{S}$.
Indeed, using 
\autoref{lemma:dependence-on-image}, the action of
$w \cdot^\rho$ on $M$ agrees with the action of
$(x \triangleright w) \cdot^\rho$ on $M$ for any $x \in c'$.
From this it follows that 
$h^z$ acts the same way on $M$ that $h$ acts.
Since $h$ acts trivially on $M$, $h^z$ acts trivially on $M$ as well, implying $h^z \in
G^{c'}_S$.
\end{proof}

\begin{figure}
	\includegraphics[scale=.4]{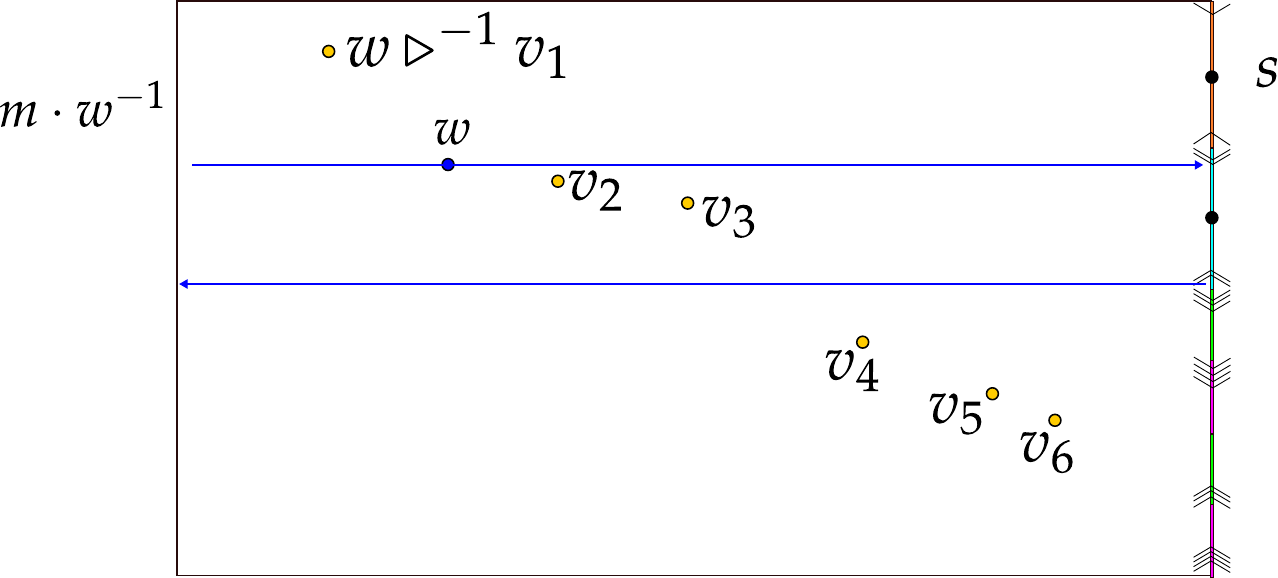}
\caption{
	This is a visualization of action defined in
\autoref{definition:module-structure-group}. Specifically, it depicts the action
	of $w \cdot^\rho$ with $w \in c$ and $\rho = 2$, corresponding to the
	second rectangle from the top.
It is used in the proof of \autoref{lemma:module-s-homotopy}.
	The homotopy $K$ there can be thought of as applying a sequence of
	such homotopies, corresponding to elements of
$E_{c',S}$,
and then averaging over these $|E_{c',S}|$ operations.
}
\label{figure:module-rectangle-action}
\end{figure}

With all the above notation set up,
we verify the first of two homotopies needed for
\autoref{proposition:module-chain-homotopy}.

\begin{lemma}
	\label{lemma:module-s-homotopy}
	With notation as in \autoref{notation:module-homotopy-notation},
	the inclusion 
	$\overline{W}^{c,S,M;k} \to W^{c,S,M;k}$ induces a homology equivalence.
\end{lemma}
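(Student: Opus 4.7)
The plan is to follow the template of \autoref{proposition:ring-chain-homotopy}, adapted to the module setting. First, observe that since $|G^{c'}_S|$ is invertible in $k$, the averaging operator $U^{c'}_S$ is a projection of $W^{c,S,M;k}$ onto the subcomplex $\overline{W}^{c,S,M;k}$, so the inclusion has $U^{c'}_S$ as a retraction. Therefore it suffices to exhibit a nullhomotopy of the quotient complex $Q := W^{c,S,M;k}/\overline{W}^{c,S,M;k}$. A basis element $(m, v_1, \ldots, v_n, s)$ of $Q$ may be chosen so that $U^{c'}_S(m, v_1, \ldots, v_n, s) = 0$. Before constructing the nullhomotopy, I verify that $U^{c'}_S$ commutes with the differential: each generator $w \cdot^\rho \in G^{c'}_S$ is defined via $d^r_{n,i} \circ \iota^\rho_w$, and one checks using \autoref{definition:module-structure-group} and the relations from \autoref{lemma:three-conditions} that these generators induce chain automorphisms.

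Next, I put the filtration $F^\bullet Q$ where $F^e$ is spanned by tuples $(m, v_1, \ldots, v_n, s)$ with at most $e$ of the $v_j$ lying in $c - c'$, and I construct a chain homotopy $K_n \colon Q_n \to Q_{n+1}$ using the set $E_{c',S}$ from \autoref{notation:e-set-for-S}. For a representative element $(m, v_1, \ldots, v_n, s)$ and each $(z, h) \in E_{c',S}$ with $z \succ h = (\rho_{i_h}, \ldots, \rho_1; w_{i_h}, \ldots, w_1)$, the formula averages a telescoping sum of terms obtained by inserting $w_s$ at the start of the $\rho_s$-th scanned rectangle via $\iota^{\rho_s}_{w_s}$, with upper/lower variants analogous to $H^u_n$ and $H^d_n$ in \eqref{equation:homotopy-ring}. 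The scale factor is $1/|E_{c',S}|$, which is invertible in $k$ by \autoref{remark:size-of-E}.

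The identity to verify is that $d_{n+1} K_n + K_{n-1} d_n \equiv \mathrm{id} - U^{c'}_S$ modulo $F^{e-1}$. Mirroring the computation \eqref{equation:ring-telescope-computation}, the key terms $\delta^r_{n+1,\bullet} \circ K_n^u$ and $\delta^r_{n+1,\bullet} \circ K_n^d$ in the appropriate $\rho$-th rectangle produce a telescoping cancellation whose total is $(m, v_1, \ldots, v_n, s) - (m, v_1, \ldots, v_n, s)^h$, and after averaging over $E_{c',S}$ and using that $G^{c'}_{c'}$ transitively acts, one obtains $\mathrm{id} - U^{c'}_S$. The remaining paired terms $\delta^{l,r}_{n+1,j+1} K_n$ and $K_{n-1} \delta^{l,r}_{n,j}$ for $j$ outside the insertion point cancel once one reindexes the sum $E_{c',S} \ni (z, h) \mapsto (v_j \cdot z, h)$, as in \eqref{equation:h-then-delta-ring-left-big}--\eqref{equation:delta-then-h-ring-left-big}; working modulo $F^{e-1}$ lets us assume $v_j \in c'$ so this reindexing is available. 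Because the identity acts nilpotently on the associated graded, the filtration is finite in each degree, and each $F^e/F^{e-1}$ is contractible, $Q$ is nullhomotopic.

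The main obstacle is bookkeeping: each $\iota^\rho_w$ is sensitive to the rectangle-tracking functions $q, u, b$ from \autoref{notation:module-chain-complex}, so inserting an element at position $i$ shifts the $\rho$-assignments of all subsequent elements, and the right differential involves the twisted expressions $\zeta_j$ and the automorphism $\tau^{\overline{\xi}_\rho}$ from \eqref{equation:zeta-right-differential}. The verification that the reindexing $(z, h) \mapsto (v_j \cdot z, h)$ preserves $E_{c',S}$ and produces the correct chain cancellation relies on \autoref{lemma:z-action-well-defined}. Confirming that the telescope endpoint $(m, v_1, \ldots, v_n, s)^{h}$ indeed realizes the action of $h \in G^{c'}_S$ (rather than just some word in $G^c_{c'}$) is where the hypothesis that $h$ acts trivially on $M$ is essential, using \autoref{lemma:dependence-on-image} to identify labels up to $c'$-orbits on the right-hand module factor.
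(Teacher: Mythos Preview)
Your overall strategy matches the paper's exactly: the filtration by the number of $v_j$ lying in $c - c'$, the averaging over $E_{c',S}$, the target identity $d_{n+1}K_n + K_{n-1}d_n \equiv \mathrm{id} - U^{c'}_S$ on the associated graded, and the reindexing trick for the non-telescoping terms. However, the specific form of the homotopy you describe is miscast and seems to conflate this lemma with \autoref{proposition:ring-chain-homotopy} and \autoref{lemma:module-c-homotopy}. In those arguments the pair $H^u_n, H^d_n$ exists because there is a distinguished antiaveraged entry $v_i$, and one inserts $w_s$ just above or just below it. Here there is no such position: the averaging is over $G^{c'}_S$, which acts on the entire tuple by looping elements around rectangles. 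The paper's homotopy accordingly has a \emph{single} term per step,
\[
K^{(z,h)}_{e,n}(m, v_1, \ldots, v_n, s) \;=\; (-1)^{i_e-1}\,\iota^{\rho_e}_{x_e}\!\Bigl(x_{e-1} \cdot^{\rho_{e-1}} \cdots\, x_1 \cdot^{\rho_1}(m, v_1, \ldots, v_n, s)\Bigr),
\]
and the telescope comes from the pair $d^l_{i_e,n+1}$ (which undoes $\iota^{\rho_e}_{x_e}$ and returns the previous stage) and $d^r_{i_e,n+1}$ (which by definition is $x_e \cdot^{\rho_e}$, advancing to the next stage). So the cancellation is between $d^l$ and $d^r$ at the same insertion index, not between $d^r$ applied to an upper and a lower variant.

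One further correction on the reindexing: the bijection on $E_{c',S}$ actually needed is $(z_1,\ldots,z_{2g+f},h)\mapsto(v_j\cdot z_1,\ldots,v_j\cdot z_m,z_{m+1},\ldots,z_{2g+f},h)$ for a value of $m$ depending on the rectangle of $v_j$ relative to the insertion, not the uniform action $(z,h)\mapsto(v_j\cdot z,h)$ on all coordinates; the paper notes this map is a bijection for every $m$, but which $m$ you use matters for the cancellation in \eqref{equation:module-average-cancel}. The paper does not separately verify that $U^{c'}_S$ commutes with the differential --- it is implicit in the way \eqref{equation:chain-homotopy-expression-s} is used --- so your observation that this should be checked is a reasonable addition.
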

\begin{proof}
	We prove this by exhibiting a suitable chain homotopy.
Any element of 
$W^{c,S,M;k}$ can be written as a linear combination of elements of the form
$(m, v_1, \ldots, v_n,s)$ with $m \in M, s \in T_0, v_i \in k\{c\}$.
We will produce a nullhomotopy of
$W^{c,S,M;k}/\overline{W}^{c,S,M;k}$.
Using notation from \autoref{notation:module-chain-complex}, 
\autoref{definition:module-structure-group}, and 
\autoref{notation:e-set-for-S},
we define
$K_n: W^{c,S,M;k}_n/\overline{W}_n^{c,S,M;k} \to
W^{c,S,M;k}_{n+1}/\overline{W}_{n+1}^{c,S,M;k}$
by
\begin{equation}
\begin{aligned}
	\label{equation:homotopy-module-averaged}
	&K_n(m, v_1, \ldots, v_n, s) := 
	\frac{1}{|E_{c',S}|} \cdot 
	\sum_{\substack{(z,h) \in
	E_{c',S} \\ z \succ h = 
(\rho_\ell^{(z,h)}, \ldots, \rho_1^{(z,h)}; x_\ell^{(z,h)}, \ldots, x_1^{(z,h)})}} 
			\sum_{e = 1}^{\ell} 
K^{(z,h)}_{e,n}(m, v_1, \ldots, v_n, s) \\
&K^{(z,h)}_{e,n}(m, v_1, \ldots, v_n, s) :=\\
	&
	(-1)^{i_e^{(z,h)} -1}
	\iota^{\rho_e^{(z,h)}}_{x_e^{(z,h)}}
	(x_{e-1}^{(z,h)} \cdot^{\rho_{e-1}^{(z,h)}}
		(x_{e-2}^{(z,h)} \cdot^{\rho_{e-2}^{(z,h)}} \cdots (x_1^{(z,h)}
				\cdot^{\rho_1^{(z,h)}}(m,
v_1, \ldots, v_n, s)) \cdots )),
	\end{aligned}
\end{equation}
where above $i_e^{(z,h)}$ is the minimal index such that $q_{(m, v_1, \ldots,
v_n,s)}(i_e^{(z,h)}) = \rho_e^{(z,h)}$.
Observe that $|E_{c',S}|$ is invertible in $k$ via the definition of $k$ and the computation of the size of
$E_{c',S}$ in \autoref{remark:size-of-E}.
We use the filtration $F^\bullet$ defined so that
$F^\omega \subset W^{c,S,M;k}$ is the subcomplex spanned by those tuples $(m, v_1,
\ldots, v_n,s)$ so that at most $e$ elements among $v_1, \ldots, v_n$
lie in
in $k\{c- c'\}$.
With this definition in hand, 
we claim 
\begin{align}
	\label{equation:chain-homotopy-expression-s}
	(d_{n+1} K_n + K_{n-1} d_n - \id)(m, v_1, \ldots, v_n,s) = -U^{c'}_S(m, v_1,
	\ldots, v_n, s)
\end{align}
on the associated graded of the filtration $F^\omega$ (meaning that we assume
the input lies in $F^\omega$ and ignore terms in $F^{\omega-1}$).
The claim produces a nullhomotopy of the complex
$W^{c,S,M;k}/\overline{W}^{c,S,M;k}$ on the associated graded of $F^\bullet$, so
implies that the complex is nullhomotopic, which will conclude the proof.

Now, the verification of 
\eqref{equation:chain-homotopy-expression-s} proceeds in a similar fashion to
the homotopies we saw earlier in \autoref{subsection:chain-homotopy-space}.
Namely, one can verify via a telescoping argument similar to
\eqref{equation:ring-telescope-computation} that
\begin{equation}
\begin{aligned}
	\label{equation:module-average-telescope}
	&\left(
	\sum_{\substack{(z,h) \in
	E_{c',S} \\ z \succ h = 
(\rho_\ell^{(z,h)}, \ldots, \rho_1^{(z,h)}; x_\ell^{(z,h)}, \ldots, x_1^{(z,h)})}}
 \sum_{e = 1}^{\ell}
\left( (-1)^{i_e^{(z,h)}-1}d^l_{i_e^{(z,h)},n+1} K^{(z,h)}_{e,n}
\right.\right.
\\
&\left.\left.+
		(-1)^{i_e^{(z,h)}}d^r_{i_e^{(z,h)},n+1}
	K^{(z,h)}_{e,n}\right)
-\id \right)
(m, v_1, \ldots, v_n,s)
= -U^{c'}_S(m, v_1,
\ldots, v_n, s),
\end{aligned}
\end{equation}
using the fact that $d^l_{i_e^{(z,h)},n+1} \iota^{\rho(\gamma_e)}_{i_e^{(z,h)}}(m, v_1, \ldots,
v_n,s) = (m, v_1, \ldots, v_n,s)$.
(One way to verify this is to expand each $v_i$ as a linear combination
of elements of $c$, and then to verify the above equality for each term in the
linear combination.)
Next, we 
use a similar computation to that carried out in \eqref{equation:h-then-delta-ring-left-big}
and \eqref{equation:delta-then-h-ring-left-big}.
We claim that one can similarly verify that, on $F^{\omega}$,
\begin{align}
	\label{equation:module-average-cancel}
	\sum_{\substack{(z,h) \in
	E_{c',S} \\ z \succ h = 
(\rho_\ell^{(z,h)}, \ldots, \rho_1^{(z,h)}; x_\ell^{(z,h)}, \ldots, x_1^{(z,h)})}}
 \sum_{e = 1}^{\ell}
(-1)^j d^\nu_{j,n+1} K^{(z,h)}_{e,n}
	+
	(-1)^{j'} K^{(z,h)}_{e,n-1}
d^\nu_{j',n} = 0,
\end{align}
modulo $F^{\omega-1}$,
for $\nu \in \{l,r\}$
and $j' = j$ if $j < i_e^{(z,h)}$ while $j' = j-1$ if $j \geq i_e^{(z,h)}$.
The above verification
relies on 
\autoref{lemma:z-action-well-defined}
and the fact that the map $( (z_1, \ldots, z_{2g+f}) ,h) \mapsto ( ( v_j \cdot z_1,
\ldots, v_j \cdot z_m, z_{m+1}, \ldots, z_{2g+f}) ,h)$ is a bijection for any $1
\leq m \leq 2g+f$,
where $v_j \cdot z_t$ denotes multiplication in $G^{c'}_{c'}$,

Summing \eqref{equation:module-average-telescope} and
\eqref{equation:module-average-telescope}
and keeping track of signs verifies
\eqref{equation:chain-homotopy-expression-s}, completing the proof.
\end{proof}

\begin{figure}
	\includegraphics[scale=.4]{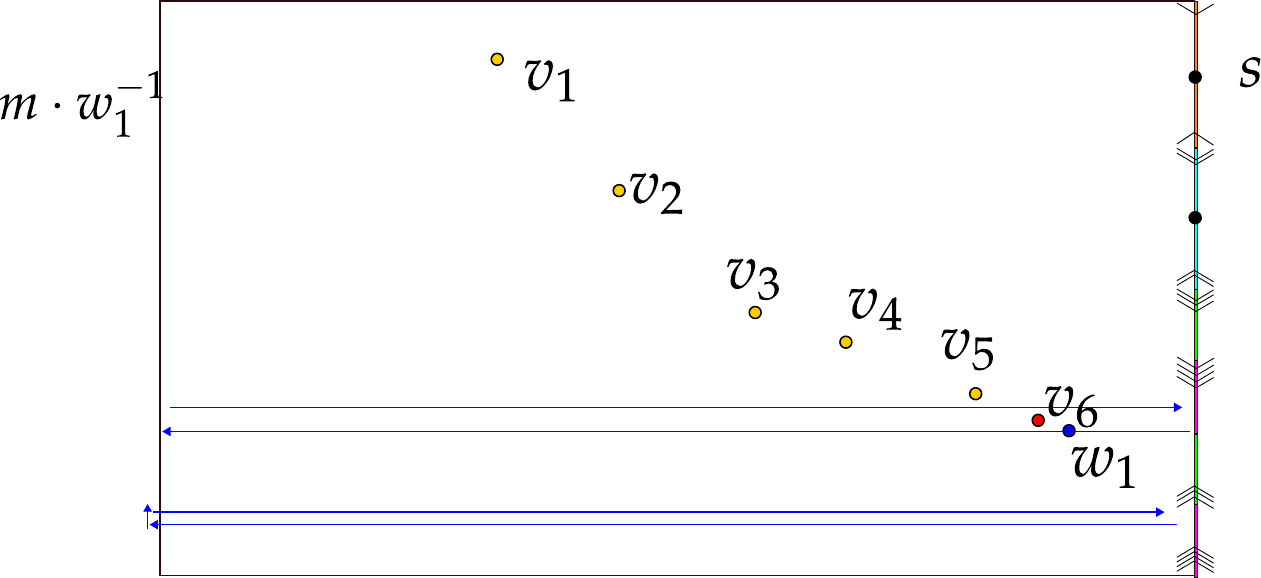}
\caption{
	This is a visualization of part of the nullhomotopy $H$ in the proof of
\autoref{lemma:module-c-homotopy}.
	The $v_i$ are written in the averaged basis, and the yellow
	$v_1,v_2,v_4$
	are averaged while the red dot $v_6$ is antiaveraged. We perform an
	allowable move directly above $v_6$ and then directly below $v_6$ so
	that the result sends $v_6$ to $w_1 \triangleright v_6$.
	In the homotopy, we then repeat this for 
	$w_2, \ldots, w_t$ so that $v_6$ is changed to $(w_1 \cdots w_t) \triangleright v_6$.
	Now, $w_1 \cdots w_t$ was made to realize one of the group elements in
	$G^{c}_{c'}$, and averaging over all such elements
	modifies $v_6$ to $U_{c'}(v_6)$, which vanishes because $v_6$
	is antiaveraged.
	This operation may not be compatible with other $v_j$ hit the boundary,
	but by summing over all of $E_{c, c'}$, it becomes compatible.
}
\label{figure:module-vrack-nullhomotopy}
\end{figure}

Combined with \autoref{lemma:module-s-homotopy},
the next lemma completes the proof of
\autoref{proposition:module-chain-homotopy}.
\begin{lemma}
	\label{lemma:module-c-homotopy}
	With notation as in \autoref{notation:module-homotopy-notation} and
	\autoref{notation:loop-averaged},
	the map 
	$\overline{W}^{c,S,M;k} \to W^{c/c',S/c',M;k}$ is an equivalence.
\end{lemma}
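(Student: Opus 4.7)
The plan is to mimic the proof of \autoref{proposition:ring-chain-homotopy}, adapting the ring-level averaging nullhomotopy to the module setting using the insertion operators $\iota^\rho_x$ of \autoref{notation:looping-operator}. First, I would verify that the section $\sigma : W^{c/c',S/c',M;k} \to W^{c,S,M;k}$ from \autoref{notation:module-homotopy-notation} lands in the $G^{c'}_S$-invariants $\overline{W}^{c,S,M;k}$: each $U_{c'}(v_i)$ is invariant under the relevant looping actions since these are already averaged over $G^{c'}_c$, and $\frac{1}{|p|}\sum_{t\in p}t$ is invariant under any $c'\times\pi_1(\Sigma^1_{g,f})$-action by construction of $S/c'$, so each generator $w\cdot^\rho$ of $G^{c'}_S$ fixes it. The composition $\pi\circ\sigma$, where $\pi:\overline{W}^{c,S,M;k}\to W^{c/c',S/c',M;k}$ is the projection, is the identity, so it suffices to show that $\sigma\circ\pi$ is chain homotopic to the identity on $\overline{W}^{c,S,M;k}$, equivalently that the quotient complex $\overline{W}^{c,S,M;k}/\sigma(W^{c/c',S/c',M;k})$ is nullhomotopic.

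Decompose each label $v_j\in k\{c\}$ in the averaged/antiaveraged basis of \autoref{definition:averaged-basis-rack} and $s\in k\{T_0\}$ into its orbit-averaged and orbit-antiaveraged parts under the $c'\times\pi_1(\Sigma^1_{g,f})$ action defining $S/c'$. Elements of the quotient are spanned by tuples $(m,v_1,\ldots,v_n,s)$ with either a minimal index $i$ such that $v_1,\ldots,v_{i-1}$ are averaged and $v_i$ is antiaveraged, or with $s$ orbit-antiaveraged and all $v_j$ averaged. Equip the quotient with the filtration $F^\bullet$ where $F^\omega$ is spanned by those tuples having at most $\omega$ labels supported in $k\{c-c'\}$, and produce a chain homotopy $H_n$ on each associated graded piece; the main case (antiaveraged $v_i$) is
\begin{equation*}
H_n(m,v_1,\ldots,v_n,s)=\tfrac{1}{|E_{c,c'}|}\bigl(H^u_n+H^d_n\bigr)(m,v_1,\ldots,v_n,s),
\end{equation*}
where $H^u_n$ and $H^d_n$ are sums over $(x,g)\in E_{c,c'}$ with $x\succ g=w_1\cdots w_t$ of the $\iota^\rho_{w_e}$-insertions at the position of $v_i$ in the rectangle $\rho:=q_{(m,\ldots,s)}(i)$, with left-end corrections coming from the successive products $(w_1\cdots w_{e-1})\triangleright v_i$, exactly as pictured in \autoref{figure:module-vrack-nullhomotopy}. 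The $s$-antiaveraged case is handled by a parallel homotopy that inserts an averaged element of $c'$ adjacent to the braid-contributing rectangle to loop off a single $c'\times\pi_1$-translate of $s$.

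The telescoping computation on the associated graded, analogous to \eqref{equation:ring-telescope-computation}, gives
\begin{equation*}
\bigl(d_{n+1}H_n+H_{n-1}d_n\bigr)(m,v_1,\ldots,v_n,s)\equiv(m,v_1,\ldots,v_n,s)-\bigl(m,v_1,\ldots,v_{i-1},U_{c'}(v_i),v_{i+1},\ldots,v_n,s\bigr)
\end{equation*}
modulo $F^{\omega-1}$, and the second term vanishes because $v_i$ is antiaveraged. The remaining cross-terms $\delta^{l/r}_{n+1,j}H_n\pm H_{n-1}\delta^{l/r}_{n,j}$ cancel via the bijection $(x,g)\mapsto(v_j\cdot x,g)$ on $E_{c,c'}$, as in the proof of \eqref{equation:left-big}, together with the rack compatibilities (\ref{equation:first-relation-same})--(\ref{equation:third-relation-different}) controlling how $\sigma^\gamma$ and $\tau^\gamma$ interact across rectangles. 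Nullhomotopy on each graded piece implies the quotient is acyclic, proving the lemma.

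The main obstacle will be verifying that $H_n$ descends through the $G^{c'}_S$-averaging defining $\overline{W}^{c,S,M;k}$: by \autoref{definition:module-structure-group} the elements of $G^{c'}_S$ are precisely the words $w_k\cdot^{\rho_k}\cdots w_1\cdot^{\rho_1}$ acting trivially on $M$, and the building blocks $\iota^\rho_x$ of the homotopy are essentially one ``half'' of such an action; keeping track of how the insertions interact with an arbitrary $G^{c'}_S$-word, using \autoref{lemma:z-action-well-defined} and the closure properties of $E_{c,c'}$ under $G^{c'}_{c'}$-translation, requires care. Secondarily, one must confirm that the analogous homotopy for the $s$-antiaveraged case produces the orbit-averaging $\frac{1}{|p|}\sum_{t\in p}t$, which relies on the transitivity of the $c'\times\pi_1(\Sigma^1_{g,f})$-action on each orbit and on the primes dividing the orbit size being invertible in $k$, as guaranteed by \autoref{remark:size-of-E} together with the inversion of $|G^{c'}_S|$.
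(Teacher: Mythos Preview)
Your overall strategy matches the paper's: define $H_n$ via $E_{c,c'}$-averaged insertions near the first antiaveraged $v_i$, use the filtration by the number of labels in $k\{c-c'\}$, and telescope. However, there is a genuine gap in your telescoping claim, and your treatment of the $s$-coordinate is not how the paper proceeds.

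The displayed equation you claim,
\[
(d_{n+1}H_n+H_{n-1}d_n)(m,v_1,\ldots,v_n,s)\equiv(m,\ldots,s)-(m,v_1,\ldots,U_{c'}(v_i),\ldots,s)\pmod{F^{\omega-1}},
\]
is what happens in the ring case \autoref{proposition:ring-chain-homotopy}, but it is \emph{not} what one obtains here. In the ring case the right differential $\delta^r_{n,i}$ annihilates an antiaveraged $v_i$ because the action on $P$ factors through $c/c'$. In the module case $d^r_{n,i}$ sends $v_i$ around the $\rho$th rectangle via $\tau^{\overline\xi_\rho}_{\chi}$ and $\sigma^{\overline\xi_\rho}$, which genuinely changes the $s$-coordinate and does \emph{not} depend only on the image of $v_i$ in $c/c'$. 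What the paper actually proves (its equation preceding the discussion of $U^{c'}_S$) is
\[
(d_{n+1}H_n+H_{n-1}d_n)(m,v_1,\ldots,v_n,s)=(m,v_1,\ldots,v_n,s)+H_{n-1}\,(-1)^i\,d^r_{n,i}(m,v_1,\ldots,v_n,s)
\]
modulo $F^{\omega-1}$, with an uncontrolled extra term. The key step you are missing is that this extra term vanishes \emph{only after} applying $U^{c'}_S$: writing $v_i=\sum_{y\in c_0}\alpha_y y$ with $\sum\alpha_y=0$, one shows the elements $w_y:=d^r_{n,i}(m,v_1,\ldots,v_{i-1},y,\ldots,s)$ for varying $y$ in the same $c'$-orbit are related precisely by elements of $G^{c'}_S$ (since $y\cdot^\rho$ and $y'\cdot^\rho$ differ by such an element), hence become equal under $U^{c'}_S$, and then $\sum\alpha_y=0$ kills the sum. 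This is the essential role of working on $\overline{W}^{c,S,M;k}$ rather than on $W^{c,S,M;k}$; it is not merely a question of $H_n$ ``descending.''

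Your separate ``$s$-antiaveraged'' case with its own homotopy is unnecessary and not how the paper argues. Once one has the nullhomotopy on the subcomplex $Z^{c,S,M;k}\subset\overline{W}^{c,S,M;k}$ spanned by $U^{c'}_S(m,v_1,\ldots,v_n,s)$ with some $v_i$ antiaveraged, the paper observes directly that its complement---spanned by $U^{c'}_S(m,v_1,\ldots,v_n,s)$ with all $v_i$ averaged---is exactly the image of the section: when every $v_i$ is $G^{c'}_c$-averaged, the action of $G^{c'}_S$ factors through $\aut(T_0)$ and the last coordinate becomes the orbit-average $\tfrac{1}{|p|}\sum_{t\in p}t$. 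No second homotopy is needed.
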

\begin{proof}
	There is a section $W^{c/c',S/c',M;k} \to W^{c,S,M;k} \to
	\overline{W}^{c,S,M;k}$ obtained from the section defined in
	\autoref{notation:module-homotopy-notation}.
	We will equivalently show
	$\overline{W}^{c,S,M;k}/W^{c/c',S/c',M;k}$ is nullhomotopic.

Define a filtration $F^\bullet$ on
$\overline{W}^{c,S,M;k}/W^{c/c',S/c',M;k}$
where an element lies in $F^\omega$ if there are at most $\omega$ elements among $v_1,
\ldots, v_n$ lying in $k\{c - c'\}$.

We can represent any element of 
$W^{c,S,M;k}$ in the form 
$(m, v_1, \ldots, v_n, s)$ where 
$v_1, \ldots, v_{i-1}$ are averaged basis elements and $v_i$ is an antiaveraged
basis element,
and we can represent any element of 
$\overline{W}^{c,S,M;k}$ as a linear combination of elements of the form $U^{c'}_S(m,
v_1,\ldots, v_n,s)$ for $(m, v_1, \ldots, v_n,s)$ in the above form and $U^{c'}_S$ as
defined in \autoref{notation:loop-averaged}.
Recall also the set $E_{c,c'}$ from \autoref{notation:group-elements}.
We define a linear map
$H_n : W_n^{c,S,M;k}/W_n^{c/c',S,M;k} \to
W_{n+1}^{c,S,M;k}/W_{n+1}^{c/c',S,M;k}$
as follows
\begin{equation}
\begin{aligned}
	\label{equation:homotopy-module-unaveraged}
	H_n(m, v_1, \ldots, v_n, s) &:= \frac{1}{|E_{c,c'}|} \cdot \left(
		H^{u}_n(m, v_1, \ldots, v_n, s) +
	H^{d}_n(m, v_1, \ldots, v_n, s) \right), \\
	H^{u}_n(m, v_1, \ldots, v_n, s) &:= \sum_{\substack{ (x,g) \in 
	E_{c,c'} \\  x \succ g = w_1 \cdots w_t}}
 \sum_{e = 1}^{\ell} (-1)^{i-1}(m w_e^{-1} , v_1, \ldots,
v_{i-1}, \\
&\left( (w_{e-1} \cdots w_1) \triangleright v_i\right) \triangleright^{-1} w_e,(w_{e-1} \cdots w_1) \triangleright v_i,
		\ldots, v_n, s), \\
		H^{d}_n(m, v_1, \ldots, v_n, s) &:= \sum_{\substack{ (x,g) \in 
	E_{c,c'} \\  x \succ g = w_1 \cdots w_t}}
 \sum_{e = 1}^{\ell}(-1)^{i-1} (m w_e^{-1} , v_1, \ldots,
	v_{i-1},  (w_{e-1} \cdots w_1) \triangleright v_i, w_e , v_{i+1}, \ldots, v_n, s).
	\end{aligned}
\end{equation}
where, following \autoref{notation:module-chain-complex},
\begin{align*}
	q_{(m w_e^{-1} , v_1, \ldots,
v_{i-1},( (w_{e-1} \cdots w_1) \triangleright v_i) \triangleright^{-1} w_e,(w_{e-1} \cdots w_1) \triangleright v_i,
		\ldots, v_n, s)}(j) 
&= 
q_{(m w_e^{-1} , v_1, \ldots,
	v_{i-1},  (w_{e-1} \cdots w_1) \triangleright v_i, w_e , v_{i+1}, \ldots, v_n, s)}(j) \\
&:=
\begin{cases}
	q_{(m, v_1, \ldots, v_n,s)}(j)  & \text{ if } j \leq i \\
	q_{(m, v_1, \ldots, v_n,s)}(j-1)  & \text{ if } j > i.
\end{cases}
\end{align*}
Colloquially, $H_n$ is defined by inserting the new coordinate involving 
$w_e$ or $( (w_{e-1} \cdots w_1) \triangleright v_i) \triangleright^{-1} w_e$
in each summand in the same rectangle
that the element $v_i$ lies in, and all other rectangle labelings remain the same.

We next check that for $(m, v_1, \ldots, v_n,s) \in F^\omega$ with $v_1,
\ldots,v_{i-1}$ averaged and $v_i$ antiaveraged,
we have
\begin{align}
	\label{equation:almost-h-homotopy}
	(d_{n+1} H_n + H_{n-1} d_n)(m, v_1, \ldots, v_n,s) = (m, v_1, \ldots,
	v_n,s)+
	H_{n-1} (-1)^i d^r_{n,i}(m,v_1,\ldots, v_n,s),
\end{align}
modulo $F^{\omega-1}$.
(We note that we are not making any claim that $H$ is a homotopy, it is just a
	linear map, and we are only claiming an equality of elements in
\eqref{equation:almost-h-homotopy}.)

The key point in verifying \eqref{equation:almost-h-homotopy} is that
\begin{align}
	\label{equation:unaveraged-module-leftward}
	(d_{n+1,i}^l H^{u}_n + d_{n+1,i+1}^l H^{d}_n)(m,v_1,\ldots, v_n,s)  =
	(m,v_1,\ldots, v_n,s),
\end{align}
when $v_1, \ldots, v_{i-1}$ are averaged basis elements and $v_i$ is an
antiaveraged basis element.
One can verify 
\eqref{equation:unaveraged-module-leftward}
via a similar telescoping sum argument to that given in
\eqref{equation:ring-telescope-computation}, using crucially that $v_i$ is antiaveraged.
We next verify the remaining terms in the sum all cancel.
We have
\begin{align}
	\label{equation:unaveraged-module-cancel-up}
	(-1)^{j'-1} d_{n+1,j'}^l H^{u}_n (m,v_1,\ldots, v_n,s) &+H^{u}_{n-1}
	(-1)^{j-1} d_{n,j}^l(m,v_1,\ldots, v_n,s) = 0
	\\
	\label{equation:unaveraged-module-cancel-down}
	(-1)^{j'} d_{n+1,j'}^r H^{d}_n (m,v_1,\ldots, v_n,s) &+H^{d}_{n-1}
	(-1)^j d_{n,j}^r(m,v_1,\ldots, v_n,s) =0,
\end{align}
modulo $F^{\omega-1}$,
where 
\begin{align*}
j' :=	
\begin{cases}
	j & \text{ if } j < i \\
	j+1 & \text{ if } j > i.
\end{cases}
\end{align*}
The equalities in
\eqref{equation:unaveraged-module-cancel-up}
and
\eqref{equation:unaveraged-module-cancel-down}
follow from similar computations to that carried out in 
\eqref{equation:h-then-delta-ring-left-big}
and
\eqref{equation:delta-then-h-ring-left-big} to verify
\eqref{equation:left-big}.
We note that since we are working modulo $F^{\omega-1}$, we can ignore all terms
where the corresponding differentials remove some $v_j \in k\{c -
c'\}$, and the remaining $v_j$ then act via an element of $k\{c'\}$.
Next, observe that that the operation $(x,g) \mapsto (v_j \cdot x, g)$ induces a
bijection on $E_{c,c'}$, 
where $v_j \cdot x$ denotes multiplication in $G^{c'}_{c'}$.
We claim that the set of $w_e$ and $( (w_{e-1} \cdots w_1) \triangleright v_i) \triangleright^{-1} w_e$
will be closed under the action of such $v_j$,
using that if $x \succ g = w_1 \cdots w_\ell$ then 
$(v_j \cdot x) \succ g = (v_j \triangleright w_1) \cdots (v_j \triangleright
w_\ell)$.
Indeed, this is immediate for
$w_e$ while for 
$( (w_{e-1} \cdots w_1) \triangleright v_i) \triangleright^{-1} w_e$
this follows from the calculation
\begin{align*}
	v_j \triangleright (( (w_{e-1} \cdots w_1) \triangleright v_i)
\triangleright^{-1} w_e) = 
( ((v_j \triangleright w_{e-1}) \cdots (v_j \triangleright w_1)) \triangleright
(v_j \triangleright v_i))
\triangleright^{-1} (v_j \triangleright w_e).
\end{align*}

Next, we observe,
\begin{equation}
\begin{aligned}
	\label{equation:unaveraged-module-rightward}
	d_{n+1,i}^r H^{u}_n(m, v_1, \ldots, v_n, s)
	&= d_{n+1,i+1}^r H^{d}_{n}(m, v_1, \ldots, v_n, s) \\
	d_{n+1,i+1}^r H^{u}_n(m, v_1, \ldots, v_n, s)
	&= d_{n+1,i}^r H^{d}_{n}(m, v_1, \ldots, v_n, s)
\end{aligned}
\end{equation}
by construction of $H_n$.

So far, we have accounted for nearly all the terms of the summation, and we
claim that the remaining terms also cancel. Namely, one can directly verify
\begin{equation}
\begin{aligned}
	\label{equation:module-unaveraged-vanishing}
	d^l_{n,i} (m, v_1, \ldots, v_n, s) &= 0 \\
	d^l_{n+1,i+1} H^u_n(m, v_1, \ldots, v_n,s) &= 
	d^l_{n+1,i} H^d_n(m, v_1, \ldots, v_n,s) = 0.
\end{aligned}
\end{equation}
Summing the above expressions from
\eqref{equation:unaveraged-module-leftward},
\eqref{equation:unaveraged-module-cancel-up},
\eqref{equation:unaveraged-module-cancel-down}, 
\eqref{equation:unaveraged-module-rightward}, and
\eqref{equation:module-unaveraged-vanishing}
yield
\eqref{equation:almost-h-homotopy}.

Now, we claim that the linear map $H_n$ restricts to a nullhomotopy of
$\overline{W}^{c,S,M;k}/W^{c/c',S/c',M;k}$.
Observe first that if $(m,v_1, \ldots, v_n,s) \in {W}^{c,S,M;k}$ with $v_1,
\ldots, v_{i-1}$ averaged and $v_i$ antiaveraged then 
$(m,v_1, \ldots, v_n,s)^h$, for $h \in G^{c'}_S$, has the same property that its first $i-1$ elements
are averaged and the $i$th element is an antiaveraged basis element
antiaveraged.
Let $c_0 \subset c' \subset c$ be a component of $c$ contained in $c'$ so that
$v_i \in k\{c_0\}$.
Thus, $U^{c'}_S(m,v_1, \ldots, v_n,s)$ is a linear combination of such elements implying
that on $F^\omega$ we have
\begin{align*}
	(d_{n+1} H_n + H_{n-1} d_n)(U^{c'}_S(m, v_1, \ldots, v_n,s)) =
	U^{c'}_S(m, v_1, \ldots, v_n,s)+
	H_{n-1} (-1)^i d^r_{n,i} (U^{c'}_S(m,v_1,\ldots, v_n,s))
\end{align*}
modulo $F^{\omega-1}$, by \eqref{equation:almost-h-homotopy}.

Next, we claim that 
$d^r_{n,i} (U^{c'}_S(m,v_1,\ldots, v_n,s)) = 0$.
Suppose $v_i$ satisfies $q_{(m, v_1, \ldots, v_n, s)} = \rho$. We may moreover
assume $v_i \in
k\{c_0\}$, as otherwise 
$H_{n-1} d^r_{n,i} (U^{c'}_S(m,v_1,\ldots, v_n,s))$ lies in $F^{\omega-1}$ and we may
ignore it.
Since the averaging operator $U^{c'}_S$ commutes with $d^r{n,i}$, it suffices
to show
$U^{c'}_S(d^r_{n,i} (m,v_1,\ldots, v_n,s)) = 0$.
Say $v_i = \sum_{y \in c_0} \alpha_y y$ with $\sum_{y \in c_0} \alpha_y = 0$.
Then, we can write
\begin{align*}
	d^r_{n,i} (m, v_1, \ldots, v_{i-1}, \sum_{y \in c_0} \alpha_y y, \ldots, v_n,s)
&= 
\sum_{y \in c_0} \alpha_y w_y,\\
w_y &:=d^r_{n,i} (m, v_1, \ldots, v_{i-1}, y, \ldots, v_n,s).
\end{align*}
To show that the application of $U^{c'}_S$ to the above expression vanishes, it is
enough to show that each of the elements
$w_y$
for varying $y \in c_0$ map to the same element under the operator $U^{c'}_S$. Indeed, once we
show these lie in the same orbit, the condition
that $\sum_{y \in c_0} \alpha_y = 0$ will imply 
\begin{align*}
	U^{c'}_S(d^r_{n,i} (m,v_1,\ldots, v_n,s)) = U^{c'}_S(\sum_{y \in c_0} \alpha_y w_y) =
	(\sum_{y \in c_0} \alpha_y) \cdot (U^{c'}_S(w_{y_0})) = 0 \cdot U^{c'}_S(w_{y_0}) = 0,
\end{align*}
where $y_0 \in c_0$ is some representative choice of element.
To check that the $w_y$ map to the same element under $U^{c'}_S$,
since $v_1, \ldots, v_{i-1}$ are averaged,
\begin{align*}
	y \cdot^\rho (m, v_1, \ldots,v_n,s) =w_y.
\end{align*}
So $w_y$ and $w_{y'}$ are related by applying the inverse
of the $y \cdot^\rho$ to $w_y$ followed by the $y' \cdot^\rho$ action.
Since $y, y' \in c_0$ both have the same image in $c'/c'$, the composite of the
inverse of $y \cdot^\rho$ followed by $y' \cdot^\rho$ will lie in
$G^{c'}_S$, as desired.

Altogether, the above implies that $H$ defines a nullhomotopy of the subcomplex
$Z^{c,S,M;k} \subset \overline{W}^{c,S,M;k}$
spanned by elements of the form $U^{c'}_S(m,v_1, \ldots, v_n,s)$ where some $v_i$ is
antiaveraged and $v_1, \ldots, v_{i-1}$ are averaged.
We claim that in fact this subcomplex is a complement to the section
$\overline{W}^{c/c',S/c',M;k} \to \overline{W}^{c,S,M;k}$, which will complete
the proof.
Indeed, any element of ${W}^{c,S,M;k}$ can be written as a linear combination of
elements $(m, v_1, \ldots, v_n,s)$ with $v_1, \ldots, v_{i-1}$ averaged and
$v_i$ antiaveraged, together with elements where all $v_1, \ldots, v_n$ are
averaged.
The key point in this case is that the value of the final coordinate
will be invariant under the action of $G^{c'}_S$, which in this case factors
through $\aut(T_0)$, and so the final coordinate consists of a $k$ multiple of
an orbit of $S/c$ under the action of $G^{c'}_S$, and we can think of it as lying
in the $0$-set of $S/c'$.
Indeed, a complement to 
$Z^{c,S,M;k}$ is given by the span of $U^{c'}_S(m,v_1, \ldots, v_n,s)$ where $v_1,
\ldots, v_n$ are all averaged.
However, the action of $U^{c'}_S$ on such tuples factors through the action of
$G^{c'}_{S}$ and sends such a tuple $(m,v_1, \ldots, v_n,s)$ to its image under
the composite $W^{c,S,M;k} \to W^{c/c',S/c',M;k} \to W^{c,S,M;k}$. That is, 
$W^{c/c',S/c',M;k}$ defines a complement to 
$Z^{c,S,M;k}$. Since we have shown
$Z^{c,S,M;k}$ is nullhomotopic, 
we obtain
$W^{c/c',S/c',M;k} \to \overline{W}^{c,S,M;k}$
is an equivalence, as desired.
\end{proof}

\section{Computing the stable homology}
\label{section:stable-homology}

In this section, we compute the stable homology of Hurwitz modules.
In order to verify some technical conditions that allow us to commute pullbacks with tensor products, we prove that certain maps of simplicial sets are Kan fibrations
in \autoref{subsection:kan}.
We then compute the stable homology of Hurwitz spaces in
\autoref{subsection:stable-homology-space} and compute the stable homology of
Hurwitz modules in \autoref{subsection:stable-homology-module}.

\subsection{Verifying certain maps are Kan fibrations}
\label{subsection:kan}

The main result of this subsection is \autoref{proposition:kan-fibration}, which
verifies a technical condition that certain maps of simplicial sets are Kan
fibrations.
The reader 
interested in the main ideas of the proofs and not the technical details
will likely wish to skip this subsection.

In what follows, for $Y$ a monoid in sets with a left action on a set $X$ and a
right action on a set $Z$, we use $\Barc(X,Y,Z)$ to denote the simplicial
set coming from the bar construction: i.e whose $p$-simplices are given by $X \times Y^p \times Z$ and the face maps
are induced by the above described actions.
To identify the stable homology of Hurwitz spaces, we will need to check several maps of simplicial
sets are Kan fibrations, and the following definition is relevant for all of
these maps.

\begin{definition}
	\label{definition:}
	Let $c$ be a rack and $c' \subset c$ a normal (possibly empty) subrack.
	If $M$ is a discrete left (respectively, right) module for $\pi_0 \Hur^c[\alpha_{c'}^{-1}]$ and $N$ is a
	discrete left (respectively, right) module 
	for $\pi_0 \Hur^{c/c'}[\alpha_{c'/c'}^{-1}]$ then we say a $\pi_0
	\Hur^c[\alpha_{c'}^{-1}]$-module map $\phi:M \to
N$ is {\em module surjective} if $M \to N$ is surjective and for any
$m \in M$ with $\phi(m) = xn$ for $x \in \pi_0 \Hur^{c/c'}[\alpha_{c'/c'}^{-1}]$
and $n \in N$, there is some $\widetilde{x} \in  \pi_0 \Hur^c[\alpha_{c'}^{-1}]$
and $\widetilde{n} \in M$ so that $\widetilde{x}$ projects to $x$,
$\phi(\widetilde{n}) = n$ and $m = \widetilde{x}\widetilde{n}$.     
\end{definition}

For several examples of module surjective maps, see
\autoref{lemma:module-surjective}. Here is an example of a surjective map of
modules that is not module surjective.
\begin{example}
	If we take $\phi:M\to N$ to be $\pi_0
	\Hur^{c/c'}[\alpha_{c'/c'}^{-1}]\{a\} \coprod \pi_0
	\Hur^{c/c'}[\alpha_{c'/c'}^{-1}]\{b\} \to \pi_0
	\Hur^{c/c'}[\alpha_{c'/c'}^{-1}]$ via the map that sends the generators
	$a,b$ to $1,x$ respectively, where $x$ is not an invertible element of
	$\Hur^{c/c'}[\alpha_{c'/c'}^{-1}]$, then $\phi$ is surjective but not
	module surjective, because we can take $m = b$, $x=x,n=1$, so that
	$\phi(m) = xn$. However, the desired $\widetilde{n} \in M, \widetilde{x} \in
	\pi_0 \Hur^c[\alpha_{c'}^{-1}]$ doesn't exist because any lift
	$\widetilde{n}$ of $n$ would necessarily lie in 
	$\Hur^{c/c'}[\alpha_{c'/c'}^{-1}]\{a\}$ and hence 
$\widetilde{x}\widetilde{n} \in \Hur^{c/c'}[\alpha_{c'/c'}^{-1}]\{a\}$ so 
	$\widetilde{x}\widetilde{n}\neq b$.
\end{example}

We can now prove the main result of this subsection, which will be used to
verify the conditions of
\cite[Theorem B.4]{bousfield2006homotopy} to commute $\times$ and $\otimes$.

\begin{proposition}
	\label{proposition:kan-fibration}
	Let $c$ be a rack and $c' \subset c$ a normal subrack. Suppose $M$ is a
	right module for
	$\pi_0 \Hur^c[\alpha_{c'}^{-1}]$, $P$ is a left module for
	$\pi_0 \Hur^c[\alpha_{c'}^{-1}]$,
	$N$ is a right module for 
	$\pi_0\Hur^{c/c'}[\alpha_{c'/c'}^{-1}]$ and $Q$ is a right module for
$\pi_0\Hur^{c/c'}[\alpha_{c'/c'}^{-1}]$.
Suppose we are given maps $M \to N$ and $P \to Q$ which are module surjective.

	Then the map of
	simplicial sets 
	\begin{align}
		\label{equation:kan-fibration}
		\Barc(M,\pi_0 \Hur^c[\alpha_{c'}^{-1}], P) \to 
		\Barc(N,{\pi_0\Hur^{c/{c'}}[\alpha_{c'/c'}^{-1}]},Q )
	\end{align}
is a Kan fibration.
\end{proposition}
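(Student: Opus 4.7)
My plan is to check the right lifting property against all horn inclusions $\Lambda^n_k \hookrightarrow \Delta^n$ directly. Writing $R := \pi_0\Hur^c[\alpha_{c'}^{-1}]$ and $R' := \pi_0\Hur^{c/c'}[\alpha_{c'/c'}^{-1}]$, a horn-lifting problem for \eqref{equation:kan-fibration} consists of faces $\sigma_i \in \Barc(M,R,P)_{n-1}$ for $i \neq k$ satisfying the simplicial compatibility $d_i\sigma_j = d_{j-1}\sigma_i$, together with a filler $\tau = (n_0, s_1, \ldots, s_n, q_0) \in \Barc(N, R', Q)_n$ of the projected horn, and asks for $\tilde\tau = (m_0, r_1, \ldots, r_n, p_0) \in \Barc(M,R,P)_n$ projecting to $\tau$ and restricting to the given horn.

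For inner horns ($0 < k < n$) and for outer horns with $n \geq 3$, I claim that the filler is read off directly from the horn data without invoking any module-surjective hypothesis. Inspecting the face-map formulas, $m_0$ appears as the first entry of every $\sigma_i$ with $i > 0$, $p_0$ as the last entry of every $\sigma_i$ with $i < n$, and each intermediate $r_j$ as a separate entry of $\sigma_i$ whenever $i \notin \{j-1, j\}$. A short counting check shows that in every inner horn, and in every outer horn with $n \geq 3$, each coordinate of $\tilde\tau$ can be read off some face $\sigma_i$ with $i \neq k$; the simplicial compatibility then forces consistency among the readings and ensures that the resulting $\tilde\tau$ has the prescribed faces and projects correctly to $\tau$.

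The remaining cases are the low-dimensional outer horns $\Lambda^1_0$, $\Lambda^1_1$, $\Lambda^2_0$, and $\Lambda^2_2$, which is where module surjectivity enters. For $\Lambda^1_0$, the horn is a single vertex $(m, p) \in M \times P$ and module surjectivity of $P \to Q$ applied to $p$ with factorization $\phi(p) = s_1 q_0$ immediately yields $r_1 \in R, p_0 \in P$ with $r_1 p_0 = p$, giving the filler. For $\Lambda^2_0$ the horn determines $m_0, r_1, p_0$ along with the combined entries $\alpha := r_1 r_2 \in R$ (from $\sigma_1$) and $\beta := r_2 p_0 \in P$ (from $\sigma_2$), and module surjectivity of $P \to Q$ applied to $\beta$ with factorization $s_2 q_0$ produces a candidate $\tilde r_2 \in R$ lifting $s_2$ together with some $\tilde p_0 \in P$ lifting $q_0$ such that $\tilde r_2 \tilde p_0 = \beta$. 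The cases $\Lambda^1_1$ and $\Lambda^2_2$ are dual and use module surjectivity of $M \to N$.

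The principal obstacle is the reconciliation in the $n = 2$ outer cases: the auxiliary lift $\tilde p_0$ is merely some element of $P$ projecting to $q_0$, not necessarily the prescribed $p_0$, so one must adjust $\tilde r_2$ so that it projects to $s_2$ and simultaneously satisfies both the $P$-equation $r_2 \cdot p_0 = \beta$ and the $R$-equation $r_1 \cdot r_2 = \alpha$. My plan is to bridge this gap using \autoref{lemma:same-image-relation}: fibers of $R \to R'$ are single orbits under right multiplication by elements of $\pi_0\Hur^{c'}[\alpha_{c'}^{-1}]$, which are units of $R$ since we have inverted $\alpha_{c'}$. Right-multiplying $\tilde r_2$ by an appropriate unit of this form preserves its image in $R'$ while correcting the mismatch between $\tilde p_0$ and $p_0$, and a parallel argument ensures the auxiliary $R$-equation is simultaneously satisfied. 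Carrying out this bookkeeping carefully — and treating the symmetric $M$-side argument in the case $\Lambda^2_2$ — is the technical heart of the proof.
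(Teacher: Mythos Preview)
Your overall strategy matches the paper's: reduce to the outer horns $\Lambda^1_0,\Lambda^1_1,\Lambda^2_0,\Lambda^2_2$, handle $n=1$ via module surjectivity, and handle $n=2$ via \autoref{lemma:same-image-relation}. The paper makes the reduction to $n\le 2$ more cleanly by observing that both bar constructions are nerves of $1$-categories and hence $2$-coskeletal, so horn-lifting for $n\ge 3$ is automatic with a unique filler; your direct reading-off argument reaches the same conclusion but the compatibility verification you allude to does need to be written out.

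Your treatment of the $n=2$ outer horns, however, has a real gap. For $\Lambda^2_0$ you apply module surjectivity of $P\to Q$ to $\beta$ and obtain $\tilde r_2,\tilde p_0$ with $\tilde r_2\tilde p_0=\beta$, then propose to fix the mismatch between $\tilde p_0$ and the prescribed $p_0$ by right-multiplying $\tilde r_2$ by a unit coming from \autoref{lemma:same-image-relation}. But that lemma only says that two elements of $R=\pi_0\Hur^c[\alpha_{c'}^{-1}]$ with the same image in $R'=\pi_0\Hur^{c/c'}[\alpha_{c'/c'}^{-1}]$ differ by a unit of $\pi_0\Hur^{c'}[\alpha_{c'}^{-1}]$; it gives no relation whatsoever between two elements $p_0,\tilde p_0\in P$ sharing an image in $Q$. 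There is no reason such a unit $w$ exists with $\tilde r_2 w\cdot p_0=\beta$, and your ``parallel argument'' for simultaneously enforcing $r_1 r_2=\alpha$ is equally unsubstantiated.

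The paper avoids this by keeping the entire $n=2$ argument inside $R$, never touching $P$ or $Q$. With the known $s=r_1\in R$ and the known composite $r=\alpha\in R$, it first factors $r=s't'$ with $\pi(s')=\pi(s)=s_1$ and $\pi(t')=s_2$ (this is module surjectivity of $R\to R'$ as a right module over itself, i.e.\ the content of \autoref{lemma:module-surjective}(1), which holds automatically from the hypotheses on $c'\subset c$). Then \autoref{lemma:same-image-relation} gives a unit $w$ with $s'w=s$, and one sets $t:=w^{-1}t'$, so that $st=(s'w)(w^{-1}t')=s't'=r$ and $\pi(t)=s_2$. The mismatch being corrected lives in $R$, precisely where the lemma applies. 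You should rework your $\Lambda^2_0$ and $\Lambda^2_2$ cases along these lines rather than routing through $P\to Q$ or $M\to N$.
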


\begin{proof}
	We need to show that given a	
	diagram
	\begin{equation}
		\label{equation:horn-filling}
		\begin{tikzcd} 
			\Lambda^n_i \ar {r} \ar {d} & 	\Barc(M,\pi_0 \Hur^c[\alpha_{c'}^{-1}], P)\ar {d} \\
	\Delta^n \ar {r}\ar[ur,dashed] &	\Barc(N,{\pi_0\Hur^{c/{c'}}[\alpha_{c'/c'}^{-1}]},Q )	\end{tikzcd}\end{equation}
	where $\Lambda^n_i$ denotes the $i$-horn of the $n$-simplex $\Delta^n$,
	there is a unique dashed map making the diagram commute.
	First, note that for $Z$ a monoid, $X$ a right $Z$ module and $Y$ a left
	$Z$ module, we can realize the simplicial set $\Barc(X,Z,Y)$
	as the simplicial set associated to the nerve of the $1$-category whose
	objects are pairs $(x,y) \in X \times Y$ and whose morphisms are
	triples $(x,s,y) \in X \times Z \times Y$ with source $(xs,y)$ and
	target $(x,sy)$. The composition in this category sends the pair $(xs_1,s_2,y),(x,s_1,s_2y)$ to $(x,s_1s_2,y)$.
	In particular, the $n$-simplices are given by tuples $(x,s_1, \ldots,
	s_n, y) \in X \times Z^{n} \times Y$ with the $i$th vertex of this simplex given by
$(xs_1 \cdots s_i, s_{i+1} \cdots s_n y)$.
	Because $1$-categories are $2$-coskeletal when viewed as simplicial sets, it follows that we may restrict ourselves to considering fillers of horns $\Lambda^n_i$ for $n\leq 2$, since otherwise there is a unique solution of the lifting problem on the source and target.
	Similarly, since there is a unique filler of the inner horn $\Lambda^2_1$, we may restrict ourselves to outer horns.

It remains to verify the unique filling of outer horns in the cases that $n = 1$ and $n = 2$.
First, we check the case $n = 1$. Let us just check the filling of the horn
$\Lambda^0_1$ as the horn $\Lambda^1_1$ is analogous.
In this case, the diagram
\eqref{equation:horn-filling} unwinds to the following data:
we are given the data of some $x \in M ,y \in
P$
together with a morphism 
$(\overline{u},\overline{s}, \overline y)$ with source $(\overline u \overline
s,\overline y) = (\overline x,\overline y)$.
Hence, to produce the desired commutative diagram \eqref{equation:horn-filling}
we only need to produce some $u \in M, s \in \pi_0 \Hur^c[\alpha_{c'}^{-1}]$ mapping to
$\overline u \in N$ and $\overline{s} \in \pi_0\Hur^{c/{c'}}[\alpha_{c'/c'}^{-1}]$
so that $us = x$,
as then we will obtain the morphism $(u,s,y)$ lifting $(\overline u, \overline s,
\overline y)$.
The existence of such $u$ and $s$ follows from the assumption that $M \to N$ is
module surjective.

To conclude, we only need verify that we can fill the outer horns in the case $n =
2$. Again, the cases $\Lambda^0_2$ and $\Lambda^2_2$ are analogous so we only
verify $\Lambda^0_2$.
Again, let us unwind what data of producing the dashed arrow in
\eqref{equation:horn-filling} amounts to.
We are given the data of morphisms $(x_1, s, y_0)$ and $(x_2, r,y_0)$ as well as
a $2$-simplex $(\overline{x}_2, \overline s, \overline t, \overline{y}_0)$
in $\Barc (N,\pi_0\Hur^{c/{c'}}[\alpha_{c'/c'}^{-1}],Q)$
and we
need to produce a simplex $(x_2, s, t, y_0)$ in
$\Barc(M,{\pi_0 \Hur^c[\alpha_{c'}^{-1}]}, P)$
mapping to the above specified
$2$-simplex in $\Barc(N ,{\pi_0\Hur^{c/{c'}}[\alpha_{c'/c'}^{-1}]},
Q)$.
Concretely, this just unwinds to finding some $t$ so that $st = r$ and
$t$ has image $\overline t$.
As usual, by multiplying all the above data by suitable elements in ${c'}$, we can
arrange that $s,r$ both lie in $\pi_0\Hur^c$ and $\overline{s},
\overline{t},\overline{r}$ lie in $\pi_0\Hur^{c/{c'}}$.
Using the same argument as in the case of filling outer horns when $n = 1$, we
can produce some $s', t' \in \Hur^c$ whose images are $\overline s$ and $\overline t$ and
whose product agrees with $r$ in $\pi_0\Hur^c$. 
Namely, suppose $r \in \Hur^c_n$ and $s \in \Hur^c_j$.
There is some element $\gamma$ so that we can identify $\overline s
\overline t$ with $\overline r$ in $\Hur^{c/{c'}}$ and then 
if we write $\gamma^{-1}(r) = x_1 \cdots x_n$ and take $s' := x_1 \cdots x_j,
t':= x_{j+1}\cdots x_n$, we will have that the image of $s'$ is $\overline s$
and the image of $t'$ is $\overline t$.
It remains to show that if we are given some $s',t'$ with images $\overline s,
\overline t$ and $s$ also has image $\overline s$, we can find some $t$ so that
$st = s't'$.
By \autoref{lemma:same-image-relation} and the assumption that $s$ and $s'$ have the same image,
there is some $w \in \pi_0 \Hur^{c'}[\alpha_{c'}^{-1}]$ so that $s'w = s$.
Then, if we take $t := w^{-1} t'$, we get $st = (s'w)(w^{-1}t') = s't' = r$ and
hence $t = w^{-1}t'$ has the same image as $t'$ in
$\pi_0\Hur^{c/c'}[\alpha_{c'/c'}^{-1}]$,
completing the proof.
\end{proof}

In order to apply \autoref{proposition:kan-fibration}, we will need to show that
the relevant maps are module surjective.
The next lemma provides several examples of such module surjective maps.

\begin{lemma}
	\label{lemma:module-surjective}
	Let $c$ be a rack and $c' \subset c$ a normal subrack.
Let $S = ({\Sigma^1_{g,f}}, \{T_n\}_{n \in \mathbb
Z_{\geq 0}}, \{\psi_n: B^{\Sigma^1_{g,f}}_n \times T_n \to T_n\}_{n \in \mathbb
Z_{\geq 0}})$
be a bijective Hurwitz module over $c$.
	The following maps are module surjective:
	\begin{enumerate}
		\item The projection map $\pi_0 \Hur^c[\alpha_{c'}^{-1}] \to
			\pi_0 \Hur^{c/c'}[\alpha_{c'/c'}^{-1}]$, with the source
			viewed as an 
$\pi_0 \Hur^c[\alpha_{c'}^{-1}]$ module and the target as a 
$\pi_0 \Hur^{c/c'}[\alpha_{c'/c'}^{-1}]$ module.
\item The projection map 
$\pi_0 \Hur^c[\alpha_{c'}^{-1}] \to
			\pi_0 \Hur^{c/c'}[\alpha_{c'/c'}^{-1}]$ where both the
			source and target are viewed as 
			$\pi_0 \Hur^c[\alpha_{c'}^{-1}]$ modules.
		\item The identity map from a module to itself.
\item 
	The projection map 
	$\pi_0 \Hur^{c,S}[\alpha_{c'}^{-1}] \to
	\pi_0 \Hur^{c/c',S/c'}[\alpha_{c'/c'}^{-1}]$ where the
			source is a 
$\pi_0 \Hur^c[\alpha_{c'}^{-1}]$
module
			and target the target is a
			$\pi_0 \Hur^{c/c'}[\alpha_{c/c'}^{-1}]$ module.
	\end{enumerate}
\end{lemma}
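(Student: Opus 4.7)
The plan is to verify each of the four cases in turn. In every case, surjectivity of the underlying map of sets is either immediate from the construction of the relevant projection (cases (1), (2), (4)) or trivial (case (3)), so the substantive content is the compatible lifting of a factorization $\phi(m) = xn$.

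For case (1), I would proceed as follows. Given $m \in \pi_0\Hur^c[\alpha_{c'}^{-1}]$ whose image in $\pi_0\Hur^{c/c'}[\alpha_{c'/c'}^{-1}]$ factors as $xn$, choose any lifts $\widetilde{x}_0, \widetilde{n}_0 \in \pi_0\Hur^c[\alpha_{c'}^{-1}]$ of $x$ and $n$. Then $\widetilde{x}_0 \widetilde{n}_0$ and $m$ share the same image in $\pi_0\Hur^{c/c'}[\alpha_{c'/c'}^{-1}]$, so \autoref{lemma:same-image-relation} supplies some $w \in \pi_0\Hur^{c'}[\alpha_{c'}^{-1}]$ with $m = \widetilde{x}_0 \widetilde{n}_0 w$. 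Setting $\widetilde{x} := \widetilde{x}_0$ and $\widetilde{n} := \widetilde{n}_0 w$ yields the required decomposition, since $w$ projects trivially. Case (2) follows the same pattern, but here we may simply take $\widetilde{x} = x$ as $x$ already lives in the larger ring, and then use \autoref{lemma:same-image-relation} to correct the lift of $n$. Case (3) is immediate: take $\widetilde{x} = x$ and $\widetilde{n} = n$.

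Case (4) is the most substantial. Given $m \in \pi_0\Hur^{c,S}[\alpha_{c'}^{-1}]$ with $\phi(m) = xn$, choose a lift $\widetilde{x}_0 \in \pi_0\Hur^c[\alpha_{c'}^{-1}]$ of $x$ and a module lift $\widetilde{n}_0 \in \pi_0\Hur^{c,S}[\alpha_{c'}^{-1}]$ of $n$. Then $\widetilde{x}_0 \widetilde{n}_0$ and $m$ have the same image in $\pi_0\Hur^{c/c',S/c'}[\alpha_{c'/c'}^{-1}]$. The key step is a module analog of \autoref{lemma:same-image-relation}: any two elements of $\pi_0\Hur^{c,S}[\alpha_{c'}^{-1}]$ with the same image in $\pi_0\Hur^{c/c',S/c'}[\alpha_{c'/c'}^{-1}]$ differ by right multiplication by an element of $\pi_0\Hur^{c'}[\alpha_{c'}^{-1}]$. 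To prove this, I would unwind \autoref{definition:quotient-hurwitz-module} and represent the two elements as tuples $(x_1, \ldots, x_n, s)$, then modify labels coordinate-by-coordinate using elements of $c'$. The rack coordinates can be adjusted exactly as in \autoref{lemma:same-image-relation} by writing $u = u \cdot xx^{-1}$ for $x$ a word in $c'$; the subtle part is the $0$-set coordinate, which requires using the generators of the equivalence relation defining $\overline{T}_0$, namely braid-group moves through elements of $c'$.

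I expect the main obstacle to be establishing this module analog of \autoref{lemma:same-image-relation}. The rack argument is clean because one only has to move elements of $c'$ across a single labeled point; in the module case, adjustments on the rack labels must remain compatible with the twisting of $T_0$ prescribed by $S/c'$, and \autoref{lemma:dependence-on-image} should be the tool that keeps everything consistent. Once the module analog is in place, case (4) closes exactly as case (1), and the four cases together give the lemma.
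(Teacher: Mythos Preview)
Your handling of cases (1)--(3) is sound; the paper streamlines (2) and (3) slightly by taking $\widetilde{x} = x$ and $\widetilde{n} = x^{-1}m$ directly, and treats (1) as the special case of (4) with $g = f = 0$ and singleton $T_0$.

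For case (4) your plan has a gap. The module $\pi_0\Hur^{c,S}[\alpha_{c'}^{-1}]$ carries only a \emph{left} action of $\pi_0\Hur^c[\alpha_{c'}^{-1}]$ (see \autoref{notation:hur}), so ``right multiplication by $w$'' on $\widetilde{n}_0$ is undefined and the assignment $\widetilde{n} := \widetilde{n}_0 w$ does not make sense. A left-sided module analog of \autoref{lemma:same-image-relation} \emph{is} provable along the lines you sketch --- the correction $w$ then sits on the left and, since it projects to the identity in $\pi_0\Hur^{c/c'}[\alpha_{c'/c'}^{-1}]$, can be absorbed into $\widetilde{x}$ rather than $\widetilde{n}$ --- but you have not carried this out, and the $T_0$ coordinate requires genuine work.

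The paper avoids this issue with a more direct construction. After clearing denominators, the equality $\phi(m) = xn$ in $\pi_0\Hur^{c/c',S/c'}$ says that a representative of $m$ in $T_k$ and the concatenation of chosen representatives of $x$ and $n$ in $\overline{T}_k$ lie in the same $B_k^{\Sigma^1_{g,f}}$-orbit, via some $\gamma$. Applying $\gamma^{-1}$ to the representative of $m$ itself and splitting the resulting tuple into its first $j$ rack coordinates and the remaining $T_{k-j}$-part gives lifts of $x$ and $n$ whose concatenation is $m$ on the nose --- no arbitrary initial lifts, no correction by $w$, and no module analog of \autoref{lemma:same-image-relation}. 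Your approach, once repaired, is more modular (it packages the braid-group manipulation into a reusable lemma); the paper's is shorter and avoids formulating that lemma at all.
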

\begin{proof}
	In all parts, the map of modules is clearly surjective, using that $c
	\to c/c'$ is surjective, and if $T_0$ is the $0$-set of $S$ and
	$\overline{T}_0$
	is the $0$ set of $S/c'$, then $T_0
	\to \overline{T}_0$ is surjective.
	Hence, we only need to verify the second condition in the definition of
	module surjective.
	The second condition from the definition of module surjective
	in cases (2) and (3) is easily seen to hold upon taking
	$\widetilde{x}=x$ and $\widetilde{n} = x^{-1} m$. 

	It remains only to verify the second condition of module surjective in
	cases $(1)$ and $(4)$.
	Moreover, case $(1)$ is actually a special case of $(4)$ where we take
	$g = f = 0$ and $S$ to have $0$ set a singleton so that $T_0 \times
	\pi_1(\Sigma^1_{0,0})$ acts trivially on $c'$.
	Hence, we now verify the second condition in case $(4)$.
	Suppose we are given $x \in \pi_0\Hur^{c,S}_n[\alpha_{c'}^{-1}]$,
	$\overline u \in \pi_0\Hur^{c/c'}_j[\alpha_{c'/c'}^{-1}]$ and $\overline{s} \in
	\pi_0 \Hur^{c/c',S/c'}_{n-j}[\alpha_{c'}^{-1}]$ such that $\overline x = \overline u \overline s$ where $\overline x$ is the image of $x$ in $\pi_0\Hur^{c/c',S/c'}$. We want to find lifts of $\overline u$ and $\overline
	s$ to $\pi_0\Hur^c_j[\alpha_{c'}^{-1}]$ and $\pi_0\Hur^{c,S}_{n-j}[\alpha_{c'}^{-1}]$ with $x = us$.
Multiplying by a suitable power of elements of ${c'}$, we may assume that the above
elements all lie in $\pi_0 \Hur^{c}, \pi_0 \Hur^{c/c'}, \pi_0 \Hur^{c,S}, \pi_0
\Hur^{c/c',S/c'}$ and $\pi_0\Hur^{c/{c'}}$,
with no localization
at $\alpha_{c'}^{-1}$ or $\alpha_{c'/c'}^{-1}$.
To produce our desired $u$ and $s$, note that after possibly replacing the
elements above with $c'$ multiples, we have an equality
$\overline x = \overline u \overline s$ in $\pi_0 \Hur^{c/c', S/c'}$. 
Let $\overline{T}_n$ be the $n$ set of $S/c'$.
Let $x' \in T_n$ denote some representative of $x$ and $\overline{x}'$ denote
its image in $\overline{T}_n$.
Rephrasing the above, if we view $\overline{u}' \in (c/c')^j$ corresponding to
$\overline{u} \in \pi_0 \Hur^{c/c'}_j$ and 
$\overline{s}' \in \overline{T}_{n-j}$  as corresponding to $\overline{s} \in
\pi_0 \Hur^{c/c',S/c'}_{n-j}$, 
we can view the concatenation $\overline{u}' \overline{s}' \in \overline{T}_n$,
and by the assumption
that $\overline{x} = \overline{u}\overline{s}$,
there is some element
$\gamma \in B_n^{\Sigma^1_{g,f}}$ with $\gamma(\overline{u}' \overline{s}') =
\overline{x}'$.
Write $\gamma^{-1}(x') = (y'_1, \ldots, y'_j, y'_{j+1}, y'_n,t')$ and define
$u' := (y_1', \ldots, y_j') \in c^j$ and $s' := (y_{j+1}', \ldots, y_n', t') \in
T_{n-j}$.
Then, the image of $u'$ in $(c/c')^j$ is $\overline{u}'$ and the image of $s'
\in \overline{T}_{n-j}$ is $\overline{s}'$.
Taking $u$ to be the image of $u' \in \pi_0 \Hur^c$ and $s$ to be the image of
$s' \in \pi_0 \Hur^{c,S}$, we find $x$ agrees with the concatenation of $u$ and
$s$, so $u$ and $s$ are the desired lifts of $\overline{u}$ and $\overline{s}$.
\end{proof}

\subsection{The stable homology of Hurwitz spaces in all directions}
\label{subsection:stable-homology-space}

We are now able to compute the stable homology of Hurwitz spaces in all
directions.
To do this, we will use descent, and to check the required isomorphisms between
fiber products of the relevant covers,
we use the nullhomotopy from \autoref{proposition:ring-chain-homotopy}
as well as a result of Bousfield-Friedlander to commute $\times$ and
$\otimes$, whose hypotheses we verify using \autoref{proposition:kan-fibration}.

Recall that we use the notation $A_c := C_*(\Hur^c) = C_*(\Hur^c;\mathbb Z)$.
\begin{theorem}
	\label{theorem:stable-homology-ring}
	Let $c$ be a finite rack and $c' \subset c$ be a union of connected components
	of $c$.
	There is an equivalence
	\begin{align}
		\label{equation:ring-localization-identification}
		C_*(\Hur^c)[\alpha_{c'}^{-1},|G^{c'}_{c}|^{-1}] \simeq
		C_*(\Hur^{c/c'} \times_{\pi_0
		\Hur^{c/c'}} \pi_0 \Hur^c)[\alpha_{c'}^{-1},|G^{c'}_{c}|^{-1}].	
	\end{align}
\end{theorem}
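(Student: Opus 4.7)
The plan is to carry out a descent argument that bootstraps from the bar-construction-level equivalence of Proposition \ref{proposition:ring-chain-homotopy} to the desired equivalence at the level of $A_c$ itself, using Proposition \ref{proposition:kan-fibration} to justify that the relevant fibre products commute with chain complexes.

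First I would reinterpret the right hand side as a derived tensor product. Since $\pi_0 \Hur^c$ and $\pi_0 \Hur^{c/c'}$ are discrete, the pullback $\Hur^{c/c'} \times_{\pi_0 \Hur^{c/c'}} \pi_0 \Hur^c$ is presented by a fibre product of the simplicial sets underlying the relevant bar constructions. Lemma \ref{lemma:module-surjective}(1) shows the projection $\pi_0 \Hur^c[\alpha_{c'}^{-1}] \to \pi_0 \Hur^{c/c'}[\alpha_{c'/c'}^{-1}]$ is module surjective, so Proposition \ref{proposition:kan-fibration} applies to show the corresponding map of simplicial sets is a Kan fibration. Combining this with \cite[Theorem B.4]{bousfield2006homotopy}, I can commute geometric realisation past the fibre product to identify
\begin{equation*}
C_*\bigl(\Hur^{c/c'} \times_{\pi_0 \Hur^{c/c'}} \pi_0 \Hur^c\bigr)\ \simeq\ A_{c/c'} \otimes_{H_0(A_{c/c'})} H_0(A_c),
\end{equation*}
with the tensor product derived. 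Localising, the theorem reduces to showing that the natural comparison map
\begin{equation*}
\phi\colon A_c[\alpha_{c'}^{-1}, |G^{c'}_c|^{-1}]\ \longrightarrow\ A_{c/c'}[\alpha_{c'/c'}^{-1}] \otimes_{H_0(A_{c/c'})[\alpha_{c'/c'}^{-1}]} H_0(A_c)[\alpha_{c'}^{-1}, |G^{c'}_c|^{-1}],
\end{equation*}
induced jointly by $A_c \to A_{c/c'}$ and $A_c \to H_0(A_c)$, is an equivalence.

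To verify this, I would argue that both sides of $\phi$ are nilpotent complete with respect to a suitable augmentation ideal, reducing the claim to showing that $\phi$ becomes an equivalence after tensoring on the left with $H_0(A_{c/c'})[\alpha_{c'/c'}^{-1}]$ over $A_{c/c'}[\alpha_{c'/c'}^{-1}]$. On the source of $\phi$, this tensoring produces $A_c[\alpha_{c'}^{-1}] \otimes_{A_{c/c'}[\alpha_{c'/c'}^{-1}]} H_0(A_{c/c'})[\alpha_{c'/c'}^{-1}]$, while on the target, repeated associativity of tensor product and the identity $H_0(A_{c/c'}) \otimes_{H_0(A_{c/c'})} H_0(A_c) \simeq H_0(A_c)$ collapse the computation to $H_0(A_c)[\alpha_{c'}^{-1}]$. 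Proposition \ref{proposition:ring-chain-homotopy} provides precisely the desired equivalence between these two after inverting $|G^{c'}_c|$. The nilpotent completeness input is supplied by \cite[Lemma 4.0.4]{landesmanL:homological-stability-for-hurwitz} applied in the same manner as in the proof of Lemma \ref{lemma:comparison-to-normalizer}, using that the augmentation ideal acts nilpotently on homotopy groups after inverting $\alpha_{c'}$.

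I expect the main obstacle will be a technical one: verifying that the derived tensor products, localisations, and pullbacks all commute in the right way when iterating the bar resolution of $A_{c/c'}[\alpha_{c'/c'}^{-1}] \to H_0(A_{c/c'})[\alpha_{c'/c'}^{-1}]$ needed for the nilpotent completeness step. Each stage in this resolution produces a new fibre product whose realisation must be identified with the corresponding iterated tensor product, and each such identification requires a fresh application of the Kan fibration criterion of Proposition \ref{proposition:kan-fibration}, drawing on a different module surjectivity from Lemma \ref{lemma:module-surjective}. Carefully threading the localisations at $\alpha_{c'}$ and $|G^{c'}_c|$ through this iterated bookkeeping, while ensuring the base ring of the tensor product is correct at each stage, is where the bulk of the technical work will lie.
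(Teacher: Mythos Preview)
Your proposal has a genuine gap in its first step. The derived tensor product $A_{c/c'} \otimes_{H_0(A_{c/c'})} H_0(A_c)$ does not make sense as written: for it to be defined, $A_{c/c'}$ must carry a right $H_0(A_{c/c'})$-module structure and $H_0(A_c)$ a left one, but the natural maps go the wrong way. The augmentation $A_{c/c'} \to H_0(A_{c/c'})$ makes the latter a module over the former, not conversely; similarly $H_0(A_c) \to H_0(A_{c/c'})$ makes $H_0(A_{c/c'})$ an $H_0(A_c)$-algebra, not the reverse. A fibre product of spaces over a discrete set $S$ corresponds to an $S$-indexed direct sum decomposition, not to a tensor product over the monoid algebra $\mathbb{Z}[S]$. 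Proposition~\ref{proposition:kan-fibration} concerns bar constructions over $\pi_0\Hur^c$ and $\pi_0\Hur^{c/c'}$; it does not produce a bar model for the fibre product itself. Consequently your subsequent descent step, tensoring the source $A_c$ over $A_{c/c'}$, also fails to make sense, since $A_c$ is not an $A_{c/c'}$-module.

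The paper organizes the same ingredients differently. Rather than reinterpreting the right-hand side, it treats both sides of \eqref{equation:ring-localization-identification} as $E_1$-algebras augmented over $H_0(A_c)[\alpha_{c'}^{-1},|G^{c'}_c|^{-1}]$, observes (via \cite[Lemma 4.0.4]{landesmanL:homological-stability-for-hurwitz}) that each is $0$-nilpotent complete for this augmentation, and thereby reduces to checking that the iterated tensor powers $(H_0(A_c)[\ldots])^{\otimes_{(-)} n+1}$ agree when $(-)$ is either side. This is Lemma~\ref{lemma:tensor-ring-equivalence}, whose $n=1$ case is proved by manipulating the two iterated tensor products (equations \eqref{equation:c-iterated-switch} and \eqref{equation:c-prime-iterated-switch}) using Proposition~\ref{proposition:kan-fibration}, Lemma~\ref{lemma:module-surjective}, and Bousfield--Friedlander to commute $\times$ with $\otimes$, and then invoking Proposition~\ref{proposition:ring-chain-homotopy} on the leftmost factor. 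So the Kan fibration and Bousfield--Friedlander inputs enter only inside the iterated bar resolution, not at the level of rewriting the theorem statement.
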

\begin{proof}
	To simplify notation, we let $G :=
	|G^{c'}_{c}|^{-1}$.
	Note that both the source and target of
	\eqref{equation:ring-localization-identification} are $0$-nilpotent
	complete with respect to $H_0(\Hur^c)[\alpha_{c'}^{-1}][G^{-1}]$  
(in the sense of \cite[Definition
4.0.1]{landesmanL:homological-stability-for-hurwitz})
	by \cite[Lemma
	4.0.4]{landesmanL:homological-stability-for-hurwitz}. Therefore, to
	prove \eqref{equation:ring-localization-identification}, it suffices to
	prove \autoref{lemma:tensor-ring-equivalence}
for every $n \geq 0$.
\end{proof}
\begin{lemma}
	\label{lemma:tensor-ring-equivalence}
Let $c$ be a finite rack and $c' \subset c$ be a union of connected components
	of $c$.
Let $G :=|G^{c'}_{c}|$.
For every $n \geq 0$, there is an equivalence
\begin{equation}
		\label{equation:iterated-stable-homology-cover}
		\begin{aligned}
		&C_*(\pi_0\Hur^c)[\alpha_{c'}^{-1},G^{-1}]^{\left(\otimes_{\left(C_*(\Hur^c)[\alpha_{c'}^{-1},G^{-1}]\right)}
	n+1 \right)} 
	\\
	&\simeq
		C_*(\pi_0 \Hur^c)[\alpha_{c'}^{-1},G^{-1}]^{\left(\otimes_{
					\left( C_*\left(\Hur^{c/c'}
							\times_{\pi_0\Hur^{c/c'}}
				\pi_0\Hur^c \right) \right)[\alpha_{c'}^{-1},G^{-1}]}
	n+1\right)}.
	\end{aligned}
\end{equation}
\end{lemma}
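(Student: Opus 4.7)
We proceed by induction on $n$. For $n = 0$, both sides equal $M := C_*(\pi_0\Hur^c)[\alpha_{c'}^{-1}, G^{-1}]$, so the statement is trivial. For the inductive step, introduce the abbreviations $R := C_*(\Hur^c)[\alpha_{c'}^{-1}, G^{-1}]$, $R' := C_*(\Hur^{c/c'})[\alpha_{c'/c'}^{-1}, G^{-1}]$, $N := C_*(\pi_0\Hur^{c/c'})[\alpha_{c'/c'}^{-1}, G^{-1}]$, and $R'' := C_*(\Hur^{c/c'} \times_{\pi_0\Hur^{c/c'}} \pi_0\Hur^c)[\alpha_{c'}^{-1}, G^{-1}]$. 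The natural factorization $\Hur^c \to \Hur^{c/c'} \times_{\pi_0\Hur^{c/c'}} \pi_0\Hur^c \to \pi_0\Hur^c$ gives ring maps $R \to R'' \to M$ through which every relevant module structure factors.

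The essential preliminary is to identify $R'' \simeq R' \otimes^L_N M$. By \autoref{lemma:module-surjective}(1), the projection $\pi_0\Hur^c \to \pi_0\Hur^{c/c'}$ (and the companion projection for $\Hur^{c/c'}$) is module surjective, so \autoref{proposition:kan-fibration} shows the corresponding map of simplicial sets realizing the relevant bar constructions is a Kan fibration. Applying \cite[Theorem B.4]{bousfield2006homotopy} then lets us commute chains past the fiber product, yielding the claimed identification of $R''$ as a derived tensor product over $N$.

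The heart of the proof is the case $n = 1$, which asserts $M \otimes^L_R M \simeq M \otimes^L_{R''} M$. Using the identification of $R''$, we rewrite the right-hand side as $M \otimes^L_{R' \otimes^L_N M} M$ and regroup the derived tensor product so that the comparison reduces to the identity $M \otimes^L_R N \simeq N \otimes^L_{R'} N$, which is precisely \autoref{proposition:ring-chain-homotopy}. For the inductive step with $n \geq 1$, we factor $M^{\otimes_R (n+2)} \simeq M^{\otimes_R (n+1)} \otimes^L_R M$ and its $R''$-analog, apply the inductive hypothesis to the first factor, and invoke the $n = 1$ case to swap the remaining tensor over $R$ for one over $R''$. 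Equivalently, the $n = 1$ case implies $R \to R''$ is a homological epimorphism (i.e.\ $R'' \otimes^L_R R'' \simeq R''$), so that $X \otimes^L_R Y \simeq X \otimes^L_{R''} Y$ for arbitrary $R''$-modules $X, Y$, which at once gives every higher case.

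The principal obstacle is the regrouping in the $n = 1$ step: the module $M$ is an $R'$-module only through $N$, so one must track carefully how the various factors interact under derived tensor product in order to massage $M \otimes^L_{R'\otimes^L_N M} M$ into a form where \autoref{proposition:ring-chain-homotopy} applies. Once this bookkeeping is handled, the remainder of the argument is formal.
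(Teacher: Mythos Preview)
Your overall strategy matches the paper's: the $n=0$ case is trivial, the $n=1$ case is the crux and reduces to \autoref{proposition:ring-chain-homotopy}, and the general case follows by iteration. But your execution of the $n=1$ step has a genuine gap.

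The expression $R'' \simeq R' \otimes^L_N M$ is ill-posed. For a derived tensor product over $N$ to make sense, $R'$ must be a right $N$-module and $M$ a left $N$-module. But the ring maps run the other way: $R' \to N$ (truncation to $\pi_0$) and $M \to N$ (induced by $c \to c/c'$) make $N$ a module over $R'$ and over $M$, not the reverse; there is no natural section. Relatedly, your invocation of Bousfield--Friedlander does not do what you claim. That theorem commutes geometric realization (the bar construction) with homotopy pullback; it does not say that the chains functor commutes with fiber products. The paper uses it in the former sense, entirely at the space level: in \eqref{equation:c-iterated-switch} and \eqref{equation:c-prime-iterated-switch}, each side of the $n=1$ equivalence is first rewritten as a bar construction of fiber products of monoids, then \autoref{proposition:kan-fibration} and Bousfield--Friedlander swap these into fiber products of bar constructions over the discrete base $\pi_0\Hur^{c/c'}[\alpha_{c'/c'}^{-1}]$. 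Only then does one take chains and invert $G$; \autoref{proposition:ring-chain-homotopy} identifies the nontrivial factor, and discreteness of the base handles the rest. Your attempt to carry out the argument at the chain level from the outset is what produces the ill-formed $R' \otimes^L_N M$.

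A smaller point: your claim that the $n=1$ case implies $R \to R''$ is a homological epimorphism is unjustified. Knowing $M \otimes^L_R M \simeq M \otimes^L_{R''} M$ says nothing directly about $R'' \otimes^L_R R''$. The honest way to iterate (implicit in the paper's terse ``iteratively applying the $n=1$ case'') is to observe that the $n=1$ equivalence is one of $M$-bimodules, whence
\[
M^{\otimes_R (n+1)} \;\simeq\; (M \otimes_R M)^{\otimes_M n} \;\simeq\; (M \otimes_{R''} M)^{\otimes_M n} \;\simeq\; M^{\otimes_{R''} (n+1)}.
\]
For that you need the bimodule structure to be respected by the comparison, which the paper's space-level argument provides but your sketch does not establish.
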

\begin{proof}
		The case $n = 0$ of
		\eqref{equation:iterated-stable-homology-cover}
		is trivial as both sides are 	
	$H_0(A_c)[\alpha_{c'}^{-1},G^{-1}]$.
	The case $n \geq 2$ follows from the case $n = 1$ by iteratively
	applying then $n  =1$ case of 
	\eqref{equation:iterated-stable-homology-cover}.
	We conclude by proving the $n =1$ case.
We can identify
\begin{equation}
	\begin{aligned}
	\label{equation:c-iterated-switch}
	&\pi_0\Hur^c \otimes_{\Hur^c} \pi_0\Hur^c[\alpha_{c'}^{-1}] \\
	&\simeq \pi_0\Hur^c[\alpha_{c'}^{-1}] \otimes_{\Hur^c[\alpha_{c'}^{-1}]} \pi_0\Hur^c[\alpha_{c'}^{-1}] \\
&\simeq \left( \pi_0\Hur^c[\alpha_{c'}^{-1}] \times_{\pi_0 \Hur^c[\alpha_{c'}^{-1}]} \pi_0 \Hur^c[\alpha_{c'}^{-1}] \right)
\otimes_{\Hur^c[\alpha_{c'}^{-1}] \times_{\pi_0 \Hur^c[\alpha_{c'}^{-1}]} \pi_0
\Hur^c[\alpha_{c'}^{-1}]} \\
&\qquad \left( \pi_0\Hur^{c/c'}[\alpha_{c'}^{-1}]
\times_{\pi_0\Hur^{c/c'}[\alpha_{c'}^{-1}]} \pi_0 \Hur^c[\alpha_{c'}^{-1}] \right) \\
&\simeq 
\left(\pi_0\Hur^c[\alpha_{c'}^{-1}] \otimes_{\Hur^c[\alpha_{c'}^{-1}]} \pi_0 \Hur^{c/c'}[\alpha_{c'}^{-1}] \right)
\times_{\left(\pi_0 \Hur^c[\alpha_{c'}^{-1}] \otimes_{\pi_0\Hur^c[\alpha_{c'}^{-1}]} \pi_0
\Hur^{c/c'}[\alpha_{c'}^{-1}]\right)} \\
&\qquad\left( \pi_0 \Hur^c[\alpha_{c'}^{-1}]
\otimes_{\pi_0\Hur^c[\alpha_{c'}^{-1}]} \pi_0 \Hur^c[\alpha_{c'}^{-1}] \right)
\\
&\simeq 
\left(\pi_0\Hur^c \otimes_{\Hur^c} \pi_0 \Hur^{c/c'}[\alpha_{c'}^{-1}]
 \right)\times_{\left(\pi_0 \Hur^c \otimes_{\pi_0\Hur^c} \pi_0
\Hur^{c/c'}\right)[\alpha_{c'}^{-1}]} \left( \pi_0 \Hur^c
\otimes_{\pi_0\Hur^c} \pi_0 \Hur^c \right)[\alpha_{c'}^{-1}]
\\
& \simeq 
\left(\pi_0\Hur^c \otimes_{\Hur^c} \pi_0 \Hur^{c/c'}[\alpha_{c'}^{-1}] \right) \times_{\pi_0
\Hur^{c/c'}[\alpha_{c'}^{-1}]} \pi_0 \Hur^c [\alpha_{c'}^{-1}]
	\end{aligned}
\end{equation}
where the third isomorphism uses
\autoref{proposition:kan-fibration} via
\autoref{lemma:module-surjective}(2) and (3)
and
\cite[Theorem B.4]{bousfield2006homotopy}.
%see also goerss and jardine p 244 thm 4.9
By a similar computation, 
using 
\autoref{proposition:kan-fibration}
via
\autoref{lemma:module-surjective}(1)
and
\cite[Theorem B.4]{bousfield2006homotopy}
we can also identify
\begin{equation}
\begin{aligned}
	\label{equation:c-prime-iterated-switch}
	&\pi_0\Hur^c \otimes_{\left(\Hur^{c/c'} \times_{\pi_0 \Hur^{c/c'}} \pi_0
	\Hur^c \right)} \pi_0\Hur^c [\alpha_{c'}^{-1}]
	\\
	&\simeq \pi_0\Hur^c[\alpha_{c'}^{-1}] \otimes_{\left(\Hur^{c/c'}[\alpha_{c'}^{-1}] \times_{\pi_0 \Hur^{c/c'}[\alpha_{c'}^{-1}]} \pi_0
	\Hur^c[\alpha_{c'}^{-1}] \right)} \pi_0\Hur^c [\alpha_{c'}^{-1}]
	\\
	&\simeq 
	\left( \pi_0\Hur^{c/c'}[\alpha_{c'}^{-1}] \times_{\pi_0 \Hur^{c/c'}[\alpha_{c'}^{-1}]} \pi_0\Hur^c[\alpha_{c'}^{-1}] \right) \otimes_{\left(\Hur^{c/c'}[\alpha_{c'}^{-1}] \times_{\pi_0 \Hur^{c/c'}[\alpha_{c'}^{-1}]} \pi_0
	\Hur^c[\alpha_{c'}^{-1}] \right)} \\
	&\qquad \left(\pi_0\Hur^{c/c'}[\alpha_{c'}^{-1}] \times_{\pi_0 \Hur^{c/c'}[\alpha_{c'}^{-1}]}
	\pi_0\Hur^c [\alpha_{c'}^{-1}] \right)
	\\
	&\simeq \left(\pi_0 \Hur^{c/c'}[\alpha_{c'}^{-1}] \otimes_{\Hur^{c/c'}[\alpha_{c'}^{-1}]} \pi_0 \Hur^{c/c'}[\alpha_{c'}^{-1}]
	\right) \times_{\left(\pi_0 \Hur^{c/c'}[\alpha_{c'}^{-1}] \otimes_{\pi_0 \Hur^{c/c'}[\alpha_{c'}^{-1}]} \pi_0
		\Hur^{c/c'}[\alpha_{c'}^{-1}]\right)} \\
		&\qquad \left(\pi_0 \Hur^c[\alpha_{c'}^{-1}] \otimes_{\pi_0\Hur^c[\alpha_{c'}^{-1}]}
	\pi_0\Hur^c[\alpha_{c'}^{-1}] \right) \\
&\simeq \left(\pi_0 \Hur^{c/c'} \otimes_{\Hur^{c/c'}} \pi_0 \Hur^{c/c'}[\alpha_{c'}^{-1}]
\right) \times_{\pi_0 \Hur^{c/c'}[\alpha_{c'}^{-1}]} \pi_0 \Hur^c [\alpha_{c'}^{-1}]
\end{aligned}
\end{equation}

Finally, applying the functors given by taking chains and inverting $G$, to the final lines of
\eqref{equation:c-iterated-switch} and
\eqref{equation:c-prime-iterated-switch}
we obtain an equivalence
\begin{align*}
&C_* \left(\left(\pi_0\Hur^c \otimes_{\Hur^c} \pi_0 \Hur^{c/c'}[\alpha_{c'}^{-1}] \right) \times_{\pi_0
\Hur^{c/c'}[\alpha_{c'}^{-1}]} \pi_0 \Hur^c [\alpha_{c'}^{-1}] \right)[G^{-1}] \\
&\simeq
	C_*\left(\left(\pi_0 \Hur^{c/c'} \otimes_{\Hur^{c/c'}} \pi_0 \Hur^{c/c'}[\alpha_{c'}^{-1}]
	\right) \times_{\pi_0 \Hur^{c/c'}[\alpha_{c'}^{-1}] } \pi_0 \Hur^c
[\alpha_{c'}^{-1}]\right)[G^{-1}],
\end{align*}
where
\autoref{proposition:ring-chain-homotopy} identifies the $\mathbb Z[G^{-1}]$
homology of the left hand factors, and we can identify the homology of the
pullbacks
because the base of the pullback is discrete.
Therefore, the result of applying chains and inverting $G$ to the first lines of 
\eqref{equation:c-iterated-switch} and
\eqref{equation:c-prime-iterated-switch}
are also equivalent, which is identified with the equivalence
\eqref{equation:iterated-stable-homology-cover} when $n = 1$.
\end{proof}

We now deduce one of our main results from the introduction, which is
essentially a
rephrasing of \autoref{theorem:stable-homology-ring}.
\subsubsection{Proof of \autoref{theorem:one-large-stable-homology-hurwitz-space}}
\label{subsubsection:stable-ring-proof}

We consider
$C_*(\CHur^c), C_*(\Hur^c), C_*(\CHur^{c/c_1}), C_*(\Hur^{c/c_1})$ as graded rings 
with respect to the number of elements in the component $c_1 \subset c$.

Using \cite[Theorem 1.4.1]{landesmanL:homological-stability-for-hurwitz},
for $n > Ii + J$
every element $\alpha_x$ for $x\in c_1$ induces an isomorphism from the $n$th
graded part of $H_i(\CHur^c)$ to the $n+1$st graded part of $H_i(\CHur^c)$.
Therefore, the $n$th graded part of
$H_i (\CHur^c)$ agrees with
the $n$th graded part of 
$H_i(\CHur^c)[\alpha_{c_1}^{-1}]$.
Similarly, 
the $n$th graded part of 
$H_i (\CHur^{c/c_1})$ agrees with
the $n$th graded part of
$H_i(\CHur^{c/c_1})[\alpha_{c_1/c_1}^{-1}]$.

To conclude the proof, it suffices to show
\begin{align*}
	H_i(\CHur^{c})[\alpha_{c_1}^{-1}, |G^{c'}_c|^{-1}]\simeq H_i(\CHur^{c/c_1} \times_{\pi_0 \Hur^{c/c_1}} \pi_0
\Hur^{c})[\alpha_{c_1/c_1}^{-1}, |G^{c'}_c|^{-1}].
\end{align*}
This identification holds
by \autoref{theorem:stable-homology-ring},
since the equivalence there sends components of 
$\Hur^{c}$ contained in 
$\CHur^{c}$
to components of
$\Hur^{c/c_1} \times_{\pi_0 \Hur^{c/c_1}} \pi_0 \Hur^{c}$
contained in 
$\CHur^{c_1} \times_{\pi_0 \Hur^{c/c_1}} \pi_0 \Hur^{c}$.
\qed

\subsection{The stable homology of bijective Hurwitz modules}
\label{subsection:stable-homology-module}

We conclude this section by computing the stable homology of Hurwitz
modules. We essentially compute their stable homology in
\autoref{theorem:identify-stable-localization} and then explain how this is
equivalent to \autoref{theorem:one-large-stable-homology} in
\autoref{subsubsection:stable-value-proof}.

The idea for proving \autoref{theorem:identify-stable-localization} is very
similar to the idea we used to prove \autoref{theorem:stable-homology-ring}.
We will argue via descent. To identify the relevant fiber products are
equivalent, we will massage these fiber products
using several applications of a result of Bousfield-Friedlander
to commute pullbacks and tensor products, whose hypotheses we verify using \autoref{proposition:kan-fibration}.
We can then identify the resulting fiber products using
\autoref{proposition:module-chain-homotopy} and
\autoref{lemma:tensor-ring-equivalence}.

The following proposition is a consequence of \Cref{theorem:identify-stable-localization}, but we in fact use it as an important ingredient in proving the theorem:

\begin{proposition}\label{proposition:trivialize-action}
	Let $c$ be a finite rack and let $S$ be a bijective
	Hurwitz module over $c$.
	Suppose $c' \subset c$ is a subrack which is an $S$-component of $c$.
	Suppose that $x$ is an invertible element in $\pi_0\Hur^{c'}[\alpha_{c'}^{-1}]$ and $y$ is a component of $\pi_0\Hur^{c,S}[\alpha_{c'}^{-1}]$ such that $xy=y$. Then multiplication by $x$ on the component of $\pi_0\Hur^{c}[\alpha_{c'}^{-1}]\otimes_{\Hur^{c}}\Hur^{c,S}$ corresponding to $y$ induces the identity map on homology after inverting $|G^{c}_{c'}|$.
\end{proposition}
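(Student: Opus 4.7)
The plan is to construct an explicit chain-level nullhomotopy of $m_x - \mathrm{id}$ on the $y$-component, adapting the techniques of \autoref{section:chain-homotopies} and especially \autoref{lemma:ring-reduction}.

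First, I reduce to the case $x \in G^{c'}_{\pi_0}$ in the notation of \autoref{notation:group-elements}. Since $xy = y$ must preserve the grading induced by the labeling of points by $c'/c'$, the image of $x$ in the free abelian group $\pi_0\Hur^{c'/c'}[\alpha_{c'/c'}^{-1}]$ has degree zero, hence is trivial. Writing $x = y_1 z_1^{-1} \cdots y_\ell z_\ell^{-1}$ with $y_i, z_i \in c'$ matching in $c'/c'$ gives a factorization amenable to a bar-level telescoping argument. Next, I use the chain complex $W^{c,S,M;k}$ from \autoref{notation:module-chain-complex} with $M = \pi_0\Hur^c[\alpha_{c'}^{-1}]$ and $k = \mathbb{Z}[|G^{c}_{c'}|^{-1}]$; by \autoref{lemma:chains-identification} this computes the homology of $N := \pi_0\Hur^c[\alpha_{c'}^{-1}] \otimes_{\Hur^c}\Hur^{c,S}$. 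The map $m_x$ becomes the chain map $(m, v_1, \ldots, v_n, s) \mapsto (xm, v_1, \ldots, v_n, s)$, and the $y$-component is the subcomplex of chains whose class in $H_0(N) \simeq \pi_0\Hur^{c,S}[\alpha_{c'}^{-1}]$ lies in $y$.

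I then construct a chain homotopy $K_n$ by an averaged telescoping insertion modeled on \eqref{equation:homotopy-ring}: averaging over the set $E_{c,c'}$ from \autoref{notation:group-elements} and using the presentations $w \succ g = u_1 u_2^{-1} \cdots$, I insert successive factors $u_t$ (or $u_t^{-1}$) at the first slot of the bar complex with the compensating action on the right factor $s$ prescribed by the bar relation. The denominator $1/|E_{c,c'}|$ necessitates inverting $|G^{c}_{c'}|$ (equivalently $|G^{c'}_{c'}|$, by \autoref{remark:size-of-E}). Verification of the identity $dK + Kd = m_x - \mathrm{id}$ on the $y$-component splits into (i) a telescoping contribution from $d^r_{n+1,1}$ producing the $m_x - \mathrm{id}$ signature, parallel to \eqref{equation:ring-telescope-computation}, and (ii) cancellations among the remaining $d^l$ and $d^r$ terms that proceed by reindexing $E_{c,c'}$ via $(w,g)\mapsto (v_j\cdot w, g)$, exactly as in \eqref{equation:h-then-delta-ring-left-big} and \eqref{equation:delta-then-h-ring-left-big}.

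The main obstacle will be reconciling the residual term of the telescope: a single-factorization telescope would yield $m_x$ minus a term $(mx, v_1, \ldots, v_n, x^{-1}s)$ in which $x^{-1}$ has migrated to act on the right factor, rather than the desired $m_x - \mathrm{id}$. The averaging over $E_{c,c'}$, together with the hypothesis $xy = y$ (which ensures $x^{-1}s$ and $s$ lie in the same $y$-component of $\pi_0\Hur^{c,S}[\alpha_{c'}^{-1}]$), produces the precise needed cancellation: after averaging, this residual term becomes $(\mathrm{Avg}_{c'}(mx), v_1, \ldots, v_n, s)$, which on the $y$-component agrees with $(m, v_1, \ldots, v_n, s)$ modulo antiaveraged-basis contributions that vanish by a short argument analogous to the final step of the proof of \autoref{lemma:ring-reduction}. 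Once this is verified, $K_n$ is a genuine chain nullhomotopy of $m_x - \mathrm{id}$ on the $y$-component, proving the proposition.
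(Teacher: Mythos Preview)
Your approach has a genuine gap. The chain complex $W^{c,S,M;k}$ is not a two-sided bar complex of discrete modules: the right-hand datum is $\Hur^{c,S}$, whose topology is encoded in the rectangle labels $q_{(\cdots)}(j)$ and in the differentials $d^r_{n,j}$ (which move points around loops in $\Sigma^1_{g,f}$ via the maps $\sigma^{\overline{\xi}_\rho}_t$ and $\tau^{\overline{\xi}_\rho}_\bullet$). There is no ``compensating action on the right factor $s$'' by $u_t^{-1}$: the element $s$ lies in $T_0$, and $\pi_0\Hur^{c'}[\alpha_{c'}^{-1}]$ does not act on $T_0$. The telescoping homotopy of \autoref{lemma:ring-reduction} works precisely because in $V^{c,M,P;k}$ the right module $P$ is discrete and admits multiplication by elements like $y_t^{-1}z_{t-1}\cdots$; this has no analogue in $W^{c,S,M;k}$. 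Concretely, your ``residual term'' $(mx, v_1,\ldots,v_n, x^{-1}s)$ is not a well-formed element of the complex, and your proposed insertion ``at the first slot'' leaves the rectangle index $\rho$ unspecified. The hypothesis $xy=y$ is a statement about the surface braid group orbit of a point in $\pi_0\Hur^{c,S}[\alpha_{c'}^{-1}]$; it is witnessed by a specific sequence of moves through the rectangles of $\mathcal M_{g,f,1}^\epsilon$, and your homotopy never invokes any such witness.

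The paper's proof is entirely different in character: it works at the level of spaces using the scanning model $\overline{Q}_\epsilon$. One chooses an explicit path $\gamma''$ in the $1$-skeleton of $Y_\epsilon$ from $(m,1,\id,(s))$ to $(xm,1,\id,(s))$ (this is where $xy=y$ is used), then builds a locally constant sheaf of homotopies satisfying the constraint that the configuration at time $t$ is the disjoint union of the original configuration with that of $\gamma'_\epsilon(t)$. The component of this sheaf containing a chosen lift gives a finite connected cover $Y'_\epsilon\to Y_\epsilon$ carrying a homotopy from $i\circ\pi$ to $\mu_x\circ\pi$. The monodromy of the cover acts on labels through $(G^c_{c'})^\mu$, so its degree is invertible once $|G^c_{c'}|$ is inverted, and transfer finishes the argument. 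In short: the paper produces a space-level homotopy on a cover of invertible degree, rather than a chain-level telescope; the latter strategy would require a right-module structure that the Hurwitz module simply does not possess.
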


\begin{proof}
	Recall from \autoref{proposition:pointed-scanning} that the space $\pi_0\Hur^{c}[\alpha_{c'}^{-1}]\otimes_{\Hur^{c}}\Hur^{c,S}$
	is the ind-weak homotopy type of
	$\overline{Q}_\epsilon[\pi_0\Hur^{c}[\alpha_{c'}^{-1}]
	,\on{hur}^{c,S}]$. Let us use $Y_{\epsilon}$ to denote the component
	of this family of spaces corresponding to the element $y$. We will show
	that the left multiplication by $x$ map $\mu_x:Y_{\epsilon} \to Y_{\epsilon'}$ induces the same map on $\ZZ[\frac 1 {|G^{c}_{c'}|}]$-homology as the inclusion $i:Y_{\epsilon} \to Y_{\epsilon'}$, where $\epsilon'$ satisfies $0<\epsilon'<\frac \epsilon N $ for some $N\gg0$, which will prove the claim.
	
	A point of $Y_{\epsilon}$ can be
	represented in the form $(m, (x,1,\gamma,\alpha=(\alpha_1, \ldots, \alpha_n,s))$ for $m \in \left( \pi_0 \Hur^{c} \right)[\alpha_{{c'}}^{-1}]$. 
	We can consider a point where $n=0$, so that its data is determined by
	$(m,1,\id,\alpha=(s))$. Because the points $(m,1,\id,\alpha=(s))$ and
	$(xm,1,\id,\alpha=(s))$ are in the same component, there is a path
	$\gamma''$ from the former to the latter. Because the $1$-skeleton of
	$Y_{\epsilon'}$ consists of paths moving points across the middle of the
	rectangles $J_i^{\epsilon}$, as defined in
	\autoref{remark:rectangles-description}, we may assume that $\gamma''$ is the concatenation of finitely many paths of this form. For any $\epsilon>0$, after passing to $Y_{\epsilon'}$ for $\epsilon'< \epsilon/N$ for $N\gg0$, via a homotopy that is an affine transformation in the vertical coordinate, we may choose a path $\gamma'_{\epsilon}$ homotopic to $\gamma''$ such that at each point, every point in $\mathcal M_{g,f,1}^{\epsilon'}$ has vertical coordinate $\epsilon'$ away from the boundary of each of the rectangles in  $\cup_{i}\overline{(J_i^{\epsilon'}-J_{i}^{\epsilon})}$.
	
	We now claim that there is a finite connected cover $\pi:Y'_{\epsilon}\to Y_{\epsilon}$ with a point $\sigma$ lying over $(m,1,\id,\alpha=(s))$, 
	and a continuous homotopy $\tilde{H}_{\epsilon} : Y'_{\epsilon} \times I
	\to Y_{\epsilon'}$ such that
	\begin{enumerate}
		\item $\tilde{H}_{\epsilon}$ is a homotopy from the map $i\circ \pi$ to $\mu_x \circ \pi$.
		\item The restriction $I \xrightarrow{(\sigma,\id)} 
			Y'_{\epsilon} \times I\xrightarrow{\tilde{H}_{\epsilon}}
			Y_{\epsilon'}$ of $\tilde{H}_{\epsilon}$ to the point
			$\sigma$ agrees with the path $\gamma'_{\epsilon}$.
		\item For any point $p$ in $Y'_{\epsilon}$, the underlying configuration of points in $\mathcal M_{g,f,1}^{\epsilon'}$ of the path $(\tilde{H}_{\epsilon})(p,t)$ as $t \in [0,1]$ varies 
			is the disjoint union of the configuration of points associated to $(\tilde{H}_{\epsilon})(p,0)$ and $\gamma'_{\epsilon}(t)$. 
	\end{enumerate}
	First, we explain why $\tilde{H}_{\epsilon}$ as above exists.
	Consider the sheaf on $Y'_{\epsilon}$ sending an open $U \to Y'_{\epsilon}$ to the collection of homotopies starting from $U \to Y'_{\epsilon} \xrightarrow{i \circ \pi} Y_{\epsilon'}$ satisfying the condition that the underlying configuration of points for any $p \in U$ at time $t$ of the homotopy is the disjoint union of the configuration of points associated to $i \circ \pi(p)$ and $\gamma'_{\epsilon}(t)$. 
	This is a finite locally constant \'etale sheaf, since
	a homotopy is locally determined by the elements of $c$ used in the labels on the
	elements in the path $\gamma'_\epsilon$, since we are fixing the configuration of points to contain those of $\gamma'_{\epsilon}$, but not fixing the labels.
	We claim that the homotopies additionally satisfying (1) are a locally constant subsheaf. 
	This claim can be rephrased as saying that it is an open and closed
	condition for a choice of labels for the
	points appearing in the path $\gamma'_\epsilon$ to result in a
homotopy to multiplication by $x$.
	
	We now verify the above claim.
	Recall that by definition of
	$\overline{Q}_\epsilon[\pi_0\Hur^{c}[\alpha_{c'}^{-1}]
	,\on{hur}^{c,S}]$, $Y_{\epsilon}$ is a quotient of the components of
	$Q_{\epsilon}[\pi_0\Hur^{c}[\alpha_{c'}^{-1}], \on{hur}^{c,S}]$ (see
	\autoref{notation:scanned-quotient}) with image in $Y_{\epsilon}$. 
	We first observe that it is an open and closed
	condition for a choice of labels for the
	points appearing in the path $\gamma'_\epsilon$ to result in a
homotopy to multiplication by $x$ on each 
	such component of $Q_{\epsilon}[\pi_0\Hur^{c}[\alpha_{c'}^{-1}],
	\on{hur}^{c,S}]$ where no points in the configuration hit the left
	boundary. Moreover, observe that the equivalence relations defining
	$Y_{\epsilon}$ involve right multiplication on the label on the left,
	which commute with left multiplication by $x$. 
	Therefore, the condition that the homotopy is between the identity and multiplication by $x$ on a boundary point is equivalent to the condition on a nearby point in the interior. 
	This implies that the claim. Hence, the homotopies additionally
	satisfying $(1)$ form a locally constant subsheaf.
	
	By the sheaf--\'etale space correspondence, the component of this
	locally constant sheaf corresponding to $\sigma$ determines, via (2), the cover $Y'_{\epsilon}$ along with the homotopy $\tilde{H}_{\epsilon}$ satisfying the desired properties.
	
	So far, we have produced a homotopy between $i \circ \pi$ and
	$\mu_x \circ \pi$ and hence both induce the same map on homology from
	$Y'_\epsilon$ to $Y_{\epsilon'}$.
	Next, we claim that the degree of the cover $Y'_{\epsilon} \to Y_{\epsilon}$ is a unit in $\ZZ[\frac 1 {|G^{c}_{c'}|}]$. 
	Once we establish this, it follows via transfer that the homology of
	$Y_{\epsilon}$ with $\ZZ[\frac 1 {|G^{c}_{c'}|}]$ coefficients is a
	summand of the homology of $Y'_\epsilon$ with $\ZZ[\frac 1
	{|G^{c}_{c'}|}]$ coefficients, and therefore left multiplication by $x$
	induces the identity map on homology of $Y_\epsilon$ after inverting
	$G^c_{c'}$.

	We now conclude the proof by showing the degree of $Y'_{\epsilon} \to
	Y_{\epsilon}$ is a unit in
$\ZZ[\frac 1 {|G^{c}_{c'}|}]$.
	To see this, we recall as above that there is a map from the fiber over
	$(m,1,\id,\alpha=(s))$ to a product $c'^{\mu}$, where $\mu$ is the
	number of points appearing in the interior of the rectangle during the
	path $\gamma'_{\epsilon}$.
		It is enough to prove that the action of the fundamental group of $
	Y_{\epsilon}$ on this subset of $c'^{\mu}$ factors through $(G^{c}_{c'})^{\mu}$. 
	
	In other words, we need to show that given a path $\beta:I \to Y_{\epsilon}$ from
	$(m,1,\id,\alpha=(s))$ to itself, if we lift $\beta(r)$ to a path
	$\tilde{\beta}:I \to Y'_{\epsilon}$ starting at a fiber over
	$(m,1,\id,\alpha=(s))$, then the path $\tilde{H}_{\epsilon}(\tilde{\beta}(1),-)$
 is obtained from $\tilde{H}_{\epsilon}(\tilde{\beta}(0),-)$ by the action of
some element of $(G^{c}_{c'})^\mu$. Since we are free to change $\beta$ up to
homotopy, 
we can assume that $\beta$ is a finite concatenation of paths $s_k, 1
\leq k \leq l$ with each $s_k$ a path moving along across the middle of one of the rectangles
$J_i^{\epsilon}$. One can see that each of these moves acts on the element of
$(c')^{\mu}$ by the rack action of elements of $c$, which in particular act through the group $(G^{c}_{c'})^{\mu}$. 
This map from the fundamental group to $(G^{c}_{c'})^{\mu}$ is a homomorphism, concluding the proof.
\end{proof}
\begin{remark}
	We note that it often seems unnecessary to invert
	$|G^{c}_{c'}|$ to make \autoref{proposition:trivialize-action} true. For
	example in the case that $S$ comes from a group action, as in
	\autoref{example:group-hurwitz-modules}, it seems to hold integrally. However we don't know any argument integrally proving the proposition for an arbitrary bijective Hurwitz module.
\end{remark}

%Recall we use the notation $A_{c,S} := C_*(\Hur^{c,S})$.
\begin{theorem}
	\label{theorem:identify-stable-localization}
	Let $c$ be a finite rack and let $S$ be a finite bijective
	Hurwitz module over $c$.
	Suppose $c' \subset c$ is a subrack which is an $S$-component of $c$.
	Let $H$ be the product $|G^{c'}_{c}||G^{c'}_{S}||G^{c}_{c'}|$.
	The natural map induces an equivalence
	\begin{equation}
	\begin{aligned}
		\label{equation:module-localization-equivalence}
		&C_*(\Hur^{c,S})[\alpha_{c'}^{-1},
		H^{-1}] \\
		&\simeq
C_*(\Hur^{c/c',S/c'}
\times_{\pi_0
\Hur^{c/c',S/c'}}\pi_0\Hur^{c,S})[\alpha_{c'}^{-1},H^{-1}].
	\end{aligned}
\end{equation}
\end{theorem}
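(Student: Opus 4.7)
The plan is to mirror the proof strategy of Theorem \ref{theorem:stable-homology-ring}, substituting Proposition \ref{proposition:module-chain-homotopy} for the ring-level chain homotopy \autoref{proposition:ring-chain-homotopy} at the crucial step. Set $k := \mathbb{Z}[H^{-1}]$. First I would observe that both sides of \eqref{equation:module-localization-equivalence} are modules over $H_0(\Hur^c)[\alpha_{c'}^{-1}, H^{-1}]$ and are $0$-nilpotent complete with respect to it, by \cite[Lemma 4.0.4]{landesmanL:homological-stability-for-hurwitz}. Therefore, by descent, it suffices to prove the analog of Lemma \ref{lemma:tensor-ring-equivalence}: for every $n \geq 0$, the $n$-fold tensor product of $C_*(\pi_0 \Hur^c)[\alpha_{c'}^{-1}, H^{-1}]$ with $C_*(\Hur^{c,S})[\alpha_{c'}^{-1}, H^{-1}]$ over $C_*(\Hur^c)[\alpha_{c'}^{-1}, H^{-1}]$ agrees with its counterpart on the pullback side, where the ring is $C_*(\Hur^{c/c',S/c'} \times_{\pi_0 \Hur^{c/c',S/c'}} \pi_0 \Hur^{c,S})[\alpha_{c'}^{-1}, H^{-1}]$. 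The case $n=0$ is a tautology, and the cases $n \geq 2$ reduce to the case $n=1$ by iteration combined with Lemma \ref{lemma:tensor-ring-equivalence}, which takes care of the extra ring-level tensor factors.

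The heart of the proof is the $n = 1$ case. I would manipulate
$$\pi_0 \Hur^c \otimes_{\Hur^c} \pi_0 \Hur^{c,S}[\alpha_{c'}^{-1}]$$
through a chain of equivalences closely modeled on \eqref{equation:c-iterated-switch}, interleaving base change to the localized rings and applications of \cite[Theorem B.4]{bousfield2006homotopy} to commute tensor products with pullbacks of simplicial sets. The Kan fibration hypotheses needed to invoke Bousfield--Friedlander are supplied by Proposition \ref{proposition:kan-fibration}; the required module surjectivity comes from Lemma \ref{lemma:module-surjective}, and crucially part (4) handles the Hurwitz module projection $\pi_0 \Hur^{c,S}[\alpha_{c'}^{-1}] \to \pi_0 \Hur^{c/c',S/c'}[\alpha_{c'/c'}^{-1}]$. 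The outcome is a description of the source as a fiber product with one factor equal to $\pi_0 \Hur^c \otimes_{\Hur^c} \pi_0 \Hur^{c/c',S/c'}[\alpha_{c'/c'}^{-1}]$. A parallel chain of equivalences rewrites the target side of \eqref{equation:module-localization-equivalence} as a fiber product whose corresponding factor is $\pi_0 \Hur^{c/c'} \otimes_{\Hur^{c/c'}} \pi_0 \Hur^{c/c',S/c'}[\alpha_{c'/c'}^{-1}]$. After applying $C_*(-)$ and inverting $H$, Proposition \ref{proposition:module-chain-homotopy} identifies these two factors on the level of chain complexes (inverting $|G^{c}_{c'}||G^{c'}_{c}||G^{c'}_S|$ is exactly what Proposition \ref{proposition:module-chain-homotopy} demands), while the bases of the pullbacks are discrete so their homology is computed directly. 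The remaining ring-type factors on both sides are identified using Lemma \ref{lemma:tensor-ring-equivalence}, assembling to the desired equivalence.

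Along the way, Proposition \ref{proposition:trivialize-action} enters to guarantee that invertible elements of $\pi_0 \Hur^{c'}[\alpha_{c'}^{-1}]$ fixing a component of $\pi_0 \Hur^{c,S}[\alpha_{c'}^{-1}]$ act trivially on its chains after inverting $|G^c_{c'}|$; this is needed to verify that the fiber product structures on the two sides of \eqref{equation:module-localization-equivalence} match correctly on each component rather than only stably, and this is precisely why the factor $|G^c_{c'}|^{-1}$ enters the localization $H^{-1}$.

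The main obstacle is the careful bookkeeping in the Bousfield--Friedlander manipulation: whereas the ring-level argument of Theorem \ref{theorem:stable-homology-ring} only needs parts (1)--(3) of Lemma \ref{lemma:module-surjective}, the module version requires the more intricate module-surjectivity of part (4), together with the compatibility between the $S$-module structure and the quotient $S/c'$. A secondary technical obstacle is correctly pinning down the set of primes that must be inverted — three distinct group orders $|G^{c'}_c|$, $|G^{c'}_S|$, and $|G^c_{c'}|$ all appear, each corresponding to a different step (Proposition \ref{proposition:module-chain-homotopy} for the first two, and Proposition \ref{proposition:trivialize-action} for the last) — and confirming that no further primes intrude through the iterated applications of Lemma \ref{lemma:tensor-ring-equivalence}.
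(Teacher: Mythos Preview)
Your overall strategy is right and you have identified the correct key ingredients: descent against $\pi_0\Hur^c$, Proposition \ref{proposition:module-chain-homotopy}, Bousfield--Friedlander via Proposition \ref{proposition:kan-fibration}, and Proposition \ref{proposition:trivialize-action}. However, the way you assemble them contains a genuine gap.

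The central misstep is in what you claim Proposition \ref{proposition:module-chain-homotopy} does. That proposition identifies
\[
H_0(A_{c/c'})[\alpha_{c'/c'}^{-1}] \otimes_{A_c[\alpha_{c'}^{-1}]} A_{c,S}[\alpha_{c'}^{-1}]
\quad\text{with}\quad
H_0(A_{c/c'})[\alpha_{c'/c'}^{-1}] \otimes_{A_{c/c'}[\alpha_{c'/c'}^{-1}]} A_{c/c',S/c'}[\alpha_{c'/c'}^{-1}],
\]
so the left module is $\pi_0\Hur^{c/c'}$ on \emph{both} sides, not $\pi_0\Hur^{c}$ on one side as you write. The descent step, however, leaves you with $\pi_0\Hur^{c}$ as the left module (your ``$n=1$'' case, the paper's \eqref{equation:module-descent-0}). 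There is therefore a missing reduction: replacing $\pi_0\Hur^{c}$ by $\pi_0\Hur^{c/c'}$. This is exactly where Proposition \ref{proposition:trivialize-action} enters in the paper---not in the vague role you assign it. One must show that further tensoring with $C_*(\pi_0\Hur^{c/c'})$ over $C_*(\pi_0\Hur^{c})$ is lossless; this amounts to passing to orbits under the finite kernel of $\pi_0\Hur^{c}[\alpha_{c'}^{-1}] \to \pi_0\Hur^{c/c'}[\alpha_{c'/c'}^{-1}]$, and Proposition \ref{proposition:trivialize-action} is precisely what guarantees that this kernel acts trivially on the homology of the $\Hur^{c,S}$ side (on the pullback side it acts merely on components, which is obvious). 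Only after that reduction (the paper's passage from \eqref{equation:module-descent-0} to \eqref{equation:module-descent-1}) can Proposition \ref{proposition:module-chain-homotopy} be applied.

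Two smaller points. First, in the paper the Bousfield--Friedlander manipulations and Theorem \ref{theorem:stable-homology-ring} are applied to the \emph{pullback} side (manipulation \eqref{equation:module-right-simplicifcation}), which already carries a fibre-product structure to exploit; the $\Hur^{c,S}$ side (manipulation \eqref{equation:module-left-simplification}) is handled directly by Proposition \ref{proposition:module-chain-homotopy} without Bousfield--Friedlander. Your plan inverts this balance of work. Second, your claim that ``the case $n=0$ is a tautology'' is not correct: the minimal level of the Amitsur cosimplicial object already involves one tensor factor of $\pi_0\Hur^c$, and there is no tautological level below it.
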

\begin{proof}
	Note that both the source and target of
	\eqref{equation:module-localization-equivalence} are $0$-nilpotent
	complete with respect to $C_*\left(\pi_0 \Hur^c \right)[\alpha_{c'}^{-1}]$ as 
	$C_*(\Hur^c)[\alpha_{c'}^{-1}]$ modules,
(in the sense of \cite[Definition
4.0.1]{landesmanL:homological-stability-for-hurwitz})
	by \cite[Lemma
	4.0.4]{landesmanL:homological-stability-for-hurwitz}. Therefore, to
	prove \eqref{equation:module-localization-equivalence}, for every $n
	\geq 0$, it suffices to
	identify
	\begin{align*}
	&\left(C_*\left(\pi_0 \Hur^c \right)[\alpha_{c'}^{-1}]
	\right)^{\otimes_{\left(C_*(\Hur^c)[\alpha_{c'}^{-1}]\right)} n+1}
	\otimes_{C_*(\Hur^c)[\alpha_{c'}^{-1}] }
	C_*(\Hur^{c,S})[\alpha_{c'}^{-1},H^{-1}] \\
	&\simeq  \left(C_*(\pi_0
	\Hur^c)[\alpha_{c'}^{-1}]\right)^{\otimes_{\left(C_*(\Hur^c)[\alpha_{c'}^{-1}] \right)} n+1} \otimes_{
C_*(\Hur^c)[\alpha_{c'}^{-1}]} \\
&\qquad   C_*\left(\Hur^{c/c',S/c'} \times_{\pi_0
\Hur^{c/c',S/c'}} \pi_0 \Hur^{c,S} \right)[\alpha_{c'}^{-1}, H^{-1}].
	\end{align*}
	The case $n > 0$ follows
	from the case $n = 0$ by applying $n$ times the functor
	\begin{align*}
	C_*\left(\pi_0 \Hur^c \right)[\alpha_{c'}^{-1}]
	\otimes_{C_*(\Hur^c)[\alpha_{c'}^{-1}]} (-).
	\end{align*}
	Hence, it suffices to prove the case $n = 0$, which we can rewrite as
	\begin{equation}
	\begin{aligned}
		\label{equation:module-descent-0}
		&C_*\left(\pi_0 \Hur^c \right)[\alpha_{c'}^{-1}]
		\otimes_{C_*(\Hur^c)[\alpha_{c'}^{-1}]}
		C_*(\Hur^{c,S})[\alpha_{c'}^{-1}, H^{-1}] \\
	&\simeq  C_*(\pi_0 \Hur^c)[\alpha_{c'}^{-1}]
	\otimes_{C_*(\Hur^c)[\alpha_{c'}^{-1}]} C_*\left(\Hur^{c/c',S/c'} \times_{\pi_0
\Hur^{c/c',S/c'}} \pi_0 \Hur^{c,S} \right)[\alpha_{c'}^{-1}, H^{-1}].
	\end{aligned}
\end{equation}
	We first claim it is enough to check that these are equivalent after applying 
	\begin{align*}
		C_*\left(\pi_0 \Hur^{c/c'} \right)[\alpha_{c'/c'}^{-1}]
	\otimes_{C_*\left(\pi_0 \Hur^c \right)[\alpha_{c'}^{-1}]}(-)	
	\end{align*}
	to both the source and target of \eqref{equation:module-descent-0}.
	To see this, first observe that 
	the map $\omega: \pi_0 \Hur^{c/c'}[\alpha_{c'/c'}^{-1}] \to
	\pi_0 \Hur^c [\alpha_{c'}^{-1}]$ is surjective, as is easy to see
	directly and is stated in
	\autoref{lemma:module-surjective}(1).
	Hence, 
	the quotient of the
	source of $\omega$ by $\ker \omega$ is the target of $\omega.$
	We can also identify $\ker \omega$ as the kernel of the map
	$\pi_0 \Hur^{c'}
	[\alpha_{c'}^{-1}] \to \pi_0 \Hur^{c'/c'} [\alpha_{c'/c'}^{-1}]$, whose
	order we have inverted by \cite[Lemma
	6.0.4]{landesmanL:homological-stability-for-hurwitz}. 
	It is clear that
	the action of the finite group $\ker \omega$ on the right hand side of
	\eqref{equation:module-descent-0} acts just on components, and it
	follows from \autoref{proposition:trivialize-action} that the same is true of the left hand side. Thus it suffices to prove the equivalence after taking the orbits by the action. 
	That is, it suffices to prove
		\begin{equation}
		\begin{aligned}
			\label{equation:module-descent-1}
			&C_*\left(\pi_0 \Hur^{c/c'} \right)[\alpha_{c'/c'}^{-1}]
			\otimes_{C_*(\Hur^c)[\alpha_{c'}^{-1}]}
			C_*(\Hur^{c,S})[\alpha_{c'}^{-1}, H^{-1}] \\
			&\simeq  C_*(\pi_0 \Hur^{c/c'})[\alpha_{c'/c'}^{-1}]
			\otimes_{C_*(\Hur^c)[\alpha_{c'}^{-1}]} C_*\left(\Hur^{c/c',S/c'} \times_{\pi_0
				\Hur^{c/c',S/c'}} \pi_0 \Hur^{c,S} \right)[\alpha_{c'}^{-1}, H^{-1}].
		\end{aligned}
	\end{equation}
	
	To this end,
	we observe
	\begin{equation}
\begin{aligned}
	\label{equation:module-right-simplicifcation}
	&C_*(\pi_0 \Hur^{c/c'})[\alpha_{c'/c'}^{-1}]
	\otimes_{C_*(\Hur^c)[\alpha_{c'}^{-1}]} C_*\left(\Hur^{c/c',S/c'} \times_{\pi_0
\Hur^{c/c',S/c'}} \pi_0 \Hur^{c,S} \right)[\alpha_{c'}^{-1}, H^{-1}] \\
	&\simeq C_*(\pi_0 \Hur^{c/c'})[\alpha_{c'/c'}^{-1}]
	\otimes_{C_*\left(\Hur^{c/c'} \times_{\pi_0 \Hur^{c/c'}} \pi_0
	\Hur^c\right)[\alpha_{c'}^{-1}]} \\
	&\qquad C_*\left(\Hur^{c/c',S/c'} \times_{\pi_0
\Hur^{c/c',S/c'}} \pi_0 \Hur^{c,S} \right)[\alpha_{c'}^{-1}, H^{-1}] \\
&\simeq C_*(\pi_0 \Hur^{c/c'}[\alpha_{c'/c'}^{-1}])
	\otimes_{C_*\left(\Hur^{c/c'} \times_{\pi_0 \Hur^{c/c'}} \pi_0
	\Hur^c\right)} C_*\left(\Hur^{c/c',S/c'} \times_{\pi_0
	\Hur^{c/c',S/c'}} \pi_0 \Hur^{c,S} \right)[H^{-1}] \\
	&\simeq C_*\left( \pi_0 \Hur^{c/c'}[\alpha^{-1}_{c'/c'}] \otimes_{\left(\Hur^{c/c'} \times_{\pi_0 \Hur^{c/c'}} \pi_0
		\Hur^c\right)} \left( \Hur^{c/c',S/c'} \times_{\pi_0
	\Hur^{c/c',S/c'}} \pi_0\Hur^{c,S} \right)\right)[H^{-1}]
		\\
		&\simeq C_*\left( \left( \pi_0 \Hur^{c/c'}[\alpha^{-1}_{c'/c'}] \times_{\pi_0 \Hur^{c/c'}[\alpha^{-1}_{c'}]}
		\pi_0 \Hur^{c/c'}[\alpha^{-1}_{c'/c'}]  \right)
		\otimes_{\left(\Hur^{c/c'}[\alpha^{-1}_{c'}] \times_{\pi_0 \Hur^{c/c'}[\alpha^{-1}_{c'}]} \pi_0
	\Hur^c[\alpha^{-1}_{c'}]\right)} \right.  \\
	& \qquad \left. \left( \Hur^{c/c',S/c'}[\alpha^{-1}_{c'}] \times_{\pi_0
	\Hur^{c/c',S/c'}[\alpha^{-1}_{c'}]} \pi_0\Hur^{c,S}[\alpha^{-1}_{c'}]
	\right)\right)[H^{-1}]
 \\
&\simeq C_* \left( \left( \pi_0 \Hur^{c/c'}[\alpha_{c'}^{-1}] \otimes_{\Hur^{c/c'}[\alpha_{c'}^{-1}]}
		\Hur^{c/c',S/c'}[\alpha_{c'}^{-1}] \right) \times_{\left(\pi_0 \Hur^{c/c'}[\alpha_{c'}^{-1}]
	\otimes_{\pi_0 \Hur^{c/c'}[\alpha_{c'}^{-1}]} \pi_0 \Hur^{c/c',S/c'}[\alpha_{c'}^{-1}] \right)} \right.
	\\
	&\qquad
	\left. \left( \pi_0 \Hur^{c/c'}[\alpha_{c'/c'}^{-1}] \otimes_{\pi_0 \Hur^c[\alpha_{c'}^{-1}]} \pi_0 \Hur^{c,S}[\alpha_{c'}^{-1}]
	\right)\right)[H^{-1}]
	\\
	&\simeq C_*\left(\left( \pi_0 \Hur^{c/c'}[\alpha_{c'}^{-1}] \otimes_{\Hur^{c/c'}[\alpha_{c'}^{-1}]}
	\Hur^{c/c',S/c'}[\alpha_{c'}^{-1}] \right) \times_{\pi_0 \Hur^{c/c',S/c'}[\alpha_{c'}^{-1}]}
\right.
	\\
	&\qquad
	\left.\left( \pi_0 \Hur^{c/c'}[\alpha_{c'/c'}^{-1}] \otimes_{\pi_0 \Hur^c[\alpha_{c'}^{-1}]} \pi_0 \Hur^{c,S}[\alpha_{c'}^{-1}]
	\right) \right)[H^{-1}]
\\
	&\simeq C_*\left(\left( \pi_0 \Hur^{c/c'}[\alpha_{c'}^{-1}] \otimes_{\Hur^{c/c'}[\alpha_{c'}^{-1}]}
	\Hur^{c/c',S/c'}[\alpha_{c'}^{-1}] \right) \times_{\pi_0 \Hur^{c/c',S/c'}[\alpha_{c'}^{-1}]}
\pi_0 \Hur^{c/c',S/c'}[\alpha_{c'}^{-1}] \right) \\
&\simeq C_*\left( \pi_0 \Hur^{c/c'}[\alpha_{c'}^{-1}] \otimes_{\Hur^{c/c'}[\alpha_{c'}^{-1}]}
	\Hur^{c/c',S/c'}[\alpha_{c'}^{-1}] \right)[H^{-1}]
\end{aligned}
\end{equation}
Indeed, the first equivalence in \eqref{equation:module-right-simplicifcation} above uses
\autoref{theorem:stable-homology-ring}.
The 
fifth equivalence 
in
\eqref{equation:module-right-simplicifcation}
uses 
\autoref{proposition:kan-fibration} via
\autoref{lemma:module-surjective}(1) and (4)
and
\cite[Theorem B.4]{bousfield2006homotopy}.
The seventh uses that the base of the fiber product is discrete so that it
suffices to identify the $\mathbb Z[H^{-1}]$ homology of the right hand terms.
That is, it suffices to show
the map $\Omega: \pi_0
\Hur^{c/c'}[\alpha_{c'/c'}^{-1}] \otimes_{\pi_0 \Hur^c[\alpha_{c'}^{-1}]} \pi_0
\Hur^{c,S}[\alpha_{c'}^{-1}]\to \pi_0\Hur^{c/c',S/c'}[\alpha_{c'}^{-1}]$ is a
$\ZZ[H^{-1}]$-homology equivalence, which we next explain. Indeed, the source
and target of $\Omega$ have no higher homology groups since we have inverted the order of the kernel of the map $\pi_0 \Hur^c[\alpha_{c'}^{-1}] \to \pi_0 \Hur^{c/c'}[\alpha_{c'/c'}^{-1}]$. 
Moreover $\pi_0$ of the source and target of $\Omega$ can be seen to agree as
they are identified with $H_0$ of the source and target of the map in \autoref{proposition:module-chain-homotopy}.
We also have equivalences
\begin{equation}
\begin{aligned}
	\label{equation:module-left-simplification}
	&C_*\left(\pi_0 \Hur^{c/c'} \right)[\alpha_{c'/c'}^{-1}]
	\otimes_{C_*(\Hur^c)[\alpha_{c'}^{-1}]}
	C_*(\Hur^{c,S})[\alpha_{c'}^{-1},H^{-1}]
		\\
&\simeq C_*\left(\pi_0 \Hur^{c/c'} \right)[\alpha_{c'/c'}^{-1}]
		\otimes_{C_*(\Hur^c)}
		C_*(\Hur^{c,S})[H^{-1}]
		\\
	&\simeq C_* \left( \pi_0 \Hur^{c/c'}[\alpha_{c'/c'}^{-1}] \otimes_{\Hur^c} \Hur^{c,S}
	\right)[H^{-1}]
	\\
		&\simeq C_* \left( \pi_0 \Hur^{c/c'}[\alpha_{c'/c'}^{-1}] \otimes_{\Hur^{c/c'}} \Hur^{c/c',S/c'}
		\right)[H^{-1}]
	\\
\end{aligned}
\end{equation}

The third equivalence 
of 
\eqref{equation:module-left-simplification}
uses
\autoref{proposition:module-chain-homotopy}.

Finally, the final line of \eqref{equation:module-left-simplification}
agrees with the final line of \eqref{equation:module-right-simplicifcation}
while the first lines of these respective equations agree with the two sides of
\eqref{equation:module-descent-1}, and hence 
\eqref{equation:module-descent-1} holds (and the equivalences identify with the natural comparison map).
\end{proof}

We now easily deduce \autoref{theorem:one-large-stable-homology}
from \autoref{theorem:identify-stable-localization} and
\autoref{theorem:homology-stabilizes-intro}.
The proof is similar to that given in
\cite[\S4.2.3]{landesmanL:the-stable-homology-of-non-splitting}.

\subsubsection{Proof of \autoref{theorem:one-large-stable-homology}}
\label{subsubsection:stable-value-proof}

Using notation from \autoref{construction:S-filtration},
we will consider 
$CA_{c,S}, A_{c,S}, CA_{c/c_1,S/c_1}, A_{c/c_1, S/c_1}$ as graded rings 
with respect to the number of elements in the $S$-component $c_1 \subset c$ or
the $S/c_1$-component $c_1/c_1 \subset c/c_1$, where the relevant grading was
defined precisely in
\autoref{notation:hur}.

Using \autoref{theorem:homology-stabilizes-intro},
for $n > Ii + J$
every element $\alpha_x$ for $x\in c_1$ induces an isomorphism from the $n$th
graded part of $H_i(CA_{c,S})$ to the $n+1$st graded part of $H_i(CA_{c,S})$.
Therefore, the $n$th graded part of
$H_i (CA_{c,S})$ agrees with
the $n$th graded part of 
$H_i(CA_{c,S})[\alpha_{c_1}^{-1}]$.
Similarly, 
the $n$th graded part of 
$H_i (CA_{c/c_1,S/c_1})$ agrees with
the $n$th graded part of
$H_i(CA_{c/c_1,S/c_1})[\alpha_{c_1/c_1}^{-1}]$.

Therefore, it suffices to show
\begin{align*}
	H_i(\CHur^{c,S})[\alpha_{c_1}^{-1}, H^{-1}]
\simeq
H_i(\CHur^{c/c',S/c'} \times_{\pi_0 \Hur^{c/c',S/c'}} \pi_0
\Hur^{c,S})[\alpha_{c_1/c_1}^{-1}, H^{-1}]
\end{align*}
for $H := |G^{c'}_c|\cdot |G^{c}_{c'}| \cdot|G^{c'}_{S}|^{-1}$.
This follows from
\autoref{theorem:identify-stable-localization},
since the equivalence there sends the components of 
$\Hur^{c,S}$ contained in 
$\CHur^{c,S}$
to the components of
\begin{align*}
\Hur^{c/c',S/c'} \times_{\pi_0 \Hur^{c/c',S/c'}} \pi_0 \Hur^{c,S}
\end{align*}
contained in 
$\CHur^{c/c',S/c'} \times_{\pi_0 \Hur^{c/c',S/c'}} \pi_0 \Hur^{c,S}$.
\qed

\section{Application to the BKLPR conjectures}
\label{section:poonen-rains}

In the special case that $c$ is a rack corresponding to a single conjugacy class
in a group which satisfies an additional {\em non-splitting} property, 
a version of \autoref{theorem:stable-homology}, showing that the homology of
bijective Hurwitz modules stabilize, was already proven in
\cite[Theorem
4.2.6]{ellenbergL:homological-stability-for-generalized-hurwitz-spaces}.
However, the stable value of this stable homology was not determined there.
Using our determination of the value of this stable homology in
\autoref{theorem:one-large-stable-homology},
we are able to upgrade \cite[Theorem
1.1.2]{ellenbergL:homological-stability-for-generalized-hurwitz-spaces}
from a statement with a large $q$ limit to a statement which holds for a fixed
sufficiently large $q$. 

\begin{remark}
	\label{remark:}
	In what follows, we spell out the details of the proof that the
	BKLPR moments are as predicted in suitable quadratic twist
	families.
	Our main result here is \autoref{theorem:poonen-rains-moments}
	where we verify the moments of Selmer groups in quadratic twist families
	are as predicted by the BKLPR conjectures, at least for fixed sufficiently large finite
	extensions of the ground field, depending on the moment.
	This improves on a previous result of the first author with Jordan
	Ellenberg \cite[Theorem
	1.1.6]{ellenbergL:homological-stability-for-generalized-hurwitz-spaces}
	where we only computed the $H$ moments in this context in the large $q$
	limit,
	while here we compute these moments for fixed $q$ as above, without
	needing to take a limit.
	However, the proof of \autoref{theorem:poonen-rains-moments}
	is extremely similar to that of 
	\cite[Theorem
	1.1.6]{ellenbergL:homological-stability-for-generalized-hurwitz-spaces}
	where the new ingredient we now have is the computation of the stable
	cohomology of the relevant spaces coming from
	\autoref{theorem:one-large-stable-homology}. We conclude 
	\autoref{theorem:poonen-rains-moments} by plugging
	the result of 
\autoref{theorem:one-large-stable-homology}
in the the rather general \cite[Lemma
5.2.2]{landesmanL:the-stable-homology-of-non-splitting}.

	Because this is a rather formal verification, and relatively
	straightforward proof depends rather heavily
	on the notation introduced in the long paper
	\cite{ellenbergL:homological-stability-for-generalized-hurwitz-spaces}
	we have opted to avoid reintroducing notation already defined at length
	(which would take many pages)
	in
	\cite{ellenbergL:homological-stability-for-generalized-hurwitz-spaces}
	and instead content ourselves with referencing the definitions made in
	that paper.
	For the reader unacquainted with 
	\cite{ellenbergL:homological-stability-for-generalized-hurwitz-spaces},
	the summary of the notation in \cite[Figure
	2]{ellenbergL:homological-stability-for-generalized-hurwitz-spaces} may
	be helpful.
\end{remark}

For the statement of the next theorem, we use the notation
$\bklpr \nu$ and $\paritybklpr \nu i$ for $i\in \{0,1\}$
as random variables modeling distributions of Selmer groups, and distributions
of Selmer groups conditioned on the parity of the rank being $i$, as defined in 
\cite[Definition
2.2.3]{ellenbergL:homological-stability-for-generalized-hurwitz-spaces}.
We use $\mathbb E|\hom(R,H)|$ to denote the expected number
	of homomorphisms from a random variable $R$ as above to the finite group
	$H$.
We also use the notation $\qtwist n U B$ for the stack of double covers of $U$,
branched over a degree $n$ divisor,
see \cite[Notation
5.1.4]{ellenbergL:homological-stability-for-generalized-hurwitz-spaces} for a
precise definition;
this can be thought of as a moduli space of quadratic twists.
We use the notation 
$\selspacemoments {\mathscr F^n_B} H$
and
$\rankselspacemoments {\mathscr F^n_B}H$
for certain twists of Hurwitz stacks parameterizing pairs of an elliptic curve and 
a suitable collection of Selmer elements,
as defined in \cite[Notation
8.2.1]{ellenbergL:homological-stability-for-generalized-hurwitz-spaces}.
The next result is stronger than, but similar to, \cite[Theorem
9.2.4]{ellenbergL:homological-stability-for-generalized-hurwitz-spaces} and the
proof is quite similar.

\begin{theorem}
        \label{theorem:symplectic-sheaf-point-counts}
        Suppose $B = \spec R$ for $R$ a DVR of generic characteristic $0$
                with
        closed point $b$ with residue field $\mathbb F_{q_0}$ and geometric
        point $\overline b$ over $b$.
%       We assume hypotheses as in \cite[Hypotheses
%       7.1.4]{ellenbergL:homological-stability-for-generalized-hurwitz-spaces}:
%       Namely, 
Suppose $\nu$ is an odd integer and $r \in \mathbb Z_{>0}$ so that every prime $\ell \mid \nu$
        satisfies $\ell > 2r + 1$.
        Let $B$ be an integral affine base scheme,
        $C$ a smooth proper curve with geometrically connected fibers of genus
	$g$ over $B$,
        $Z \subset C$ finite \'etale nonempty over $B$ of degree $f+1$, and $U := C - Z$, with
        $j: U \to C$ the inclusion. Suppose
        $2\nu$ is invertible on $B$.
        Let $\mathscr F$ be a rank $2r$, tame, locally constant constructible, symplectically self-dual sheaf
        of free $\mathbb Z/\nu \mathbb Z$ modules over $U$ (see \cite[Definition
		5.1.1]{ellenbergL:homological-stability-for-generalized-hurwitz-spaces}.
        We assume there is some point $x \in C_{\overline b}$ at which
        $\drop_x(\mathscr F_{\overline b}[\ell]) = 1$ for every prime $\ell \mid
	\nu$ (see \cite[Definition
	5.2.4]{ellenbergL:homological-stability-for-generalized-hurwitz-spaces}).
        Also suppose
        $\mathscr F_{\overline b}[\ell]$ is irreducible
        for each $\ell \mid \nu$, and that 
        the map $j_* \mathscr F_{\overline b}[\ell^w] \to j_* \mathscr F_{\overline b}[\ell^{w-t}]$ is
        surjective for each prime $\ell \mid \nu$ such that $\ell^w \mid \nu$, and
        $w \geq t$.
        Fix $A \to U_b$ a polarized abelian scheme with polarization degree
        prime to $\nu$.
Suppose 
%the tame irreducible locally constant constructible symplectically self-dual sheaf 
$\mathscr F$
satisfies
$\mathscr F_b \simeq A[\nu]$.
        For any finite $\mathbb Z/\nu \mathbb Z$ module $H$, 
        and any finite field extension $\mathbb F_{q_0} \subset \mathbb F_q$,
        there are constants $I(H)$,
        $J(H)$,
        $C_{H}$, 
	depending on $H$,
	as well as 
	$C_{H,g,f}$, depending on $H,g,$ and $f$, so that for $\sqrt q > 2C_{H}$ and $n > C_{H}$ even,
                \begin{align}
                \label{equation:full-moments}
        \left|\frac{ |\selspacemoments {\mathscr F^n_B} H (\mathbb
        F_q)|}{q^{n}} - \mathbb E|\hom(\bklpr
        \nu ,H)| \cdot \frac{ |\qtwist n U B(\mathbb F_q)|}{ q^n} \right| &\leq
	\frac{4C_{H,g,f}}{1- \frac{C_{H}}{\sqrt{q}}}
        \left(\frac{C_{H}}{\sqrt{q}}\right)^{\frac{n-J( H)}{I(H)}} 
 \\
        \label{equation:parity-moments}
                \left |\frac{\# \rankselspacemoments {\mathscr F^n_B}H (\mathbb F_q)}{
        q^{n}} - \mathbb E|\hom(\paritybklpr
        \nu {\rk V_{\mathscr F^n_B} \bmod 2},H)|\cdot \frac{ |\qtwist n U B(\mathbb F_q)|}{ q^n}\right| &\leq 
	\frac{4C_{H,g,f}}{1- \frac{C_{H}}{\sqrt{q}}}\left(\frac{C_{H}}{\sqrt{q}}\right)^{\frac{n-J(H)}{I(H)}}.
        \end{align}
\end{theorem}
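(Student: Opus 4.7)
The plan is to follow the strategy of \cite[Theorem 9.2.4]{ellenbergL:homological-stability-for-generalized-hurwitz-spaces}, but replace the use of the weaker stable homology result (which required taking $q^j \to \infty$) with the stronger ``in all directions'' stabilization given by \autoref{theorem:one-large-stable-homology}. The overall approach is: (i) identify $\selspacemoments{\mathscr F^n_B}H$ and $\rankselspacemoments{\mathscr F^n_B}H$ as suitable components of (the $B$-relative form of) a Hurwitz module space $\Hur^{c,S}$, where the rack $c$ encodes branch data for quadratic twists, $S$ is a bijective Hurwitz module encoding the auxiliary Selmer-style datum parameterized by $H$, and the distinguished $S$-component $c_1 \subset c$ corresponds to points contributing to the branching of the quadratic cover; (ii) apply the Grothendieck--Lefschetz trace formula to write each point count as an alternating sum of traces of Frobenius on $\ell$-adic cohomology; (iii) use \autoref{theorem:one-large-stable-homology} to identify the stable cohomology in each fixed degree with the cohomology of the corresponding configuration-space-like object $\CHur^{c/c_1, S/c_1} \times_{\pi_0 \Hur^{c/c_1, S/c_1}} \pi_0 \Hur^{c, S}$, which by \autoref{example:generating-is-conf} and \autoref{lemma:quotient-components-conf} reduces to (multicolored) configuration spaces over $U$.

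Next, I would identify the leading main term. The stable $H^0$ contribution, after accounting for the Frobenius action on components, matches $\mathbb E|\hom(\bklpr\nu, H)| \cdot |\qtwist n U B(\mathbb F_q)|/q^n$ (respectively its parity-conditioned analog). This identification is exactly the content already carried out in \cite[Proposition 2.3.1]{ellenbergL:homological-stability-for-generalized-hurwitz-spaces}, and it does not depend on $A$ beyond its mod $\nu$ Galois representation, whose distribution over quadratic twists agrees with the BKLPR model by the irreducibility/big-image hypothesis on $\mathscr F[\ell]$ combined with the drop hypothesis at $x$. The higher-degree contributions to the Lefschetz sum are then controlled by Deligne's bound from Weil II: each class in $H^i$ contributes at most $q^{n - i/2}$ times its dimension. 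Feeding the linear stability range $n > I(H)i + J(H)$ from \autoref{theorem:homology-stabilizes-intro} together with the stable-value identification of \autoref{theorem:one-large-stable-homology} into the bookkeeping of \cite[Lemma 5.2.2]{landesmanL:the-stable-homology-of-non-splitting} yields a geometric series in $C_H/\sqrt q$ bounded by $\frac{4C_{H,g,f}}{1 - C_H/\sqrt q}(C_H/\sqrt q)^{(n-J(H))/I(H)}$, which is exactly the form asserted in \eqref{equation:full-moments} and \eqref{equation:parity-moments}.

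The main obstacle, and the place where the improvement over \cite[Theorem 9.2.4]{ellenbergL:homological-stability-for-generalized-hurwitz-spaces} is delicate, will be showing that the constants $C_H$, $I(H)$, $J(H)$ can be chosen independently of $A$. This is where \autoref{theorem:one-large-stable-homology} is essential: its bounds depend only on $|c_1|$ and the order of the action of an element of $c_1$ on $c$, quantities which in turn depend only on $\nu$, $r$, and $H$ (through $\mathscr F[\nu]$ viewed abstractly as a symplectic $\mathbb Z/\nu\mathbb Z$-module), and not on $A$ itself. By contrast, the constant $C_{H,g,f}$ must be allowed to depend on $g$ and $f$ because it absorbs the geometric complexity of $C$ and $Z$, but the $A$-independence of all other constants falls out because the module structure group $G^{c'}_S$ and the relative structure groups $G^{c'}_c$, $G^{c}_{c'}$ are all determined by the abstract symplectic module $\mathscr F[\nu]$. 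Once this $A$-independence is verified, the remainder of the deduction is formal: one simply feeds the stable homology identification into the trace formula bookkeeping exactly as in \cite[\S9]{ellenbergL:homological-stability-for-generalized-hurwitz-spaces}, with \autoref{theorem:one-large-stable-homology} replacing the earlier, weaker input.
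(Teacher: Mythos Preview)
Your overall strategy matches the paper's and you have correctly identified the key new input (\autoref{theorem:one-large-stable-homology}) and its role. However, there are two genuine gaps in the execution.

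First, you conflate two distinct approximations. The lemma \cite[Lemma 5.2.2]{landesmanL:the-stable-homology-of-non-splitting} does \emph{not} directly produce the main term $\mathbb E|\hom(\bklpr\nu,H)| \cdot |\qtwist n U B(\mathbb F_q)|/q^n$. What it produces is $\mathbb E|\hom(\bklpr\nu,H)| \cdot L$, where $L = \sum_{i \geq 0} (-1)^i \tr(\frob_q^{-1}|V_i)$ and $V_i$ is the stable $i$th cohomology of a single component, identified via \autoref{theorem:one-large-stable-homology} and \autoref{lemma:quotient-components-conf} with $H^i(\conf n U B)$. (Here one also needs that every component is already geometrically irreducible over $\mathbb F_q$, from \cite[Proposition 9.2.1]{ellenbergL:homological-stability-for-generalized-hurwitz-spaces}, and that $\tr(\frob_q^{-1}|V_i)$ is genuinely independent of $n$ once $n$ is large, which the paper extracts from \cite[Theorem 1.2]{petersen:a-spectral-sequence} rather than from \autoref{theorem:one-large-stable-homology}.) To pass from $L$ to $|\qtwist n U B(\mathbb F_q)|/q^n$, the paper applies the same lemma a \emph{second} time with $H = \id$, using that $\qtwist n U B = \selspacemoments{\mathscr F^n_B}{\id}$, and then combines the two estimates by the triangle inequality. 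This second step is precisely why the constant in the statement is $4$ rather than $2$, and why one must replace $C_H, J(H)$ by $\max(C_H,C_{\id}), \max(J(H),J(\id))$. Your description of the ``stable $H^0$ contribution'' as already equaling the desired main term is not correct.

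Second, you never verify condition (2) of \cite[Lemma 5.2.2]{landesmanL:the-stable-homology-of-non-splitting}, namely the exponential bound $\dim H_i(\Hur_n^{c,S}) \leq C_{H,g,f}\, C_H^i$. This is not a consequence of Deligne's weight bounds (which control eigenvalue sizes, not Betti numbers) and is not a consequence of \autoref{theorem:one-large-stable-homology} or \autoref{theorem:homology-stabilizes-intro}. The paper obtains it from \cite[Corollary 4.3.4 and Proposition 4.3.3]{ellenbergL:homological-stability-for-generalized-hurwitz-spaces}, extracting explicitly $C_{H,g,f} = 2^{2g+f+J+2}|c|^{J+2}$ and $C_H = (2|H|)^I$. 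In particular, the $A$-independence of $C_H$ comes from \emph{this} source, not from \autoref{theorem:one-large-stable-homology} as you assert; the rack $c$ is simply the conjugacy class of involutions in $\mathbb Z/2\mathbb Z \ltimes H$, which depends only on $H$.
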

\begin{proof}
        We aim to prove this by applying \cite[Lemma
        5.2.2]{landesmanL:the-stable-homology-of-non-splitting}
        to the two sequence of stacks
$\selspacemoments {\mathscr F^n_B} H$
and
$\rankselspacemoments {\mathscr F^n_B}H$ for $B = \spec \mathbb F_q$.
To apply this, we need to verify the two conditions 
of
\cite[Lemma 5.2.2]{landesmanL:the-stable-homology-of-non-splitting}.
For the reader's convenience, we note that
\cite[Lemma 5.2.2]{landesmanL:the-stable-homology-of-non-splitting}
is a lemma that provides a bound on the limiting number of $\mathbb F_q$ points
of a sequence of varieties granting two conditions: first that the trace of
Frobenius on their cohomologies stabilize and second the their cohomology is
exponentially bounded.

To verify the first condition
\cite[Lemma 5.2.2]{landesmanL:the-stable-homology-of-non-splitting}(1), we first
claim that the composite map
$\psi: \rankselspacemoments {\mathscr F^n_B} H \xrightarrow{\phi}
\selspacemoments {\mathscr F^n_B} H \to \qtwist n U B \to \conf n U B$,
induces an isomorphism on stable
cohomology on each component; 
this means concretely that there are constants $I$ and $J$,
depending only on $H$ (and not on $\mathscr F$), so that for $n > Ii + J$, and $Z \subset 
\rankselspacemoments {\mathscr F^n_B} H$ any component, the map $H^i(\conf n U B,\mathbb Q_\ell)
\to H^i(Z,\mathbb Q_\ell)$ is an isomorphism.
Observe also since the map $\phi$ above is a finite \'etale cover, this also implies that the
stable cohomology of
$\selspacemoments {\mathscr F^n_B} H$ is identified with that of 
$\conf n U B$.

We next set out to show the composite map $\psi: 
\rankselspacemoments {\mathscr F^n_B} H \to \conf n U B$
induces an isomorphism on stable
cohomology on each component.
Since these stacks are smooth and are gerbes over their coarse spaces, they have
cohomology groups isomorphic to that of their coarse spaces, via the coarse
space map, by
\cite[Proposition 2.2.8]{behrend:thesis}.
Hence, it suffices to verify the claim regarding the stable cohomology 
when $B = \spec \mathbb C$ using 
the isomorphism between their cohomology over $\mathbb C$ and over $\mathbb F_q$
coming from \cite[Proposition 7.7]{EllenbergVW:cohenLenstra},
which in turn uses the normal crossings compactification of $\conf n U B$ coming from
\cite[Corollary
B.1.4]{ellenbergL:homological-stability-for-generalized-hurwitz-spaces}.
We next relate the cohomology of 
$\rankselspacemoments {\mathscr F^n_B} H$ to that of a certain Hurwitz space
$\on{Hur}_{\mathscr F^n_B}^{H,\on{rk}}$
(which is defined in 
\cite[Notation
8.2.1]{ellenbergL:homological-stability-for-generalized-hurwitz-spaces}
as a double cover of the Hurwitz stack 
$\on{Hur}_{\mathscr F^n_{\mathbb C}}^H$ 
described in
\cite[Notation
6.2.1]{ellenbergL:homological-stability-for-generalized-hurwitz-spaces})
and
$\on{Hur}^{\rk}_{S^n_{\mathscr F,H,g,f}}$ 
(defined in \cite[Example
8.1.11]{ellenbergL:homological-stability-for-generalized-hurwitz-spaces}
as a double cover of the Hurwitz scheme 
$\on{Hur}_{S^n_{\mathscr F,H,g,f}}$ 
defined in 
\cite[Example
8.1.3]{ellenbergL:homological-stability-for-generalized-hurwitz-spaces}).
In the case $B = \spec \mathbb C$,
we can use the isomorphism from
\cite[Corollary 6.4.7]{ellenbergL:homological-stability-for-generalized-hurwitz-spaces}
which identifies 
$\rankselspacemoments {\mathscr F^n_B} H$ with
$\on{Hur}_{\mathscr F^n_B}^{H,\on{rk}}$
to reduce to identifying the stable cohomology of each component of
$\on{Hur}_{\mathscr F^n_B}^{H,\on{rk}}$
with that of $\conf n U B$.
Moreover, the Hurwitz space
$\on{Hur}^{\rk}_{S^n_{\mathscr F,H,g,f}}$ 
(which is roughly a version of 
$\on{Hur}_{\mathscr F^n_B}^{H,\on{rk}}$
where one marks a point of the cover over infinity)
is a finite unramified covering space
of
$\on{Hur}_{\mathscr F^n_{\mathbb C}}^{H,\on{rk}}$.
Hence, it suffices to show the stable cohomology of each component of
$\on{Hur}^{\rk}_{S^n_{\mathscr F,H,g,f}}$ 
agrees with
that of $\conf n U B$.
Let 
$c$ denote the conjugacy class of order $2$ elements in $\mathbb Z/2 \mathbb Z \ltimes H$,
where $\mathbb Z/2 \mathbb Z$ acts on $H$ by negation.
Then
$\on{Hur}^{\rk}_{S^n_{\mathscr F,H,g,f}}$
can be identified with $\Hur^{c,S}$, where $c$ is the rack described above and
$S = (\Sigma^1_{g,f}, \{T_n\}_{n \in \mathbb Z_{\geq 0}},
        \{\psi_n\}_{n \in \mathbb Z_{\geq 0}})$
	is a bijective Hurwitz module described in \cite[Lemma 8.1.8]{ellenbergL:homological-stability-for-generalized-hurwitz-spaces}.
	(Technically, 
\cite[Lemma
8.1.8]{ellenbergL:homological-stability-for-generalized-hurwitz-spaces}
describes a coefficient system, which is like a bijective Hurwitz module valued
in vector spaces instead of sets, but 
\cite[Remark 8.1.9]{ellenbergL:homological-stability-for-generalized-hurwitz-spaces}
explains that the relevant vector space is actually the free vector space on a
set, so this coefficient system actually comes from a bijective Hurwitz module.)
By \autoref{lemma:quotient-components-conf}, each component of
$\Hur_n^{c/c,S/c}$ is identified with $\conf n U B$.

The claim regarding the existence of $I$ and $J$ depending only on $H$ at the
beginning of this proof then follows from
\autoref{theorem:one-large-stable-homology}.
Moreover, for $B = \spec \mathbb C$,
by
\autoref{theorem:one-large-stable-homology},
the stable homology of each component of $\Hur^{c,S}$ is identified with the stable homology
of $\Hur^{c/c,S/c}$ and hence with that of $\conf n U B$.

In order to complete the verification of 
\cite[Lemmma 5.2.2]{landesmanL:the-stable-homology-of-non-splitting}(1),
when $B = \spec \overline{\mathbb F}_q$
we need to show the trace of $\on{Frob}_q^{-1}$, for $\on{Frob}_q$ geometric
Frobenius, on the stable cohomology 
of $\conf n U B$ stabilizes.
Indeed, this follows from 
\cite[Theorem 1.2]{petersen:a-spectral-sequence}(2).
Furthermore, we need to determine the number of components of the above Selmer
spaces.
Indeed, the number of geometric components is given by
\cite[Proposition
9.2.1]{ellenbergL:homological-stability-for-generalized-hurwitz-spaces}, 
which shows that
every
component of both 
$\selspacemoments {\mathscr F^n_B} H$
and
$\rankselspacemoments {\mathscr F^n_B}H$
is geometrically connected, and the number of such components is also computed to
be 
$\mathbb E|\hom(\bklpr \nu ,H)| $
and
$\mathbb E|\hom(\paritybklpr \nu {\rk V_{\mathscr F^n_B} \bmod 2},H)|$
in the two cases.

To verify the second condition,
\cite[Lemma 5.2.2]{landesmanL:the-stable-homology-of-non-splitting}(2)
for $S$ as earlier in this proof,
we wish to show there are constants $C_{H, g,f}$ and $C_H$ so that $\dim
H_i(\Hur^{c,S}_n) \leq C_{H, g,f} C_H^i$.
Indeed, this was essentially shown in 
\cite[Corollary 4.3.4 and Proposition
4.3.3]{ellenbergL:homological-stability-for-generalized-hurwitz-spaces}, except
the bound was written there in the form $K^{i+1}$ for a slightly different value
of $K$. However, examining the proof of 
\cite[Corollary 4.3.4 and Proposition
4.3.3]{ellenbergL:homological-stability-for-generalized-hurwitz-spaces},
specifically the fourth to last line, we see that we can take
$C_{H, g,f} := 2^{2g+f+J+2} |c|^{J +2}$
and $C_H := (2|H|)^I$
(upon noting that $|H| = |c|$ and $\mathbb U$ in 
\cite[Proposition 4.3.3]{ellenbergL:homological-stability-for-generalized-hurwitz-spaces}
can be taken to have degree $2$ using
\cite[Proposition
A.3.1]{ellenbergL:homological-stability-for-generalized-hurwitz-spaces}).

Combining the above, if we let $V_i$ denote the vector space with Frobenius
action equal to the $i$th 
cohomology of $\conf n U B$ for $n$ sufficiently large relative to $i$.
Then, the above application of \cite[Lemma
5.2.2]{landesmanL:the-stable-homology-of-non-splitting}
yields
\begin{align}
\label{equation:full-moments-stable-trace}
        \left|\frac{ |\selspacemoments {\mathscr F^n_B} H (\mathbb
        F_q)|}{q^{n}} - \mathbb E|\hom(\bklpr
        \nu ,H)| \cdot \sum_{i=0}^\infty (-1)^i \tr(\frob_q^{-1} |V_i)  \right| &\leq
	\frac{2C_{H,g,f}}{1- \frac{C_{H}}{\sqrt{q}}}
        \left(\frac{C_{H}}{\sqrt{q}}\right)^{\frac{n-J(H)}{I(H)}} 
 \\
        \label{equation:parity-moments-stable-trace}
                \left |\frac{\# \rankselspacemoments {\mathscr F^n_B}H (\mathbb F_q)}{
        q^{n}} - \mathbb E|\hom(\paritybklpr
        \nu {\rk V_{\mathscr F^n_B} \bmod 2},H)|\cdot \sum_{i=0}^\infty (-1)^i \tr(\frob_q^{-1} |V_i) \right| &\leq 
	\frac{2C_{H,g,f}}{1- \frac{C_{H}}{\sqrt{q}}}\left(\frac{C_{H}}{\sqrt{q}}\right)^{\frac{n-J(H)}{I(H)}} 
        \end{align}
To conclude, it remains to relate 
\eqref{equation:full-moments-stable-trace}
to \eqref{equation:full-moments}
and \eqref{equation:parity-moments-stable-trace}
to \eqref{equation:parity-moments}.
We next explain how to deduce
\eqref{equation:full-moments}
from \eqref{equation:full-moments-stable-trace}.
Note that 
$\qtwist n U B = \selspacemoments {\mathscr F^n_B} {\id}$.
Applying 
\eqref{equation:full-moments-stable-trace}
for both $H$ and $\id$ and adding the results, we find
\begin{align*}
        &\left|\frac{ |\selspacemoments {\mathscr F^n_B} H (\mathbb
        F_q)|}{q^{n}} - \mathbb E|\hom(\bklpr
        \nu ,H)| \cdot \frac{ |\qtwist n U B(\mathbb F_q)|}{ q^n} \right| \\
        &\leq
        \left|\frac{ |\selspacemoments {\mathscr F^n_B} H (\mathbb
        F_q)|}{q^{n}} - \mathbb E|\hom(\bklpr
        \nu ,H)| \cdot \sum_{i=0}^\infty (-1)^i \tr(\frob_q^{-1} |V_i)  \right|
        \\
        &+
\left|
\mathbb E |\hom(\bklpr \nu ,H)|
\frac{ |\qtwist n U B|}{q^{n}} - \mathbb E|\hom(\bklpr
        \nu ,H)| \cdot \sum_{i=0}^\infty (-1)^i \tr(\frob_q^{-1} |V_i)  \right| 
        \\
        &\leq
	\frac{2C_{\id,g,f}}{1- \frac{C_{\id}}{\sqrt{q}}}
        \left(\frac{C_{\id}}{\sqrt{q}}\right)^{\frac{n-J(\id)}{I(H)}} +
	\frac{2C_{H,g,f}}{1- \frac{C_{H}}{\sqrt{q}}}
        \left(\frac{C_{H}}{\sqrt{q}}\right)^{\frac{n-J(
        H)}{I(H)}} \\
	&\leq \frac{4 \max(C_{H,g,f}, C_{\id,g,f})}{1- \frac{ \max(C_{H}, C_{\id})}{\sqrt{q}}}
        \left(\frac{\max(C_{H},C_{\id})}{\sqrt{q}}\right)^{\frac{n-\max(J(H), J(\id))}{I(H)}}.
\end{align*}
So, by replacing $C_{H, g,f}$ with $\max(C_{H, g,f}, C_{\id,
g,f})$, replacing $C_{H}$ with $\max(C_{H}, C_{\id})$, 
and replacing $J(H)$ with $\max(J(H), J(\id))$,
we obtain
\eqref{equation:full-moments}.
Similarly, we can deduce 
\eqref{equation:parity-moments}
from
\eqref{equation:parity-moments-stable-trace}.
\end{proof}

\subsection{Proof of \autoref{theorem:poonen-rains-moments}}
\label{subsection:proof-poonen-rains}

\autoref{theorem:poonen-rains-moments}
	follows from \autoref{theorem:symplectic-sheaf-point-counts}
	in the same way that \cite[Theorem
	1.1.6]{ellenbergL:homological-stability-for-generalized-hurwitz-spaces}
	follows from \cite[Theorem
	9.2.4]{ellenbergL:homological-stability-for-generalized-hurwitz-spaces}.
	We note that the constant $C_H$ in 
\autoref{theorem:poonen-rains-moments}
is the square of the constant also called $C_H$ in 
\autoref{theorem:symplectic-sheaf-point-counts}.

	In a bit more detail, let $b = \spec \mathbb F_q$. We may view $(C,U,Z,A[\nu])$ as {\em symplectic
	sheaf data over $b$} in the sense of \cite[Definition
	10.2.2]{ellenbergL:homological-stability-for-generalized-hurwitz-spaces}.
	Let $B$ be a complete dvr with closed point $b$ and generic
	characteristic $0$. By \cite[Lemma
	10.2.3]{ellenbergL:homological-stability-for-generalized-hurwitz-spaces},
	we can realize
	$(C,U,Z,A[\nu])$ as the restriction along $b \to B$ of some symplectic
	sheaf data $(C_B, U_B, Z_B, \mathscr F_B)$ on $B$.

	Since $\sym^2 H$ is the $H$-surjection moment of the BKLPR distribution
	as explained in \cite[Proposition
	2.3.1]{ellenbergL:homological-stability-for-generalized-hurwitz-spaces},
	the result then follows from 
	\autoref{theorem:symplectic-sheaf-point-counts}
	and an inclusion-exclusion to show certain components of
	$\on{Sel}^H_{\mathscr F^n_b}$ 
	(defined in \cite[Notation
	8.2.1]{ellenbergL:homological-stability-for-generalized-hurwitz-spaces})
	correspond to surjections onto $H$, in
	place of all homomorphisms onto $H$.
	\qed

\subsection{Proof of \autoref{theorem:poonen-rains-elliptic-curves}}
\label{subsection:proof-poonen-rains-elliptic}

\autoref{theorem:poonen-rains-elliptic-curves} is a 
special case of the substantially more general 
\autoref{theorem:poonen-rains-moments}, as we now explain.
If we take the group $H$ appearing in 
\autoref{theorem:poonen-rains-moments}
to be $\mathbb Z/d \mathbb Z$,
we find $\# \sym^2 H = \# (\mathbb Z/d \mathbb Z) = d$. The order of 
$\# \sel_\nu(A_x)$ is then the sum of the number of surjections onto
$\mathbb Z/d \mathbb Z$ for each $d \mid \nu$.
It only remains to verify the hypotheses in 
\autoref{theorem:poonen-rains-elliptic-curves}
hold.
Note that $A[\nu] \to U$ is tame because we assume $q$ is prime to $6$.
The irreducibility assumption in 
\autoref{theorem:poonen-rains-moments}
holds in 
\autoref{theorem:poonen-rains-elliptic-curves}
by \cite[Proposition 2.7]{zywina:inverse-orthogonal}.
Note that a
nonconstant elliptic curve with squarefree discriminant is necessarily
nonisotrivial, and has a place of multiplicative reduction.
The remaining conditions in 
\autoref{theorem:poonen-rains-moments}
therefore hold for nonconstant elliptic curves of squarefree discriminant since
the geometric component group of the N\'eron model of an elliptic curve with
squarefree discriminant is trivial.
\qed

\begin{remark}
	\label{remark:explicit-constant}
The constants $C_\nu$ and $C_H$ appearing in
\autoref{theorem:poonen-rains-elliptic-curves} and
\autoref{theorem:poonen-rains-moments} are completely explicit, though large, and can be
computed by tracing through the proof.
The proof shows that when $H = \mathbb Z/\nu \mathbb Z$ we have $C_{\nu} = C_H$,
so we will just explain how to compute the constants $C_H$ as in
\autoref{theorem:poonen-rains-moments}.
Tracing through the proof gives that $C_H = (2|H|)^{2I}$, for $I$ the
slope coming from an application of \autoref{theorem:homology-stabilizes-intro}
associated to $c$ the set of order two elements in $\mathbb Z/2 \mathbb Z
\ltimes H$.
The value of this $I$ can be computed to be $(N_0+2)\cdot 2$
\cite[Proposition 4.4, Proposition 8.1, Theorem 7.1, Corollary
7.4]{randal-williams:homology-of-hurwitz-spaces},
for $N_0$ as in
\cite[Proposition 4.4]{randal-williams:homology-of-hurwitz-spaces}.
For example, if $H = \mathbb Z/5 \mathbb Z$, one can compute $N_0 = 5$ so one can take
$C_H = 10^{28}$.
We note that this is smaller 
than the constant appearing in 
\cite[Remark 5.3.2]{landesmanL:the-stable-homology-of-non-splitting}
since we slightly improved the constant $C_H$ in the proof of
\autoref{theorem:symplectic-sheaf-point-counts}
compared to the constant described in
\cite[Remark 5.3.2]{landesmanL:the-stable-homology-of-non-splitting}, resting on
\cite[Proposition
4.3.3]{ellenbergL:homological-stability-for-generalized-hurwitz-spaces}.
\end{remark}

\section{Bhargava's conjecture}
\label{section:bhargava-conjecture}

In this section, we prove \autoref{theorem:bhargava-analog}, which implies
\autoref{theorem:bhargava-intro} from the introduction.
This can be rephrased as a question about counting $\mathbb F_q$ points on
certain Hurwitz schemes of $S_d$ covers, and so in order to apply
\autoref{theorem:one-large-stable-homology-hurwitz-space}, we will want to
determine the number of components of the relevant Hurwitz schemes, which is
essentially the content of \autoref{lemma:partial-components}, though we
rephrase this over finite fields in \autoref{lemma:unique-component}.
We now build up to computing the components of these Hurwitz spaces.

\begin{example}
	\label{example:symmetric-group-components}
	Let $G$ be the symmetric group $S_d$ and $c \subset G$ the conjugacy class of
	transpositions.
	We now explain why $H_2(G,c) = 0$.
	It is shown in \cite[Theorem 2.5 and Theorem 3.1]{wood:an-algebraic-lifting-invariant}
	that $H_2(G,c)$ is identified with the number of components of 
	$\cphur G n c {\mathbb C}$ with trivial boundary monodromy for sufficiently large
	even $n$.
	The result then follows from the fact that Hurwitz spaces
	simply branched overs of
	$\mathbb P^1$ with sufficiently many branch points have a unique connected component 
	see \cite[p. 224-225]{clebsch:zur-theorie-der-riemannschen-flache}
	and \cite{hurwitz:uber-riemannshce-flaschen}
	for classical references, and 
	\cite[\S1]{EisenbudEHS:onTheHurwitz}
	for a more modern reference.
	In particular, it follows that for any $\widetilde{c}$ containing the conjugacy
	class of transpositions, we also have $H_2(G,\widetilde{c}) = 0$, because that is a
	quotient of $H_2(G,c)= 0$.
\end{example}
\begin{remark}
	\label{remark:alternate-symmetric-group-components}
	One can alternatively compute $H_2(G,c)$ from its definition as a
		quotient of $H_2(G; \mathbb Z)$. This is trivial for $d \leq 3$
		and $\mathbb Z/2 \mathbb Z$ for $d > 3$. One can verify that if
		one takes two distinct commuting transpositions $x, y \in S_d$ for $d > 3$, the corresponding
		element of $H_2(G; \mathbb Z)$ under the map $H_2(\mathbb Z^2;
		\mathbb Z) \to H_2(G; \mathbb Z), (i,j) \mapsto x^i y^j$ is
		nontrivial. Hence $H_2(G, c)$ is trivial.
\end{remark}

Before continuing, we pause to give a couple interesting examples of
computations of the stable components of Hurwitz spaces. The next two examples
will not be needed elsewhere in this paper.
In the next example, we show that there is a unique stable component of Hurwitz
spaces for $A_4$ when one has many elements of each conjugacy class, but when
one only has $3$-cycles, there are multiple stable components.
\begin{example}
	\label{example:alternating-components}
	Let $c' \subset A_4$ denote the set of $3$-cycles, which is a union of
	two conjugacy classes.
	Let $c := A_4 - \id$.
	We will show $H_2(A_4, c) = 0$ but $H_2(A_4, c') \neq 0$.
	Therefore, even though $c'$ generates $A_4$, the Hurwitz space for $c'$
	may have more dominant stable components than the Hurwitz space for $c$.
	Let $K_4 := \mathbb Z/2 \mathbb Z \times \mathbb Z/2 \mathbb Z$.
	To show the above claims, we use the exact sequence
	\begin{equation}
		\label{equation:}
		\begin{tikzcd}
			0 \ar {r} & K_4 \ar {r} & A_4 \ar {r} & \mathbb Z/3
			\mathbb Z \ar {r}
			& 0. 
	\end{tikzcd}\end{equation}
	This gives a spectral sequence which allows us to compute $H_2(A_4;
	\mathbb Z)$.
	The spectral sequence includes terms 
	\begin{align*}
		H_0(\mathbb Z/3\mathbb Z;
	H_2(K_4; \mathbb Z)) &= H_0( \mathbb Z/3 \mathbb Z; \mathbb Z/2 \mathbb
	Z) = \mathbb Z/2 \mathbb Z, \\
	H_1( \mathbb Z/3 \mathbb Z; H_1(K_4; \mathbb Z)) &= H_1(
	\mathbb Z/3 \mathbb Z; K_4) = 0, \\
	H_2( \mathbb Z/3 \mathbb Z; H_0(K_4; \mathbb Z)) &= H_2( \mathbb Z/3
	\mathbb Z; \mathbb Z) = 0.
	\end{align*}
	Using that $H_i(\mathbb Z/3\mathbb Z; H_j(K_4; \mathbb Z))$ is
	$3$-torsion for $i > 0$, the 
$H_0(\mathbb Z/3\mathbb Z;
	H_2(K_4; \mathbb Z))$ term must survive the spectral sequence and 
	we obtain an isomorphism 
$\mathbb Z/2 \mathbb Z \simeq H_0(\mathbb Z/3\mathbb Z;
H_2(K_4; \mathbb Z)) \simeq H_2(A_4; \mathbb Z)$.
The generator of this cohomology group corresponds to the generator 
$H_2(K_4; \mathbb Z)$, coming from a pair of distinct $(2,2)$ cycles.
Therefore, for $x, y \in A_4$ commuting elements
the map $H_2(\mathbb Z^2;
\mathbb Z) \to H^2(A_4; \mathbb Z)$
induced by $(x,y) \mapsto x^i y^j$
will be trivial when $x,y$ are $3$-cycles but nontrivial when $x,y$ are $(2,2)$
cycles.
This implies $H_2(A_4, c) = 0$ but $H_2(A_4, c') = \mathbb Z/2 \mathbb Z$.
\end{example}

\begin{example}
	\label{example:symmetric-components}
	A similar analysis to \autoref{example:alternating-components}, using
	that $S_4$ has normal subgroup $K_4 \simeq \mathbb Z/2 \mathbb Z \times
	\mathbb Z/2 \mathbb Z$ with quotient $S_3$ shows $\mathbb Z/2 \mathbb Z
	\simeq H_0(S_3;H_2(K_4; \mathbb Z)) \simeq H_2(S_4; \mathbb Z)$, and so
	$H_2(S_4; \mathbb Z)$ is generated by the image of 
$H_2(\mathbb Z^2;
\mathbb Z) \to H^2(S_4; \mathbb Z)$
induced by $(x,y) \mapsto x^i y^j$
for $x,y$ commuting transpositions.
\end{example}

\begin{lemma}
	\label{lemma:quotient-rack}
	Suppose $G$ is a finite group and $c, c' \subset G$ are two unions of
	conjugacy classes with $c' \subset c$. If $c'$ generates $G$ then $c/c'
	\simeq c/c$.
\end{lemma}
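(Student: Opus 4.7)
The plan is to unwind the definitions and check that, because $c'$ is a union of $G$-conjugacy classes that generates $G$, the equivalence relations defining $c/c'$ and $c/c$ both coincide with the relation ``same $G$-conjugacy class.''

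First I would verify that both quotients are well-defined, i.e.\ that $c'$ and $c$ are normal subracks of $c$ in the sense of \autoref{definition:normalizer}. For a union of conjugacy classes $c'$ in $G$, the condition $x \triangleright y = xyx^{-1} \in c'$ for every $x \in c, y \in c'$ is immediate since $c'$ is stable under conjugation by every element of $G$. The same holds for $c$ inside itself.

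Next I would unwind \autoref{definition:quotient-rack} in this group-theoretic setting: for racks coming from conjugacy classes in $G$ with $x \triangleright y = xyx^{-1}$, the relation defining $c/c'$ is the transitive closure of $y \sim (wyw^{-1})$ for $w \in c'$. Therefore two elements $x,y \in c$ are equivalent in $c/c'$ if and only if $y = gxg^{-1}$ for some $g$ in the subgroup $\langle c' \rangle \subset G$. By the hypothesis that $c'$ generates $G$, we have $\langle c' \rangle = G$, so the equivalence classes in $c/c'$ are precisely the $G$-conjugacy classes contained in $c$. The identical argument, applied to the subrack $c \subset c$, shows that the equivalence classes in $c/c$ are the orbits of the subgroup $\langle c \rangle$; but $\langle c' \rangle \subseteq \langle c \rangle \subseteq G$ forces $\langle c \rangle = G$ as well, so the equivalence classes in $c/c$ are also the $G$-conjugacy classes contained in $c$.

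Finally I would observe that the rack structures on $c/c'$ and $c/c$ agree: in both cases the operation is $\overline{x} \triangleright \overline{y} = \overline{xyx^{-1}}$, where the overline now denotes a $G$-conjugacy class in $c$, and this is well-defined by \autoref{lemma:quotient-well-defined}. Since both quotient racks have the same underlying set (with the same equivalence relation) and the same induced operation, the identity map on $c$ descends to a rack isomorphism $c/c' \xrightarrow{\sim} c/c$. There is no serious obstacle here; the entire content is the observation that equivalence ``via successive conjugations by elements of $c'$'' is the same as equivalence ``via conjugation by some element of $\langle c' \rangle = G$,'' for which generation of $G$ by $c'$ is exactly what is needed.
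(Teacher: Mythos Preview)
Your proof is correct and follows essentially the same approach as the paper's. Both arguments reduce to the observation that the equivalence classes in $c/c'$ are the orbits under conjugation by $\langle c' \rangle$, and since $c'$ generates $G$ this subgroup is all of $G$; the paper phrases this more tersely (``write $x = y_1 \cdots y_k$ with $y_i \in c'$'') while you spell out the normality check and the identification of the rack structure, but the content is the same.
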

\begin{proof}
	We have to show that if $s,t \in c$ lie in the same orbit under the $c$
	conjugation action then they lie in the same orbit under the $c'$
	conjugation action.
	It suffices to show that if $s = x \cdot t$ for some $x \in c$ then
	there is a sequence of elements $y_1 \cdots y_k \in c'$ with $s = (y_1
	\cdots y_k) \cdot t$. Indeed, since $c'$ generates $G$, we can write $x = y_1 \cdots
	y_k$ with $y_i \in c'$, which gives the desired $y_1, \ldots, y_k$.
\end{proof}
We now prove our main result toward counting the components of Hurwitz spaces
$\CHur^c$ for $c \subset S_d$ the conjugacy class of transpositions.
\begin{lemma}
	\label{lemma:partial-components}
	Suppose $c \subset G$ is a union of conjugacy classes in the
	symmetric group $G = S_d$.
	Suppose $c' \subset c$ is the conjugacy class of transpositions.
	Then the map $\pi_0(\CHur^c)[(\alpha_{c'})^{-1}] \to G
	\times_{G^{\ab}}(\pi_0(\CHur^{c/c'})[(\alpha_{c'/c'})^{-1}])
	\simeq G \times_{G^{\ab}}(\pi_0(\CHur^{c/c})[(\alpha_{c'/c'})^{-1}])$,
	given by taking boundary monodromy in the first factor and taking the
	image of $c$ in $c/c$ in the second factor,
	is a bijection.
\end{lemma}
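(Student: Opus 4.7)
The plan is to classify components of stabilized Hurwitz space via Wood's lifting invariant and reduce to the vanishing of the relevant Schur-type homology group established in \autoref{example:symmetric-group-components}. First, the second isomorphism in the target is immediate from \autoref{lemma:quotient-rack}: since the transpositions $c'$ generate $S_d$, we have $c/c' \simeq c/c$ as racks, so $\CHur^{c/c'} = \CHur^{c/c}$. By \autoref{example:generating-is-conf}, $\CHur^{c/c}$ is a $\upsilon$-colored configuration space, and its $\pi_0$ simply records the distribution $(n_1, \ldots, n_\upsilon)$ of branch points among the conjugacy classes $c_1, \ldots, c_\upsilon$. Consequently the fiber product $G \times_{G^{\ab}} \pi_0(\CHur^{c/c})[(\alpha_{c'/c'})^{-1}]$ parametrizes pairs $(g, (n_1, \ldots, n_\upsilon))$ consisting of a boundary monodromy and a conjugacy class distribution satisfying the abelianization constraint $\bar g = \sum_i n_i \bar c_i$ in $G^{\ab}$.

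The map in the statement is well defined, since a component $Z \subset \CHur^c_{n_1, \ldots, n_\upsilon}$ is connected and therefore has a well-defined boundary monodromy in $G$ and a well-defined image under $\CHur^c \to \CHur^{c/c}$, which are automatically compatible in $G^{\ab}$. To show the map is a bijection after inverting $\alpha_{c'}$, I would invoke Wood's classification: by \cite[Theorem 2.5 and Theorem 3.1]{wood:an-algebraic-lifting-invariant}, once $n_1$ is sufficiently large --- which is precisely what inverting $\alpha_{c'}$ achieves --- any nonempty fiber of our map over a compatible pair $(g, (n_1, \ldots, n_\upsilon))$ is a torsor for $H_2(G, c)$. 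By \autoref{example:symmetric-group-components}, $H_2(S_d, c') = 0$, and since $c \supset c'$ the natural map $H_2(S_d, c') \twoheadrightarrow H_2(S_d, c)$ is surjective, so $H_2(S_d, c) = 0$. Therefore every nonempty fiber is a singleton.

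It remains to verify nonemptiness of each fiber: given a compatible pair $(g, (n_1, \ldots, n_\upsilon))$, one must exhibit a connected $S_d$-cover with boundary monodromy $g$ and branch distribution $(n_i)$ branching in $c$. Since $c'$ generates $S_d$ and acts transitively on $\{1, \ldots, d\}$, for $n_1$ large enough one can always factor $g$ as an ordered product of elements of $c$ realizing the prescribed class counts (using the abelianization compatibility), and any factorization with sufficiently many transpositions automatically generates a transitive subgroup of $S_d$, hence corresponds to a connected cover. The main conceptual obstacle --- the vanishing $H_2(S_d, c) = 0$ --- is already handled in \autoref{example:symmetric-group-components}; what remains is to carefully assemble the Wood lifting invariant with the behavior of the localization at $\alpha_{c'}$, which ensures we are in the stable regime where the lifting invariant is a complete component invariant.
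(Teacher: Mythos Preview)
Your proposal has a genuine gap in the injectivity argument. You invoke Wood's theorems to conclude that, once $n_1$ is large, the fibers of the map are torsors for $H_2(S_d,c)$. But Wood's classification (and the underlying Conway--Parker type result) requires that \emph{every} conjugacy class in $c$ appears sufficiently many times, not merely the transposition class $c'$. Inverting $\alpha_{c'}$ only sends $n_1 \to \infty$ while $n_2,\ldots,n_\upsilon$ remain fixed and possibly small, so you are precisely in the regime where Wood's theorem does not apply directly. Indeed, the whole thrust of the paper is to handle this ``one-direction-large'' regime, so you cannot simply cite a result that presupposes the all-directions-large regime.

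The paper's proof closes this gap by a hands-on reduction: given two classes $\mu,\nu$ with the same image in the target, it first appends matching transpositions to both and then uses braid moves---conjugating by full twists around the non-transposition letters, exploiting that transpositions generate $S_d$---to force the non-transposition parts $[a_1]\cdots[a_k]$ and $[x_1]\cdots[x_k]$ to agree literally. Only after this reduction is one left comparing two words $[b_1]\cdots[b_j]$ and $[y_1]\cdots[y_j]$ consisting entirely of transpositions with the same boundary monodromy; now $c'$ is a single conjugacy class with $j$ large, and Wood's theorem together with $H_2(S_d,c')=0$ applies legitimately. Your observation that $H_2(S_d,c)=0$ is correct but not the operative fact; what is actually used is $H_2(S_d,c')=0$ after the reduction. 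The surjectivity half of your argument is essentially fine and matches the paper's.
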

\begin{proof}
	The later isomorphism follows from the fact that $c/c' \simeq c/c$ using
	\autoref{lemma:quotient-rack}. Therefore, we will check the composite
	map is a bijection.
	Upon identifying $\pi_0(\CHur^{c/c}) \simeq \mathbb N^{|c/c|}$, we claim the map
	from the statement is a surjection. To see this, first note the map
	$\pi_0(\CHur^c)[(c')^{-1}]\simeq \pi_0(\CHur^{c/c})[(c'/c')^{-1}]$
	is a surjection. Moreover, we can 
	modify the boundary monodromy of the source 
	(within its coset of $A_d \subset S_d$)
	while preserving the
	number of branch points
by multiplying by some product of $\alpha_g$ and
$(\alpha_h)^{-1}$ for varying $g,h \in c'$.

To conclude, it is enough to show this map is injective.
In other words, suppose we have two classes $\mu$ and $\nu$, with the same image
in the target.
Since the homology of Hurwitz spaces stabilize once one has sufficiently many of
any given conjugacy class, see
\cite[Theorem 1.4.1]{landesmanL:homological-stability-for-hurwitz},
it is enough to show they have the same image after adding sufficiently many
transpositions to the right of both words, so long as we add the same transpositions to each.
By moving the transpositions to the right, we can arrange that $\mu = [a_1]
\cdots [a_k] [b_1] \cdots [b_j]$ and $\nu = [x_1] \cdots [x_k] [y_1] \cdots
[y_j]$ where $b_1, \ldots, b_j, y_1, \ldots, y_j$ consist of transpositions,
while there are no transpositions among $a_1, \ldots, a_k, x_1, \ldots, x_k$.
Moreover, we may assume that that $a_i$ and $x_i$ lie in the same
conjugacy class.
Next, using that transpositions generate $S_d$, by possibly adding the same set
of transpositions to the right of both elements, we
can use the braid group action by moving suitable transpositions in a full twist
around $[a_1] \cdots [a_k]$ and $[x_1] \cdots [x_k]$ to ensure that $a_1 = x_1$.
Repeating this, we may assume $a_i = x_i$ for all $1 \leq i \leq k$. It only
remains to ensure that $[b_1] \cdots [b_j]$ lies in the same braid group orbit as
$[y_1] \cdots [y_j]$, provided they have the same boundary monodromy.
This then follows from \autoref{example:symmetric-group-components}, which tells
us $H_2(G, c') = 0$ and hence it follows from 
\cite[Theorem 2.5]{wood:an-algebraic-lifting-invariant} and
\cite[Theorem 3.1]{wood:an-algebraic-lifting-invariant} that 
$[b_1] \cdots [b_j]$ lies in the same braid group orbit
$[y_1] \cdots [y_j],$ provided $j$ is sufficiently large
and also that $b_1, \ldots, b_j$ generate $G$ and $y_1, \ldots, y_j$ generate
$G$.
\end{proof}

So far we have identified the relevant stable components over $\mathbb C$, and
we next wish to identify its stable homology.

For $n_1, \ldots, n_\upsilon$ integers and $R$ a ring,
we use 
$\Conf_{n_1, \ldots, n_\upsilon, B}$ to denote the multi-colored configuration
space parameterizing $0$-dimensional subschemes of $\mathbb A^1_{\spec R}$ with a degree $n_i$
divisor of color $i$, see \cite[Definition
2.2.1]{landesmanL:homological-stability-for-hurwitz} for a more formal
definition. When $R = \mathbb C$, we omit that subscript.

For the next lemma, we suggest the reader review the function $\sigma$ defined in
\autoref{definition:sigma-weighting}. Before continuing let's see a brief
example.
\begin{example}
	\label{example:}
	So, for example, if $d = 3$, let $c_1$ be the conjugacy class of transpositions,
and $c_2$ be the conjugacy class of three-cycles.
Then, we claim $\sigma(n_1, n_2)$
is $1$ if $n_1$ is odd and $2$ if $n_1$ is even.
To see this, first note that
$n_1 c_1 + n_2 c_2$ has trivial image in $S_3^{\ab}$ if and only if $n_1$
is even. The claim then follows because
transpositions are the unique conjugacy class with nontrivial projection to
$S_3^{\ab}$, while there are two conjugacy classes with trivial projection to
$S_3^{\ab}$.
\end{example}
\begin{remark}
	\label{remark:}
	It may be helpful to note that the function $\sigma$ from
	\autoref{definition:sigma-weighting} is $2$-periodic as a function
of each of the inputs $n_1, \ldots, n_\upsilon$ because 
$S_d^{\ab} \simeq \mathbb Z/2 \mathbb Z$, and it is $1$-periodic as a function
of each input corresponding to a conjugacy class lying in $A_d$.
\end{remark}

We are now prepared to identify the stable homology of the relevant Hurwitz
spaces.

\begin{lemma}
	\label{lemma:cohomology-and-components}
	If $c_1 \subset S_d$ is the conjugacy class of transpositions and $c :=
	S_d - \id$, then
	there are constants $I$ and $J$ so that if $n_1 > Ii + J$, 
	the map
	\begin{align*}
	H_i([\cphurc {n_1, \ldots, n_\upsilon} {c}/S_d]; \mathbb Z[1/d!])
	\to H_i(\Conf_{n_1, \ldots, n_\upsilon}; \mathbb
	Z[1/d!])^{\oplus \sigma(n_1, \ldots, n_\upsilon)}
	\end{align*}
	sending a cover to its branch locus (with the conjugacy classes of
	monodromy recorded)
	is an isomorphism.
\end{lemma}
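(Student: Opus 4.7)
The plan is to combine Theorem~\ref{theorem:one-large-stable-homology-hurwitz-space}, applied with $c_1$ taken to be the conjugacy class of transpositions, together with the component count of Lemma~\ref{lemma:partial-components}. Since transpositions generate $S_d$, Lemma~\ref{lemma:quotient-rack} gives $c/c_1 = c/c$, so the target of the projection $\CHur^c_{n_1, \ldots, n_\upsilon} \to \CHur^{c/c_1}_{n_1,\ldots, n_\upsilon}$ is identified with the multi-colored configuration space $\Conf_{n_1, \ldots, n_\upsilon}$ (each component of $\CHur^{c/c}_{n_1, \ldots, n_\upsilon}$ being a single copy thereof). With $|c_1| = \binom{d}{2}$ and the order of each transposition acting on $c$ at most $2$, Theorem~\ref{theorem:one-large-stable-homology-hurwitz-space} yields constants $I, J$ depending only on $d$ such that, for $n_1 > Ii + J$, every component $Z \subset \CHur^c_{n_1, \ldots, n_\upsilon}$ satisfies $H_i(Z; \mathbb Z[1/d!]) \xrightarrow{\sim} H_i(\Conf_{n_1, \ldots, n_\upsilon}; \mathbb Z[1/d!])$, using that $|G^{c_1}_c|$ divides $|S_d| = d!$.

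Next I would count the components of $[\CHur^c_{n_1, \ldots, n_\upsilon}/S_d]$. The $S_d$-action on $\CHur^c$, which changes the trivialization at infinity, is free and covers the trivial actions on $\Conf_{n_1, \ldots, n_\upsilon}$ and on $S_d^{\ab}$. By Lemma~\ref{lemma:partial-components}, $\pi_0(\CHur^c_{n_1, \ldots, n_\upsilon})[\alpha_{c_1}^{-1}]$ is identified with the fiber of $S_d \to S_d^{\ab}$ over the image of $n_1c_1 + \cdots + n_\upsilon c_\upsilon$; homological stability for $H_0$ in the $n_1$-direction (Theorem~1.4.1 of \cite{landesmanL:homological-stability-for-hurwitz}) allows us to drop the localization once $n_1$ is sufficiently large. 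The $S_d$-conjugation orbits on this fiber are exactly the conjugacy classes of $S_d$ with the prescribed image in $S_d^{\ab}$, and by Definition~\ref{definition:sigma-weighting} their number is $\sigma(n_1, \ldots, n_\upsilon)$.

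Finally I would descend the homology isomorphism along the free quotient. For each $S_d$-orbit of components $\{Z_1, \ldots, Z_r\}$ with common (up to conjugation) stabilizer $H \subset S_d$, the corresponding component of $[\CHur^c_{n_1, \ldots, n_\upsilon}/S_d]$ is $Z_1/H$. The isomorphism $H_i(Z_1; \mathbb Z[1/d!]) \xrightarrow{\sim} H_i(\Conf_{n_1, \ldots, n_\upsilon}; \mathbb Z[1/d!])$ is natural with respect to the projection to $\Conf_{n_1, \ldots, n_\upsilon}$, on which $H$ acts trivially; hence $H$ acts trivially on $H_i(Z_1; \mathbb Z[1/d!])$. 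Since $|H|$ divides $d!$, the transfer gives $H_i(Z_1/H; \mathbb Z[1/d!]) \simeq H_i(Z_1; \mathbb Z[1/d!])^H \simeq H_i(\Conf_{n_1, \ldots, n_\upsilon}; \mathbb Z[1/d!])$. Summing over the $\sigma(n_1, \ldots, n_\upsilon)$ orbits yields the stated isomorphism. The main hurdle is the last step: verifying that the $S_d$-stabilizer of a component really does act trivially on its stable homology via the isomorphism with $H_i(\Conf_{n_1, \ldots, n_\upsilon})$, which is what enables the descent across the free quotient.
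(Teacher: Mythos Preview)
Your proposal is correct and follows the same approach as the paper: identify $\CHur^{c/c_1}$ with $\Conf$ via \autoref{lemma:quotient-rack}, apply \autoref{theorem:one-large-stable-homology-hurwitz-space} componentwise, count the $S_d$-orbits of components via \autoref{lemma:partial-components} to get $\sigma(n_1,\ldots,n_\upsilon)$, and then descend the stable-homology isomorphism to the quotient. Your treatment of the descent step (showing the component stabilizer acts trivially on stable $\mathbb Z[1/d!]$-homology because the map to $\Conf$ is equivariant for the trivial action, then applying transfer) is in fact more explicit than the paper's, which compresses this into the phrase ``each component of $[\CHur^c_{n_1,\ldots,n_\upsilon}/S_d]$ is isomorphic to $[\Conf_{n_1,\ldots,n_\upsilon}/S_d]$'' --- a statement to be read at the level of stable $\mathbb Z[1/d!]$-homology rather than literally as spaces.
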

\begin{proof}
	Note first that $\cphurc {n_1, \ldots, n_\upsilon} {c/c_1} \simeq
	\Conf_{n_1,\ldots, n_\upsilon}$ by \autoref{lemma:quotient-rack}.
\autoref{lemma:partial-components} shows that if $c_1 \subset S_d$ is the set of
transpositions, then the components of $\cphurc {n_1, \ldots, n_\upsilon} {c_1,
\ldots, c_\upsilon}$ over $\cphurc {n_1, \ldots, n_\upsilon} {c/c_1} \simeq
\Conf_{n_1,\ldots, n_\upsilon}$ with $n_1$ sufficiently large are in bijection with
$S_d/S_d^{\ab}$, the possible values of the boundary monodromy.
By ``possible values'' we mean that if we fix $n_2, \ldots, n_\upsilon$, then
the boundary monodromy can either take all values in $A_d$ or all values in $S_d
- A_d$, depending on the image of $n_1 c_1 + \cdots + n_\upsilon c_\upsilon$ in
$S_d^{\ab} \simeq \mathbb Z/2 \mathbb Z$.
Hence, after quotienting by the conjugation action of $S_d$, we obtain that the
number of components of 
$[\cphurc {n_1, \ldots, n_\upsilon} {c_1,
\ldots, c_\upsilon}/S_d]$
is the number of possible values of the boundary monodromy, up to conjugacy.
By definition, this is precisely $\sigma(n_1, \ldots, n_\upsilon)$.
Moreover, each component of 
$[\cphurc {n_1, \ldots, n_\upsilon} {c_1,
\ldots, c_\upsilon}/S_d]$
is isomorphic to $[\Conf_{n_1, \ldots, n_\upsilon}/S_d]$
using \autoref{lemma:quotient-rack},
which then has the same $\mathbb Z[1/d!]$ cohomology as 
$\Conf_{n_1, \ldots, n_\upsilon}$
since $S_d$ acts trivially on 
$\Conf_{n_1, \ldots, n_\upsilon}$.
The result then follows from
\autoref{theorem:one-large-stable-homology}, which identifies the stable
homology of each such component.
\end{proof}

As our final preparation for proving Bhargava's conjecture in the function field
case, we wish to identify the geometrically irreducible components of the
relevant Hurwitz spaces over $\mathbb F_q$.

\begin{notation}
	\label{notation:hurwitz-components}
	Let $q$ be a prime power relatively prime to $d!$.
	We use the notation 
$\cquohur {S_d} {S_d}{n_1, \ldots, n_\upsilon} {c} {\mathbb F_q}$
to denote the union of components of 
$\cquohur {S_d} {S_d}{n} {c} {\mathbb F_q}$
as defined in 
\cite[Definition 2.3.3]{landesmanL:homological-stability-for-hurwitz}
which are geometrically irreducible and whose base change to $\overline{\mathbb
F}_q$ lies in
$\cquohur {S_d} {S_d}{n_1, \ldots, n_\upsilon} {c} {\overline{\mathbb F}_q}$,
as defined in
\cite[Notation 2.3.7]{landesmanL:homological-stability-for-hurwitz}.
\end{notation}

\begin{lemma}
	\label{lemma:unique-component}
	With notation from \autoref{notation:hurwitz-components},
	fix $g \in S_d$ and $n_1, \ldots, n_\upsilon$ integers.
	Let $c := S_d - \id$ and suppose $c_1 \subset c$ is the conjugacy class
	of transpositions.
	For $n_1$
	sufficiently large,
	there is at most one irreducible component of 
	$\cquohur {S_d} {S_d}{n_1, \ldots, n_\upsilon} {c} {\mathbb F_q}$
with fixed values $n_1, \ldots, n_\upsilon$ and boundary monodromy in the
conjugacy class of $g$, and, moreover, that component is geometrically
irreducible.
\end{lemma}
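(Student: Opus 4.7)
The plan is to reduce to the geometric situation over $\overline{\mathbb F}_q$, apply Lemma \ref{lemma:partial-components}, and then descend to $\mathbb F_q$ using Galois-invariance of the labeling data.

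First I would base change to $\overline{\mathbb F}_q$ and invoke the comparison between Hurwitz spaces in characteristic $0$ and in characteristic prime to $|S_d| = d!$ (which holds since $q$ is assumed coprime to $d!$). This identifies the components of $\cquohur {S_d}{S_d}{n_1,\ldots,n_\upsilon}{c}{\overline{\mathbb F}_q}$ with those of $[\cphurc{n_1,\ldots,n_\upsilon}{c}/S_d]$ over $\mathbb C$. Lemma \ref{lemma:partial-components} then describes the set of stable components of $\cphurc{n_1,\ldots,n_\upsilon}{c}$ (for $n_1$ larger than some threshold depending only on $d$) as the fiber over $(n_1,\ldots,n_\upsilon)$ of $S_d \times_{S_d^{\ab}}\pi_0(\CHur^{c/c'})[(\alpha_{c'/c'})^{-1}]$, where the $S_d$-coordinate records the boundary monodromy. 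Quotienting by the conjugation action of $S_d$ collapses this fiber onto the set of conjugacy classes of $S_d$ whose image in $S_d^{\ab}$ agrees with the image of $n_1 c_1 + \cdots + n_\upsilon c_\upsilon$; in particular, there is \emph{exactly} one component over $\overline{\mathbb F}_q$ whose boundary monodromy lies in the $S_d$-conjugacy class of $g$ (and zero such components when the parity of $g$ is incompatible).

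Next I would descend from $\overline{\mathbb F}_q$ to $\mathbb F_q$. The geometric Frobenius acts on the set of irreducible components of $\cquohur {S_d}{S_d}{n_1,\ldots,n_\upsilon}{c}{\overline{\mathbb F}_q}$, and it preserves both the tuple $(n_1,\ldots,n_\upsilon)$ and the $S_d$-conjugacy class of the boundary monodromy (since these are discrete invariants attached to the component and the monodromy conjugacy class is defined over $\mathbb F_q$). By the uniqueness established in the previous step, Frobenius must therefore fix this unique geometric component. Hence it descends to a geometrically irreducible component defined over $\mathbb F_q$; this is the unique component in $\cquohur {S_d}{S_d}{n_1,\ldots,n_\upsilon}{c}{\mathbb F_q}$ with the prescribed discrete invariants and boundary monodromy class.

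The only genuine obstacle is the geometric uniqueness statement, which has already been handled by Lemma \ref{lemma:partial-components} together with the vanishing $H_2(S_d, c') = 0$ from Example \ref{example:symmetric-group-components}; the descent step is then automatic because the labeling data is Frobenius-invariant. The threshold on $n_1$ is inherited from the threshold implicit in the localized bijection of Lemma \ref{lemma:partial-components}, together with the homological stability bounds of \cite[Theorem 1.4.1]{landesmanL:homological-stability-for-hurwitz} used in its proof.
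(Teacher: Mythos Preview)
Your proposal is correct and follows essentially the same route as the paper: establish geometric uniqueness over $\overline{\mathbb F}_q$ by comparing with $\mathbb C$ (the paper cites \cite[Lemma 2.3.5]{landesmanL:homological-stability-for-hurwitz} for this) and invoking Lemma~\ref{lemma:partial-components}, then observe that Frobenius must fix the unique geometric component and conclude geometric irreducibility (the paper cites \cite[Lemma 2.3.8]{landesmanL:homological-stability-for-hurwitz} here). Your argument that Frobenius preserves the boundary-monodromy conjugacy class is the same content the paper uses implicitly, just spelled out more explicitly.
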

\begin{proof}
We first show there is at most one irreducible component of 
$\cquohur {S_d} {S_d} {n_1, \ldots, n_\upsilon} {c} {\overline{\mathbb F}_q}$
with boundary monodromy in the conjugacy class of $g$,
for $n_1$ large enough.
There is bijection between components of
$\cquohur {S_d} {S_d}{n_1, \ldots, n_\upsilon} {c} {\overline{\mathbb F}_q}$
and components of
$\cquohur {S_d} {S_d}{n_1, \ldots, n_\upsilon} {c} {\mathbb C}$
as
shown in \cite[Lemma 2.3.5]{landesmanL:homological-stability-for-hurwitz}.
It then follows from \autoref{lemma:partial-components}
that, once $n_1$ is sufficiently large, there is a unique component of 
$\cphur {S_d} {n_1, \ldots, n_\upsilon} {c} {\mathbb C}$
with boundary monodromy $g$, and hence a unique component of 
$\cquohur {S_d} {S_d}{n_1, \ldots, n_\upsilon} {c} {\mathbb C}$
with boundary monodromy in the conjugacy class of $g$.

Since there is at most one irreducible component of 
$\cquohur {S_d} {S_d} {n_1, \ldots, n_\upsilon} {c} {\overline{\mathbb F}_q}$,
for $n_1$ large enough,
as shown above,
Frobenius must fix that component. Hence, for $n_1$ sufficiently large, by
\cite[Lemma 2.3.8]{landesmanL:homological-stability-for-hurwitz}
every irreducible component of 
$\cquohur {S_d} {S_d} {n_1, \ldots, n_\upsilon} {c} {\mathbb F_q}$ is geometrically
irreducible because there the action of Frobenius on geometric components is
trivial.
\end{proof}

\begin{notation}
	\label{notation:discriminant-count}
	We use
$\Delta(\mathbb F_q(t), S_d, c, q^n)$ for the number of connected $S_d$ extensions of $\mathbb F_q(t)$ of discriminant $q^n$ with
monodromy in $c$, which are geometrically connected.
We use
$\Delta(\mathbb F_q(t), A_d, c, q^n)$ for the number of connected $S_d$
extensions of $\mathbb F_q(t)$ of discriminant $q^n$ with monodromy in $c$ which
become two $A_d$ extensions over $\overline{\mathbb F}_q(t)$.
\end{notation}

With the above determination of the components of Hurwitz spaces out of the way,
we are ready to deduce a function field version of Bhargava's conjecture.
In the following statement, if $x$ is a set, we use $|x|$ to denote the
cardinality of $x$, and if $y$ is a real number, we use $\|y\|$ to denote its
absolute value.

\begin{theorem}
	\label{theorem:bhargava-analog}
	We use notation from \autoref{notation:twisted-conf} and
	\autoref{notation:discriminant-count}.
	For $c = S_d - \id$ and $c_1$ the conjugacy class of transpositions, if
	$q$ is sufficiently large depending on $d$, we
	have
\begin{align}
	\label{equation:alternating-bound}
	\Delta(\mathbb F_q(t), A_d, c, q^n) = o(q^{n})
\end{align}
and
\begin{align}
	\label{equation:symmetric-bound}
\left \| \Delta(\mathbb F_q(t), S_d, c, q^n)  -
	\sum_{\substack{n_1, \ldots, n_v \\ 
			\sum_{i=1}^\upsilon n_i
	\Delta(c_i) = n}} 
	\sigma(n_1, \ldots, n_\upsilon)
	\left| \Conf_{n_1, \ldots, n_\upsilon, \mathbb F_q}  (\mathbb
	F_q) \right| \right\|
	= o(q^{n}).
\end{align}
Hence, 
\begin{align}
	\label{equation:total-bound}
\Delta(\mathbb F_q(t),c , q^n) = 
	\sum_{\substack{n_1, \ldots, n_v \\ 
			\sum_{i=1}^\upsilon n_i
	\Delta(c_i) = n}}
	\sigma(n_1, \ldots, n_\upsilon)
	\left| \Conf_{n_1, \ldots, n_\upsilon, \mathbb F_q} (\mathbb
	F_q) \right| + o(q^{n}).
\end{align}
\end{theorem}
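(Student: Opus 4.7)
The plan is to reduce the counting problem to point counts on the Hurwitz stack $[\CHur^c_{n_1,\ldots,n_\upsilon,\mathbb F_q}/S_d]$ and then apply Grothendieck–Lefschetz together with the stable homology computation. First I will set up the bijection: a degree $d$, $S_d$-extension of $\mathbb F_q(t)$ with discriminant $q^n$ corresponds (via its Galois closure and the places of ramification above $\mathbb A^1_{\mathbb F_q}$) to a geometrically connected $S_d$-cover of $\mathbb A^1_{\mathbb F_q}$ with branch data of type $(c_1,\ldots,c_\upsilon)$ satisfying $\sum n_i\Delta(c_i)=n$. Thus
\begin{align*}
\Delta(\mathbb F_q(t),S_d,c,q^n) = \sum_{\sum n_i\Delta(c_i)=n} \#\bigl\{\text{geom.\ conn.\ $\mathbb F_q$-points of }[\CHur^c_{n_1,\ldots,n_\upsilon,\mathbb F_q}/S_d]\bigr\},
\end{align*}
where being geometrically connected corresponds to the component lying in $\CHur^c$ rather than merely $\Hur^c$, and the $A_d$ case is handled analogously but with covers that geometrically split.

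Next I would invoke \autoref{lemma:unique-component}: once $n_1$ exceeds the stability constant, the irreducible components of $[\CHur^c_{n_1,\ldots,n_\upsilon,\mathbb F_q}/S_d]$ indexed by a fixed conjugacy class of boundary monodromy are geometrically irreducible and in bijection with conjugacy classes in $S_d$ whose image in $S_d^{\mathrm{ab}}$ matches the parity of $\sum n_ic_i$; the count of such components is exactly $\sigma(n_1,\ldots,n_\upsilon)$. Combining Grothendieck–Lefschetz with \autoref{lemma:cohomology-and-components} then yields, for $n_1 > Ii+J$, that the $i$-th cohomology of each such component matches $H^i(\Conf_{n_1,\ldots,n_\upsilon,\mathbb F_q};\mathbb Q_\ell)$, so the main term in the Lefschetz sum equals $\sigma(n_1,\ldots,n_\upsilon) \cdot |\Conf_{n_1,\ldots,n_\upsilon,\mathbb F_q}(\mathbb F_q)|$.

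To conclude \eqref{equation:symmetric-bound} I would apply the general point-counting lemma \cite[Lemma 5.2.2]{landesmanL:the-stable-homology-of-non-splitting}, which converts stable cohomology (matching) plus an exponential Betti number bound into an estimate on point counts. The required exponential bound on $\dim H^i(\CHur^c_{n_1,\ldots,n_\upsilon};\mathbb Q)$ comes from the homological stability machinery already in this paper (specifically \autoref{theorem:homology-stabilizes-intro} combined with a standard generator-count argument as in \cite[Proposition 4.3.3]{ellenbergL:homological-stability-for-generalized-hurwitz-spaces}), and for $q$ large enough depending on $d$ the error term is $o(q^n)$. The case $n_1$ below the stability constant contributes boundedly many $(n_1,\ldots,n_\upsilon)$, each giving $O(q^{n - \epsilon n})$ by trivial bounds, hence also $o(q^n)$. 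For \eqref{equation:alternating-bound}, the extensions counted are those corresponding to $\mathbb F_q$-points of components of $[\CHur^c_{n_1,\ldots,n_\upsilon,\mathbb F_q}/S_d]$ whose base change to $\overline{\mathbb F}_q$ decomposes into two components swapped by Frobenius; since each geometric component is defined over $\mathbb F_q$ in the stable range, such components only arise in the unstable range and thus contribute $o(q^n)$ by the same trivial bound. Finally, \eqref{equation:total-bound} follows by summing \eqref{equation:alternating-bound} and \eqref{equation:symmetric-bound}.

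The main obstacle is ensuring the error terms from the unstable range (small $n_1$) and from the higher cohomology in the stable range combine to give $o(q^n)$ uniformly over all partitions $(n_1,\ldots,n_\upsilon)$ with $\sum n_i\Delta(c_i)=n$; this requires carefully balancing the exponential Betti bound, the stability slope $I$, and the size of $q$, which is why the hypothesis that $q$ is sufficiently large depending on $d$ appears. The remaining verification that the $A_d$-splitting contribution is negligible is essentially a bookkeeping exercise once one observes that in the stable range \autoref{lemma:unique-component} forces geometric irreducibility.
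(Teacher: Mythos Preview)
Your overall strategy---identify extensions with $\mathbb F_q$-points on $[\CHur^{S_d,c}_{n_1,\ldots,n_\upsilon}/S_d]$, compute stable cohomology via \autoref{lemma:cohomology-and-components} and \autoref{lemma:unique-component}, then apply \cite[Lemma 5.2.2]{landesmanL:the-stable-homology-of-non-splitting}---matches the paper. However, two steps have genuine gaps.

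First, your argument for \eqref{equation:alternating-bound} rests on a conceptual error: $A_d$-extensions do not correspond to $\mathbb F_q$-points on geometrically reducible components of $[\CHur^{S_d,c}/S_d]$. By definition $\CHur^{S_d,c}$ parametrizes covers whose geometric monodromy is all of $S_d$, whereas an extension ``becoming two $A_d$ extensions over $\overline{\mathbb F}_q(t)$'' has geometric monodromy $A_d$, so it does not appear on $[\CHur^{S_d,c}/S_d]$ at all. You are conflating irreducibility of components of the moduli space with geometric connectedness of the covers they parametrize. The paper instead invokes the Malle-type bound \cite[Theorem 10.1.8]{landesmanL:homological-stability-for-hurwitz}, using that every nontrivial element of $A_d$ has $\Delta(c_i)\geq 2$ so the $a$-invariant equals $2$. (An alternative direct fix: such extensions have $n_1=0$, hence the relevant Hurwitz space has dimension $\leq n/2$, giving $o(q^n)$.)

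Second, your handling of the unstable range is imprecise. The claim that ``$n_1$ below the stability constant contributes boundedly many $(n_1,\ldots,n_\upsilon)$'' is false: for each bounded $n_1$ there are polynomially-in-$n$ many tuples $(n_2,\ldots,n_\upsilon)$ with $\sum n_i\Delta(c_i)=n$. The paper handles this differently, taking the cutoff $n_1\leq n/2$ (not a fixed constant) and bounding that contribution via the reduced discriminant and \cite[Theorem 10.1.8]{landesmanL:homological-stability-for-hurwitz}, which gives $o(q^{3n/4})$; for $n_1\geq n/2$ the error from Lemma 5.2.2 then decays like $(C/\sqrt q)^{(n/2-J)/I}$ and sums over the $O(n^{\upsilon-1})$ tuples to $o(q^n)$. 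Your trivial-bound approach is salvageable but you would need to track polynomial-in-$n$ multiplicity and use $\dim\leq (n+n_1)/2$. Finally, for the Betti bound with constants independent of $(n_2,\ldots,n_\upsilon)$, the paper cites \cite[Lemma 8.4.2]{landesmanL:homological-stability-for-hurwitz} rather than \cite[Proposition 4.3.3]{ellenbergL:homological-stability-for-generalized-hurwitz-spaces}, which is tailored to Hurwitz modules.
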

\begin{proof}
\eqref{equation:total-bound} follows from
	\eqref{equation:alternating-bound} and \eqref{equation:symmetric-bound} because the only two
	normal subgroups of $S_d$ with cyclic quotient are $A_d$ and $S_d$
	and 
	$\inv(\mathbb F_q(t),c , q^n) = \sum_N \inv(\mathbb F_q(t),N, c , q^n)$,
	where the sum traverses over normal subgroups of $S_d$ with cyclic
	quotient.

	First, let us explain \eqref{equation:alternating-bound}.
	In this paragraph, we will use the notation 
$a(c \cap A_d,\Delta)$ and 
$b_M(\mathbb F_{q^2}(t), A_d, (A_d - \id)_{\Delta})$ for the constants in
Malle's conjecture, defined in \cite[Notation
10.1.4]{landesmanL:homological-stability-for-hurwitz}.
Now, 
\eqref{equation:alternating-bound}
follows from \cite[Theorem
	10.1.8]{landesmanL:homological-stability-for-hurwitz}
	because $a(c \cap A_d,\Delta) = 2$, as any nontrivial element of the alternating
	group cannot fix $d-2$ elements of $\{1, \ldots, d\}$.
	(In fact, one can moreover show that the left hand side of
	\eqref{equation:alternating-bound} is bounded by $O(q^{n/2})$ using that
$b_M(\mathbb F_{q^2}(t), A_d, (A_d - \id)_{\Delta}) = 1$, though we will not
need this.)

To conclude, we verify \eqref{equation:symmetric-bound}.
We can identify
$\Delta(\mathbb F_q(t), S_d, c, q^n) t $
with 
\begin{align}
	\label{equation:delta-count-as-hur}
	\sum_{\substack{n_1, \ldots, n_v \\ \sum_{i=1}^\upsilon n_i
	\Delta(c_i) = n}} 
	\cquohur {S_d}{S_d} {n_1, \ldots, n_\upsilon} c {\mathbb F_q} (\mathbb
	F_q).
\end{align}

Hence, to conclude, it suffices to show
\begin{align}
	\label{equation:symmetric-bound-difference}
\left \| \sum_{\substack{n_1, \ldots, n_v \\ \sum_{i=1}^\upsilon n_i
	\Delta(c_i) = n
}} 
	\cquohur {S_d}{S_d} {n_1, \ldots, n_\upsilon} c {\mathbb F_q} (\mathbb
	F_q)  -
	\sum_{\substack{n_1, \ldots, n_v \\ 
			\sum_{i=1}^\upsilon n_i
	\Delta(c_i) = n 
}} 
	\sigma(n_1, \ldots, n_\upsilon)
	\left|\Conf_{n_1, \ldots, n_\upsilon, \mathbb F_q}  (\mathbb
	F_q) \right| \right \|
	= o(q^{n}).
\end{align}
We conclude by explaining why \eqref{equation:symmetric-bound-difference}
holds.

We will start by bounding
\begin{align}
	\label{equation:low-transposition-bound}
	\sum_{\substack{n_1, \ldots, n_v \\ \sum_{i=1}^\upsilon n_i
\Delta(c_i) = n \\ n_1 \leq n/2}} 
\cquohur {S_d}{S_d} {n_1, \ldots, n_\upsilon} c {\mathbb F_q} (\mathbb
F_q) = o(q^{3n/4}).
\end{align}
Let $\on{rDisc}$ denote the reduced discriminant invariant, defined precisely in
\cite[Example 10.1.3]{landesmanL:homological-stability-for-hurwitz}.
By definition $\on{rDisc}(c_i) = 1$ for all $i$.
Then, since $\Delta(c_i) \geq 2$ for any $c_i$ other than transpositions, 
the reduced Discriminant of any point of discriminant $n$ with at most $n/2$
transpositions is at most $3n/4$.
We obtain that the left hand side of \eqref{equation:low-transposition-bound}
is bounded by
$\left | \on{rDisc}(\mathbb F_q(t), S_d, c, q^{3n/4}) \right |$, which we
bounded by $O(q^{3n/4+ \epsilon})= o(q^n)$ in 
\cite[Theorem 10.1.8]{landesmanL:homological-stability-for-hurwitz}.

Now, in order to bound
\eqref{equation:symmetric-bound-difference}, using
\eqref{equation:low-transposition-bound},
if we fix values for $n_2, \ldots, n_\upsilon$,
it is enough to show there are constants $C, C', I, J$ independent of $n_2,
\ldots, n_\upsilon$ such that
\begin{equation}
	\label{equation:symmetric-bound-difference-fixed-sequence}
\begin{aligned}
&\left \| \sum_{\substack{n_1 \\ \sum_{i=1}^\upsilon n_i
	\Delta(c_i) = n \\ n_1 \geq n/2}} 
	\cquohur {S_d}{S_d} {n_1, \ldots, n_\upsilon} c {\mathbb F_q} (\mathbb
	F_q)  -
	\sum_{\substack{n_1 \\ \sum_{i=1}^\upsilon n_i
	\Delta(c_i) = n \\ n_1 \geq n/2}} 
	\sigma(n_1, \ldots, n_\upsilon)
	\left| \Conf_{n_1, \ldots, n_\upsilon, \mathbb F_q}  (\mathbb
	F_q) \right| \right \|
	\\
	&= q^n \cdot \frac{2C'}{1-\frac{C}{\sqrt{q}}} \left( \frac{C}{\sqrt{q}}
	\right)^{\frac{n-J}{I}}
\end{aligned}
\end{equation}
Once we establish \eqref{equation:symmetric-bound-difference-fixed-sequence}, we
can sum over all values of $n_2, \ldots, n_\upsilon \leq n$ and bound the left hand
side of \eqref{equation:symmetric-bound-difference} by at most $q^n \cdot n^{\upsilon - 1}
\cdot \frac{2C'}{1-\frac{C}{\sqrt{q}}} \left( \frac{C}{\sqrt{q}}
\right)^{\frac{n-J}{I}}$, which is indeed $o(q^n)$, once $q$ is sufficiently large.

To verify \eqref{equation:symmetric-bound-difference-fixed-sequence}, we will check it separately as $n_1$ ranges over odd
integers and as $n_1$ ranges over even integers. The reason for considering
these two cases depending on the parity of $n_1$ is because the value of 
$\sigma(n_1, \ldots, n_\upsilon)$ is only a function of the parity of $n_1$, for
$n_2, \ldots, n_\upsilon$ fixed.
We conclude by explaining why the above claim holds via an application of
\cite[Lemma 5.2.2]{landesmanL:the-stable-homology-of-non-splitting}.
Indeed, we just have to verify the hypotheses (1) and (2) of
\cite[Lemma 5.2.2]{landesmanL:the-stable-homology-of-non-splitting},
while showing the constants $C, C'$, $I$, and $J$ there are independent of the values of
$n_2, \ldots, n_\upsilon$.
The hypothesis $(2)$ holds with the constants $C$ and $C'$ there independent of
$n_2, \ldots, n_\upsilon$
using \cite[Lemma 8.4.2]{landesmanL:homological-stability-for-hurwitz}.
Hence, it remains to verify hypothesis $(1)$, with the additional constraint that the values of $I$ and $J$
are independent of $n_2, \ldots, n_\upsilon$.
The independence of $I$ and $J$ follows from
\autoref{theorem:one-large-stable-homology-hurwitz-space}.
Hence, it remains only to identify 
the stable trace of $\on{Frob}_q^{-1}$ (where $\on{Frob}_q$
geometric Frobenius, and stable means that $n_1$ is sufficiently large) on each component of
$\cquohur {S_d}{S_d} {n_1, \ldots, n_\upsilon} c {\mathbb F_q} \times_{\spec \mathbb
F_q} \spec \overline{\mathbb F}_q$
with the stable trace of $\on{Frob}_q^{-1}$ on 
$\Conf_{n_1, \ldots, n_\upsilon,\overline{\mathbb F}_q}$.
To make this identification, we use the composite map
\begin{align}
	\label{equation:hur-to-conf-sd}
	\cquohur {S_d}{S_d} {n_1, \ldots, n_\upsilon} c {\mathbb F_q} 
	\to \cquohur {S_d/c_1}{S_d} {n_1, \ldots, n_\upsilon} c {\mathbb F_q}
	\simeq
[\Conf_{n_1,\ldots, n_\upsilon,\mathbb F_q}/S_d]
\to
	\Conf_{n_1,
\ldots, n_\upsilon,\mathbb F_q}
\end{align}
over $\mathbb F_q$,
given by sending a cover to its branch locus, where one records the degree of
each conjugacy classes of
monodromy occurring in the branch locus in the values $n_1, \ldots, n_\upsilon$.
The existence of this map \eqref{equation:hur-to-conf-sd}
relies on the identification
$\cquohur {S_d/c_1}{S_d} {n_1, \ldots, n_\upsilon} c {\mathbb F_q}
\simeq
[\Conf_{n_1,\ldots, n_\upsilon,\mathbb F_q}/S_d]$
over $\mathbb F_q$, which stems from the fact that every conjugacy class of $S_d$ is sent to
itself under the $q$th powering map when $q$ is relatively prime to $d!$. 
%This is because any cycle of length prime to q is sent to a cycle of the same
%length under $q$th powering.
We note that 
\eqref{equation:hur-to-conf-sd}
is a bijection between components of the source with fixed conjugacy class of
boundary monodromy
to components of the target using
\autoref{lemma:partial-components} and
\autoref{lemma:unique-component}.
This implies that the map
\eqref{equation:hur-to-conf-sd}, when base changed to $\overline{\mathbb F}_q$
and restricted to a single component of the source
induces a Frobenius equivariant isomorphism on stable cohomology, for $n_1$
sufficiently large.
Hence, the stable trace of Frobenius (meaning that it is stable as $n_1$ grows) on the cohomology of each component of 
$\cquohur {S_d}{S_d} {n_1, \ldots, n_\upsilon} c {\overline{\mathbb F}_q}$
is identified with the stable trace of Frobenius on the cohomology of 
$\Conf_{n_1, \ldots, n_\upsilon, \overline{\mathbb F}_q}$,
yielding \eqref{equation:symmetric-bound-difference-fixed-sequence}.
\end{proof}

\section{Representation Stability}
\label{section:representation-stability}

In this section, we prove \autoref{theorem:representation-stability} on
representation stability for homology of Hurwitz spaces.
Before taking up the proof, we begin with some remarks and complements.
Throughout this section, we freely use notation from
\autoref{definition:representation-stability}.

%\begin{remark}
%	\label{remark:multiplicity}
%	Let $\PConf_n \to \Conf_n$ denote the cover corresponding to ordering
%	the $n$ points. Then, we claim the dimension of $H_i(\CHur^c_n; \mathbb H_{\lambda,n})$ is the same as the multiplicity of the
%representation associated to $\rho_{\lambda,n}$ in 
%$H_i(\CHur^c_n \times_{\Conf_n} \PConf_n,\mathbb Q)$. Indeed, this can be deduced from
%the fact that the regular representation of $S_n$ decomposes as
%$\oplus_{S_n-\text{irreps} \rho} \mathbb V_\rho \otimes\rho$, where $\mathbb
%V_\rho$ is the local system on $BS_n$ associated to $\rho$.
%Therefore, our definition of representation stability is in accordance with the usual
%definition as in \cite[Definition
%1.1]{churchF:representation-theory-and-homological-stability}.
%\end{remark}

\begin{remark}
	\label{remark:non-splitting-stability}
	If $c$ has multiple components, it will simply be false that 
	$H_i(\CHur^c_n; \mathbb H_{\lambda,n})$ stabilizes. Indeed, even in the case
	$\lambda$ is the trivial partition of $1$, 
	$H_0(\CHur^c_n; \mathbb Q)$ grows with polynomial degree
	$|c/c|-1$.
	So only in the case $|c/c|=1$ can this multiplicity possibly stabilize.
	Similarly, if we were to use 
	$\Hur^c_n$ in place of $\CHur^c_n$, then
	$H_0(\Hur^c_n; \mathbb Q)$ would typically not stabilize, except
	in the case that $c$ satisfies the {\em non-splitting property} as
	defined in \cite[Definition
	4.1.7]{landesmanL:the-stable-homology-of-non-splitting}, which
	is equivalent to the condition that $H_0(\Hur^c_n; \mathbb Q)$
	stabilizes in $n$.
\end{remark}

We let $c$ be a finite rack with a single component. Recall our goal is to show
$\CHur^c_n$ satisfies linear representation stability.
The idea will be to define an appropriate rack such that knowing the stable
homology of certain Hurwitz spaces associated to that rack will allow us to
deduce representation stability.
We now define the relevant rack.

\begin{definition}
	\label{definition:copies-rack}
		For $j \geq 1$, let $c^{\boxtimes j}$ denote the rack of order $j|c|$ consisting of $j$
		copies of $c$, given by $c^{\boxtimes j} = c_1 \coprod \cdots
		\coprod c_j$. If $x_u \in
	c_u, y_v \in c_v$ map to $x, y \in c$ under the isomorphism $c_u \simeq
	c, c_v \simeq c$, then $x_u \triangleright y_v$ is defined to
	be $(x\triangleright
	y)_v \in c_v$.
\end{definition}

In what follows, we use $1^u$ as notation for the tuple $\underbrace{1, \ldots, 1}_{u \text{ times}}$. 
We first record an elementary consequence of the representation theory of $S_n$.

\begin{lemma}
	\label{lemma:sn-rep-theory}
	Let $c$ be a finite rack with a single component.
	For any partition $\lambda$,
	there is some value of $j \leq |\lambda|$ so that the map
	$\phi_n: \CHur^{c^{\boxtimes j}}_{1^{j-1}, n-j+1} \to
	\CHur^c_n$
	contains a copy of $\mathbb
	H_{\lambda,n} \subset (\phi_n)_* \mathbb Q$.
\end{lemma}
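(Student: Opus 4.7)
The plan is to reduce the claim to a classical statement in the representation theory of the symmetric group, routing through a base change identification. First, I would identify $\phi_n$ geometrically: because a $c^{\boxtimes j}$-cover is the same data as a $c$-cover together with a coloring of its branch points by $\{1,\dots,j\}$, up to components there is an identification of $\Hur^{c^{\boxtimes j}}_{1^{j-1}, n-j+1}$ with the fibered product $\Hur^c_n \times_{\Conf_n} \Conf_{1^{j-1}, n-j+1}$. Restricting to the components lying in $\CHur^c_n$ and applying base change for the finite étale pushforward, $(\phi_n)_*\mathbb{Q}$ becomes a summand of $f_n^*\,\pi_*\mathbb{Q}$, where $\pi\colon \Conf_{1^{j-1},n-j+1}\to \Conf_n$ is the colored configuration space projection.

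Next, I would translate the problem into representation theory. The local system $\pi_*\mathbb{Q}$ on $\Conf_n$ has monodromy factoring through $B_n\to S_n$, and corresponds to the $S_n$-permutation representation on ordered $(j-1)$-tuples of distinct elements of $\{1,\dots,n\}$, i.e.\ the induced representation $\Ind_{S_{n-j+1}}^{S_n}\mathbb{Q}$. Since $\mathbb{H}_{\lambda,n}=f_n^*\mathbb{V}_{\lambda,n}$ and $f_n^*$ preserves multiplicities among local systems pulled back from $\Conf_n$, it will suffice to show that $\rho_{\lambda,n}$ embeds into $\Ind_{S_{n-j+1}}^{S_n}\mathbb{Q}$ for a suitable $j$. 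By Frobenius reciprocity, this is equivalent to the trivial representation of $S_{n-j+1}$ appearing in $\res_{S_{n-j+1}}^{S_n}\rho_{\lambda,n}$. Iterating the branching rule for $S_k\supset S_{k-1}$, which removes a single corner box, this amounts to exhibiting a chain of partitions from the one-row shape $(n-j+1)$ to $(n-|\lambda|,\lambda_1,\dots,\lambda_p)$ by adding one box at each step. Choosing $j$ so that $j-1=|\lambda|$, such a chain is obtained by filling in the $|\lambda|$ boxes lying in rows $\geq 2$ one at a time by stripping corners (say, bottom-row first), each intermediate shape remaining a valid partition.

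The main obstacle I expect is the geometric identification in the first step, namely matching $\CHur^{c^{\boxtimes j}}_{1^{j-1}, n-j+1}$ with the correct union of components of $\CHur^c_n\times_{\Conf_n}\Conf_{1^{j-1},n-j+1}$. Here the hypothesis that $c$ has a single component is crucial: it ensures that any two colorings of the branch locus of a given $c$-cover yield covers in the same ``connected'' Hurwitz space $\CHur^{c^{\boxtimes j}}$, so that the component structure on both sides is compatible with the forgetful map. Once this compatibility is verified, the reduction to Frobenius reciprocity together with the branching rule argument above is essentially formal.
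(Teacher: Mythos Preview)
Your reduction via base change to the representation theory of $S_n$ matches the paper's strategy, and the key identification $\Hur^{c^{\boxtimes j}}_{1^{j-1}, n-j+1} \simeq \Hur^c_n \times_{\Conf_n} \Conf_{1^{j-1}, n-j+1}$ is exactly what the paper uses implicitly when it says ``pulling this back along the map $\CHur^c_n \to \Conf_n$''. Where you differ is in the representation-theoretic step: the paper argues that $\rho_{\lambda,n}$ sits inside $\std^{\otimes |\lambda|} \subset \perm^{\otimes |\lambda|}$ and then decomposes $\perm^{\otimes |\lambda|}$ into permutation modules on ordered tuples, whereas you go directly through Frobenius reciprocity and the branching rule. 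Your route is cleaner and has the advantage of naming the required $j$ explicitly rather than via an existence argument.

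Two small points. First, your choice $j = |\lambda|+1$ does not meet the stated bound $j \leq |\lambda|$; this is in fact an off-by-one slip in the lemma as written (already for $\lambda=(1)$ the standard representation does not embed in the trivial local system coming from $j=1$, so $j=2$ is forced), and the paper's own argument, traced carefully, also only yields $j \leq |\lambda|+1$. The downstream application in the proof of representation stability uses only $n-j+1 \geq n-|\lambda|$, which your $j$ satisfies with equality. Second, you hedge that $(\phi_n)_*\mathbb{Q}$ is ``a summand'' of $f_n^*\pi_*\mathbb{Q}$; for the desired inclusion $\mathbb{H}_{\lambda,n} \subset (\phi_n)_*\mathbb{Q}$ to follow you actually need equality here, and it does hold---your final paragraph correctly identifies that the single-component hypothesis on $c$ is precisely what makes $\CHur^{c^{\boxtimes j}}_{1^{j-1},n-j+1}$ coincide with the full preimage of $\CHur^c_n$.
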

\begin{proof}
	Let $\std$ denote the standard representation of $S_n$, which has
	dimension $n-1$, and let $\perm$ denote the $n$-dimensional permutation
	representation representation, which has dimension $n$.
	Let $V^{n,j}$ denote the $\binom{n}{n-j}$ dimensional $S_n$
	representation obtained from the permutation action on the
	set $S^{n,j}$ consisting of the $\binom{n}{n-j}$ order $j$ subsets of $\{1, \ldots, n\}$.
	The set $S^{j,n}$ corresponds to the
	cover $\Conf_{1^j,n-j} \to \Conf_n$ in the sense that it is the kernel
	of the action $\pi_1(\Conf_n)$ on $S^{j,n}$, acting through the quotient
	$\pi_1(\Conf_n)\simeq B_n \to S_n$.
	For any given partition $\lambda$, the representation theory for the
	symmetric group implies the representation associated to the
	partition $(n-|\lambda|, \lambda_1, \ldots, \lambda_p)$
	is a subrepresentation of $\std^{\otimes |\lambda|}$.
	Therefore it is also a subrepresentation of 
	$\perm^{\otimes |\lambda|}$.
	Since $\perm^{\otimes |\lambda|}$ can be expressed as a sum of $V^{n,j}$
	for $j \leq |\lambda|$, we also find
	that $\rho_{\lambda,n}$ appears in some $V^{n,j}$ for $j \leq
	|\lambda|$. Hence, 
	there is some $j \leq |\lambda|$ so that $\mathbb V_{\lambda,n}$ appears
	as a subrepresentation of $(\psi_n)_* \mathbb Q$ for $\psi_n:
	\Conf_{1^{j-1},n-j+1} \to \Conf_n$. Pulling this back along the map $\CHur_n^c \to
	\Conf_n$ yields the result.
\end{proof}

\subsection{Proof of \autoref{theorem:representation-stability}}
\label{subsection:representation-stability-proof}

Let 
$Z'' \subset \CHur^{c^{\boxtimes n}}_{1^n}$ be a component mapping to a
component
$Z' \subset \CHur^{c^{\boxtimes j}}_{1^{j-1},n-j-1}$ which maps to a component
$Z \subset \CHur^c_n$.
Using \autoref{lemma:sn-rep-theory}, we obtain a commutative diagram
	\begin{equation}
		\label{equation:}
		\begin{tikzcd}
			H_i(Z; \mathbb H_{\lambda,n}|_Z) \ar {r} \ar {d}{\iota^Z_i} &
			H_i(Z'; \mathbb Q)
			\ar {r} \ar {d}{\iota^{Z'}_i} & H_i(Z'' ; \mathbb Q)\ar
			{d}{\iota^{Z''}_i}  \\
			H_i(\CHur^c_n; \mathbb H_{\lambda,n}) \ar {r} \ar {d}{\alpha_i} &
			H_i(\CHur^{c^{\boxtimes j}}_{1^{j-1},n-j-1}; \mathbb Q)
			\ar {r} \ar {d}{\beta_i} & H_i(\CHur^{c^{\boxtimes
			n}}_{1^n} ; \mathbb Q)\ar
				{d}{\gamma_i} \\
			H_i(\Conf_n; \mathbb V_{\lambda,n}) \ar {r} &
			H_i(\Conf_{1^{j-1},n-j-1};\mathbb Q) \ar {r} &
			H_i(\Conf_{1^n}; \mathbb Q).
	\end{tikzcd}\end{equation}
	By \cite[Theorem 2.4]{shusterman:the-tamely-ramified-geometric}
	the map 
	$\CHur^{c^{\boxtimes n}}_{1^n} \to
	\CHur^{c^{\boxtimes j}}_{1^{j-1},n-j-1} \xrightarrow{\phi_n}
	\CHur^{c}_n$ induces a bijection on components for $n$
	sufficiently large, depending on $c$.
	Since the map $\CHur^{c^{\boxtimes j}}_{1^{j-1},n-j-1} \to
	\CHur^{c}_n$ induces a bijection on components for $n$ large enough, 
the summand $\mathbb H_{\lambda,n} \subset (\phi_n)_*\mathbb Q$ restricts to a
summand $\mathbb H_{\lambda,n}|_Z \subset ((\phi_n|_{Z'})_* \mathbb Q|_{Z'}) =((\phi_n)_*\mathbb Q)|_{Z'}$. 
Recall that we are trying to show 
$\alpha_i \circ\iota^Z_i$ induces an isomorphism when $n - |\lambda| > Ii + J$
for suitable constants $I$ and $J$. Hence, by the above, it
suffices to show 
$\beta_i \circ \iota^{Z'}_i$ induces an isomorphism when $n - |\lambda| > Ii +
J$.

Note that $\beta_0 \circ \iota^{Z'}_0$ is an isomorphism by construction,
because the source and target both have a single component.
	We next explain why $\beta_i$ is also
	an isomorphism for $i > 0$. 
	Let $Z''' \subset \CHur^{c^{\boxtimes j}/c^{\boxtimes
	j}}_{1^{j-1},n-j+1}$ denote the component which $Z'$ maps to under the
	projection
$\CHur^{c^{\boxtimes j}}_{1^{j-1},n-j-1} \to \CHur^{c^{\boxtimes j}/c^{\boxtimes
	j}}_{1^{j-1},n-j+1}$.
	Since $c^{\boxtimes j}/c_j \simeq c^{\boxtimes j}/c^{\boxtimes
	j}$, 
	it follows from
\autoref{theorem:one-large-stable-homology-hurwitz-space}
that there are constants $I$ and $J'$ depending on $c$ so that for $n - j+1> Ii
+ J'$,
the map 
\begin{align*}
	\beta_i \circ \iota^{Z'}_i: H_i(Z';\mathbb Q) \simeq
	H_i(Z''';\mathbb Q)
	\simeq H_i(\Hur^{c^{\boxtimes j}/c^{\boxtimes
	j}}_{1^{j-1},n-j+1};\mathbb Q)
	\simeq H_i(\Conf_{1^{j-1},n-j+1};\mathbb Q)
\end{align*}
is an isomorphism.

We have now shown that 
$\beta_i \circ \iota^{Z'}_i$ is an isomorphism for 
$n-j > Ii + J'$.
Since $j \leq |\lambda|$ we also have that
$\beta_i \circ \iota^{Z'}_i$ is an isomorphism 
for
$n-|\lambda| > Ii+J'$. 
This completes the proof, as explained above, since it
then means that 
$\alpha_i \circ\iota^Z_i$ 
is an isomorphism 
when $n - |\lambda| > Ii + J$
for some constants $I$ and $J$, with $J$ possibly larger than $J'$ but only
depending on $c$.
\qed

\section{Further questions}
\label{section:further-questions}

The results of this paper open avenues to prove a vast collection of function
field results in arithmetic statistics, and the examples we surveyed, such as
the BKLPR heuristics and Bhargava's conjecture, only constitute a small
collection of potential applications.

Here, to prove a version of Bhargava's conjecture,
we counted degree $d$, $S_d$ extensions of $\mathbb F_q(t)$ by discriminant. 
It seems likely that the techniques of this paper could also compute the
constant determining the asymptotic count of $S_d$ extensions by other invariants. It also seems likely
one could generalize the techniques to predict what the constant should be in
T\"urkelli's version of Malle's conjecture. (In 
\cite[Theorem 10.1.10]{landesmanL:homological-stability-for-hurwitz} we showed there
are some periodic constants relevant to T\"urkelli's conjecture when one
counts by discriminant, but we did not compute them.)
We note that, in some cases, a prediction for the constant in Malle's conjecture over $\mathbb Q$ has
been made in \cite{loughranS:malles-conjecture-and-brauer-groups}.

More generally, it would be natural to predict the constant
governing the number of extensions of an arbitrary global field, instead of just
$\mathbb Q$ or $\mathbb F_q(t)$. 
It would also be natural to predict
the number of extensions with specified local conditions at a finite set
of places.
In the function field case, adding local conditions amounts
to understanding the cohomology of Hurwitz modules over punctured
curves, where one imposes local conditions at the punctures. We note that, in the related context of the Cohen-Lenstra-Martinet heuristics,
predictions have been made for the average size of torsion in class groups
over varying extensions of number fields
\cite{cohen1987class} and versions with local conditions have been given in
\cite{wood:cohen-lenstra-heuristics-and-local-conditions}.
The idea for how to make the above conjectures over function fields would be 
to phrase them
in terms 
of components of Hurwitz spaces, so that one can aim to prove them over function
fields using the techniques of this paper.
One could then try to phrase the resulting conjectures in a way so that they
could also work over number fields.

Another direction to investigate concerns whether there is a moduli
interpretation of Hurwitz spaces associated to an arbitrary rack (and not only
racks coming from unions of conjugacy classes in a group).
We conjecture that such an interpretation for a rack $c$ does exist over
$\mathbb Z[1/|G^c_c|]$,
so that it is possible to define a scheme over 
$\mathbb Z[1/|G^c_c|]$ whose restriction to $\mathbb C$ is
$\Hur^c$.
If one is able to define such a scheme, one would obtain immediate consequences
for the number of $\mathbb F_q$ points in each of its components, using the
results of this paper.
Similarly, we ask whether it is possible to define a moduli interpretation of
Hurwitz modules or bijective Hurwitz modules over $\mathbb
Z[1/N^c_S]$,
for some integer $N^c_S$ depending on the rack $c$ and the Hurwitz module $S$,
whose pullback to $\mathbb C$ is $\Hur^{c,S}$.
Perhaps $N^c_S = |G^c_S| \cdot |G^c_c|$.
Again, if this is true, one would obtain immediate consequences for the finite
field points of such a scheme using the results of this paper.

In \autoref{theorem:representation-stability}, we proved representation
stability for Hurwitz spaces. Can one also prove representation stability for
Hurwitz modules? Of course, the main results
\autoref{theorem:homology-stabilizes-intro} and
\autoref{theorem:one-large-stable-homology} are already stated in the setting of
Hurwitz modules, and it appears that one of the main obstacles to answering this
question is to generalize \cite[Theorem 2.4]{shusterman:the-tamely-ramified-geometric}
to the setting of Hurwitz modules.
Can one prove uniform representation stability for Hurwitz spaces and Hurwitz
modules, so that the constants do not depend on the partition $\lambda$?
(This has been shown in some special cases in 
\cite{himesMW:homological-stability-for-hurwitz}.)

Another direction which it seems likely these results could apply are in
computing moments of Selmer groups of semi-abelian varieties.
Here, we restricted ourselves to the setting of Selmer groups of abelian
varieties.
However, the distribution of Selmer groups of $\mathbb G_m$ in quadratic twist
families is closely related to the Cohen-Lenstra heuristics, as discussed in
\cite[Remark 1.4]{landesman:a-geometric-approach-cohen-lenstra}.
It would be interesting to find a common generalization of the Cohen-Lenstra and
BKLPR heuristics predicting the distribution of Selmer groups of
quadratic twist families of semi-abelian varieties.

Yet another direction of further possible study relates to special values of L-functions and their
moments.
Given a union of conjugacy classes $c_G$ in a group $G$, it is possible to construct a rack $c$ so that the universal
curve over $\Hur^{c_G}$ is a disjoint union of certain
components of $\Hur^c$.
Since average values of $L$ functions at the central point can be related to point counts on fibers
of the universal curve
it would be interesting to see if the results of this paper can say anything
about moments of L functions, especially along the lines of the results in
\cite{bergstromDPW:hyperelliptic-curves-the-scanning-map}.
As explained to us by Will Sawin, it seems unlikely our results could obtain the
necessary information to prove the analog of
\cite[Proposition 1.5]{millerPPR:uniform-twisted-homological-stability}
for general groups $G$, as that would seem to involve understanding something
about the unstable homology of Hurwitz spaces.

\bibliographystyle{alpha}
\bibliography{./bibliography}

\end{document}